\documentclass[10pt,reqno]{article} 
\usepackage[latin1]{inputenc}   
\usepackage{amsfonts,amsmath,layout}
\usepackage{mathrsfs}  
\usepackage{soul}
\usepackage{amssymb,mathabx,amsfonts,amsthm,amscd,stmaryrd,dsfont,esint,upgreek,constants,todonotes}
\DeclareFontFamily{OT1}{pzc}{}
\DeclareFontShape{OT1}{pzc}{m}{it}{<-> s * [1.10] pzcmi7t}{}
\DeclareMathAlphabet{\mathpzc}{OT1}{pzc}{m}{it}

\setlength{\textwidth}{16cm}
\setlength{\textheight}{23cm}
\setlength{\topmargin}{-1cm}
\setlength{\oddsidemargin}{-1mm}
\setlength{\evensidemargin}{-1mm}
\raggedbottom

\usepackage{color} 

\definecolor{Red}{cmyk}{0,1,1,0.2}
\usepackage{hyperref}


\newcommand{\N}{\mathbb N}

\newcommand{\Z}{\mathbb Z}


\newcommand{\R}{\mathbb R}



\def\R{\mathbb R}
\def\N{\mathbb N}
\def\Z{\mathbb Z}

\def\E{\mathbb E}
\def\P{\mathbb P}

\def\ep{\epsilon}
\def\rg{\rangle} 
\def\lg{\langle}

\newcommand{\be}{\begin{equation}}
\newcommand{\ee}{\end{equation}}
\def\1{{\bf 1}}

\def\ds{\displaystyle}

\newtheorem{Theorem}{Theorem}[section]

\newtheorem{Lemma}[Theorem]{Lemma}
\newtheorem{Corollary}[Theorem]{Corollary}
\newtheorem{Remark}[Theorem]{Remark}

\begin{document}

\title{Microscopic derivation of a traffic flow model with a bifurcation}
\author{\renewcommand{\thefootnote}{\arabic{footnote}}
  P. Cardaliaguet\footnotemark[1], N. Forcadel\footnotemark[2]}
\footnotetext[1]{CEREMADE, UMR CNRS 7534, Universit\'e Paris Dauphine-PSL,
Place de Lattre de Tassigny, 75775 Paris Cedex 16, France. cardaliaguet@ceremade.dauphine.fr}
\footnotetext[2]{Normandie Univ, INSA de Rouen Normandie, LMI (EA 3226 - FR CNRS 3335), 76000 Rouen, France, 685 Avenue de l'Universit\'e, 76801 St Etienne du Rouvray cedex.
France. nicolas.forcadel@insa-rouen.fr}

\maketitle

\begin{abstract} The goal of the paper is a rigorous derivation of a macroscopic traffic flow model with a bifurcation or a local perturbation from a microscopic one. The microscopic model is a simple follow-the-leader with random parameters. The random parameters are used as a statistical description of the road taken by a vehicle and its law of motion. The limit model is a deterministic and scalar Hamilton-Jacobi on a network with a flux limiter, the flux-limiter describing how much the bifurcation or the local perturbation slows down the vehicles. The proof of the existence of this flux limiter---the first one in the context of stochastic homogenization---relies on a concentration inequality and on a delicate derivation of a superadditive inequality. 
\end{abstract}

\paragraph{AMS Classification:} 35D40, 35R60, 35F20.

\paragraph{Keywords:} traffic flow, stochastic homogenization, Hamilton-Jacobi equations on networks.

\tableofcontents

\bigskip

\section*{Introduction}

In this paper we study traffic flows models with a bifurcation consisting in  a single incoming road which is divided after a junction into several outgoing ones. As a particular case our analysis contains traffic flow models on a single road  with a localized perturbation (a bottleneck for instance). There are two main classes of models to describe these situations: microscopic models, which explain how each vehicle behaves  in function of the vehicles in front; and macroscopic ones, taking the form of a conservation law in which the main unknown is the density of vehicles on the roads. Our aim is to start from simple microscopic models on a bifurcation (or on a perturbation) and derive from these models continuous ones after scaling. The point is to get a better understanding of the continuous traffic flow models arising as the limit of discrete ones. Indeed there exists many different continuous models of traffic flow on a junction or with a local perturbation in the literature \cite{AnRo, BenRos17, CrSa, GaGo11, piccoli, ViGoCh17} and the relation between these models is not completely clear. If the basic continuous model on a single straight road (the so-called LWR model, from Lighthill and Whitham \cite{LW} and Richards \cite{richards}) is well understood and justified by micro-macro limits in several contexts \cite{MR1952895, CFGG, rigorousLWR, GoatinRossi, MR3721873}, there is no consensus for problems with a junction or a bifurcation: the models are only obtained so far by heuristic arguments, with the exception of \cite{FoSa20} discussed below.  In this paper we show that the continuous model suggested in \cite{GaImMo15, IM} pops up as the natural limit of follow-the-leader models. The continuous model in \cite{GaImMo15, IM}  takes the form of a flux limited Hamilton-Jacobi equation: it is a kind of integrated form of the basic LWR model outside the junction  combined with a ``flux limiting condition'' on the junction. Our micro-macro derivation holds for a large class of follow-the-leader models, allowing for a possible heterogeneous behavior of the vehicles. \\

Our starting point is a microscopic model. Before describing it, let us recall that few discrete traffic flow models with a junction or a local perturbation exist in the literature: \cite{CHM20} discusses an interesting leader follower model with a junction including several incoming and outgoing roads: the model we present in the present paper shares similar flavors, but in the much simpler setting of a single incoming road; \cite{AnRo} presents a microscopic model of traffic with a flow limitation at a point and formally justifies the derivation of a conservation law with a discontinuous flux (but leaves the rigorous proof as an open problem); \cite{FSZ18} describes a traffic flow model with (deterministic) traffic lights and derives rigorously the continuous model (in terms of a flux limited Hamilton-Jacobi equation on the line). The only model proving micro-macro derivation in the case of a bifurcation is \cite{FoSa20}: in \cite{FoSa20} there are two outgoing roads and it is assumed (no too realistically) that every second vehicle takes a given road. In this setting the authors show that the convergence of the discrete problem to a flux limited solution of a Hamilton-Jacobi equation on a junction. One of the goals of the present paper is to introduce a more realistic model in which one replaces the deterministic rule of \cite{FoSa20} by a random one (e.g., every second vehicle {\it in average} takes a given outgoing road). The introduction of randomness in traffic flow problems is natural and can be traced back to \cite{CLB09}. The micro-macro derivation of the LWR model from a random one on a single road was established in \cite{CaFo}. Here we prove the corresponding result for a bifurcation.

\paragraph*{Short description of the microscopic model.} In our discrete model there is one incoming road and $K$ outgoing ones, where $K\in \N$, $K\geq 1$. A position on the road is given by a pair $(x,k)$ where $x$ is a real number and $k$ is a label in $\{0, \dots, K\}$. If $x$ is nonpositive, then by convention $k=0$ and the  vehicle is on the incoming road. If $x$ is positive then $k\in \{1, \dots, K\}$ and the vehicle is on the outgoing road $k$. The junction is an interval around $x=0$, say, to fix the ideas, $[-R_0,0]$. The vehicles  are labelled by $i\in \Z$. The position of the vehicle labelled $i$ at time $t$ is denoted by $U_i(t)$. The outgoing road the vehicle chooses is fixed from the beginning (independent of time) and denoted by $T_i\in \{1, \dots, K\}$. The motion of the vehicles is given by a leader-follower model: it satisfies the system of ordinary differential equations
\be\label{eq.dynintro}
 \frac{d}{dt}U_i(t)= V_{Z_i}(U_{i+1}(t)-U_i(t), U_{\ell_i}(t)-U_i(t), U_i(t)), \qquad t\geq 0,\;  i\in \Z.
\ee
We assume that all the vehicle are going or have gone through the junction and were ordered before going through the junction: $i+1$ is the label of the vehicle right in front of the vehicle $i$ before this vehicle has gone through the junction. We denote by $\ell_i$  the label of the first vehicle in front of vehicle $i$ taking the same outgoing road as $i$ (in other words, $\ell_i= \inf \{j>i, T_i=T_j\}$). Each vehicle has a type $Z_i$ encoding, on the one hand, the outgoing road the vehicle is taking or is going to take (namely,  $T_i=T(Z_i)$ for a deterministic map $T:\mathcal Z\to \{1, \dots, K\}$) and,  on the other hand, the ``behavior'' of the vehicle (for instance, if it is a truck or a race car). The velocity law $V=V_z(e_1,e_2,x)$ depends on the type $z\in \mathcal Z$ of the vehicle, the distances $e_1$ or $e_2$ to the next vehicle and the position $x$ of the vehicle. 

In order to obtain a limit model with a few unknowns and as simple as possible, we do not keep track of all the vehicles of a given type (in contrast with \cite{CKM}). Instead we prefer a statistical description and assume that the types  $(Z_i)$ of the vehicles are random, independent and with the same law (i.i.d.); as a consequence the $(T_i)$ are also i.i.d. In addition, we also suppose that  the traffic is homogeneous outside the junction: namely, we assume that, before the junction (i.e., $x\leq -R_0$), $V_z(e_1,e_2,x)$ depends only on $e_1$ and $z$, i.e., $V_z(e_1,e_2,x)=\tilde V^0_z(e_1)$. In the same way, after the junction (i.e., $x\geq 0$) we suppose that $V_z(e_1,e_2,x)=\tilde V^k_z(e_2)$   depends only on $e_2$, $k=T(z)$ and $z$. There are two main reasons to do so: first (and again in contrast with \cite{CKM}), we will see that these assumptions  yield to a relatively simple continuous scalar equations. Second, tracing the type of a vehicle (and even more the road it is going to take later on) seems an impossible task in practice: a statistical description is probably more justified, at least if the structure of the traffic is stable in time. 

For later use we denote by $\pi^k:=\P[T_i=k]$ the proportion of vehicle taking (or planning to take) road~$k$.  

\paragraph*{The convergence result and the continuous model.} For $\ep>0$, we look at the (scaled) traffic density of vehicles on each road:
$$
m^\ep(dx,k,t)= \left\{\begin{array}{ll} 
\ds \ep \sum_{i\in \Z, \ T_i=k} \delta_{\ep U_i(t/\ep)}(dx) & {\rm if }\; x> 0, \; k\in \{1, \dots, K\}\\
\ds \ep \sum_{i\in \Z} \delta_{\ep U_i(t/\ep)}(dx) & {\rm if }\; x\leq 0, \; k=0
\end{array}\right. 
$$
and want to understand the limit, as $\ep\to 0$, of $m^\ep$. For this it is convenient to integrate in space $m^\ep$ and look instead at: 
$$
\nu^\ep(x,k,t) = \left\{\begin{array}{l}
\ds \ep  (\pi^k)^{-1} \left( \sum_{i\in \Z, \ i\leq 0, \  T_i=k}  \delta_{\ep U_{i}(t/\ep)}( (x,+\infty))- \sum_{i\in \Z,\ i>0, \ T_i=k} \delta_{\ep U_{i}(t/\ep)}((-\infty,x]) \right) \\
 \hspace{9cm} {\rm if }\; x> 0, \; k\in \{1, \dots, K\}\\
\ds \ep\left(\sum_{i\in \Z, \ i\leq 0}  \delta_{U_{i}(t)}( (x,+\infty))- \sum_{i\in \Z,\ i>0} \delta_{U_{i}(t)}((-\infty,x])\right) \hspace{1.7cm}  {\rm if }\; x\leq 0, \; k=0 . 
\end{array}\right. 
$$
Note that $\partial_x \nu^\ep=- m^\ep$ if $x\leq 0$ while $\partial_x \nu^\ep=- (\pi^k)^{-1}m^\ep$ if $x\geq 0$ and $k\in \{1,\dots,K\}$. This choice ensures the map $\nu^\ep$ to be ``almost continuous'' at $0$ since the vehicles are split between the $K$ roads after the junction in proportion $\pi^k$ for the road $k$. Our main result (Theorem \ref{thm.main}) roughly  states that, under suitable assumptions on $V$ and if $\nu^\ep(\cdot, \cdot, 0)$ has a locally uniform (deterministic) limit $\nu_0(\cdot, \cdot)$ at time $t=0$, then $\nu^\ep$ has a.s. a locally uniform (deterministic) limit $\nu$ which is the unique viscosity solution to
\be\label{eq.HJintro}
\left\{\begin{array}{ll}
\ds \partial_t \nu(x,k,t) + H^k(\partial_x \nu(x,k,t))=0& {\rm if}\; x\neq 0,\; t>0\\
\ds \partial_t \nu +\max\{ \bar A,  H^{0,+}(\partial_0 \nu), H^{1,-}(\partial_1 \nu), \dots, H^{K,-}(\partial_K \nu))\}=0& {\rm at}\; x=0\\ 
\ds \nu(x,k,0)= \nu_0(x,k) & \mbox{\rm for any}\; x,k.
\end{array}\right.
\ee
The first equation is a Hamilton-Jacobi (HJ) equation in which the homogenized Hamiltonians $H^k(p)$ can be explicitly computed from the $\tilde V^k$. As we explain below it corresponds to an integrated form of the LWR equation. The second equation describes the behavior of the vehicles at the junction (reduced after scaling to $x=0$): we explain below the different terms. It roughly says that $\partial_t \nu + \bar A=0$ at $x=0$ (unless the HJ equation is satisfied at $x=0$). The real number $\bar A$ is the so-called flux limiter. This is the main unknown of the paper. It quantifies how the traffic is slowed down by the junction. We show that 
$$
A_0\leq \bar A\leq  0, \; {\rm where}\; A_0:= \max_{k\in\{0, \dots, K\}} \min_{p\in \R}  H^k(p). 
$$
When $\bar A=A_0$, the flux is not limited at all. If  $\bar A=0$, then the traffic is completely stopped by the junction (this does not happen under our assumptions). The existence of $\bar A$ is the main point of the paper, which  presents the first existence result of a flux limiter in the context of a stochastic homogenization problem. We show that  $\bar A$ can be computed as follows: 
$$
\bar A= -  \lim_{t\to+\infty} \frac{1}{t} \sharp \bigl\{ i\in \Z, \; \exists s\in [0,t], \; U_{e,i}(s)=0\bigr\},
$$
where $\sharp E$ denotes the number of elements of a set $E$, $e=(e^k)_{k=0, \dots, K}$ is such that  $H^k(-1/e^k)= \min_p H^k(p)$ for any $k\in \{0, \dots, K\}$ and  $(U_{e,i})$ is the solution to \eqref{eq.dynintro} with the ``flat'' initial condition $U_{e,i}(0)=e^ki$ (where $k=0$ if $i\leq 0$ and $k=T_i$ if $i\geq 0$). The quantity $\bar A$ can be interpreted as the maximal fraction of vehicles the junction can let pass given an amount of time. 

The introduction of Hamilton-Jacobi equations on a junction or stratified domains can be traced back to \cite{ACCT13, AOT15, AcTc15, BaBrCh13, BrHo07, CaMa13, ImMoZi13, Sh13, ShCa13}; a general theory of flux limited solutions was developed in \cite{IM} (see also \cite{BaBrChIm18}) with, as fundamental result, a comparison theorem; \cite{LiSo16, LiSo17} present different arguments for the comparison while \cite{BaCh18} proposes a general survey on the topic. 

\paragraph*{Short discussion of the problem in terms of scalar conservation law.}  
Hamilton-Jacobi equations on a junction and scalar conservation laws with discontinuous coefficients seem intimately connected, although the rigorous relationships between the two notions has not been discussed so far. We do not intend to investigate this point here but  only develop formal arguments and postpone a more detailed analysis to future works. 

For $k\in \{1, \dots, K\}$, we define the random measures 
$$
\rho^\ep(dx,k,t) = \left\{\begin{array}{ll}
\ds \ep (\pi^k)^{-1}\sum_{i\in \Z, \; T_i=k, \; U^\ep_i(t/\ep)\geq 0} \delta_{\ep U^\ep_i(t/\ep)}(dx) & {\rm if}\; x\geq 0, \; k\in \{1, \dots, K\}\\
\ds  \ep  \sum_{i\in \Z, \; U^\ep_i(t/\ep)<0} \delta_{\ep U^\ep_i(t/\ep)}(dx) & {\rm if}\; x<0, \; k=0
\end{array}\right. 
$$
The quantities are the scaled densities of the traffic on each branch of the junction. A elementary computation shows that we have, in the sense of distribution, $\rho^\ep = -\partial_x \nu^\ep$. 
According to our main result (Theorem \ref{thm.main}) $\rho^\ep$ converges a.s. and in the sense of distribution, to $\rho:=-\partial_x\nu$. As $\nu$ solves \eqref{eq.HJintro} and is Lipschitz continuous,  it is known \cite{Cas92} that $\rho$ is, outside  the junction, an $L^\infty$ entropy solution of the scalar conservation law 
\be\label{conservationlaw}
\partial_t \rho+\partial_x(f(\rho,x,k))=0\qquad {\rm for} \;x\neq 0, 
\ee
where 
$$
f(\rho,x,k)= \left\{\begin{array}{ll}
- H^k(-\rho) & {\rm if}\; x> 0\; {\rm and}\; k\in \{1, \dots, K\},\\
-H^0(-\rho) &  {\rm if}\; x< 0\; {\rm and}\; k=0,
\end{array}\right.
$$
with an initial condition given by $\rho(x,k,0)= -\partial_x \nu_0(x,k)$. 

It is well-known \cite{AMG} that an extra conditions at the junction (depending on the model) is needed to ensure the uniqueness of such a scalar conservation law. The additional equation at $x=0$ for $\nu$ in \eqref{eq.HJintro} should lead to this extra condition. It does not seem however obvious how to interpret it in terms of the limit density $\rho$.

\paragraph*{Method of proof.}

We now describe the method of proof of our main result. As it is quite involve it is convenient for this discussion to reduce  drastically the problem by considering the case of a single road on which the vehicle behave in an identical way, expect on a small zone on which they are subject to a perturbation depending on their type. This situation pops up for instance when  the vehicles are slowed down on a small portion of a road by a speed bump to which they may react in a different way depending on their size. The leader-follower model now reads 
\be \label{eq.simpleintro}
\frac{d}{dt}U_i(t)= V_{Z_i}(U_{i+1}(t)-U_i(t), U_i(t))
\ee
where $Z_i$ is as before the type of the vehicle (supposed to be an i.i.d. random variable) and where $V_z(p,x)= \tilde V(p)$ outside the perturbation $[-R_0,0]$.  Following \cite{IM} (see also \cite{FSZ18, FoSa20}), one expects the limit model to be of the form of a LWR model with a flux limiting condition at the origin. The fundamental diagram outside the perturbation is given by $H(p)= p \tilde V(-1/p)$ and the only issue is to compute the flux limiter. In the case of a deterministic model (for instance time periodic, see \cite{FSZ18}; or periodic in the type, see \cite{FoSa20}) a standard method consists in building a corrector. However in the random setting such a corrector does not necessarily exist: see for instance the discussion in \cite{LS03}. A standard way to overcome this difficult issue is to identify subadditive quantities \cite{ArSo13, LiSo05, So97}. In the case such a quantity is not directly available, a different, more quantitative approach has been developed  in \cite{ArCa18} and later used in different contexts \cite{CaSm20, FeSo17, JiZl19}. As no subadditive quantity seems  adequate in our setting we follow this alternative approach.  

The starting point is to explore what happens for the ``flat'' initial condition $U_i(0)= ei$ ($i\in \Z$), where $e>0$ is  such that $H(-1/e)= \min_p H(p)$. In the absence of a perturbation, this initial condition would be a  steady state of the problem: namely, $U_i(t)= ei+ t\tilde V(e)$ solves \eqref{eq.simpleintro} outside the perturbation. The point is to understand how this steady state solution is modified by the perturbation. For this we introduce the (random) quantity 
$$
\theta_e(t)=\inf\{ i\geq 0, \; U_{-i}(t)\leq 0\},
$$
which corresponds to the number of vehicles having gone through the perturbation at time $t$. If the problem was unperturbed,  one would have simply $\theta_e(t)\simeq t\tilde V(e)/e$. To understand if the macroscopic model is affected by the perturbation, one is therefore led to investigate the behavior  of $\theta_e(t)/t$ as $t$ tends to infinity. The existence of such a limit is the main difficulty of the work. Indeed, $\theta_e$ does not seem to enjoy any obvious sub- or superadditivity property.  Following \cite{ArCa18} the first step of the proof consists in showing that $\theta_e(t)/t$ is almost deterministic. Namely, we prove that there is a constant $C$ (depending on $e$) such that, for all $\ep\in (0,1]$ and all $t\geq C\ep^{-1}$,  
$$
\P\left[ |\theta_e(t)-\bar \theta_e(t)| \ >\ \ep t\right]\leq Ct^2 \exp\left\{ - \ep^2 t/C\right\}. 
$$
where $\bar \theta_e(t)= \E\left[\theta_e(t)\right]$. 
For this the technique developed in \cite{ArCa18} consists  in showing that the martingale
$$
M_n(t):= \E\left[ \theta_e(t)\ |\ \mathcal F_n\right]-\E\left[\theta_e(t)\right], 
$$
(where $(\mathcal F_n)$ is the filtration generated by $\{Z_{-n}, Z_{-n+1}, \dots\}$) has bounded increments, coincides with $\theta_e(t)$ for $n=[Ct]$ (where $C$ is a large constant) and then use Azuma's inequality. Although we won't study this model in detail later, it is indeed possible to show in this case that $(M_n)$ has bounded increments by using three  facts:
\begin{itemize}
\item First $M_0(t)=0$ since the randomness of $\theta_e(t)$ comes only from $Z_i$ with $i\leq 0$ (indeed, $U_i(t)= ei+\tilde V(e)t$ for $i\geq 0$ is deterministic), 
\item Second one can show that two subsequent vehicles remain at a distance not larger than $e$ before the perturbation, 
\item Third one can prove that a vehicle close to the perturbation will cross it in a controlled time. 
\end{itemize}
The next step consists in establishing that $\bar \theta_e(t)/t$ has indeed a limit as $t\to+\infty$. The difficult issue is to understand how the profile of the  solution $(U_i(t))$ at time $t$ looks like the profile of the $(U_i(0)=ei)$ at time $0$.  For this one looks at how much $(U_{i}(t))_{i\in \Z}$ is far from the unperturbed solution $(ei+\tilde V(e)t)$: namely one looks at the quantity 
$$
M_e(t):=\inf_{i\in Z} U_i(t)-e i-\tilde V(e)t. 
$$
If the traffic is slowed down by the perturbation, this quantity is expected to be nonpositive and to decrease in time. An almost finite speed of propagation argument (Lemma \ref{lem.finitespeedPSJunc}) shows that,  far from the perturbation, the solution is almost given by the steady state and therefore the infimum in $M_e(t)$ (if negative) cannot be achieved by large values of $|i|$. So there is a minimum point $i_0$ for $M_e(t)$. By the envelop theorem one expects that 
$$
0> \frac{d}{dt} M_e(t) =  \frac{d}{dt} U_{i_0}(t)- \tilde V(e) = V_{Z_i}(U_{i_0+1}-U_{i_0}(t), U_{i_0}(t))- \tilde V(e).
$$
By minimality of $i_0$, one also has
\be\label{malensfdIntro}
U_{i}(t)-U_{i_0}(t)\geq e(i-i_0)\qquad \forall i\in \Z.
\ee
The two inequalities above imply that $U_{i_0}(t)\in [-R_0,0]$ because otherwise one would have, as $\tilde V$ is nondecreasing and \eqref{malensfdIntro} holds, 
$$
0> \frac{d}{dt} M_e(t)  =\tilde V(U_{i_0+1}-U_{i_0}(t))- \tilde V(e)\geq \tilde V(e)- \tilde V(e)=0.
$$
The fact that  $U_{i_0}(t)\in [-R_0,0]$ then implies that $i_0$ is close to $-\theta_e(t)$ and thus that 
$$
M_e(t)= U_{i_0}(t)- ei_0-\tilde V(e)t = e\theta_e(t)-\tilde V(e)t +O(1).
$$
On the other hand, by \eqref{malensfdIntro}, one has
$$
U_i(t)\geq U_{i_0}(t)+ e(i-i_0) \geq e(i-i_0-R_0/e)\qquad \forall i\in \Z.
$$
Setting  $i_1= i_0+R_0/e$, we obtain by comparison that the solution $U_i$ at time $t+s$ is above the solution starting from $e(i-i_1)$: 
$$
U^\omega_i(t+s)\geq U^{\tau_{i_1}\omega}_{i-i_1}(s) \qquad \forall i\in \Z,
$$
(the shift in the $\omega$ is due to the fact that one has to shift also the types of the vehicles). By the concentration inequality and the fact that $i_0\approx -\theta_e(t)\approx -\bar \theta_e(t)$, this implies that 
$$
\bar \theta_e(t+s)\geq \bar \theta_e(s)+ \bar \theta_e(t)-C.
$$
Fekete's Lemma then implies that $\bar \theta_e(t)/t$ has a limit and therefore that $\theta_e(t)/t$ has a limit. One can also prove that this limit  gives the value $\bar A$ of the flux limiter. \\ 

The proof in the general case (a bifurcation with several outgoing roads) follows the same lines but is much more involved. Many arguments described above are no longer valid. For instance, it is no longer true that $M_0(t)$ vanishes, because, as the distribution of the vehicles at initial time on the outgoing roads is random, $\theta_e(t)$ actually depends on the behavior of all the vehicles. We overcome this issue by using the approximate finite speed of propagation. Second, the distance between two subsequent vehicle can be arbitrarily large: this is already true at the initial time on the outgoing roads. In addition, because the vehicles have different types, the maximal speed of a leader can be larger than the maximal speed of its follower.  We show however that this distance is controlled by the distance to the first ``slow'' vehicle in front of the leader (Lemma \ref{key.estiUlli-Ui}). The main consequence of this is that $M_n(t)$ cannot have bounded increments (in contrast with \cite{ArCa18} for instance; see however \cite{CaSm20}); one has to rely on more refined concentration inequalities. Finally the presence of a bifurcation (instead of a perturbation) makes the proof of the superadditive inequality much trickier: it actually relies on the delicate construction of a corrector outside the junction (Subsection \ref{subsec:correctoroutside}). 

\subsection*{Organization of the paper} 

In the first part we explain the problem and the notation, introduce the standing assumption and state the main result (Theorem \ref{thm.main}). In the second part we give several facts which are valid for any solution of the system: an estimate of the distance to the next vehicle (Lemma \ref{key.estiUlli-Ui}) and an approximation finite speed of propagation (Lemma \ref{lem.finitespeedPSJunc}). In the third part we study the time function $\theta_e(t)/t$, show its concentration (Theorem \ref{thm:concentrationTOT}) and prove its convergence (Theorem \ref{thm.kebarvartheta}). In the last part, we derive from this the behavior of the solution starting from the flat initial datum (Lemma \ref{lem.limNTOT}), infer definition of the flux limiter $\bar A$ and finally show the homogenization result. 

Throughout the paper, the letter $C$ denote a deterministic constant which may change from line to line and which depends on the data but not on time.

\section{The main result} 

\subsection{Statement of the problem}

We consider the system
\be\label{eq.SystJuncTOT}
\frac{d}{dt} U^\omega_i(t)= V_{Z^\omega_i}(U^\omega_{i+1}(t)-U^\omega_i(t), U^\omega_{\ell^\omega_i}(t)-U^\omega_i(t), U^\omega_i(t))\qquad i\in \Z, \; t\geq 0, 
\ee
where $V:\mathcal Z\times \R^3\to \R_+$ is Lipschitz continuous in the three last variables (uniformly in the $z-$variable), nondecreasing with respect to the two middle ones and bounded by $\|V\|_\infty$. The type of the vehicle $i\in \Z$ is the random variable $Z_i$ in $\mathcal Z$. We assume that $\mathcal Z$ is a finite set and that the $(Z_i)_{i\in \Z}$ are i.i.d. 

There is a single incoming road and $K$ outgoing roads (where $K\in \N\backslash \{0\}$). The junction $\mathcal R$ is given by 
$$
\mathcal R= \bigcup_{k=0}^K \mathcal R^k, \qquad \mathcal R^0= (-\infty,0]\times \{0\}, \; \mathcal R^k= [0, +\infty)\times \{k\} \; \text{for $k\in \{1, \dots, K\}$}. 
$$
We also denote by $ \stackrel{o}{{\mathcal R}}$ the interior of the roads:
$$ \stackrel{o}{{\mathcal R}}= \bigcup_{k=0}^K  \stackrel{o}{ \mathcal R^k}, \qquad  \stackrel{o}{\mathcal R^0}= (-\infty,0)\times \{0\}, \;  \stackrel{o}{\mathcal R^k}= (0, +\infty)\times \{k\} \; \text{for $k\in \{1, \dots, K\}$}. 
$$
The outgoing road chosen by a vehicle is determined by its type $z$ and  is given by the map  $T: \mathcal Z\to \{1,\dots, K\}$. We  set   
$$
T_i^\omega= T(Z_i^\omega)\;{\rm and}\; \ell^\omega_i=\inf\{ j>i, \; T_j^\omega=T_i^\omega\} \qquad \forall i\in \Z.
$$
The vehicle $\ell_i$ is the first vehicle in front of $i$ which takes the same outgoing road as $i$. As the vehicles with the same outgoing road remain ordered, $\ell_i$ does not depend on time. 
For $k\in \{1, \dots, K\}$, let $\pi^k= \P[T_0=k]$ be the probability for a vehicle to take the outgoing road $k$. By convention, we set $\pi^0=1$. {\it Without loss of generality, we assume throughout the paper that $\pi^k\in (0,1]$ for any $k\in \{1, \dots, K\}$}. In this case $\ell_i$ is well-defined a.s. since, $\P-$a.s., $\{j>i, \; T_j^\omega=T_i^\omega\}$ is nonempty. \\

The bifurcation is supposed to be at $x=0$. We assume that the equation is homogeneous outside a transition zone $[-R_0,0]$ near the bifurcation: namely we suppose the existence of $R_0>0$ and of $\tilde V^0,\dots, \tilde V^K:[0,+\infty)\to [0,+\infty)$ such that 
$$
V_z(e_1,e_2, x) =\left\{ \begin{array}{ll}
\tilde V^0_z(e_1) & {\rm if }\; x\leq -R_0\\
\tilde V^{k}_z(e_2) & {\rm if}\; x\geq 0 \;{\rm and} \; T(z)= k. 
\end{array}\right.
$$
The meaning of this assumption is that, if the position $U_i(t)$ of a vehicle $i$ at time $t$ is not in the interval $(-R_0,0)$, the velocity of this vehicle is determined by its type and by the distance to the vehicle in front of it (which has label $i+1$ if $U_i(t)\leq -R_0$ and $\ell_i$ if $U_i(t)\geq 0$). It is only when the vehicle is in the transition zone $[-R_0,0]$ that its velocity also depends possibly on its position and on the vehicles in front;  for instance it may slow down to prepare the change of road.\\

The problem as stated above contains the following particular cases: 
\begin{itemize}
\item Problem on a single road with a perturbation: in this case there is a single outgoing road and  the vehicles solve the simpler system 
$$
\frac{d}{dt} U^\omega_i(t)= V_{Z^\omega_i}(U^\omega_{i+1}(t)-U^\omega_i(t), U^\omega_i(t))\qquad i\in \Z, \; t\geq 0,
$$
where $V_z(e_1,x)=\tilde V_z(e_1)$ does not depend on $x$ if $x\notin [-R_0,0]$.  

\item Problem in which the type is only the choice of the outgoing road: in this case  the system is still of the form \eqref{eq.SystJuncTOT} but one has $\mathcal Z= \{1, \dots, K\}$, $T(z)=z$ and the $\tilde V^k_z$ do not depend on $z$. There is still a transition zone $[-R_0,0]$ on which the velocity of the vehicle $i$ passes from a dependence to the distance to the vehicle right in front (with label $i+1$) to the distance to the vehicle going on the same outgoing road (with label $\ell_i$). 
\end{itemize} 

If the proof of homogenization would be somewhat simpler in the first case (as described in the introduction), the second case contains already (almost) all the difficulties we will meet below. \\

The goal of the paper is to understand the behavior of the solution on large scale of time and space: namely, the behavior of $(x,t)\to \ep U_{[x/\ep]}(t/\ep)$, (where $[y]$ is the integer part of the real number $y$). \\

{\bf Notation:} Throughout the paper, $\Omega:= \mathcal Z^{\Z}$ is endowed with the product $\sigma-$field $\mathcal F$ and with the product probability measure $\P$. We denote by $\tau:\Z\times \Omega\to \Omega$ the shift map defined by
$$
(\tau_n\omega)_i= \omega_{i+n}, \qquad \forall \omega =(\omega_i)_{i\in \Z}\in \Omega, \; \forall n\in \Z.
$$
We set $Z^\omega_i= \omega_i$ for $\omega=(\omega_i) \in \Omega$ and $i\in \Z$. As $\P$ is the product measure on $\Omega$, this means  that the $(Z_i)_i\in \Z$ are i.i.d. We note for later use that $Z^{\tau_n\omega}_{i-n}= Z^\omega_i$ while $\ell^{\tau_n\omega}_i = \ell^\omega_{i+n}-n$ for any $n, i\in \Z$. 

For $x,y\in \R$, we denote by $[x]$ the integer part of $x$, set $(x)_+= \max\{0, x\}$, $(x)_-=\max\{0, -x\}$, $x\wedge y=\min\{x,y\}$. If $E\in \mathcal F$, then $E^c= \Omega\backslash E$. 

\subsection{Assumptions} 

Let us state our standing assumptions on  $V_z$: 
\begin{itemize}
\item[$(H_1)$] For any $z\in {\mathcal Z}$, the map $(e_1,e_2,x)\to V_z(e_1,e_2,x)$ is Lipschitz continuous from $\R_+^2\times \R$ to $\R_+$ and nondecreasing with respect to the first two variables;
\item[$(H_2)$] There exists $e_{\max}>\Delta_{\min}>0$ and $0<R_2<R_1<R_0$, with $R_0>e_{\max}$, such that for any $z\in \mathcal Z$, for any $(e_1,e_2,x)\in \R_+^2\times \R$, 
\begin{itemize}
\item[(i)] $V_z(e_1,e_2,x)=0$ if ($e_1\leq \Delta_{\min}$ and $x\leq -R_2$) or if ($e_2\leq \Delta_{\min}$ and $x\geq -R_1$), 
\item[(ii)]  $V_z(e,e_2,x)= V_z(e_{\max},e_2,x)$ and $V_z(e_1,e,x)= V_z(e_1,e_{\max},x)$ if $e\geq e_{\max}$;
\end{itemize}
\item[$(H_3)$] There exists $\tilde V^0,\dots, \tilde V^K:[0,+\infty)\to [0,+\infty)$ such that 
$$
V_z(e_1,e_2, x) =\left\{ \begin{array}{ll}
\tilde V^0_z(e_1) & {\rm if }\; x\leq -R_0 \\
\tilde V^{k}_z(e_2) & {\rm if}\; x\geq 0 \;{\rm and} \; T(z)= k. 
\end{array}\right.
$$
\item[$(H_4)$] For any $z\in {\mathcal Z}$ and any $k\in \{0, \dots, K\}$,  there exists $h^k_{\max,z}\in (\Delta_{\min}, e_{\max}]$ such that  $p\to \tilde V^k_z(p)$ is increasing  and concave
in $[\Delta_{\min},h^k_{\max,z}]$ and constant on $[h^k_{\max,z},+\infty)$; 
\item[$(H_5)$] There exists $\kappa>0$ such that,  for any $z\in \mathcal Z$,
\begin{itemize}
\item[(i)] $V_z(e_1,e_2,x)= \tilde V^0_z(e_1)$ if $e_1\leq e_2$, $x\leq -R_2$ and $V_z(e_1,e_2,x)\leq \kappa$, 
\item[(ii)] $\partial_x V_z(e_1,e_2,x) \geq 0$ if $x\in [-R_1,0]$ and $V_z(e_1,e_2,x)\leq \kappa$, 
\item[(iii)] $V_z(e_1,e_2,x)  >0$ if $e_1\wedge e_2 >\Delta_{\min}$.
\end{itemize}
%
\end{itemize}
Note that, by assumption ($H_2$), we have $\tilde V^k_z(e)= 0$ if $e\leq \Delta_{\min}$ and $\tilde V^k_z(e)=\tilde V^k_z(e_{\max})$ if $e\geq e_{\max}$. \\
 
Some comment on the assumption are now in order. The assumption that $\mathcal Z$ is finite is useful throughout the proofs but could be relaxed; as this would introduce an extra layer of technicalities, we prefer to keep this condition for simplicity. Assumption (${ H_2}$) is standard in the analysis of leader-follower models. The existence of $\Delta_{\min}$ prevents  vehicles to collide (and could correspond to the size of the smallest vehicle for instance). The existence of $e_{\max}$ just says that the vehicles do not take into account the vehicles too far ahead. Assumption $R_0>e_{\max}$ can be made without loss of generality. Assumption ($H_3$) means that the roads are homogeneous outside the bifurcation. This formalizes the fact that we concentrate here on a single bifurcation. Assumption ($H_4$) is also standard in the analysis of  leader-follower models. There is one restriction though: the minimal distance such that the velocity has to be positive (i.e., here $\Delta_{\min}$) has to be the same for all vehicle and is not allowed to depend on the type of the vehicle; this restriction is related to the last (and technical) assumption ($H_5$). Assumption ($H_5$) has to do with the behavior of vehicles with slow velocity on the junction and ensures that  the vehicles starting with a flat initial condition $(U_i(0):=e^ki)_{i\in \Z}$ (where $k=0$ if $i\leq 0$ and $k=T_i$ if $i\geq 0$ and $e^k$ is such that  $H^k(-1/e^k)= \min_p H^k(p)$) have a velocity bounded below by a positive constant independent of time and position (Lemma \ref{lem.minvke}). This last property is instrumental throughout the proofs. Assumption (${H_5}$), without being unrealistic, is a little restrictive, but we do not know if it is possible to relax it. We illustrate these assumptions by an example. 
\bigskip
 
\noindent {\bf An example.} Let $0< r_3<r_2<r_1<r_0$.
Fix three smooth and nonincreasing maps $\xi_i:\R\to [0,1]$ such that $\xi_i(x)=1$ for $x\leq -r_{i-1}$, $\xi_i(x)= 0$ for $x\geq -r_i$ ($i=1, 2, 3$). Fix also $\bar s>0$ and, for any $z\in \mathcal Z$ and $k\in\{0, \dots, K\}$, $W^k_z:[0, +\infty)\to [0,+\infty)$  a smooth and nondecreasing map with $W_z(0)=0$, $W_z(s)=W_z(\bar s)$ for any $s\geq \bar s$; we also assume that $W^k$ is increasing on $[0,\bar s]$. Then the map defined by $e_1,e_2\geq 0$, $x\in \R$, $z\in \mathcal Z$ and $k=T(z)$ by
\begin{align*}
V_z(e_1,e_2,x)= & \xi_1(x) W^0_z\bigl((e_1-\Delta_{\min})_+\bigr)
+(1-\xi_1(x)) \xi_2(x)W^0_z\bigl( (e_1\wedge e_2-\Delta_{\min})_+\bigr) \\
& + (1-\xi_2(x)) 
W^k_z \bigl( \xi_3(x)(e_1\wedge e_2-\Delta_{\min})_++(1-\xi_3(x))(e_2-\Delta_{\min})_+\bigr)
\end{align*}
satisfies the required conditions with $R_0=r_0$, $R_1=r_2$, $R_2=r_3$ and $\tilde V^k_z(p)= W^k_z((p-\Delta_{\min})_+)$. The complicated expression of $V$ expresses the transition between a configuration in which the vehicle drives at speed $\tilde V^0_z$ and considers only the vehicle in front on road $0$ to a configuration in which it drives at speed $\tilde V^k_z$ and considers only the vehicle in front on road $k$. Namely, before $-r_0$, the vehicle, driving at speed $\tilde V^0_z$, takes into account only the next vehicle on road $0$. Between $-r_0$ and $-r_1$, the vehicle, still driving at speed $\tilde V^0_z$, slows down in order to take also into account the next vehicle on the road $k$. Between $-r_1$ and $-r_2$, the vehicle adapts its speed to road $k$ (passes from velocity $\tilde V^0_z$ to $\tilde V^k_z$). Between $-r_2$ and $-r_3$, the vehicle, driving at speed $\tilde V^k_z$, looses track of the vehicle which was in front on road $0$ and only considers the vehicle in front on road $k$ after $0$.

\subsection{The homogenized velocities and Hamiltonians.}

Let $V^k_{\max,z}:=  \tilde V^k_z(h^k_{\max,z})$. Under assumptions (H1)---(H4), the map $ \tilde V^k_z:  [\Delta_{\min},h^k_{\max,z}]\to [0, V^k_{\max,z}]$ is increasing and continuous for any $z\in {\mathcal Z}$  and any $k\in \{0, \dots, K\}$. We denote by $( \tilde V^k_z)^{-1}$ its inverse. \\

 Let
\be\label{def.barvk}
\bar v^0:= \inf_{z\in \mathcal Z} \tilde V^0_z(e_{\max}), \qquad \bar v^k:= \inf_{z\in \mathcal Z, \; T(z)= k} \tilde V^k_z(e_{\max}).
\ee
We recall from \cite{CaFo} the definition of the homogenized velocities $\bar V^k$ and homogenized Hamiltonians:  $\bar  V^0$ is the inverse of the continuous increasing map defined on $(0,\bar v^0)$ by $v\to  \E\left[ (\tilde V^0_{Z_0})^{-1}(v)\right]$. We note that  $\bar  V^0$ is defined on $(\Delta_{\min}, \E\left[ (\tilde V^0_{Z_0})^{-1}(\bar v^0)\right])$.  We extend  it for any $e\in [0,\Delta_{\min}]$ by $\bar  V^0(e)=0$ and for $e\geq \E\left[ (\tilde V^0_{Z_0})^{-1}(\bar v^0)\right]$ by $\bar  V^0(e)=\bar v^0$. In the same way we define $\bar  V^k$ as the inverse of the continuous increasing map defined on $(0,\bar v^k)$ by $v\to  \E\left[ (\tilde V^k_{Z_0})^{-1}(v)\ |\ T_0=k\right]$. It defines  $\bar  V^k$ on $(\Delta_{\min}, \E\left[ (\tilde V^k_{Z_0})^{-1}(\bar v^0)\; |\; T_0=k\right])$.  We extend it  for any $e\in [0,\Delta_{\min}]$ by $\bar  V^k(e)=0$ and for any $e\geq  \E\left[ (\tilde V^k_{Z_0})^{-1}(\bar v^0)\; |\; T_0=k\right]$ by $\bar  V^k(e)=\bar v^k$. The maps $\bar  V^k$ (for $k\in \{0, \dots, K\}$) are continuous and bounded on $[0,+\infty)$. 

We set, for any $k\in \{1, \dots K\}$,  
$$
H^0(p)= p \bar V^0(-1/p), \; H^k(p)= p \bar V^k(-1/(\pi^k p)), \; p\in (-\infty,0), \qquad H^0(p)= H^k(p)=0, \; \forall p\geq0
$$
and
\be\label{defA0}
A_0=\max_{k\in\{0, \dots, K\}}   \min_{p\in \R}  H^k(p).  
\ee
By Assumption $(H4)$, for $i\in \{0, \dots, K\}$,  $H^k$ is convex in $(-1/(\pi^k\Delta_{\min}),0)$ (see Lemma \ref{lem:convexity}).\\

A last set of notation will be needed in order to define the condition at the junction: for $k\in \{0, \dots, K\}$, we denote by 
$H^{k,+}$ (resp. $H^{k,-}$) the largest nondecreasing (resp. nonincreasing) map below $H^k$.  

\subsection{The main result} 

The main result of the paper states that the system  homogenizes: let $(U^{0,\ep}_i)_{i\in \Z}$  be a deterministic family of initial conditions satisfying the compatibility condition: for any $i\in \Z$, 
\be\label{compatibilitycond}
U^{0,\ep}_{i+1}\geq U^{0,\ep}_i+\Delta_{\min} \; {\rm if} \; U^{0,\ep}_{i+1}\leq -R_2 \qquad {\rm and}\qquad U^{0,\ep}_{\ell_i}\geq U^{0,\ep}_i +\Delta_{\min}\; \mbox{\rm for any $i\in \Z$}. 
\ee 
Up to relabel the indices, we also assume that $U^\ep_{i,0}\le0$  iff $i\le0$. 
Let $U^\ep$ be the  solution of \eqref{eq.SystJuncTOT} with initial condition $(U^{0,\ep}_i)_{i\in \Z}$. Let us define, for $k\in \{1, \dots, K\}$ and $(x,t)\in \R\times [0,+\infty)$,  
\be\label{def.NJuncTOTbis}
N^{\ep,\omega}(x,k,t)=  \sum_{i\in \Z, \ i\leq 0, \  T_i=k}  \delta_{U^\ep_{i}(t)}( (x,+\infty))- \sum_{i\in \Z,\ i>0, \ T_i=k} \delta_{U^\ep_{i}(t)}((-\infty,x]). 
\ee
and set for $x\leq0$
$$
N^{\ep,\omega}(x,0,t)=  \sum_{i\in \Z,\ i\le 0}  \delta_{U^\ep_{i}(t)}( (x,+\infty)).
$$
Then we introduce the scaled quantities 
\be\label{defnuep}
\nu^{\ep,\omega}(x,k,t)= \left\{ \begin{array}{ll} \ep (\pi^k)^{-1} N^{\ep, \omega}(x/\ep,k,t/\ep)& \forall (x,k,t)\in \R\times \{1, \dots, K\}\times [0,+\infty)\\
\ep N^{\ep,\omega}(x/\ep,0,t/\ep)& \forall (x,t)\in  (-\infty,0]\times [0,+\infty)\end{array}\right. 
\ee

\begin{Theorem}\label{thm.main} There is a set $\Omega_0$ of full probability and a constant $\bar A<0$ (the flux limiter) such that, if $(U^{0,\ep}_i)_{i\in \Z}$ is a family of initial conditions such that the associated scaled function $\nu^\ep(\cdot, \cdot, 0)$ defined by \eqref{defnuep} (with $t=0$) converges locally uniformly in $\mathcal R$ to a Lipschitz continuous map  $\nu_0:\mathcal R\to \R$, then, for any $\omega\in \Omega_0$, $\nu^\ep$ converges locally uniformly in $\mathcal R\times [0,+\infty)$ to the unique continuous viscosity solution of the Hamilton-Jacobi equation with flux limiter $\bar A$: 
\be\label{eq.limitsystem}
\left\{\begin{array}{l}
\ds \partial_t \nu + H(\partial_x \nu)=0\qquad {\rm in }\;  \stackrel{o}{{\mathcal R}}\times (0,+\infty) \\
\ds \partial_t \nu +\max\{ \bar A,  H^{0,+}(\partial_0 \nu), H^{1,-}(\partial_1 \nu), \dots, H^{K,-}(\partial_K \nu))\}=0\; {\rm at}\; x=0\\ 
\ds \nu(x,k,0)= \nu_0(x,k) \qquad {\rm in}\; \mathcal R.
\end{array}\right.
\ee
\end{Theorem}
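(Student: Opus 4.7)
The proof follows the standard half-relaxed limits scheme for Hamilton--Jacobi equations on junctions, combined with the perturbed test function method (as in \cite{IM, BaBrChIm18}). The flux limiter $\bar A$ is defined on a set $\Omega_0$ of full probability through the behavior of the flat profile solution: Theorems \ref{thm:concentrationTOT}--\ref{thm.kebarvartheta} and Lemma \ref{lem.limNTOT} yield an a.s. deterministic limit of $\theta_e(t)/t$ as $t\to+\infty$, which in turn determines $\bar A$.

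First I would derive local equi-Lipschitz estimates on $\nu^\ep$ in $(x,t)$: the bound $\|V\|_\infty$ gives the time-Lipschitz estimate, while the compatibility condition \eqref{compatibilitycond} combined with Lemma \ref{key.estiUlli-Ui} controls the spacing between consecutive vehicles and hence the spatial Lipschitz constant of $\nu^\ep$; Lemma \ref{lem.finitespeedPSJunc} ensures that the trace at $t=0$ is $\nu_0$. The upper and lower half-relaxed limits of $\nu^\ep$, denoted $\bar\nu$ and $\underline\nu$, are then Lipschitz continuous on $\mathcal R\times[0,+\infty)$, agree with $\nu_0$ at $t=0$, and satisfy $\underline\nu\leq\bar\nu$.

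Next I would show that $\bar\nu$ (resp.\ $\underline\nu$) is a subsolution (resp.\ supersolution) of \eqref{eq.limitsystem}. On each open branch $\stackrel{o}{\mathcal R^k}\times(0,+\infty)$, Assumption $(H_3)$ reduces the dynamics to the homogeneous single-road model, and the standard perturbed test function argument built from the correctors of Subsection \ref{subsec:correctoroutside} yields $\partial_t\nu + H^k(\partial_x\nu)=0$ in the viscosity sense. At $x=0$, the subsolution inequality combines the branch equations, the monotonicity $H^{0,+}\leq H^0$ and $H^{k,-}\leq H^k$, and the bound $\bar A\leq 0$ established in Section 4. The supersolution inequality is the main obstacle: if a smooth test function $\varphi$ touches $\underline\nu$ from below at $(0,k,t_0)$ with
$$
\partial_t\varphi(t_0) + \max\bigl\{\bar A,\, H^{0,+}(\partial_0\varphi),\, H^{1,-}(\partial_1\varphi),\,\dots,\, H^{K,-}(\partial_K\varphi)\bigr\} < 0,
$$
then I would derive a contradiction by comparing $\ep U^\ep_i(t/\ep)$ near $(0,t_0)$ to the shifted flat profile with slopes $(e^k)$. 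Lemma \ref{lem.limNTOT} together with the superadditive inequality behind Theorem \ref{thm.kebarvartheta} force the rate at which vehicles cross $x=0$ in an order-one time window to exceed what the very definition of $\bar A$ allows, giving the desired contradiction.

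Once the sub/supersolution properties are in hand, the comparison theorem of \cite{IM} for flux-limited Hamilton--Jacobi equations on junctions gives $\bar\nu\leq\underline\nu$, hence $\bar\nu=\underline\nu=\nu$ and the convergence of $\nu^\ep$ to $\nu$ is locally uniform on $\mathcal R\times[0,+\infty)$ for every $\omega\in\Omega_0$. The supersolution inequality at the junction is the delicate step: it genuinely exploits the two-sided control of the microscopic flow through the bifurcation provided by the superadditive estimate, together with the envelope-theorem argument sketched in the introduction, and this is also where the corrector construction outside the junction plays a crucial role in matching the microscopic profile to $\varphi$.
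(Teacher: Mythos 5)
Your overall architecture matches the paper's: half-relaxed limits of $\nu^\ep$, the branch HJ equations by single-road homogenization, the junction condition, and the comparison theorem of \cite{IM}; and you correctly identify that $\bar A$ comes from the flat-profile quantity $\theta_e(t)/t$, controlled by the concentration inequality and the superadditive estimate.

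There is, however, a genuine gap in your treatment of the \emph{sub}solution inequality at the junction. You write that it ``combines the branch equations, the monotonicity $H^{0,+}\leq H^0$ and $H^{k,-}\leq H^k$, and the bound $\bar A\leq 0$.'' That argument only yields a subsolution inequality with flux limiter $A_0 = \max_k \min_p H^k(p)$: indeed \cite[Theorem 2.7]{IM} says that a continuous subsolution of the HJ equation on the branches is automatically a subsolution at the junction with the \emph{minimal} flux limiter $A_0$. But the statement is about $\bar A$, which is in general strictly larger than $A_0$---this is precisely the interesting case where the junction genuinely slows the traffic. When $\bar A>A_0$, the constraint $\partial_t \phi + \bar A\le 0$ is a nontrivial upper bound on the flow rate through $x=0$ and does not follow from elementary monotonicity. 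Proving it requires the same flat-profile comparison you reserve for the supersolution: one must show that near $(0,t_0)$ the microscopic solution stays below a suitable translate of $u_e$, which forces the limit flux to be at most $-\bar A=\bar\vartheta_e$. The paper does this in Lemma \ref{lem:sub}, fully parallel to Lemma \ref{lem:super}; both rest on the comparison-under-scaling Lemma \ref{lem.CompaLimJunc} (whose proof needs the ergodic argument in Lemma \ref{lemma.bigprobaJunc} to find admissible shifts of $\omega$ and bring $u_e^{\ep,\tau_m\omega}$ close to $u_e$), and this lemma is the structural step your outline omits. Two smaller inaccuracies: the corrector $(W_i)$ of Subsection \ref{subsec:correctoroutside} enters the superadditive inequality for $\bar\theta_e$, not the branch homogenization, which the paper handles by citing \cite{CaFo}; and the spatial Lipschitz bound on $\nu^\ep$ comes from the lower bound $\Delta_{\min}$ on interdistances (Lemma \ref{lem.CompDeBase}), whereas Lemma \ref{key.estiUlli-Ui} gives the \emph{upper} bound on interdistances used in the concentration proof.
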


Let us recall the notion of viscosity solution of \eqref{eq.limitsystem}. For this we define the set of test functions $C^1(\mathcal R\times (0,+\infty))$ as the set of continuous maps $\phi:\mathcal R\times (0,+\infty)\to \R$ such that the restriction to each branch of $\mathcal R$ is of class $C^1$ on this branch and $\partial_t \phi$ exists and is continuous everywhere. We denote by $\partial_k\phi(0,t)$ its derivative at $x=0$ on the branch $k$ (namely, $\partial_k\phi(0,t)= \partial_x\phi(0,k,t)$, which is well-defined by continuity). 

We say that a  map $\nu$ is a viscosity solution of \eqref{eq.limitsystem} if $\nu:\mathcal R\times [0,+\infty)\to \R$ is  uniformly continuous, and if, for any test function $\phi\in C^1(\mathcal R\times (0,+\infty))$ such that $\nu-\phi$ has a local maximum (respectively minimum) at $(\bar x,\bar k, \bar t)\in \mathcal R\times (0,+\infty)$  one has \begin{align*}
& \partial_t \phi(\bar x,\bar k, \bar t) + H^{\bar k}(\partial_x \phi(\bar x, \bar k, \bar t))\leq 0 & {\rm if }\; \bar x\neq 0 & \qquad \mbox{\rm (resp. $\geq 0$)}\\ 
& \partial_t \phi(0, \bar t) +\max\{ \bar A,  H^{0,+}(\partial_0 \phi(0,\bar t)), H^{1,-}(\partial_1  \phi(0,\bar t)), \dots, H^{K,-}(\partial_K  \phi(0,\bar t)))\}\leq 0& {\rm if }\; \bar x=0  &\qquad  \mbox{\rm (resp. $\geq 0$)}
\end{align*}

\section{Properties of the solution} 

In this section we investigate two important properties of the solution: the distance to the next vehicle and the finite speed of propagation. \\

Throughout the paper we need to define the solution $(U_i)$ of \eqref{eq.SystJuncTOT} for a finite number of indices, namely for $i\in \{i_1, \dots, i_0\}$ where $i_0,i_1\in \Z$, $i_1<i_0$.  We say that $(U_i)_{i\in \{i_1, \dots, i_0\}}$ is a subsolution (respectively a supersolution) of \eqref{eq.SystJuncTOT} on a time interval $[0,T]$ if, for any $i\in \{i_1, \dots, i_0\}$, the map $t\to U_i(t)$ is nondecreasing, Lipschitz continuous with a Lipschitz constant not larger than $\|V\|_\infty$ and if, for any $i\in \{i_1, \dots, i_0\}$ with $\ell_i\leq i_0$,
$$
\frac{d}{dt} U^\omega_i(t)\leq V_{Z^\omega_i}(U^\omega_{i+1}(t)-U^\omega_i(t), U^\omega_{\ell^\omega_i}(t)-U^\omega_i(t), U^\omega_i(t)) \qquad \forall t\in[0,T]
$$
(resp. $$
\frac{d}{dt} U^\omega_i(t)\geq V_{Z^\omega_i}(U^\omega_{i+1}(t)-U^\omega_i(t), U^\omega_{\ell^\omega_i}(t)-U^\omega_i(t), U^\omega_i(t)) \qquad \forall t\in[0,T].)
$$

\subsection{Basic properties}

Given an initial condition $(U^0_i)_{i\in \Z}$ satisfying the compatibility condition \eqref{compatibilitycond}, there exists a unique solution $U=(U_i)_{i\in \Z}$ to \eqref{eq.SystJuncTOT}. Moreover we have the following basic comparison  principle: if  $(U_i)_{i\in \Z}$ and $(\tilde U_i)_{i\in \Z}$ are two solutions of \eqref{eq.SystJuncTOT} such that $U_i(0)\leq \tilde U_i(0)$ for any $i\in \Z$, then $U_i(t)\leq \tilde U_i(t)$ for any $i\in \Z$ and any $t\geq 0$. These results are standard and are easy consequences of Lemma \ref{lem.finitespeedPSJunc} below. 

\begin{Lemma}[Basic ordering] \label{lem.CompDeBase} Let $i_0, i_1\in \Z$ with $i_1<i_0$,  $T>0$ and $(U_i)_{i\in \{i_1, \dots,  i_0\}}$ be a solution of \eqref{eq.SystJuncTOT} on the time interval $[0,T]$ with $(U_i(0))$ satisfying the compatibility condition \eqref{compatibilitycond}. We have $U_{i}(t)\leq U_{j}(t)-\Delta_{\min}$ for any $i_1\leq i< j\leq i_0$  such that $\ell_i\leq i_0$ and $\ell_j\leq i_0$ and any $t\in [0,T]$ with $U_{j}(t)\leq -R_2$ or $U_i(t)\leq -R_2$. In addition, for any $i\in \{i_1,\dots, i_0\}$ with $\ell_i\leq i_0$ and any $t\in [0,T]$ , $U_{i}(t)\leq U_{\ell_i}(t)-\Delta_{\min}$. \\
\end{Lemma}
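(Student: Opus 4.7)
The plan is to argue by contradiction at a first failure time. Both inequalities of the lemma hold at $t=0$ by the compatibility condition \eqref{compatibilitycond}, and since each $U_i$ is Lipschitz in $t$ (with constant $\|V\|_\infty$) and the index set is finite, there is a smallest time $t_0 \in (0,T]$ at which one of the two inequalities becomes saturated. The heart of the proof is that assumption $(H_2)(i)$ forces the follower's velocity to vanish at any saturation, so the gap cannot actually decrease across $t_0$.

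For the statement $U_i \leq U_{\ell_i} - \Delta_{\min}$, suppose at $t_0$ a pair $(i,\ell_i)$ satisfies $U_{\ell_i}(t_0) - U_i(t_0) = \Delta_{\min}$. If $U_i(t_0) \geq -R_1$, the second clause of $(H_2)(i)$ applied with $e_2 = \Delta_{\min}$ yields $V_{Z_i}(\cdot,\Delta_{\min},U_i(t_0))=0$, hence $\dot U_i(t_0)=0$, while $\dot U_{\ell_i}(t_0)\geq 0$ since $V\geq 0$. A short integration on a right-neighborhood $[t_0,t_0+\eta]$---using continuity to keep $U_i \geq -R_1$ and re-applying $(H_2)(i)$ for as long as the gap remains at $\Delta_{\min}$---forces $U_{\ell_i}-U_i\geq \Delta_{\min}$ on $[t_0,t_0+\eta]$, contradicting the minimality of $t_0$.

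For the statement $U_i \leq U_j - \Delta_{\min}$ under the side condition $U_i(t)\leq -R_2$ or $U_j(t)\leq -R_2$, I would first reduce to the consecutive case $j=i+1$ by telescoping. The side condition $U_j(t_0)\leq -R_2<0$, together with the convention $U^{0,\ep}_m\leq 0 \Leftrightarrow m\leq 0$ and the monotonicity $U_m(t_0)\geq U_m(0)$, forces $j\leq 0$; then all $m\in[i,j]$ are $\leq 0$ and---once the consecutive form is available---also satisfy $U_m(t_0)\leq -R_2$, so telescoping yields $U_j-U_i\geq (j-i)\Delta_{\min}$. For the consecutive case, the saturation $U_{i+1}(t_0)-U_i(t_0)=\Delta_{\min}$ combined with the side condition forces $U_i(t_0)\leq -R_2$ (either directly, or because $U_{i+1}(t_0)\leq -R_2$ forces $U_i(t_0)=U_{i+1}(t_0)-\Delta_{\min}<-R_2$), so the first clause of $(H_2)(i)$ with $e_1=\Delta_{\min}$ yields $\dot U_i(t_0)=0$ and the same integration argument delivers the contradiction.

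Finally, the remaining sub-case of the $\ell_i$-statement, namely a saturated pair $(i,\ell_i)$ with $U_i(t_0)<-R_1\leq -R_2$, falls inside the regime of the first statement and is handled by re-running the contradiction on the consecutive pair $(i,i+1)$ and exploiting $\ell_i\geq i+1$. The principal obstacle is the transition zone $[-R_1,-R_2]$, where neither clause of $(H_2)(i)$ applies instantly to both arguments $e_1$ and $e_2$ of $V_{Z_i}$; running the two contradictions in tandem at the common saturation time $t_0$, together with the physical ordering enforced by the initialization convention, is what resolves it.
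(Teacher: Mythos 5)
Your overall plan is the same as the paper's: argue by contradiction around the first failure, use $(H_2)(i)$ to show the follower's velocity vanishes there, and treat the consecutive claim before the $\ell_i$ claim, closing the second consecutive sub-case with a bootstrap. The one technical difference is that the paper packages the ``first failure time'' as the minimum of the penalized map $t \mapsto (\text{gap})(t) + \ep/(\tau - t)$, so that an interior critical time with an exact stationarity identity is guaranteed, whereas you sketch a forward ``short integration'' from the saturation time; these are morally equivalent.

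There is, however, a genuine gap in your handling of the $\ell_i$-statement when $U_i(t_0) < -R_1$. You claim this case ``falls inside the regime of the first statement'' and is ``handled by re-running the contradiction on the consecutive pair $(i,i+1)$'', but that does not work when $\ell_i > i+1$: to make $V_{Z_i}$ vanish via the first clause of $(H_2)(i)$ you would need $e_1 = U_{i+1}(t_0) - U_i(t_0) \leq \Delta_{\min}$, and the first statement applied to $(i,i+1)$ (which holds, since $U_i(t_0) \leq -R_2$) gives exactly the opposite bound $U_{i+1}(t_0) - U_i(t_0) \geq \Delta_{\min}$, generically strict; moreover $(i,i+1)$ says nothing about $U_{\ell_i}$ when $\ell_i > i+1$. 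You also have the geometry of $(H_2)(i)$ reversed: since $0 < R_2 < R_1$, the interval $[-R_1,-R_2]$ is where \emph{both} clauses of $(H_2)(i)$ are simultaneously available, while the zone that actually causes trouble for the $\ell_i$-statement is $x < -R_1$, where the $e_2$-clause disappears. The paper's one-line resolution (``$U_i(\bar t) \geq -R_2 > -R_1$'') is: apply the already-established first statement to the pair $(i,\ell_i)$ \emph{itself} at any time where the gap $U_{\ell_i}-U_i$ is strictly below $\Delta_{\min}$; if $U_i \leq -R_2$ there, the first statement would force the gap to be $\geq \Delta_{\min}$, a contradiction, so in fact $U_i > -R_2 > -R_1$ at every such time and the $e_2$-clause is always available. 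The right move is thus not to case-split on the value of $U_i$ at the saturation time, but to observe that the dangerous regime is vacuous at all times where the gap is strictly below $\Delta_{\min}$.

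A minor remark: the convention $U^{0,\ep}_m \leq 0 \Leftrightarrow m \leq 0$ is a normalization imposed just before Theorem \ref{thm.main} and is not a hypothesis of Lemma \ref{lem.CompDeBase}; it is also unnecessary for your telescoping, since once the consecutive statement is in place, $U_j(t) \leq -R_2$ gives $U_{j-1}(t) \leq U_j(t) - \Delta_{\min} < -R_2$ and so on inductively (this is Remark \ref{rem.lem.distminnext}).
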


Recall that $R_2$ and $\Delta_{\min}$ are defined in Assumption ($H_2$). Note that, after the junction, the order is not necessarily preserved among the vehicles if they have not the same type. 

\begin{Remark}\label{rem.lem.distminnext}{\rm 
We note for later use that, if $U_j(t)\leq -R_2$ and $i<j$, then $U_{i}(t)\leq U_j(t)-(j-i)\Delta_{\min}$. 
}
\end{Remark}

\begin{proof} To prove the first claim, it is enough to check that $U_{i-1}(t)\leq U_i(t)-\Delta_{\min}$ for $t\in [0,T]$ if $U_{i-1}(t)\leq -R_2$ or $U_i(t)\leq-R_2$. We start with the first case. Assume by contradiction that there exists $i\in \{i_1+1,\dots, i_0\}$ and a time $s\in[0,T]$ such that $\delta:=\Delta_{\min}-(U_{i}(s)-U_{i-1}(s))>0$ and $U_{i-1}(s)\leq -R_2$. Let $\tau>0$ be the largest time such that $U_{i}(t)-U_{i-1}(t)\geq \Delta_{\min}-\delta$ on $[0,\tau)$. Note that $\tau\leq s$, $U_{i}(\tau)-U_{i-1}(\tau )= \Delta_{\min}-\delta$ and  $U_{i-1}(\tau)\leq -R_2$. Then, for $\ep>0$ small enough, the map $t\to U_{i}(t)-U_{i-1}(t)+\ep/(\tau-t)$ has a minimum on $[0, \tau)$, which is less than $\Delta_{\min}$ and reached at a time $\bar t\in(0,\tau)$. By optimality condition we have 
\begin{align*}
& V_{Z_{i}}(U_{i+1}(\bar t)-U_{i}(\bar t),U_{\ell_{i}}(\bar t)-U_{i}(\bar t), U_{i}(\bar t))\\
& \qquad - V_{Z_{i-1}}(U_{i}(\bar t)-U_{i-1}(\bar t), U_{\ell_{i-1}}(\bar t)-U_{i-1}(\bar t), U_{i-1}(\bar t)) + \ep/(\tau-\bar t)^2=0, 
\end{align*}
where by Assumption ($H_2$) the second term vanishes because $U_{i-1}(\bar t)\leq -R_2$ and $U_{i}(\bar t)-U_{i-1}(\bar t)\leq \Delta_{\min}$.  So there is a contradiction and  we have proved that $U_{i-1}(t)\leq U_i(t)$ if $U_{i-1}(t)\leq -R_2$. \\

Let us now check that  $U_{i-1}(t)+\Delta_{\min}\leq U_i(t)$ if  $U_i(t)\leq-R_2$. Let $\tau$ be the first time (if any) such that $U_{i-1}(\tau)= -R_2$. We have just proved that $U_{i-1}(s)+\Delta_{\min}\leq U_i(s)$ if $s\in[0,\tau]$. Thus $U_i(\tau)>-R_2$, which implies that $t<\tau$ and proves the claim. \\ 

The proof of the second statement is analogous: if there is $i\in \{i_1,\dots, i_0\}$ with $\ell_i\leq i_0$, a time $s\in [0,T]$ and $\delta>0$ such that  $U_{i}(s)> U_{\ell_i}(s)-\Delta_{\min}+\delta$, then we look at the largest time $\tau$ such that $U_{i}(t)< U_{\ell_i}(t)-\Delta_{\min}+\delta$ on $[0,\tau]$. We have $U_{i}(\tau)= U_{\ell_i}(\tau)-\Delta_{\min}+\delta$ and, by the previous step, we know that $U_{i}(\tau)>-R_2$. Let $\bar t\in (0,\tau)$ be a minimum point of the map $t\to U_{\ell_i}(t)-U_i(t)+\ep/(\tau-t)$ on $[0,\tau)$. For $\ep>0$ small enough, this minimum is less than $\Delta_{\min}$. Then by optimality we have 
\begin{align*}
& V_{Z_{\ell_i}}(U_{\ell_i+1}(\bar t)-U_{\ell_i}(\bar t),U_{\ell_{\ell_i}}(\bar t)-U_{\ell_i}(\bar t), U_{\ell_i}(\bar t))\\
& \qquad - V_{Z_i}(U_{i+1}(\bar t)-U_{i}(\bar t), U_{\ell_{i}}(\bar t)-U_{i}(\bar t), U_{i}(\bar t)) + \ep/(\tau-\bar t)^2=0, 
\end{align*}
where the second term vanishes by assumption (H2) because $U_{i}(\bar t)\geq -R_2>-R_1$ and $U_{\ell_i}(\bar t)-U_{i}(\bar t)\leq \Delta_{\min}$.  So there is a contradiction and  we have proved that $U_{i}(t)\leq U_{\ell_i}(t)-\Delta_{\min}$.
\end{proof}

\subsection{The maximal distance to the next vehicle}\label{subsec.distVehicle}

In this part we investigate the maximal distance to the next vehicle. This question will play an important role in the proof of Theorem \ref{thm:concentrationTOT}. Here we work in a deterministic setting: we fix a deterministic sequence $(z_i)_{i\in \Z}$ in $\mathcal Z$ such that $\ell_i:=\inf\{j\geq i+1, \; T(z_j)=T(z_i)\}$ is finite for any $i\in \Z$. We consider the system of ODEs
\be\label{eq.SystJuncTOTmaxdist}
\frac{d}{dt} U_i(t)= V_{z_i}(U_{i+1}(t)-U_i(t), U_{\ell_i}(t)-U_i(t), U_i(t))\qquad i\in \Z, \; t\geq 0, 
\ee
Let us introduce some notation for the slow vehicles. Recall that 
$$
\bar v^0:= \inf_{z\in \mathcal Z} \tilde V^0_z(e_{\max}), \qquad \bar v^k:= \inf_{z\in \mathcal Z, \; T(z)= k} \tilde  V^k_z(e_{\max})
$$
and let $z^k_{\min}$ (where $k\in \{0, \dots,K\}$) be  elements of $\mathcal Z$ such that $\tilde V^0_{z^0_{\min}}(e_{\max})=\bar v^0$ and, for $k\geq 1$, $T(z^k_{\min})=k$ and $\tilde  V^k_{z^k_{\min}}(e_{\max})=\bar v^k$.  The types  $z^k_{\min}$ correspond to ``slow vehicles'', in the sense that their maximal velocity is the smallest. 

\begin{Lemma} \label{lem.EstiDansR0} Let $\bar t>0$, $i_1\in \Z$. Assume that $T(z_{i_1})= z^0_{\min}$ and that $(U_i)_{i\in\Z}$ is a solution to \eqref{eq.SystJuncTOTmaxdist} on $[0,\bar t]$ with an initial condition satisfying the compatibility condition \eqref{compatibilitycond}. If $U_{i_1}(\bar t)\leq -R_0$, then for any $i\leq i_1$ we have 
$$
U_{ i_1}(\bar t) \leq U_{i}(\bar t) +U_{i_1}(0)-U_{i}(0) +e_{\max}(i_1-i). 
$$
\end{Lemma}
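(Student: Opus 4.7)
The natural strategy is to construct an explicit subsolution that saturates at vehicle $i_1$ and propagates backwards with slope $e_{\max}$, then use a comparison argument. For $i\le i_1$ set
$$
\tilde U_i(t):=U_{i_1}(t)-e_{\max}(i_1-i)-\bigl(U_{i_1}(0)-U_i(0)\bigr).
$$
By construction $\tilde U_i(0)=U_i(0)$ for every $i\le i_1$, $\tilde U_{i_1}(t)=U_{i_1}(t)$ for every $t$, and $\tilde U_{i+1}(t)-\tilde U_i(t)=e_{\max}$ for every $i<i_1$. Once we show $\tilde U_i(\bar t)\le U_i(\bar t)$, the desired inequality follows by rearranging. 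Two observations will drive everything. First, since the velocity is nonnegative ($V\ge 0$) the map $U_{i_1}$ is nondecreasing, hence $U_{i_1}(t)\le U_{i_1}(\bar t)\le -R_0$ on $[0,\bar t]$; combined with Lemma~\ref{lem.CompDeBase} (which gives $U_i(0)\le U_{i_1}(0)$ for $i\le i_1$), this forces $\tilde U_i(t)\le U_{i_1}(t)\le -R_0$ throughout $[0,\bar t]$. Second, by $(H_3)$ and the assumption $z_{i_1}=z^0_{\min}$, together with the saturation in $(H_2)(ii)$,
$$
\frac{d\tilde U_i}{dt}(t)=\frac{dU_{i_1}}{dt}(t)=\tilde V^{0}_{z^0_{\min}}\bigl(U_{i_1+1}(t)-U_{i_1}(t)\bigr)\ \le\ \tilde V^{0}_{z^0_{\min}}(e_{\max})=\bar v^0.
$$

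Next I introduce the gap
$$
m(t):=\inf_{i\le i_1}\bigl(U_i(t)-\tilde U_i(t)\bigr),
$$
and check that the infimum is attained: using Lemma~\ref{lem.CompDeBase} and Remark~\ref{rem.lem.distminnext} ($U_{i_1}(t)\le -R_2$), one has $U_i(t)-U_{i_1}(t)\le -\Delta_{\min}(i_1-i)$, but also $|U_i(t)-U_i(0)|\le \|V\|_\infty t$ and similarly for $U_{i_1}$, so
$$
U_i(t)-\tilde U_i(t)\ \ge\ e_{\max}(i_1-i)-2\bar t\,\|V\|_\infty\ \xrightarrow[i\to-\infty]{}+\infty
$$
uniformly on $[0,\bar t]$. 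Hence $m(t)$ is a genuine minimum over some finite window independent of $t$, and $m$ is Lipschitz. Clearly $m(0)=0$.

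The core of the proof is to show $m\ge 0$ on $[0,\bar t]$. Suppose not; by continuity there would exist $0\le t_1<t_2\le\bar t$ with $m(t_1)=0$ and $m(t)<0$ on $(t_1,t_2]$. Fix $t\in(t_1,t_2]$ and pick a minimizer $i^*=i^*(t)$. Since $m(t)<0=U_{i_1}(t)-\tilde U_{i_1}(t)$ we must have $i^*<i_1$, and $U_{i^*}(t)<\tilde U_{i^*}(t)\le -R_0$. Optimality at $i^*$ yields $U_{i^*+1}(t)-\tilde U_{i^*+1}(t)\ge U_{i^*}(t)-\tilde U_{i^*}(t)$, whence
$$
U_{i^*+1}(t)-U_{i^*}(t)\ \ge\ \tilde U_{i^*+1}(t)-\tilde U_{i^*}(t)\ =\ e_{\max}.
$$
Therefore, using $(H_3)$ at $U_{i^*}(t)\le -R_0$ and the saturation $(H_2)(ii)$,
$$
\frac{dU_{i^*}}{dt}(t)=\tilde V^{0}_{z_{i^*}}\bigl(U_{i^*+1}(t)-U_{i^*}(t)\bigr)=\tilde V^{0}_{z_{i^*}}(e_{\max})\ \ge\ \bar v^0\ \ge\ \frac{d\tilde U_{i^*}}{dt}(t).
$$
By Danskin's theorem (applied to the finite family on which the inf is attained), $m$ is differentiable a.e.\ and $m'(t)\ge 0$ on $(t_1,t_2]$, so $m(t_2)\ge m(t_1)=0$, a contradiction.

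Finally, $m(\bar t)\ge 0$ evaluated at $i$ gives $U_i(\bar t)\ge \tilde U_i(\bar t)=U_{i_1}(\bar t)-e_{\max}(i_1-i)-(U_{i_1}(0)-U_i(0))$, which is exactly the claimed estimate. The only delicate point is step three (the reduction to a finite minimum and the Danskin argument); the rest is a clean envelope-type comparison in which the choice $z_{i_1}=z^0_{\min}$ precisely balances the bound $\bar v^0$ on $\dot{\tilde U}_i$ with the universal lower bound $\tilde V^{0}_{z}(e_{\max})\ge \bar v^0$ for any type.
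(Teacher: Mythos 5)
Your proof is correct and takes a genuinely different route from the paper. The paper proves the bound by induction on $m=i_1-i$, arguing by contradiction: it penalizes $U_{i_1}-U_i$ by $\varepsilon/(\bar t-t)$, looks at the maximum over $[0,\bar t)$, and feeds the induction hypothesis (the bound for $i+1$) into the optimality equation to deduce $U_{i+1}(t_\varepsilon)-U_i(t_\varepsilon)>e_{\max}$, whence a contradiction with the extremality of $z^0_{\min}$. You instead construct an explicit barrier $\tilde U_i$ and prove $U_i\geq \tilde U_i$ for \emph{all} $i$ simultaneously by showing that the gap $m(t)=\inf_i(U_i-\tilde U_i)$ cannot become negative; the crucial bound $U_{i^*+1}-U_{i^*}\geq e_{\max}$ then falls out of the \emph{minimality} of $i^*$ rather than from an inductive hypothesis. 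Both arguments hinge on the same two facts (the saturation of $\tilde V^0_z$ above $e_{\max}$ and the extremality $\tilde V^0_{z^0_{\min}}(e_{\max})=\bar v^0=\inf_z\tilde V^0_z(e_{\max})$), but yours avoids the induction entirely, trading it for a careful treatment of the differentiability of the infimum; the paper's penalization sidesteps that issue at the cost of the inductive scaffolding.

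Two small inaccuracies, neither fatal: (i) your formula actually gives $\tilde U_{i+1}(t)-\tilde U_i(t)=e_{\max}+\bigl(U_{i+1}(0)-U_i(0)\bigr)$, which by Lemma~\ref{lem.CompDeBase} is $\geq e_{\max}$ (not $=e_{\max}$); since you only use the inequality $U_{i^*+1}(t)-U_{i^*}(t)\geq e_{\max}$, the argument is unaffected. (ii) In the coercivity estimate, since $U_i(t)\geq U_i(0)$ you in fact get $U_i(t)-\tilde U_i(t)\geq e_{\max}(i_1-i)-\bar t\|V\|_\infty$, i.e.\ the factor $2$ is superfluous; again the conclusion (that the infimum is attained over a finite window, uniformly in $t\in[0,\bar t]$) stands.
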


\begin{proof}  We prove by induction on $m\in \N$ that $U_{ i_1}(\bar t) \leq U_{i_1-m}(\bar t) +U_{ i_1}(0)-U_{i_1-m}(0)+e_{\max}m$. The result is obvious for $m=0$. Let us assume that it holds for some $m-1$ with $m\geq 1$. We set $i:=i_1-m$. We argue by contradiction and assume that there exists $t\in [0,\bar t]$ such that $U_{i_1}(t) -U_{i}(t) >U_{i_1}(0) -U_{i}(0)+e_{\max}m$. Then for $\ep>0$ small, the maximum of $t\to U_{i_1}(t) -U_{i}(t) -\ep/(\bar t-t)$ exists on $[0,\bar t)$ and is larger than $U_{i_1}(0) -U_{i}(0)+e_{\max}m$. We denote by $t_\ep$ the point of maximum and we remark that $t_\ep$ is positive by definition. By optimality condition, we have 
\be\label{aejkzsdf}
\tilde V_{z_{i_1}}^0(U_{i_1+1}(t_\ep)-U_{i_1}(t_\ep)) -\tilde V_{z_i}^0(U_{i+1}(t_\ep)-U_i(t_\ep))-\ep/(\bar t-t_\ep)^2=0.
\ee
On the other hand, by induction assumption, we have $U_{i_1}(t_\ep)-U_{i+1}(t_\ep)\leq U_{i_1}(0)-U_{i+1}(0) + e_{\max}(m-1)$. So 
 \begin{align*}
 U_{i+1}(t_\ep)-U_i(t_\ep) & = U_{i_1}(t_\ep)-U_i(t_\ep)-(U_{i_1}(t_\ep)-U_{i+1}(t_\ep)) \\
 & >  U_{i_1}(0) -U_{i}(0)+e_{\max}m -(U_{i_1}(0)-U_{i+1}(0) + e_{\max}(m-1))\geq e_{\max},
 \end{align*}
 so that, by assumption (H2)
 $$
\tilde V_{z_i}^0(U_{i+1}(t_\ep)-U_i(t_\ep)) = \tilde V_{z_i}^0(e_{\max})
$$
while, as $z_{i_1}=z^0_{\min}$, 
 $$
 \tilde V_{z_{i_1}}^0(U_{i_1+1}(t_\ep)-U_{i_1}(t_\ep)) \leq \tilde V_{z_{i_1}}^0(e_{\max})= \bar v^0 \leq \tilde V_{z_i}^0(e_{\max}). 
 $$
This contradicts \eqref{aejkzsdf} and proves the result. 
\end{proof}

\begin{Lemma}\label{lem.EstiDansRk} Let $(U_i)_{i\in \Z}$ be a solution of \eqref{eq.SystJuncTOTmaxdist} with an initial condition satisfying the compatibility condition \eqref{compatibilitycond}. Let $i,i_0\in \Z$ and $k\in \{1, \dots, K\}$ with $i_0> i$, $T(z_i)=T(z_{i_0})=k$ and $z_{i_0}=z^k_{\min}$. If $U_i(s)\geq 0$ for some $s\geq 0$, then 
$$
U_{i_0}(t) \leq U_i(t)+  U_{i_0}(s) - U_i(s) + e_{\max}\sharp\{ j\in \{i+1, \dots , i_0\}, \; T(z_j)=T(z_i)\}\qquad \forall t\geq s.
$$
\end{Lemma}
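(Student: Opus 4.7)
The plan is to mimic the proof of Lemma \ref{lem.EstiDansR0}, adapted to the outgoing-road dynamics, and to proceed by induction on
\[
M := \sharp\{j\in\{i+1,\dots,i_0\},\; T(z_j)=T(z_i)\}\ge 1.
\]
Two preliminary observations reduce the problem to a clean ``after-the-junction'' setting. First, $(H_1)$ forces each $U_j$ to be nondecreasing, so $U_i(t)\ge U_i(s)\ge 0$ for all $t\ge s$. Second, iterating the second assertion of Lemma \ref{lem.CompDeBase} along the chain of same-road vehicles $i<\ell_i<\ell_{\ell_i}<\dots<i_0$ yields $U_{i_0}(t)\ge U_i(t)+M\Delta_{\min}\ge 0$. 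Hence by $(H_3)$ both $U_i$ and $U_{i_0}$ satisfy $\dot U_j=\tilde V^k_{z_j}(U_{\ell_j}(t)-U_j(t))$ throughout $[s,+\infty)$, and this is the form I would use.

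The heart of the argument is a contradiction scheme copied from Lemma \ref{lem.EstiDansR0}. Suppose the claimed inequality fails at some $t^*>s$ by a margin $\delta>0$. For $\epsilon>0$ small the map $\phi(t)=U_{i_0}(t)-U_i(t)-\epsilon/(t^*-t)$ attains an interior maximum at some $t_\epsilon\in(s,t^*)$; by choosing $\epsilon$ small one may arrange
\[
U_{i_0}(t_\epsilon)-U_i(t_\epsilon)\;\ge\;U_{i_0}(s)-U_i(s)+e_{\max}M+\delta/2,
\]
while the optimality condition, combined with $(H_3)$, reads
\[
\tilde V^k_{z_{i_0}}\bigl(U_{\ell_{i_0}}(t_\epsilon)-U_{i_0}(t_\epsilon)\bigr)-\tilde V^k_{z_i}\bigl(U_{\ell_i}(t_\epsilon)-U_i(t_\epsilon)\bigr)=\frac{\epsilon}{(t^*-t_\epsilon)^2}>0.
\]
The target contradiction is to show this difference is in fact $\le 0$. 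Since $z_{i_0}=z^k_{\min}$ one has $\tilde V^k_{z_{i_0}}(\cdot)\le \bar v^k$, while $(H_2)(ii)$ and the definition \eqref{def.barvk} give $\tilde V^k_{z_i}(e)=\tilde V^k_{z_i}(e_{\max})\ge \bar v^k$ as soon as $e\ge e_{\max}$. So everything reduces to proving the pointwise lower bound $U_{\ell_i}(t_\epsilon)-U_i(t_\epsilon)\ge e_{\max}$.

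This last estimate is where the induction enters. If $M=1$, then $\ell_i=i_0$ and the displayed bound on $U_{i_0}(t_\epsilon)-U_i(t_\epsilon)$, combined with $U_{i_0}(s)-U_i(s)\ge \Delta_{\min}$ from Lemma \ref{lem.CompDeBase}, directly gives $U_{\ell_i}(t_\epsilon)-U_i(t_\epsilon)\ge e_{\max}+\Delta_{\min}+\delta/2$. If $M\ge 2$, I apply the induction hypothesis to the pair $(\ell_i,i_0)$: the same-road count on $\{\ell_i+1,\dots,i_0\}$ is $M-1$, the starting condition $U_{\ell_i}(s)\ge U_i(s)+\Delta_{\min}\ge 0$ holds by Lemma \ref{lem.CompDeBase}, and the slow-type assumption $z_{i_0}=z^k_{\min}$ is preserved, so the hypothesis yields $U_{i_0}(t_\epsilon)-U_{\ell_i}(t_\epsilon)\le U_{i_0}(s)-U_{\ell_i}(s)+e_{\max}(M-1)$. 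Subtracting this from the lower bound on $U_{i_0}(t_\epsilon)-U_i(t_\epsilon)$ produces $U_{\ell_i}(t_\epsilon)-U_i(t_\epsilon)\ge U_{\ell_i}(s)-U_i(s)+e_{\max}+\delta/2\ge e_{\max}$, as required.

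The main obstacle I foresee is purely bookkeeping: tracking the $\delta/2$ margin surviving from the penalization step through the inductive bound so that it is preserved uniformly in $\epsilon$, and verifying that the ``slow vehicle'' and ``positive position'' hypotheses propagate correctly to the sub-pair $(\ell_i,i_0)$. Once these checks are in place, the argument is structurally identical to that of Lemma \ref{lem.EstiDansR0}, with $\tilde V^0_{z^0_{\min}}$ replaced by $\tilde V^k_{z^k_{\min}}$ and the label $i+1$ of the immediate predecessor replaced by the same-road predecessor $\ell_i$.
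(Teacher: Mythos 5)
Your proposal is correct and follows essentially the same approach as the paper's proof: induction on the same‑road count $M$, a contradiction scheme via penalization to locate an interior maximum $t_\epsilon$, the optimality condition from $(H_3)$, and the slow‑type assumption $z_{i_0}=z^k_{\min}$ to force $\tilde V^k_{z_{i_0}}\le\bar v^k\le \tilde V^k_{z_i}(e_{\max})$ once $U_{\ell_i}(t_\epsilon)-U_i(t_\epsilon)\ge e_{\max}$ is obtained from the inductive bound. The only cosmetic difference is your choice of penalty $\epsilon/(t^*-t)$ on $[s,t^*)$ rather than the paper's $\epsilon t^2$ on $[s,+\infty)$, which does not affect the argument.
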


\begin{proof} We proceed  by induction on the value of $n:= \sharp\{ j\in \{i+1, \dots , i_0\}, \; T(z_j)=T(z_i)\}$. Let us fist assume that $n=1$. Then $i_0=\ell_i$. We argue by contradiction and assume that there exists $t\ge  s$ such that $U_{\ell_i}(t) -U_{i}(t) >e_{\max}+ U_{i_0}(s) - U_i(s)$. Then for $\ep>0$ small, the maximum $t_\ep$ of $t\to U_{\ell_i}(t) -U_{i}(t) -\ep t^2$ exists on $[s,+\infty)$ and is larger than $e_{\max}+ U_{\ell_i}(s) - U_i(s)$. Hence $t_\ep$ is larger than $s$. By optimality condition, we have (setting $k=T(z_i)$)
\be\label{aejkzsdfTOT}
\tilde V_{z_{\ell_i}}^k(U_{\ell_{\ell_i}}(t_\ep)-U_{\ell_i}(t_\ep))-\tilde V_{z_i}^k(U_{\ell_i}(t_\ep)-U_i(t_\ep))-2\ep t_\ep=0.
\ee
As  $U_{\ell_i}(t_\ep)-U_i(t_\ep)  \geq e_{\max}$, we get, by the definition of $i_0=\ell_i$: 
$$
\tilde V_{z_i}^k(U_{\ell_i}(t_\ep)-U_i(t_\ep))=\tilde  V_{z_i}^k(e_{\max}) =  -2\ep t_\ep +\tilde V_{z^k_{\min}}^k(U_{\ell_{\ell_i}}(t_\ep)-U_{\ell_i}(t_\ep))  <  \bar v^k, 
$$
which contradicts the definition of $\bar v^k$. So the result holds for $n=1$. 

Let us now assume that the result holds for $n-1$ (where $n\geq 2$) and let us prove it for $n$. We argue by contradiction in the same way and suppose that 
there exists $t\ge  s$ such that $U_{i_0}(t) -U_{i}(t) > U_{i_0}(s) - U_i(s)+n e_{\max}$. 
As above,  for $\ep>0$ small, the maximum $t_\ep$ of $t\to U_{i_0}(t) -U_{i}(t) -\ep t^2$ exists on $[s,+\infty)$ and is larger than $U_{i_0}(s) - U_i(s)+n e_{\max}$. By the induction assumption we have
$$
U_{i_0}(\tau) - U_{\ell_i}(\tau)\leq U_{i_0}(s) - U_{\ell_i}(s) + (n-1)e_{\max} \qquad \forall \tau\geq s.
$$
Hence 
\begin{align*}
& U_{\ell_i}(t_\ep)-U_i(t_\ep) = U_{i_0}(t_\ep)-U_i(t_\ep)-(U_{i_0}(t_\ep) - U_{\ell_i}(t_\ep)) \\
& \qquad \geq U_{i_0}(s) - U_i(s)+n e_{\max}-(U_{i_0}(s) - U_{\ell_i}(s) + (n-1)e_{\max}) \ge e_{\max}.
\end{align*}
Moreover, by optimality condition, we have 
$$
\tilde V_{z_{i_0}}^k(U_{\ell_{i_0}}(t_\ep)-U_{i_0}(t_\ep))-\tilde V_{z_i}^k(U_{\ell_i}(t_\ep)-U_i(t_\ep))-2\ep t_\ep=0,
$$
which leads to a contradiction as above. 
\end{proof}

\begin{Lemma} \label{key.estiUlli-Ui} Let $(U_i)_{i\in \Z}$ be a solution of \eqref{eq.SystJuncTOTmaxdist} with an initial condition satisfying the compatibility condition \eqref{compatibilitycond}. Assume in addition that there exists $\delta>0$ such that $(d/dt) U_i(t)\geq \delta$ for all $t\geq0$ and all $i\in \Z$. Then, there exists a constant $C_0$ depending on $V$ and $\delta$ only such that, for any $i,i_0\in \Z$ and $k\in \{1, \dots, K\}$ with $i_0> i$, $T(z_i)=T(z_{i_0})=k$ and $z_{i_0}=z^k_{\min}$, we have
$$
U_{i_0}(t)\leq U_i(t)+ C_0(U_i(0))_-+ U_{i_0}(0)-U_i(0)+C_0\sharp \{j\in \{i+1, \dots, i_0\}, \; T(z_j)=k\}\qquad \forall t\geq 0.
$$
If in addition there exists $i_1\geq i_0$ such that $T(z_{i_1})=z^0_{\min}$ and $U_{i_1}(0)\leq -R_0$, then 
$$
U_{i_0}(t)\leq U_{i}(t) + C_0(1+U_{i_1}(0)-U_i(0)+ i_1-i) \qquad \forall t\geq0.
$$
\end{Lemma}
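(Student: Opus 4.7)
The plan is to reduce to Lemma \ref{lem.EstiDansRk} after waiting long enough for vehicle $i$ to cross the origin. The uniform lower bound $\dot U_i \ge \delta$ forces $U_i(s_0) \ge 0$ at the deterministic time $s_0 := (U_i(0))_-/\delta$. For $t \ge s_0$ I apply Lemma \ref{lem.EstiDansRk} starting from $s = s_0$, obtaining $U_{i_0}(t) - U_i(t) \le U_{i_0}(s_0) - U_i(s_0) + e_{\max} n$, where $n := \sharp\{j \in \{i+1,\dots,i_0\} : T(z_j) = k\}$. Estimating the gap at $s_0$ via $\dot U_{i_0} \le \|V\|_\infty$ together with $U_i(s_0) \ge 0$ yields a bound of the form $U_{i_0}(0) - U_i(0) + (\|V\|_\infty/\delta)(U_i(0))_-$. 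For the complementary window $t \le s_0$ the rough two-sided velocity bounds already produce the same estimate. Taking $C_0 \ge \max(e_{\max}, \|V\|_\infty/\delta)$ then yields the first claim; the case $U_i(0) \ge 0$ corresponds to $s_0 = 0$.

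\textbf{Second claim.} Here I split time according to whether the slow front vehicle $i_1$ has already passed $-R_0$. Set $s_1 := \inf\{t \ge 0 : U_{i_1}(t) > -R_0\}$, with $s_1 = +\infty$ if the infimum is vacuous. On $[0, s_1]$ one has $U_{i_1}(t) \le -R_0 \le -R_2$, so Lemma \ref{lem.CompDeBase} gives $U_{i_0}(t) \le U_{i_1}(t)$, while Lemma \ref{lem.EstiDansR0} applied to the pair $(i, i_1)$ at time $t$ gives $U_{i_1}(t) - U_i(t) \le U_{i_1}(0) - U_i(0) + e_{\max}(i_1 - i)$; chaining these two inequalities delivers the required estimate on this window. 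For $t \ge s_1$ I use the autonomy of the ODE system and the preservation of \eqref{compatibilitycond} under the flow (Lemma \ref{lem.CompDeBase}) to reapply the first claim with $s_1$ in place of $0$. To make that inequality useful I evaluate Lemma \ref{lem.EstiDansR0} at $s_1$ itself, where $U_{i_1}(s_1) = -R_0$, to obtain $(U_i(s_1))_- \le R_0 + U_{i_1}(0) - U_i(0) + e_{\max}(i_1 - i)$ and, combined with ordering, $U_{i_0}(s_1) - U_i(s_1) \le U_{i_1}(0) - U_i(0) + e_{\max}(i_1 - i)$. Plugging these into the shifted first claim produces an affine bound in $1$, $U_{i_1}(0) - U_i(0)$ and $i_1 - i$, the additive $C_0 R_0$ being absorbed into a new constant depending only on $V$ and $\delta$.

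\textbf{Main obstacle.} The delicate point is precisely the passage to the second claim. A naive application of the first claim would feature $C_0(U_i(0))_-$, a quantity that \emph{cannot} be controlled by $U_{i_1}(0) - U_i(0)$ alone since $|U_{i_1}(0)|$ itself can be arbitrarily large. The mechanism that rescues the estimate, and the reason the slow-type hypothesis $z_{i_1} = z^0_{\min}$ is indispensable, is that Lemma \ref{lem.EstiDansR0} effectively traps $i_0$ behind $-R_0$ throughout the window $[0, s_1]$; one only pays the ``$(U_i(\cdot))_-$ tax'' at the single instant $s_1$, where this quantity has exactly the admissible size $R_0 + U_{i_1}(0) - U_i(0) + e_{\max}(i_1 - i)$.
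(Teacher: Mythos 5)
Your proof is correct and follows essentially the same strategy as the paper: for the first claim you split at the time when $U_i$ reaches $0$ and invoke Lemma \ref{lem.EstiDansRk}, and for the second claim you split at the time when $U_{i_1}$ reaches $-R_0$, using Lemma \ref{lem.EstiDansR0} together with the ordering from Lemma \ref{lem.CompDeBase} on the first window. The only cosmetic difference is that you package the second phase as a time-shifted re-application of the first claim at $s_1$ (using autonomy and the preservation of the compatibility condition), whereas the paper re-derives the needed Lipschitz-in-time estimates directly on the interval between $\bar t$ and $t_i$; both routes lead to the same affine bound in $1$, $U_{i_1}(0)-U_i(0)$ and $i_1-i$.
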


\begin{proof} 
If $U_i(0)\geq 0$, the first result holds by Lemma \ref{lem.EstiDansRk}. Let us now assume that $U_i(0)<0$. We set $n:= \sharp \{j\in \{i+1, \dots, i_0\}, \; T(z_j)=k\}$. Let $t_i=\inf\{t\geq 0, \; U_i(t)=0\}$. Then, as $(d/dt) U_{i}(t)\geq \delta$, we have $t_i\leq \delta^{-1} (-U_i(0))$. By Lemma \ref{lem.EstiDansRk}, we have 
 $$
U_{i_0}(t)-U_i(t) \leq U_{i_0}(t_i)-U_i(t_i)+e_{\max}n \qquad \forall t \geq  t_i.
$$
On the other hand, for $t\in [0,t_i]$, we get
$$
U_{i_0}(t)-U_i(t) \leq U_{i_0}(0)-U_i(0)+\|V\|_\infty t_i \leq U_{i_0}(0)-U_i(0)+C(U_i(0))_- . 
$$
This proves the first part of the claim. \\

Assume now that $U_{i_1}(0)\leq -R_0$. Let $\bar t=\inf \{t\geq 0, \; U_{i_1}(t)=-R_0\}$. We know from Lemma \ref{lem.EstiDansR0} that 
$$
U_{i_1}(t) \leq U_{i}(t)+U_{i_1}(0)-U_i(0)+e_{\max}(i_1-i)\qquad \forall t\in [0, \bar t].
$$
This implies by Lemma \ref{lem.CompDeBase} that 
\be\label{A1kjenf:cg}
U_{i_0}(t) \leq U_{i}(t)+U_{i_1}(0)-U_i(0)+e_{\max}(i_1-i)\qquad \forall t\in [0, \bar t].
\ee
Let $t_i=\inf\{t\geq 0, \; U_i(t)=0\}$. Then, as 
$$
U_i(\bar t) \geq U_{i_1}(\bar t)-(U_{i_1}(0)-U_i(0)+e_{\max}(i_1-i))\geq -R_0-(U_{i_1}(0)-U_i(0)+e_{\max}(i_1-i)),
$$
while $U_i(t_i)=0$, we have, since $(d/dt) U_{i}(t)\geq \delta$, 
\begin{align*}
t_i-\bar t & \leq  \delta^{-1}(U_i(t_i)- U_i(\bar t))\leq \delta^{-1}(R_0+U_{i_1}(0)-U_i(0)+e_{\max}(i_1-i))\\
& \leq C(1+U_{i_1}(0)-U_i(0)+i_1-i). 
\end{align*}
Note that $U_{i_0}(\bar t) \leq U_{i_1}(\bar t)= -R_0\leq 0$. So, as $U_i(t_i)=0$,  
$$
U_{i_0}(t_i)-U_i(t_i) \leq U_{i_0}(t_i)-U_{i_0}(\bar t) \leq \|V\|_\infty(t_i-\bar t)\leq C(1+U_{i_1}(0)-U_i(0)+i_1-i).
$$
By Lemma \ref{lem.EstiDansRk}, we obtain 
\be\label{A2kjenf:cg}
U_{i_0}(t)-U_i(t) \leq U_{i_0}(t_i)-U_i(t_i)+e_{\max}(i_0-i)\leq C(1+U_{i_1}(0)-U_i(0)+ i_1-i) \qquad \forall t \geq  t_i. 
\ee
Finally, for $t\in [\bar t, t_i]$, we have 
\be\label{A3kjenf:cg}
U_{i_0}(t)-U_i(t) \leq U_{i_0}(t_i)-U_i(t_i)+ \|V\|_\infty(t_i-t) \leq C(1+U_{i_1}(0)-U_i(0)+ i_1-i).
\ee
Combining \eqref{A1kjenf:cg}, \eqref{A2kjenf:cg} and \eqref{A3kjenf:cg} proves the second part of the claim. 
\end{proof}

\subsection{Approximate speed of propagation} 

The approximate speed of propagation says that the behavior of a vehicle mostly depends on a finite number of vehicles in front of it. To describe this result we need to introduce a few notation. Given $T\in \Z$, we define by induction 
$$
J^\omega_0(T)=T, \qquad J^\omega_n(T) = \inf_{k\in \{1, \dots, K\}}\sup\Bigl\{ i\in \Z, \; T^\omega_i= k, \; \ell^\omega_i\leq J^\omega_{n-1}(T)\Bigr\}.
$$
We note that the $J_n(T)$ are random and decreasing in $n$. By construction, if $i\leq J_n(T)$, then $i+1\leq J_{n-1}(T)$ and $\ell_i\leq J_{n-1}(T)$.

\begin{Lemma} [Approximate finite speed of propagation on the junction] \label{lem.finitespeedPSJunc} Fix $i_0\in \Z$, $L\in \N$, $T\geq 0$ and $E\in \mathcal F$ an event with a positive probability. Assume that, in $E$, $(U_i)_{i\in \{i_0, \dots, i_0+L\}}$ is a (non decreasing) subsolution while $(\tilde U_i)_{i\in \{i_0, \dots, i_0+L\}}$ is a supersolution of the system
$$
\frac{d}{dt} U_i(t)=  V_{Z_i}(U_{i+1}(t)-U_i(t), U_{\ell_i}(t)-U_i(t), U_i(t))
$$
for $i=i_0, \dots ,J_1(i_0+L)$  and $t\in [0,T]$. Suppose in addition that, in $E$,  $U_i(0)\leq \tilde U_{i}(0)$ for $i\in \{i_0, \dots, i_0+L\}$. Then, for all $n\in \Z$, $n\geq 1$, for all $\omega\in E$  and for all $i\in \{i_0, \dots,  J_n(i_0+L)\}$, 
 $$
 U_{i}(t)\leq \tilde U_{i}(t) +C\ 2^{-n}e^{\beta t}  \qquad \forall t\in  [0,T], 
$$
where, $\beta=\gamma+2C_1$, with $\gamma:=\sup_{z\in \mathcal Z}( \|\partial_1  V_z\|_\infty+\|\partial_2  V_z\|_\infty)$ and $C_1:=\sup_{z\in \mathcal Z} \|\partial_x  V_z\|_\infty$, and where $C$ depend on $\beta$ and on $  \|V\|_{\infty}$ only.
\end{Lemma}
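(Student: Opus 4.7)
I would run a Gronwall-type iteration along the decreasing sequence $(J_n(i_0+L))_{n\geq 0}$, exploiting the structural fact just recalled by the authors: if $i\le J_n(i_0+L)$ then both $i+1$ and $\ell_i$ lie in $\{i_0,\dots,J_{n-1}(i_0+L)\}$. The quantity to track is
\[
a_i(t):=(U_i(t)-\tilde U_i(t))_+,\qquad i\in\{i_0,\dots,i_0+L\},\ t\in[0,T].
\]
The hypothesis $U_i(0)\le\tilde U_i(0)$ yields $a_i(0)=0$, and the $\|V\|_\infty$-Lipschitz in $t$ of both $U_i$ and $\tilde U_i$ gives the crude uniform a priori bound $a_i(t)\le\|V\|_\infty t$.

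\smallskip
The first substantive step is a pointwise dissipative estimate for $a_i$. For $i\leq J_1(i_0+L)$, subtracting the sub-solution bound from the super-solution bound produces
\[
\tfrac{d}{dt}(U_i-\tilde U_i)\;\le\;V(U_{i+1}-U_i,U_{\ell_i}-U_i,U_i)-V(\tilde U_{i+1}-\tilde U_i,\tilde U_{\ell_i}-\tilde U_i,\tilde U_i).
\]
I would expand the right-hand side as a line integral of $\nabla V$ along the segment between the two triples of arguments. The monotonicity $\partial_1 V,\partial_2 V\ge 0$ from $(H_1)$ allows one to replace $U_{i+1}-\tilde U_{i+1}$ by its positive part $a_{i+1}$ (and similarly for $\ell_i$); combined with $|\partial_1 V_z|+|\partial_2 V_z|\le \gamma$ and $|\partial_x V_z|\le C_1$ (these are the very definitions of $\gamma$ and $C_1$), this gives, at a.e.\ $t$ with $a_i(t)>0$,
\[
\tfrac{d}{dt}a_i\;\le\;\gamma\,\max(a_{i+1},a_{\ell_i})+C_1\,a_i.
\]
Since $a_i$ is Lipschitz and nonnegative, the derivative of $a_i$ vanishes a.e.\ on $\{a_i=0\}$, so the inequality extends to an a.e.\ bound on all of $[0,T]$. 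The integrating factor $\alpha_i:=a_ie^{-C_1 t}$ then kills the last term, leaving
\[
\tfrac{d}{dt}\alpha_i\;\le\;\gamma\,\max(\alpha_{i+1},\alpha_{\ell_i})\qquad\text{a.e.\ on }[0,T].
\]

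\smallskip
The remainder is the induction itself. Set $G_n(t):=\sup_{s\le t,\,i\le J_n(i_0+L)}\alpha_i(s)$. For $n\ge 1$ and $i\le J_n(i_0+L)$, the fact that $i+1,\ell_i\le J_{n-1}(i_0+L)$, together with $\alpha_i(0)=0$, gives upon integration $\alpha_i(t)\le\gamma\int_0^t G_{n-1}(s)\,ds$, and hence $G_n(t)\le\gamma\int_0^t G_{n-1}(s)\,ds$. Unrolling this, one obtains $G_n(t)\le G_0(t)(\gamma t)^n/n!$, while the crude a priori bound combined with $te^{-C_1 t}\le 1/(C_1 e)$ (with a cosmetic modification if $C_1=0$) yields $G_0(t)\le C$ with $C$ depending only on $\|V\|_\infty$ and $C_1$. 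The elementary inequality $(\gamma t)^n/n!\le 2^{-n}e^{2\gamma t}$, obtained by comparing with one term in the power series of $e^{2\gamma t}$, then produces $G_n(t)\le C\,2^{-n}e^{2\gamma t}$, and reverting through $a_i=\alpha_ie^{C_1 t}$ yields the announced estimate (with some $\beta$ of the form $C_1+2\gamma$; the precise $\gamma+2C_1$ of the statement is a matter of slightly sharper bookkeeping of the Lipschitz contributions). The one slightly delicate point I anticipate is the a.e.\ validity of the ODE inequality for the Lipschitz envelope $a_i=(U_i-\tilde U_i)_+$ across its zero set, but this is by now completely standard; everything else is a transparent Gronwall argument made effective by the hierarchy $(J_n(i_0+L))_n$.
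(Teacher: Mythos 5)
Your proof is correct. The broad strategy coincides with the paper's---both iterate along the hierarchy $(J_n)$ using the same differential inequality that bounds $\tfrac{d}{dt}a_i$ by $\gamma\max(a_{i+1},a_{\ell_i})+C_1\,a_i$---but the endgame differs. The paper tracks $M_n=\sup_{i\le J_n}\sup_{s\le T}e^{-\beta s}[W_i(s)]_-$ and proves the one-step contraction $M_n\le\tfrac12 M_{n-1}$ directly by an integration by parts in the Duhamel representation, with $\beta=\gamma+2C_1$ tuned precisely so that the resulting integral factor is $\le e^{\beta t}/2$. You instead absorb the $C_1$-term with the integrating factor $e^{-C_1 t}$, unroll the iterated Gronwall inequality to $G_n(t)\le G_0(t)(\gamma t)^n/n!$, and convert the factorial to $2^{-n}e^{2\gamma t}$ via the Taylor series of $e^{2\gamma t}$. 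Yours is arguably the more elementary route (no bespoke integration by parts), at the modest cost of landing on $\beta=2\gamma+C_1$ rather than $\gamma+2C_1$, a discrepancy you correctly flag and which is immaterial: every downstream use of the lemma only requires a finite $\beta$ depending on the data. Two small points worth making explicit: the a.e.\ validity of the ODE inequality across $\{a_i=0\}$ (which you note is standard) follows because a nonnegative Lipschitz function has zero derivative a.e.\ on its zero set, while on $\{a_i>0\}$ the right-hand side $\gamma\max(a_{i+1},a_{\ell_i})+C_1a_i$ is nonnegative; and when $C_1=0$ your bound $G_0\le C$ fails as stated, so one should replace $e^{-C_1 t}$ by $e^{-\eta t}$ for a small $\eta>0$, which restores boundedness of $G_0$ at the price of enlarging $\beta$ to $2\gamma+C_1+\eta$---still perfectly adequate.
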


\begin{proof} We work in $E$ all along the proof.  Let us set $W_i(t)= \tilde U_i(t)-U_i(t)$ for $i=i_0,\dots,J_1( i_0+L)$. Since we work with sub and super-solution, we extend the velocity $V_z(p,x)$ by $0$ if $p\le 0$. Let $n_0$ be the largest integer such that $J_{n_0}(i_0+L)\geq i_0$. 
We define, for $n\in \{1,\dots, n_0\}$, 
$$
M_n= \sup_{i\in\{i_0, \dots, J_n(i_0+L)\}}  \sup_{s\in [0,T]}e^{-\beta s} [W_i(s)]_-.
$$
Note that, as $W_i(0)\geq 0$ and $(d/dt) W_i(s)\leq \|V\|_\infty$, we have 
$$
M_1\le \sup_{i\in\{i_0, \dots, i_0+L\}}  \sup_{s\in [0,T]}e^{-\beta s} \|V\|_\infty s \leq C, 
$$
where $C$ depends  on $\beta$ and $\|V\|_\infty$ only. The main step consists in showing that, for all $n\in\{2, \dots, n_0\}$,  
\be\label{MnMn-1}
M_{n}\leq \frac12 M_{n-1}. 
\ee
Fix $n\in\{2, \dots, n_0\}$ and $i\in \{i_0, \dots, J_n(i_0+L)\}$. We have, for $t\in [0,T]$, 
$$
\frac{d}{dt} W_i(t)  \geq  V_{Z_i}(\tilde U_{i+1}(t)- \tilde U_i(t), \tilde U_{\ell_i}(t)- \tilde U_i(t), \tilde U_i(t))- V_{Z_i}(U_{i+1}(t)-U_i(t), U_{\ell_i}(t)-U_i(t), U_i(t)). 
$$
So
\begin{align*}
\frac{d}{dt} W_i(t) &\geq A_{i,1}(t)(W_{i+1}(t)-W_i(t)) +A_{i,2}(t)(W_{\ell_i}(t)-W_i(t)) +B_i(t) W_i(t)
\end{align*}
where 
\begin{align*}
& A_{i,1}(t):= \int_0^1 \partial_1 V_{Z_i}(w_{i}(\tau)) d\tau,
\; A_{i,2}(t):= \int_0^1 \partial_2 V_{Z_i}(w_{i}(\tau)) d\tau,\; 
B_i(t):= \int_0^1 \partial_3 V_{Z_i}(w_i(t)) d\tau,
\end{align*}
with $w_i(t)= (w_{i,1}(\tau), w_{2,i}(t), w_{3,i}(t))$, 
\begin{align*}
&\qquad \qquad w_{i,1}(\tau)= (1-\tau)(\tilde U_{i+1}(t)-\tilde U_i(t))+\tau (U_{i+1}(t)-U_i(t)), 
\\
&w_{2,i}(t) = (1-\tau)(\tilde U_{\ell_i}(t)-\tilde U_i(t))+\tau (U_{\ell_i}(t)-U_i(t))
\; {\rm and }\;  w_{3,i}(t)= (1-\tau) \tilde U_i(t)+\tau U_i(t).
\end{align*}
We note for later use that $0\leq A_{i,1}\leq \gamma$, $0\leq A_{i,2}\leq \gamma$ and $|B_i|\leq C_1$. Setting $A_i=A_{i,1}+A_{i,2}$, we find
\begin{align*}
W_i(t)\geq & W_i(0) \exp\left\{-\int_0^t(A_i-B_i)(s)ds\right\} + \int_0^t A_{i,1}(s) \exp\left\{-\int_s^t (A_i-B_i)(\tau)d\tau\right\} W_{i+1}(s)ds\\
& + \int_0^t A_{i,2}(s) \exp\left\{-\int_s^t (A_i-B_i)(\tau)d\tau\right\} W_{\ell_i}(s)ds.
\end{align*}
As $W_i(0)\geq 0$ and $A_i\geq 0$, we infer that 
\begin{align*}
[W_i(t)]_- & \leq \int_0^t A_{i,1}(s) \exp\left\{-\int_s^t (A_i-B_i)(\tau)d\tau\right\} [W_{i+1}(s)]_- ds\\
& \qquad + \int_0^t A_{i,2}(s) \exp\left\{-\int_s^t (A_i-B_i)(\tau)d\tau\right\} [W_{\ell_i}(s)]_- ds\\
&\le \int_0^t A_i(s) \exp\left\{-\int_s^t (A_i-B_i)(\tau)d\tau+\beta s\right\} e^{-\beta s}([W_{i+1}(s)]_-\vee [W_{\ell_i}(s)]_-) ds\\
&  \leq \sup_{s\in [0,t]}(e^{-\beta s}([W_{i+1}(s)]_-\vee [W_{\ell_i}(s)]_-)\;\int_0^t A_i(s) \exp\left\{-\int_s^t (A_i-|B_i|)(\tau)d\tau+\beta s\right\} ds.
\end{align*}
We now estimate the last term in the inequality above. After an integration by part, we have, since $\frac \beta 2 \geq C_1\geq |B_i|$ and $0\leq A_i\leq \gamma$,  
\begin{align*}
& \int_0^t A_i(s) \exp\left\{-\int_s^t (A_i-|B_i|)(\tau)d\tau+\beta s\right\}ds \\
& =\int_0^t A_i(s) \exp\left\{-\int_s^t A_i(\tau)d\tau\right\}\exp\left\{\int_s^t |B_i|(\tau)d\tau+\beta s\right\}ds \\
&  = \left[ \exp\left\{-\int_s^t (A_i-|B_i|)(\tau)d\tau+\beta s\right\}\right]_0^t -
\int_0^t \exp\left\{-\int_s^t (A_i-|B_i|)(\tau)d\tau+\beta s\right\}(-|B_i(s)|+\beta)ds  \\
&  \leq e^{\beta t}- \exp\left\{-\int_0^t (A_i-|B_i|)(\tau)d\tau\right\}  -
\int_0^t \exp\left\{\gamma (s-t)+\beta s \right\}(-C_1+\beta)ds \\
&  \leq e^{\beta t}-e^{-\gamma t}  -e^{-\gamma t}
\int_0^t \exp\left\{(\gamma+\beta)s \right\}(-C_1+\beta)ds .
\end{align*}
So 
\begin{align*}
& \int_0^t A_i(s) \exp\left\{-\int_s^t (A_i-|B_i|)(\tau)d\tau+\beta s\right\}ds\\
& \qquad  \leq e^{\beta t}- e^{-\gamma t}  - \frac{\beta-C_1}{\gamma+\beta} (e^{\beta t}-e^{-\gamma t})
 \leq  \frac{\gamma+C_1}{\gamma+\beta}e^{\beta t}= \frac{e^{\beta t}}{2},
\end{align*}
by the choice of $\beta=\gamma+2C_1$. This shows that 
$$
 \sup_{s\in [0,T]} e^{-\beta s}  [W_{i}(s)]_-  \leq \frac12  \sup_{s\in [0,T]} e^{-\beta s} ( [W_{i+1}(s)]_-\vee  [W_{\ell_i}(s)]_-).
$$
As $i\leq J_n(i_0+L)$, $i+1$ and $\ell_i$ belong to $\{i_0,\dots, J_{n-1}(i_0+L)\}$. So the right-hand side is less that $M_{n-1}/2$. Taking the supremum over all $i\in \{i_0, \dots, J_n(i_0+L)\}$ gives \eqref{MnMn-1}. 

By induction, we obtain that, for all $n\leq n_0$,  
$$
M_n\leq 2^{-(n-1)}M_1\leq C2^{-n},
$$
from which we derive the result. 
\end{proof}

Next we investigate the behavior of $J_n$ for large values of $n$: 

\begin{Lemma}\label{lem.Jn} There exists a constant $\alpha>0$ such that, for any $T\in \Z$, any $\ep\in (0,1]$ and any $n\in \N$, 
$$
\P\left[ |J_n(T)-T+\alpha n| \geq \ep n \right]\leq 2\exp\{ -\ep^2n /C\}. 
$$
\end{Lemma}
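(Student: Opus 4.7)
By the translation invariance of the i.i.d.\ sequence $(Z_i)_{i \in \Z}$, the law of $T - J_n(T)$ is independent of $T$, so it suffices to take $T = 0$ and to show that $D_n := -J_n(0)$ satisfies $\P[|D_n - \alpha n| \geq \ep n] \leq 2\exp(-\ep^2 n/C)$ for some $\alpha, C > 0$. Writing $D_n = \sum_{m=1}^n G_m$ with $G_m := J_{m-1}(0) - J_m(0) \geq 1$, the problem reduces to proving a sub-Gaussian law of large numbers for the gaps $G_m$.

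The first step is to identify the Markov structure. From the definition, $J_m$ is the position obtained by scanning leftward from $J_{m-1}$ and stopping at the first position at which each of the $K$ types has been encountered for the second time (the type $T_{J_{m-1}}$ counting as its own first sighting). Conditionally on $J_{m-1}$ and on the types at positions $\geq J_{m-1}$, the types at positions $< J_{m-1}$ remain i.i.d.\ with the original product law, so the conditional distribution of the pair $(G_m, T_{J_m})$ depends on the past only through the state $T_{J_{m-1}}$. Hence $(T_{J_m})_{m \geq 0}$ is a Markov chain on the finite state space $\{1, \dots, K\}$, and each gap $G_m$ is a functional of this chain driven by fresh i.i.d.\ randomness. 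Irreducibility is immediate---from any state $k$, every target state $k'$ can be made the one completing the next block with positive probability by prescribing the intermediate types---so the chain admits a unique invariant probability $\pi$, and we set $\alpha := \E_\pi[G_1]$.

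A uniform coupon-collector tail estimate shows that the gaps have exponential tails: for every $k \in \{1, \dots, K\}$ and every $n \geq 1$,
$$
\P\bigl[G_m > n \bigm| T_{J_{m-1}} = k\bigr] \;\leq\; K(1 - p_{\min})^{\lfloor n/(2K) \rfloor},
\qquad p_{\min} := \min_{k'} \P[T_0 = k'] > 0,
$$
because in any window of $2K$ consecutive i.i.d.\ types at least one type appears twice with probability bounded away from $0$. In particular $\alpha < \infty$, and $\alpha \geq 1$ since $G_m \geq 1$ almost surely.

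Finally, to pass from the Markov-chain structure to sub-Gaussian concentration of $D_n = \sum_m G_m$, we use a regeneration argument. Let $\tau_0 := 0 < \tau_1 < \tau_2 < \cdots$ denote the successive return times of $(T_{J_m})$ to a fixed state $k^\star$; then the blocks $(G_{\tau_{i-1}+1}, \dots, G_{\tau_i})$ are i.i.d.\ for $i \geq 2$, and both the block lengths $\tau_i - \tau_{i-1}$ and the block sums $S_i := \sum_{m = \tau_{i-1}+1}^{\tau_i} G_m$ inherit exponential moments, combining the geometric tails of return times in a finite ergodic chain with the tail bound above. A Bernstein inequality applied to $\sum_i S_i$, together with standard control of the random number of regenerations completed before time $n$ and of the two boundary blocks, yields the announced estimate. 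The main technical obstacle is precisely this passage from Markov dependence to the sharp $\exp(-\ep^2 n/C)$ exponent: the gaps $G_m$ are \emph{not} i.i.d., and the regeneration decomposition (or, equivalently, a Bernstein-type inequality for additive functionals of finite-state ergodic Markov chains) is what allows one to recover sub-Gaussian concentration.
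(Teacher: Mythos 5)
Your proof is correct and is in fact more careful than the paper's. The paper asserts that the increments $J_n(T)-J_{n+1}(T)$ form an i.i.d.\ family and applies the classical Bernstein inequality directly; as you observe, this is not so in general, because the conditional law of $J_n(T)-J_{n+1}(T)$ given the past depends on the type $T_{J_n(T)}$, and $(T_{J_n(T)})_n$ evolves as a (non-stationary) Markov chain on $\{1,\dots,K\}$. Concretely, starting from $J_0=T$, the type $T_{T}$ has law $(\pi^k)_k$, while $T_{J_1(T)}$ is biased towards the \emph{rarer} types: the second-to-last occurrence of a rare type tends to sit farther back and hence to realize the infimum in the definition of $J_1$. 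When the $\pi^k$ are not all equal the gaps are therefore neither independent nor identically distributed, and the direct appeal to Bernstein for i.i.d.\ sums has a genuine gap (the lemma itself is still true). Your regeneration decomposition --- cut at return times of $(T_{J_m})_m$ to a fixed reference state, observe that the resulting blocks are i.i.d.\ with exponential moments inherited from the uniform tail bound on the gaps and the geometric tails of return times of a finite ergodic chain, then apply Bernstein to the block sums and control the boundary blocks --- is a standard and correct way to recover the sub-Gaussian concentration; an equivalent alternative is Freedman's martingale Bernstein inequality for $\sum_m(G_m-\E[G_m\mid \mathcal F_{m-1}])$ combined with concentration of the additive functional $\sum_m \E[G_m\mid T_{J_{m-1}}]$ of the chain. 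One small imprecision in your coupon-collector step: you need \emph{every} type (not ``at least one'') to appear at least twice in a window of $2K$ consecutive slots; this event has probability at least $(2K)!\prod_k(\pi^k)^2/2^K>0$, which is what yields the exponential tail of the gap. A by-product of the corrected analysis is that the centering constant $\alpha$ is the mean gap under the invariant law of the chain $(T_{J_m})_m$, rather than $\E[J_0(T)-J_1(T)]$ as written in the paper; since the lemma asserts only the existence of some $\alpha>0$, its statement and downstream uses are unaffected.
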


\begin{proof} Let $T\in \Z$ and  $X_n:= J^\omega_{n}(T)$, $n\in \N$. Then $(X_{n+1}-X_n)$ is a family of i.i.d. nonpositive random variables with law given by (for all $m\in \N$)
\begin{align*}
\P[X_{1}-X_0< -m]& \leq  \P\Bigl[\exists k\in \{1, \dots, K\}, \sharp\{ i\in \{T-m, \dots, T\}, \; T_i= k\}\le 1 \; \Bigr] \leq \sum_{k=1}^K m(1-\pi^k)^{m-1}\\
 & \leq Km\pi^{m-1}\leq C(\frac{1+\pi}{2})^m,
\end{align*}
where $\pi =\max_k (1-\pi^k)<1$ and $C$ depends on $\pi$ and $K$ only. Therefore $X_{1}-X_0$ satisfies Bernstein's conditions: there exist positive numbers $\nu$ and $c$ (depending on $\pi$ only) such that $\E\left[|X_1-X_0|^2\right] \leq \nu$ and $\E\left[ |X_1-X_0|^q\right]\leq q! \nu c^{q-2}$ for any integer $q\geq 3$ (see Subsection \ref{subsec:lemmaJn} in the Appendix). Let us set $\alpha= \E\left[X_{0}-X_1\right]>0$. From Bernstein's Inequality (Corollary 2.11 in \cite{BLM13})
$$
\P\left[ |J_n(T)-T+\alpha n| \geq x \right]\leq 2\exp\{ -x^2/(C(n+x))\}
$$
for some constant $C$ depending on $\pi$ only. This implies the result for $\ep\in (0,1]$.
\end{proof}

\section{The time function}\label{section.timefunction}

The goal of Sections \ref{section.timefunction} is to build the flux limiter at the junction. 
For doing so we consider the solution of our system  starting with a ``flat initial condition'' and look at the time it takes to reach 0 from a position (far) on the ingoing road. 

Let us fix from now on $e=(e^k)_{k=0,\dots K}$ such that  $H^k(-1/e^k)= \min_p H^k(p)$. We note for later use that $e^k>\pi^k \Delta_{\min}$, where $\Delta_{\min}$ is defined in Assumption (H2). We define $(U_{e,i}^\omega)_{i\in \Z}$  as the solution of \eqref{eq.SystJuncTOT} with initial condition defined  for any $i\in \Z$ by
$$U^{\omega}_{e,i}=\left\{\begin{array}{ll}
e^0i &{\rm if}\;  i\leq 0\\
 e^ki &{\rm if}\; i\geq 0\;  {\rm and}\; T_i=k.
\end{array}\right.$$ Then we set 
$$
\theta^\omega_e(t)=\inf\{ i\geq 0, \; U^\omega_{e,-i}(t)\leq 0\}.
$$
The quantity $\theta^\omega_e(t)$ is the number of vehicles having gone through $0$ at time $t$.  The goal of the section is to show, by using a concentration inequality, that $\theta_e(t)/t$ has a.s. a deterministic limit as $t\to +\infty$.

\subsection{Preliminaries}

Let us collect some basic facts on the $(U_{e,i}^\omega)_{i\in \Z}$ and on $\theta_e$. Let us set $v_e^k=\bar V^k(e^k/\pi^k)$ for $k\in \{0, \dots, K\}$ (where $\pi^0=1$ by convention).

\begin{Lemma}\label{lem.minvke}  We have $\min_{k\in \{0, \dots, K\}} v_e^k>0$. 
\end{Lemma}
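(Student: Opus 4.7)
The plan is to unwind the definitions of $H^k$, $e^k$ and $\bar V^k$ and reduce the claim to the already-noted inequality $e^k > \pi^k \Delta_{\min}$.

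First I would observe that, by the very construction of $\bar V^k$ in Section 1.3, one has $\bar V^k(e) > 0$ if and only if $e > \Delta_{\min}$: indeed $\bar V^k$ is the extension by $0$ on $[0,\Delta_{\min}]$ of the inverse of the strictly increasing continuous map $v \mapsto \E[(\tilde V^k_{Z_0})^{-1}(v)\mid T_0=k]$ (and analogously for $k=0$), which takes values in $[\Delta_{\min},+\infty)$. Consequently,
\[
v_e^k = \bar V^k\!\left(\frac{e^k}{\pi^k}\right) > 0 \qquad \Longleftrightarrow \qquad e^k > \pi^k \Delta_{\min}.
\]
Since $\{0,\dots,K\}$ is a finite index set, once the equivalence above is secured for every $k$, taking the minimum yields the lemma.

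Next I would justify the strict inequality $e^k > \pi^k\Delta_{\min}$, which is exactly the fact flagged in the text immediately above the statement. Using the representation $H^k(p)=p\,\bar V^k(-1/(\pi^k p))$ for $p<0$ (with $\pi^0=1$), a direct case analysis shows:
\begin{itemize}
\item if $p\ge 0$, then $H^k(p)=0$;
\item if $p\le -1/(\pi^k\Delta_{\min})$, then $-1/(\pi^k p)\le \Delta_{\min}$, hence $\bar V^k(-1/(\pi^k p))=0$ and again $H^k(p)=0$;
\item if $p\in(-1/(\pi^k\Delta_{\min}),0)$, then $-1/(\pi^k p)>\Delta_{\min}$, hence $\bar V^k(-1/(\pi^k p))>0$ and $H^k(p)<0$.
\end{itemize}
In particular $\min_{\R}H^k<0$, and since $H^k=0$ outside $(-1/(\pi^k\Delta_{\min}),0)$ the minimum can only be attained at some point $p^\star\in(-1/(\pi^k\Delta_{\min}),0)$. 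Writing $p^\star = -1/e^k$ gives $e^k\in(\pi^k\Delta_{\min},+\infty)$, as desired.

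Combining the two steps: for every $k\in\{0,\dots,K\}$ we have $e^k/\pi^k>\Delta_{\min}$, so $v_e^k=\bar V^k(e^k/\pi^k)>0$, and the finite minimum over $k$ is strictly positive.

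There is essentially no obstacle here; the only point that requires any care is to verify that the $H^k$'s cannot achieve their minimum on the "flat" region $\{H^k=0\}$, which is precisely what the case analysis above settles by exhibiting explicit $p$ with $H^k(p)<0$ (take for instance $p=-1/(\pi^k e)$ for any $e>\Delta_{\min}$ in the interior of the domain of positivity of $\bar V^k$).
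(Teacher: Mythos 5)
Your proof is correct, but it follows a slightly different route from the one in the paper. The paper's proof invokes assumption $(H_5)$-(iii) together with the finiteness of $\mathcal Z$ to produce a \emph{uniform quantitative} lower bound $\tilde V^0_z(e^0)\geq C^{-1}$ for all $z\in\mathcal Z$, and then feeds this through the defining relation $(\bar V^0)^{-1}(v)=\E[(\tilde V^0_{Z_0})^{-1}(v)]$ to conclude $v^0_e=\bar V^0(e^0)\geq C^{-1}$ (with the cases $k\geq 1$ handled analogously). You instead argue \emph{qualitatively}: reading off from the construction of $\bar V^k$ as the inverse of a strictly increasing map whose range starts at $\Delta_{\min}$, you observe that $\bar V^k(e)>0$ exactly when $e>\Delta_{\min}$, and then reduce the lemma to the inequality $e^k>\pi^k\Delta_{\min}$. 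Unlike the paper, which only records this last inequality as a remark, you supply a proof of it by locating the minimizer of $H^k$: since $H^k$ vanishes identically outside $(-1/(\pi^k\Delta_{\min}),0)$ while being continuous and strictly negative on that open interval, the minimizer $-1/e^k$ must lie in the interior, which gives $e^k>\pi^k\Delta_{\min}$. Both routes are sound and rest on the same two ingredients ($e^k>\pi^k\Delta_{\min}$ and positivity of $\bar V^k$ past $\Delta_{\min}$); the paper's version produces an explicit bound $C^{-1}$, while yours is self-contained and makes explicit the role played by the minimizing property defining $e^k$.
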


\begin{proof} As $e>\pi\Delta_{\min}$ and $\mathcal Z$ is finite, assumption ($H_5$) implies the existence of $C>1$ such that $\tilde V^0_z(e^0)\geq C^{-1}$ for any $z\in \mathcal Z$. Thus $(\bar V^0)^{-1}(C^{-1})=\E[(\tilde V^0_{Z_0})^{-1}(C^{-1})] \leq e^0$, which shows that $v^0_e=\bar V^0(e^0)\geq C^{-1}$. The proof for $v^k_e$ (for $k=1, \dots , K$) works in the same way. 
\end{proof}

\begin{Lemma}\label{lem.delta} 
There exists $\delta>0$ such that
$$
\frac{d}{dt} U_{e,i}(t)\geq \delta\qquad \forall t\geq0, \; \forall i\in \Z.
$$
\end{Lemma}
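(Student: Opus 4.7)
My plan is to establish a positive lower bound on the velocities at $t=0$ and then propagate it forward in time via a comparison principle applied to the linearized system satisfied by the velocities $W_i(t):=(d/dt)U_{e,i}(t)$.

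First I would verify that $W_i(0)\geq \delta_0>0$ uniformly in $i$ for some $\delta_0>0$. From the flat initial profile $U_{e,i}(0) = e^{T_i} i$, the initial spacing on the ingoing road is $U_{e,i+1}(0) - U_{e,i}(0) = e^0$ with $e^0 > \pi^0\Delta_{\min} = \Delta_{\min}$, and on the outgoing road the spacing that enters the velocity is $U_{e,\ell_i}(0) - U_{e,i}(0) = e^{T_i}(\ell_i - i)$, which exceeds $\Delta_{\min}$ by the compatibility condition \eqref{compatibilitycond}. Assumption $(H_5)(iii)$, the finiteness of $\mathcal Z$, and a case-by-case inspection of the finite set of vehicles whose positions lie in the transition zone $[-R_0,0]$ (using $(H_5)(i)$--(ii) and continuity of $V$) then yield $\delta_0>0$.

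Second, formally differentiating \eqref{eq.SystJuncTOT} in $t$ gives the linear system
\begin{equation*}
W_i'(t) = a_i(t)(W_{i+1}(t)-W_i(t)) + b_i(t)(W_{\ell_i}(t)-W_i(t)) + c_i(t) W_i(t),
\end{equation*}
with $a_i,b_i\ge 0$ by the monotonicity in $(H_1)$ and $c_i:=\partial_x V_{Z_i}$. A constant $\delta>0$ is a sub-solution whenever $c_i\ge 0$. By $(H_3)$, $c_i\equiv 0$ as soon as $U_{e,i}(t)\notin[-R_0,0]$. Within $[-R_1,0]$, $(H_5)(ii)$ gives $c_i\ge 0$ whenever $W_i\le \kappa$. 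Within $[-R_0,-R_1]\subset(-\infty,-R_2]$, Lemma \ref{lem.CompDeBase} yields the ordering $U_{e,i+1}(t)\le U_{e,\ell_i}(t)$, and then $(H_5)(i)$ makes $V_{Z_i}$ independent of $x$, so $c_i=0$, again provided $W_i\le\kappa$. Hence at every index where $W_i\le \min(\delta_0,\kappa)$ one has $c_i\ge 0$, and a comparison argument pitting $W_i$ against the constant sub-solution $\delta:=\min(\delta_0,\kappa)$ forces $W_i(t)\ge\delta$ for all $i$ and $t\ge 0$.

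The main obstacle is the delicate case analysis within the transition zone $[-R_0,0]$, which requires the full strength of $(H_5)$ and of the ordering Lemma \ref{lem.CompDeBase} to guarantee $c_i\ge 0$ at any potential minimizer of $W$. A secondary technical point is justifying the comparison principle for an infinite linear system on $\mathbb Z$; this is handled via the uniform bound $\|W_i\|_\infty\le\|V\|_\infty$ together with the approximate finite speed of propagation (Lemma \ref{lem.finitespeedPSJunc}), which localizes the argument to a finite window of indices at a time before letting the window grow.
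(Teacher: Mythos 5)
The overall strategy — linearize in time, observe that a constant $\delta$ is a sub-solution of the system for $W_i := (d/dt)U_{e,i}$ provided the coefficient $c_i := \partial_x V_{Z_i}$ is nonnegative, and run a comparison argument at a hypothetical minimizer — is close in spirit to what the paper does. But the case analysis for $c_i\ge 0$ inside the transition zone has a genuine gap.

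You claim that if $U_{e,i}(t)\in[-R_0,-R_1]$ then Lemma \ref{lem.CompDeBase} gives the ordering $U_{e,i+1}(t)\le U_{e,\ell_i}(t)$, so that $(H_5)$-(i) applies and $c_i=0$. That is not what Lemma \ref{lem.CompDeBase} says. To compare $U_{i+1}$ and $U_{\ell_i}$ (when $\ell_i>i+1$) the lemma requires \emph{either} $U_{\ell_i}(t)\le -R_2$ \emph{or} $U_{i+1}(t)\le -R_2$, and neither is forced by the position of vehicle $i$: the vehicles ahead of $i$ can already be past $-R_2$, or even past $0$, while $U_i$ sits in $[-R_0,-R_1]$. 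In that regime $e_1=U_{i+1}-U_i$ can exceed $e_2=U_{\ell_i}-U_i$, $(H_5)$-(i) does not apply, and the sign of $c_i$ is uncontrolled by the assumptions. Since at a minimizer of $W$ the optimality condition reads $0=a_i(W_{i+1}-W_i)+b_i(W_{\ell_i}-W_i)+c_iW_i+(\text{positive term})$ with the first two terms $\ge 0$, a negative $c_iW_i$ is precisely the term you cannot discard — so the contradiction fails and the proof is incomplete.

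The paper closes this case by a different mechanism: from $U_{\ell_i}(t)<U_{i+1}(t)$ one infers via Lemma \ref{lem.CompDeBase} (applied to the pair $(i+1,\ell_i)$) that $U_{\ell_i}(t)\ge -R_2+\Delta_{\min}$, hence also $U_{i+1}(t)\ge -R_2+\Delta_{\min}$; combined with $U_i(t)<-R_1$ this forces both spacings $e_1,e_2\ge R_1-R_2+\Delta_{\min}$, so the velocity itself is bounded below by $\min_{x,z}V_z(R_1-R_2+\Delta_{\min},R_1-R_2+\Delta_{\min},x)$, contradicting $W_i<\delta$ directly rather than through a sign condition on $c_i$. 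Accordingly, the correct $\delta$ must include this extra term in its definition; your choice $\delta=\min(\delta_0,\kappa)$ omits it. A secondary remark: the attainment of the infimum of $W_i$ over $i\in\Z$ also needs a concrete justification (the paper truncates the system so that the velocities are exactly $v_e^0$, $v_e^k$ at infinity); your appeal to finite speed of propagation points in the right direction but, as written, does not yield the attainment needed to differentiate at the minimizer.
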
 

\begin{proof} Recall the definition of $\kappa$ in assumption  ($H_5$). Let us set $e^{\min}=\min_{k=0,\dots,K}e^k$ and 
$$
\delta := \min\{\kappa\ ,\  \min_{k=0, \dots, K} v^k_e\ ,\ \min_{x\in \R, \ z\in \mathcal Z} V_z(e^{\min},e^{\min},x)\ , \ \min_{x\in \R, \ z\in \mathcal Z} V_z(R_1-R_2+\Delta_{\min},R_1-R_2+\Delta_{\min},x) \}. 
$$
By Lemma \ref{lem.minvke}, the fact that $e^{\min}>\Delta_{\min}$ and assumption ($H_5$-(iii)) (combined with the fact that $\mathcal Z$ is finite and that $V$ is independent of $x$ for $x\notin [-R_0,0]$), we have that $\delta>0$. Fix $n\in \N$ large (say, $n\geq R_0/e^0$) and let $U^n$ be the solution to \eqref{eq.SystJuncTOT} with initial condition defined by 
  for any $i\in \Z$, $|i|\le n$ by
$$U^n_i(0)=\left\{\begin{array}{ll}
e^0i &{\rm if}\;  -n\le i\leq 0\\
 e^ki &{\rm if}\; 0\le i  \le n\;  {\rm and}\; k=T_i.
\end{array}\right.$$
If  $|i|>n$ we define $U^n_i(0)$, by induction by setting, if $i<-n$, 
$$
\tilde V^0_{Z_{i-1}}(U^n_{i}(0)-U^n_{i-1}(0))= v_e^0
$$
and, if $\ell_i> n$ and $k=T_i$, 
$$
\tilde V^k_{Z_{i}}(U^n_{\ell_i}(0)-U^n_{i}(0))= v_e^k.
$$
Then $U^n$ converges locally uniformly on $\Z\times [0,+\infty)$ to $U_e$ as $n\to +\infty$. We are going to show that the claim holds for $U^n$, which implies the claim for $U_e$. 

Let us first note that the claim holds for $t=0$. Indeed,  by definition of $U^n_{i}(0)$, we have, if $-n\leq i< 0$, 
$$
\frac{d}{dt} U^n_{i}(0)=V_{Z_i}(e^0,U_{\ell_i}^n(0)-U^n_i(0), e^0i) \geq V_{Z_i}(e^{\min},e^{\min}, ei) \geq \delta,
$$
while, if $i\ge 0$ and $k=T_i$, then 
$$
\frac{d}{dt} U^n_{i}(0)=\tilde V^k_{Z_i}(U_{\ell_i}^n(0)-U^n_i(0)) = \left\{\begin{array}{ll}
\tilde V^k_{Z_i}((\ell_i-i)e^k)\geq \delta & {\rm if }\; \ell_i\leq n ,\\
v^k_e \geq \delta & {\rm otherwise}.
\end{array}\right. 
$$
Finally, if $i<-n$,  then
$$
\frac{d}{dt} U^n_{i}(0)= \tilde V^0_{Z_i}(U^n_{i+1}(0)-U^n_i(0))=v_e^0\geq \delta.
$$
So we have proved that, in any case, $\frac{d}{dt} U^n_{i}(0)\geq \delta$. \\

By Lipschitz continuity in time of $I(t):=\inf_{i\in \Z} \frac{d}{dt} U^n_i(t)$, we have that $I(t)\geq \delta/2$ for $t\geq 0$ small. Let $[0, T]$ be an interval on which this inequality holds. We are going to show that actually $I(t)\geq \delta$ on $[0,T]$, which is enough to prove the claim. For this we argue by contradiction and assume that there is $(\bar t,j)\in [0,T]\times \Z$ such that $\frac{d}{dt} U^n_{j}(\bar t)< \delta$. Then, for $\ep>0$ small enough, 
$$
I:=\inf_{t\in [0, \bar t), \; i\in \Z}\;  \frac{d}{dt} U^n_{i}(t)+\frac{\ep}{\bar t-t}
$$
is less than $\delta$. In the next step we show that the infimum is actually a minimum. For this we first note that $\bar U^n_i(t):= U^n_i(0)+v^0_e t$ for $i\leq -N$ and $t\in [0,T]$ is a solution to \eqref{eq.SystJuncTOT} for $N$ large enough depending on $T$ and $n$. So, by  the approximate finite speed of propagation (Lemma \ref{lem.finitespeedPSJunc}) applied to $U^n_i$ and $\bar U^n_i$ for $i\leq -N$ and $t\in [0,T]$, we have for any $t\in [0, T]$
$$
\lim_{i\to -\infty} U^n_{i}(t)-U^n_{i}(0)= tv_e^0, 
$$
so that
$$
\lim_{i\to -\infty} \frac{d}{dt} U^n_{i}(t) = \lim_{i\to -\infty} \tilde V^0_{Z_i}(U^n_{i+1}(t)-U^n_i(t))= \lim_{i\to -\infty} \tilde V^0_{Z_i}(U^n_{i+1}(0)-U^n_i(0))=v_e^0\geq \delta.
$$
On the other hand, by the  construction of $U^n$, we have for $i>n$ and $T_i=k$ that $U^n_i(t)= U^n_i(0)+tv_e^k$, so that 
$$
\lim_{i\to +\infty, \ T_i=k} \frac{d}{dt} U^n_{i}(t) =v_e^k\geq  \delta. 
$$
This shows that the infimum in the definition of $I$ is a minimum: let $(t_0,i_0)$ be a minimum point such that $i_0$ is maximal. Recalling that $I(0)\geq \delta$, we have $t_0>0$. By the optimality of $(t_0,i_0)$ and the maximality of $i_0$,  we have
\be\label{liaeuzhkrdjfg}
 \frac{d}{dt} U^n_{i_0+1}(t_0)>  \frac{d}{dt} U^n_{i_0}(t_0)\; {\rm and}\;  \frac{d}{dt} U^n_{\ell_{i_0}}(t_0)>  \frac{d}{dt} U^n_{i_0}(t_0).
\ee
By optimality of $t_0>0$, we also have  (omitting the dependence of $V_{Z_{i_0}}$ with respect to its parameters to simplify the notation)
$$
0= \partial_{e_1} V_{Z_{i_0}}\left( \frac{d}{dt}U^n_{i_0+1}(t_0)-  \frac{d}{dt} U^n_{i_0}(t_0)\right) 
+ \partial_{e_2} V_{Z_{i_0}}\left( \frac{d}{dt}U^n_{\ell_{i_0}}(t_0)-  \frac{d}{dt} U^n_{i_0}(t_0)\right) 
+ \partial_x V_{Z_{i_0}} \frac{d}{dt} U^n_{i_0}(t_0) +\frac{\ep}{(\bar t-t_0)^2}.
$$
By \eqref{liaeuzhkrdjfg}, all the terms are nonnegative except perhaps $\partial_x V_{Z_{i_0}}$. By assumption ($H_5$-(ii)), we cannot have $U^n_{i_0}(t_0) \geq -R_1$ since in this case $\partial_x V_{Z_{i_0}} \geq 0$. So $U^n_{i_0}(t_0) < -R_1$. Now, if $U^n_{\ell_{i_0}}(t_0)\geq U^n_{i_0+1}(t_0)$, then by assumption ($H_5$-(i)), 
$V_{Z_{i_0}}$ does not depend on $x$ and therefore $\partial_x V_{Z_{i_0}} = 0$. Thus $U^n_{\ell_{i_0}}(t_0)< U^n_{i_0+1}(t_0)$. By Lemma \ref{lem.CompDeBase}, with $i=i_0+1$ and $j=\ell_{i_0}>i_0+1$, this implies that $U^n_{\ell_{i_0}}(t_0)\ge -R_2+\Delta_{\min}$ and, hence, $U^n_{i_0+1}(t_0)\ge-R_2+\Delta_{\min}$. Thus 
$$
\frac{d}{dt} U^n_{i_0}(t_0) \geq  V_{Z_{i_0}}(R_1-R_2+\Delta_{\min},R_1-R_2+\Delta_{\min},U^n_{i_0}(t_0))\geq \delta, 
$$
which is again impossible. 
\end{proof}

A immediate consequence of Lemma \ref{lem.delta} is that $U_{e,i}(t)\to +\infty$ as $t\to +\infty$ and therefore $\theta_e(t)\to +\infty$ as $t\to +\infty$. Next we show a bound from above for $\theta_e$.

\begin{Lemma}\label{lem.estitheta} There exists a constant $C_\theta>0$ such that 
$$
 0 \leq \theta_e(t)-\theta_e(s) \leq C_\theta(t-s+1) \qquad \forall 0\leq s\leq t. 
$$
Moreover, for $t\geq 0$,
$$
\theta_e(t)\leq C_\theta t. 
$$
\end{Lemma}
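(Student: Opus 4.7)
\emph{Plan.} The monotonicity $\theta_e(t)\geq\theta_e(s)$ is immediate from Lemma~\ref{lem.delta}: since $\frac{d}{dt}U_{e,-i}(t)\geq\delta>0$, each map $t\mapsto U_{e,-i}(t)$ is strictly increasing, so once it exceeds $0$ it stays above, the set $\{i\geq0:U_{e,-i}(t)\leq 0\}$ shrinks in $t$, and $\theta_e$ is non-decreasing.

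For the upper bound on $n:=\theta_e(t)-\theta_e(s)$, I focus on the ``oldest'' newly-crossed vehicle, namely the one with index $-\theta_e(t)+1=-\theta_e(s)-n+1$. By definition $U_{e,-\theta_e(t)+1}(s)\leq 0<U_{e,-\theta_e(t)+1}(t)$, and the uniform Lipschitz bound $\|V\|_\infty$ on velocities yields
$$-U_{e,-\theta_e(t)+1}(s)\;\leq\; U_{e,-\theta_e(t)+1}(t)-U_{e,-\theta_e(t)+1}(s)\;\leq\;\|V\|_\infty(t-s).$$
The plan is to show that $U_{e,-\theta_e(t)+1}(s)$ must be very negative when $n$ is large, which will force $t-s$ to be proportionally large. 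The key auxiliary estimate, which I expect to be the delicate step, is a uniform bound on the number of indices $i$ with $U_{e,i}(s)\in(-R_2,0]$: by Lemma~\ref{lem.delta} each vehicle spends at most $R_2/\delta$ units of time in that strip, while by Lemma~\ref{lem.CompDeBase} two vehicles that reach the level $-R_2$ are separated by at least $\Delta_{\min}$, so the rate at which vehicles can cross $-R_2$ is bounded by $\|V\|_\infty/\Delta_{\min}$. Combining these gives a uniform bound $M:=R_2\|V\|_\infty/(\delta\Delta_{\min})$ on the number of vehicles in $(-R_2,0]$ at any time.

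Once $M$ is under control the proof concludes quickly. If $n\leq M+1$ the inequality is automatic with $C_\theta\geq M+1$. Otherwise, the $M$-bound implies that the index $-\theta_e(s)-M-1$ satisfies $U_{e,-\theta_e(s)-M-1}(s)\leq-R_2$, and Remark~\ref{rem.lem.distminnext} applied to indices below this one gives
$$U_{e,-\theta_e(t)+1}(s)\;\leq\;-R_2-(n-1-M)\Delta_{\min}.$$
Plugging this into the Lipschitz estimate above yields $(n-1-M)\Delta_{\min}\leq\|V\|_\infty(t-s)$, and hence $n\leq C_\theta(t-s+1)$ for $C_\theta:=\max(M+1,\|V\|_\infty/\Delta_{\min})$. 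The second statement follows from the first by taking $s=0$ and using $\theta_e(0)=0$ (up to re-adjusting $C_\theta$ to absorb the additive constant for small $t$). The main technical point throughout is the uniform bound on the number of vehicles in the transition zone, which relies crucially on the strict positive lower bound on velocities supplied by Lemma~\ref{lem.delta}.
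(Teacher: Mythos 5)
Your monotonicity argument is sound (and in fact the paper omits it); the issue is with the linear-growth bound, where there is a genuine gap.

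The problem is that Lemma~\ref{lem.CompDeBase} only gives the ordering $U_{e,i}(s)\leq U_{e,j}(s)-\Delta_{\min}$ for $i<j$ when at least one of the two positions lies at $\leq -R_2$. In the transition zone $(-R_2,0]$ --- and in particular on $[-R_1,0]$, where a vehicle is constrained only by its same-type leader $\ell_i$ --- vehicles of different types may overtake one another, so "$i<j\Rightarrow U_{e,i}(s)\leq U_{e,j}(s)$" can fail near the junction. Two steps of your argument silently rely on exactly this ordering. The minor one is the assertion "by definition $U_{e,-\theta_e(t)+1}(s)\leq 0$": since $-\theta_e(t)+1\leq -\theta_e(s)$, the infimum defining $\theta_e(s)$ says nothing about that index (this is harmless, though, because your Lipschitz estimate only needs $U_{e,-\theta_e(t)+1}(t)>0$). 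The serious one is the claim that the $M$-bound forces $U_{e,-\theta_e(s)-M-1}(s)\leq -R_2$. The $M$-bound counts how many indices sit in $(-R_2,0]$; if an index strictly below $-\theta_e(s)$ has escaped to a position $>0$ after overtaking, it is simply not counted, and nothing pins down the position of the specific index $-\theta_e(s)-M-1$. Without that, you cannot apply Remark~\ref{rem.lem.distminnext} with $j=-\theta_e(s)-M-1$ to get the spacing $U_{e,-\theta_e(t)+1}(s)\leq -R_2-(n-1-M)\Delta_{\min}$.

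The paper avoids this by shifting the reference time backward rather than reasoning at time $s$: set $i_0=-\theta_e(s)$ and use Lemma~\ref{lem.delta} to get $U_{e,i_0}(s-R_2/\delta)\leq U_{e,i_0}(s)-R_2\leq -R_2$. At time $s-R_2/\delta$ the reference vehicle is already at $\leq -R_2$, so Remark~\ref{rem.lem.distminnext} gives $U_{e,i}(s-R_2/\delta)\leq U_{e,i_0}(s-R_2/\delta)-(i_0-i)\Delta_{\min}$ for all $i<i_0$ with no ordering worry, and one then pushes forward to time $t$ by the Lipschitz bound $\|V\|_\infty$ (with three elementary sub-cases for small $s$ or $t$). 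Your $M$-bound on the number of vehicles in the strip is correct as a standalone fact and is implicit in the paper's computation, but it cannot localize a specific index without first establishing ordering; the robust way to do that here is the backward time-shift, not a count of the strip's occupants at time $s$.
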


\begin{proof} Let us first prove the second statement. For this we note that, for any $i\geq \|V\|_\infty t/e^0$, we have 
$$
U_{e,-i}(t) \leq -e^0i+ \|V\|_\infty t \leq 0. 
$$
So $\theta_e(t) \leq  \|V\|_\infty t/e^0$, which proves the claim. 

We now prove the first statement. Let $i_0:= -\theta_e(s)$. Then $U_{e,i_0}(s)\leq 0$. Let $\delta$ be the constant given by Lemma \ref{lem.delta}. Assume first that $s\geq R_2/\delta$. Then, by Lemma \ref{lem.delta}, $U_{e,i_0}(s-R_2/\delta)\leq U_{e,i_0}(s)-R_2\leq - R_2$. Recalling Remark \ref{rem.lem.distminnext} after Lemma \ref{lem.CompDeBase} we have, for any $i\leq i_0-\|V\|_\infty(t-s+R_2/\delta)/\Delta_{\min}$, 
$$
U_{e,i}(t)\leq U_{e,i}(s-R_2/\delta)+ \|V\|_\infty(t-s+R_2/\delta)\leq U_{e,i_0}(s-R_2/\delta)- \Delta_{\min} (i_0-i) + \|V\|_\infty(t-s+R_2/\delta)\leq -R_2< 0. 
$$
So, for any $i\leq  i_0 -\|V\|_\infty(t-s+R_2/\delta)/\Delta_{\min}$, we obtain $U_{e,i}(t) < 0$. This shows that
$$
\theta_e(t) \leq -i_0+ \|V\|_\infty(t-s+R_2/\delta)/\Delta_{\min}\leq  \theta_e(s) + C(t-s+1). 
$$
If $t\leq R_2/\delta$, the conclusion obviously holds. Finally, if $s\leq R_2/\delta$ and $t> R_2/\delta$, then, by the previous inequality and the first part of the proof, we have
\begin{align*}
\theta_e(t)  \leq & \theta_e(R_2/\delta) + C(t-R_2/\delta+1) \\
\leq&  \|V\|_\infty R_2/(\delta e^0)+  C(t-s+1)\\
\leq& \theta_e(s) + \|V\|_\infty R_2/(\delta e^0)+  C(t-s+1)\\
\leq &\theta_e(s) + C'(t-s+1). 
\end{align*}
\end{proof}

\subsection{A concentration inequality}

In this section, we prove a concentration inequality for 
$$
\theta_e(t)= \inf\{i\geq 0, \; U_{e, -i}(t)\leq 0\}. 
$$

\begin{Theorem}\label{thm:concentrationTOT} There is a constant $C>0$ such that for any $\ep \in (0,C^{-1}]$ and any $t\ge C \ep^{-1}$,  
$$
\P\left[ | \theta_e (t)-\E[\theta_e(t)]| \ \geq \ep t\right] \leq C\exp\{ -\ep^2 t/C\}.
$$
\end{Theorem}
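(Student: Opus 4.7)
The plan is to adapt the martingale method developed in \cite{ArCa18} (see also \cite{CaSm20} for the unbounded-increment variant). Introduce the increasing filtration $\mathcal{F}_n := \sigma(Z_i : i \geq -n)$ for $n \in \Z$, and the associated martingale
\[
M_n(t) := \E\bigl[\theta_e(t) \bigm| \mathcal{F}_n\bigr] - \E[\theta_e(t)], \qquad n \in \Z.
\]
Since $\mathcal{F}_n$ becomes trivial as $n \to -\infty$ and generates $\sigma(Z_i : i \in \Z)$ as $n \to +\infty$, one has the telescoping identity $\theta_e(t) - \E[\theta_e(t)] = \sum_{n \in \Z}(M_n(t) - M_{n-1}(t))$. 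The strategy is to truncate this sum to $O(t)$ terms, control each increment, and then apply a Bernstein-type martingale concentration inequality.

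First I would use the approximate finite speed of propagation (Lemma \ref{lem.finitespeedPSJunc}) together with the tail estimate on $J_n$ (Lemma \ref{lem.Jn}) to localize the problem. For $n_0 = [Ct]$ with $C$ chosen large (so that both $C 2^{-n_0} e^{\beta t} \ll \Delta_{\min}$ and $J_{n_0}$ covers the relevant range $[-C_\theta t, 0]$ with high probability), changing $Z_i$ for $|i| > n_0$ affects the values $U_{e,-j}(t)$ for $0 \leq j \leq \theta_e(t) \leq C_\theta t$ (the last bound being provided by Lemma \ref{lem.estitheta}) by less than $\Delta_{\min}$ and therefore does not move the integer $\theta_e(t)$. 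It follows that $M_n(t) = \theta_e(t) - \E[\theta_e(t)]$ for $n \geq n_0$ and $M_n(t) = 0$ for $n \leq -n_0$, up to a negligible additive error. The problem is thus reduced to bounding a sum of at most $2 n_0 + 1 = O(t)$ martingale increments.

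The central step is to bound the individual increments $|M_n(t) - M_{n-1}(t)|$. This is precisely the difficulty highlighted in the introduction: unlike in \cite{ArCa18}, these increments are unbounded because the distance between two successive vehicles may be arbitrarily large and because order among vehicles of different types is not preserved past the junction. To control them, I would use a coupling: let $(\tilde Z_i)$ be obtained from $(Z_i)$ by replacing $Z_{-n}$ with an independent copy; then, writing $\theta_e^{\tilde\omega}$ for the corresponding time function,
\[
|M_n(t) - M_{n-1}(t)| \leq \E\bigl[|\theta_e^\omega(t) - \theta_e^{\tilde\omega}(t)| \bigm| \mathcal{F}_n \vee \sigma(\tilde Z_{-n})\bigr].
\]
The comparison principle (Lemma \ref{lem.CompDeBase}) and the key distance estimate (Lemma \ref{key.estiUlli-Ui}) then control the pointwise difference $|\theta_e^\omega(t) - \theta_e^{\tilde\omega}(t)|$ by the distance between vehicle $-n$ and the nearest ``slow'' vehicle (of some type $z^k_{\min}$) further along the same branch. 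Since slow vehicles appear independently with strictly positive probability, this distance has geometric tails, and so the increments satisfy a Bernstein moment bound $\E[|M_n(t) - M_{n-1}(t)|^q] \leq q!\,\nu c^{q-2}$ uniformly in $n$ and $t$, with $\nu, c$ depending only on the data.

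Finally, applying Bernstein's inequality for martingale differences (as in the proof of Lemma \ref{lem.Jn}, via Corollary~2.11 of \cite{BLM13}) to the $O(t)$ non-trivial increments gives
\[
\P\bigl[|\theta_e(t) - \E[\theta_e(t)]| \geq \ep t\bigr] \leq C \exp\bigl(-\ep^2 t / C\bigr)
\]
in the regime $\ep \in (0, C^{-1}]$ and $t \geq C\ep^{-1}$, where the lower bound on $t$ ensures that $\ep t$ dominates the deterministic error introduced at the truncation step. The main obstacle is the third step: Lemma \ref{key.estiUlli-Ui} was designed precisely for this purpose, but it must be carefully combined with Lemma \ref{lem.delta} (to ensure a positive lower velocity) and with the comparison principle, while tracking the effect of a single $Z_{-n}$ both before and after the junction. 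This replaces the bounded-increment Azuma bound of \cite{ArCa18} by a genuine Bernstein-type estimate and is the main technical innovation of this section.
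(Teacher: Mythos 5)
Your overall strategy tracks the paper's: define a martingale in an index exploring the randomness from the junction outward, localize using the approximate finite speed of propagation (Lemma~\ref{lem.finitespeedPSJunc}) and the tail of $J_n$ (Lemma~\ref{lem.Jn}), control the increments via the distance to the nearest slow vehicle (Lemma~\ref{key.estiUlli-Ui}), and conclude by a concentration inequality. Two of your intermediate steps differ from the paper but are plausible: (i) you work directly with $\theta_e$ and argue that $M_n(t)$ is negligible for $n\leq -n_0$, whereas the paper introduces a truncated dynamics $U^m_{e,\cdot}$ (replacing $Z_i$ by $z^{T_i}_{\min}$ for $i\geq m$) so that $\theta^m_e$ is \emph{exactly} $\sigma(Z_{m-\bar n},\dots,Z_{m-1})$-measurable and then proves $|\theta_e - \theta^m_e|\leq C$ with high probability (Lemma~\ref{thetathetam}); (ii) your increment bound uses a single-coordinate coupling, whereas the paper (Lemma~\ref{lem.mol=jkzndcTOT}) conditions on the boundary trajectory $\left(U^{m}_{e,l^{m-n,\cdot}(k)}\right)_k$ and introduces auxiliary processes $\hat U^{n,x}$; both lead to an increment estimate controlled by $1 + \sigma^0_{\sigma_{m-n}}\wedge m - (m-n)$.

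The gap is the final concentration step. You conclude by invoking ``Bernstein's inequality for martingale differences (Corollary~2.11 of \cite{BLM13})'' from the unconditional moment bound $\E[|M_n-M_{n-1}|^q]\leq q!\,\nu c^{q-2}$. That corollary is for sums of \emph{independent} variables, and the genuinely hard issue here is that the conditional increment bound $\E[|M_{n+1}-M_n|^q\mid \mathcal{F}_{m,n}]$ is dominated by a \emph{random, $\mathcal{F}_{m,n}$-adapted} quantity (powers of $1 + \sigma^0_{\sigma_{m-n}}\wedge m - (m-n)$), not by a deterministic constant uniform over the past. No off-the-shelf Azuma or Freedman/Bernstein martingale bound with deterministic conditional moments applies. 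This is exactly why the paper uses the Bercu--Touati self-normalized martingale inequality \eqref{concentrationBercuTouatiTOT}, which conditions on the quadratic variation $[M]_n+\lg M\rg_n$, and then spends an entire further lemma (Lemma~\ref{lem.estilgMrgTOT}) controlling the tail of $[M]_n+\lg M\rg_n$ by decomposing the adapted bounds into sums of cubes and fifth powers of i.i.d.\ inter-arrival gaps of slow vehicles (to which Bernstein \emph{for i.i.d.\ sums} then applies). This two-stage argument---self-normalized martingale bound plus a separate tail estimate on the predictable bracket---is the substantive ingredient that your proposal collapses into a one-line appeal to Bernstein, and is where the proposed proof would break down.
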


The proof requires several steps. The first issue is that $\theta_e(t)$ depends a priori on all the $Z_i$, even for any $i\geq 0$ large. In order to reduce this dependence, we  introduce the auxiliary quantity $\theta^m_e(t)$ defined, for any $m\in \N$ large (say $m\geq 2$), by 
$$
\theta^m_e(t)=  \inf\{i\geq 0, \; U^m_{e, -i}(t)\leq 0\}
$$
where the $(U^m_{e,i})_{i\in\Z}$ is the solution of \eqref{eq.SystJuncTOT} where the sequences $Z_i$ and the initial condition $U_e(0)$  are replaced into $Z_i^m$ and $U^m_e(0)$ defined as follows: 
$$
Z^m_i:= \left\{\begin{array}{ll}
Z_i & {\rm if}\; i\leq m-1\\
z^k_{\min} & {\rm if}\; i\geq m\; {\rm and}\; T_i=k
\end{array}\right.
$$
and 
$$
U^m_{e,i}(0)=  \left\{\begin{array}{ll}
 U_{e,i}(0) & {\rm if}\; i\leq m-1\\
e^{T_i}(m-1)+e^{T_i}\sharp\{ j\in \{ m, \dots, i\}, \; T_j=T_i\} & {\rm if}\; i\geq m.
\end{array}\right.
$$
Let us recall that the $z^k_{\min}$ are introduced at the beginning of Subsection \ref{subsec.distVehicle}. 
Before proceeding, let us collect several important properties of the $U^m_{e,i}$. 

\begin{Lemma}\label{lem.Um}  \begin{enumerate}
\item For each $i\in \Z$ with $i\leq m-1$, $U^m_{e,i}$ is $\sigma\{Z_j, \; j\in \{i, \dots, m-1\}\}-$measurable. 
\item There exists $\delta>0$ such that $(d/dt) U^m_{e,i}(t)\geq \delta$ for any $i\in \Z$ and any $t\geq 0$. 
\item There exists a constant $C>0$ such that 
$$
 0 \leq \theta^m_e(t)-\theta^m_e(s) \leq C(t-s+1) \qquad {\rm and}\qquad \theta^m_e(t)\leq C t \qquad \forall 0\leq s\leq t.
$$
\item Setting $\sigma_i:= \inf\{j\geq i+1, \; Z_j= z^{T_i}_{\min} \}$ and $\sigma^0_i=\inf\{j\geq i+1, \; Z_j=z^0_{\min}\}$, we have 
\be\label{distnextUm}
U^m_{e,\ell_i}(t)\leq U^m_{e,i}(t)+ C (1+\sigma^0_{\sigma_i}\wedge m- i) \qquad \forall i\leq m-1, \; \forall t\geq 0.
\ee
\end{enumerate}
\end{Lemma}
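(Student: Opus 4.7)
The plan is to handle the four assertions in turn, leveraging throughout the fact that the modification of $Z$ and the initial condition for indices $\geq m$ places the tail of the system into an explicit steady state. For claim 1, the key preliminary observation is that, for $j\geq m$, $U^m_{e,j}(0)\geq e^{T_j}m>0$, so by monotonicity $U^m_{e,j}(t)\geq 0$ for all $t$, and assumption $(H_3)$ reduces the dynamics to $(d/dt)U^m_{e,j}=\tilde V^{T_j}_{z^{T_j}_{\min}}(U^m_{e,\ell_j}-U^m_{e,j})$. Restricted to a fixed outgoing road $k$, the indices with $T_j=k$ and $j\geq m$ form a decoupled system of identical slow vehicles equally spaced by $e^k$, whose unique solution is the rigid translation $U^m_{e,j}(t)=U^m_{e,j}(0)+t\bar v^k$. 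In particular, for any $i\leq m-1$ with $\ell_i\geq m$, the index $\ell_i$ is the first road-$T_i$ vehicle after $m-1$, and $U^m_{e,\ell_i}(t)=e^{T_i}m+t\bar v^{T_i}$ depends only on $T_i=T(Z_i)$. A backward induction on $i\in\{m-1,m-2,\ldots\}$ then gives the claim: for $i\geq 0$, $U^m_{e,i}(0)\geq 0$ implies $U^m_{e,i}(t)\geq 0$, and $(H_3)$ makes the dynamics involve only $Z_i$ and $U^m_{e,\ell_i}$; for $i\leq -1$ it additionally involves $U^m_{e,i+1}$, handled by induction.

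For claim 2, I would mimic the proof of Lemma \ref{lem.delta} almost verbatim. The initial velocities satisfy $(d/dt)U^m_{e,i}(0)\geq\delta$ thanks to the flat structure of the initial condition: for $i\leq -1$ the spacing to $i+1$ is $e^0$ and to $\ell_i$ at least $\Delta_{\min}$, giving velocity $\geq\delta$ by $(H_5)$; for $0\leq i\leq m-1$ on road $k$ the spacing to $\ell_i$ is $\geq e^k$ and $(H_3)$ yields velocity $\tilde V^k_{Z_i}(\mathrm{spacing})\geq\min_z\tilde V^k_z(e^k)>0$; for $i\geq m$ the velocity is exactly $\bar v^{T_i}$. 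The maximum-principle contradiction argument then carries over word for word using $(H_5)$, the infimum over $i$ being attained by the same finite-speed-of-propagation considerations as in Lemma \ref{lem.delta}, since the derivatives approach $\bar v^0$ as $i\to-\infty$ and $\bar v^{T_i}$ as $i\to+\infty$. Claim 3 is then immediate from claim 2, by the same reasoning as Lemma \ref{lem.estitheta}, whose proof uses only a lower velocity bound and the global bound $\|V\|_\infty$.

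Claim 4 is expected to be the main obstacle. The plan is to apply Lemma \ref{key.estiUlli-Ui} to $U^m_e$ (valid by claim 2), with slow-vehicle indices chosen adapted to the $Z^m$ sequence. First, the first slow vehicle on road $T_i$ in the $Z^m$ system occurs at index at most $\sigma_i\wedge m+O(1)$: if $\sigma_i\leq m-1$ then $Z^m_{\sigma_i}=Z_{\sigma_i}=z^{T_i}_{\min}$, and otherwise the first $j\geq m$ with $T_j=T_i$ works since $Z^m_j=z^{T_j}_{\min}=z^{T_i}_{\min}$. I would then split on whether $\sigma^0_{\sigma_i}\leq m-1$ and corresponds to a vehicle starting at or before $-R_0$: in the favorable case I would apply the second bound of Lemma \ref{key.estiUlli-Ui} with $i_1$ close to $\sigma^0_{\sigma_i}$, yielding directly $C(1+\sigma^0_{\sigma_i}-i)$; in the unfavorable case I would fall back on the first bound of Lemma \ref{key.estiUlli-Ui} (or Lemma \ref{lem.EstiDansRk} applied at a time $s$ where $U^m_{e,i}(s)\geq 0$), whose cruder estimate still fits within $C(1+m-i)=C(1+\sigma^0_{\sigma_i}\wedge m-i)$ in the regime $\sigma^0_{\sigma_i}\geq m$. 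The chief difficulty is the bookkeeping: identifying the appropriate slow indices in the truncated $Z^m$ system, estimating the initial positions $U^m_{e,i}(0)$, and transferring the resulting estimate from the chosen $Z^m$-slow index back to $\ell_i$ via same-road ordering (Lemma \ref{lem.CompDeBase}).
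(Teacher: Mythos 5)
Your proposal follows the paper's proof almost exactly: the deterministic tail for $j\geq m$ plus backward induction for (1), reuse of the arguments of Lemmas \ref{lem.delta} and \ref{lem.estitheta} for (2)--(3), and a case split feeding into Lemma \ref{key.estiUlli-Ui} with $i_0$ identified via $\sigma_i\wedge\inf\{j\geq m,\ T_j=k\}$ for (4). One small quantitative slip: the velocity of the frozen tail on road $k$ is $\tilde V^k_{z^k_{\min}}(e^k)$, not $\bar v^k=\tilde V^k_{z^k_{\min}}(e_{\max})$ (these agree only if $e^k\geq h^k_{\max,z^k_{\min}}$); this does not affect your argument, since measurability only needs a deterministic constant and the lower bound $\tilde V^k_{z^k_{\min}}(e^k)\geq\delta$ still holds because $e^k\geq e^{\min}$.
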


\begin{proof} 1) Let us first check that $U^m_{e,i}(t)= U^m_{e,i}(0)+\tilde V^k_{z^k_{\min}}(e^k)t$ for any $i\geq m$ with $T_i= k$. Indeed, for such any $i\geq m$, we have 
$$
\frac{d}{dt}U^m_{e,i}(t)= \tilde V^k_{z^k_{\min}}(e^k)
$$
while 
\begin{align*}
V_{Z^m_i}(U^m_{e,i+1}(t)-U^m_{e,i}(t),U^m_{e,\ell_i}(t)-U^m_{e,i}(t),U^m_{e,i}(t))& = \tilde V^k_{z^k_{\min}}(U^m_{e,\ell_i}(t)-U^m_{e,i}(t))\\
& = 
 \tilde V^k_{z^k_{\min}}(U^m_{e,\ell_i}(0)-U^m_{e,i}(0))= \tilde V^k_{z^k_{\min}}(e^k).  
\end{align*}
The measurability of $U^m_{e,i}$ for $i\leq m-1$ can then be  checked by backward induction. For $i=m-1$, $U^m_{e,m-1}$ solves (if we set $T_i=k$), 
$$
\frac{d}{dt} U^m_{e,m-1}(t)= \tilde V^{k}_{Z_i}(U^m_{\ell_i}(t)-U^m_{e,i}(t)), \; t\geq 0, \qquad U^m_{e,m-1}(0)=e^k(m-1).
$$
Given $Z_i$, the above equation has deterministic coefficients since $U^m_{\ell_i}(t)= e^k(m-1)+\tilde V^k_{z^k_{\min}}(e^k)t$. For $i\leq m-2$, it can be proved by induction in the same way that $U^m_{e,i}$ satisfies an ODE with coefficients which are $\sigma\{Z_j, \; j\in \{i, \dots, m-1\}\}-$measurable.

2) \& 3) The existence of $\delta>0$ such that $(d/dt) U^m_{e,i}(t)\geq \delta$  is a consequence of Lemma \ref{lem.delta} (which is a deterministic statement). In the same way, the estimate on $\theta^m_e$ is an application of Lemma \ref{lem.estitheta}. 

4) Let us finally check \eqref{distnextUm}. For this we use Lemma \ref{key.estiUlli-Ui}. Fix $i\leq m-1$ and let $k=T_i$, $i_0:= \inf\{j\geq i+1, \; Z^m_j= z^k_{\min}\}$ and $i_1=\inf\{j\geq i_0, \; Z^m_j= z^0_{\min}\}$ (if this exists).  Note that $i_0= \sigma_i\wedge (\inf\{j\ge m, T_j=k\})$ and $i_1= \sigma^0_{\sigma_i}$ if $\sigma^0_{\sigma_i}<m$. If $e^0i_1\leq -R_0$, then by the second part of Lemma \ref{key.estiUlli-Ui} we have 
$$
U^m_{e,\ell_i}(t)\leq U^m_{e,i_0}(t)\leq  U^m_{e,i}(t)+  C_0(1+U^m_{i_1}(0)-U^m_i(0)+ i_1-i)\le U^m_{e,i}(t)+ C (1+i_1- i) \qquad \forall t\geq 0.
$$
As $i_1\leq 0< m-1$ we also have $i_1=\sigma^0_{\sigma_i}< m$, which proves the inequality in this case.

 Let us now assume that $e^0i_1> -R_0$. According to the first part of Lemma \ref{key.estiUlli-Ui} we have 
$$
U^m_{e,\ell_i}(t)\leq  U^m_{e,i_0}(t)\leq U^m_{e,i}(t)+ C_0(e^0i)_-+ U^m_{e,i_0}(0)-U^m_{e,i}(0)+C_0\sharp \{j\in \{i+1, \dots, i_0\}, \; T(Z^m_j)=k\}.
$$
As $e^0i_1> -R_0$, we have 
$$
(e^0i)_- \leq e^0(i_1\wedge m- i)+ R_0\leq e^0(\sigma^0_{\sigma_i}\wedge m-i) +R_0.
$$
On the other hand, by the construction of the $U^m_{e,j}(0)$, we have $U^m_{e,i_0}(0)= (m\wedge \sigma_i)e^k$. Finally, as by definition of the $ Z^m_j$ and of $i_0$ we also have (with $k:=T_i$) 
$$
\sharp \{j\in \{i+1, \dots, i_0\}, \; T(Z^m_j)=k\} \leq \sharp \{j\in \{i+1, \dots, m-1\}, \; T(Z^m_j)=k\}+1 \leq \sigma_i\wedge m-i+1. 
$$
This shows that 
$$
U^m_{e,\ell_i}(t)\leq U^m_{e,i}(t)+ C (\sigma^0_{\sigma_i}\wedge m- i+1) \qquad \forall t\geq 0.
$$

\end{proof}

We now note that $\theta_e$ and $\theta^m_e$ are close.

\begin{Lemma}\label{thetathetam}  There exists a constant $C>0$ such that, for any $ t\geq C$ and if  $m=[Ct]$,   
$$
\P\left[|\theta_e(t)- \theta^{m}_e(t) |> C \right]\leq C\exp\{- t/C\}.
$$
\end{Lemma}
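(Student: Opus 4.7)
My plan is to use the approximate finite speed of propagation (Lemma~\ref{lem.finitespeedPSJunc}) together with the uniform lower velocity bound (Lemmas~\ref{lem.delta} and~\ref{lem.Um}(2)) and the asymptotic behavior of the indices $J_n$ (Lemma~\ref{lem.Jn}). The observation that drives the proof is that $U_{e,i}(0)=U^m_{e,i}(0)$ for every $i\le m-1$ and, since $Z^m_i=Z_i$ for $i\le m-1$, both $U_e$ and $U^m_e$ satisfy exactly the same ODE system on this range. The differences between the two solutions at indices $i\le m-1$ can therefore only propagate in from the ``boundary'' at indices $i\ge m$, and this boundary influence decays geometrically in the depth parameter $n$ of Lemma~\ref{lem.finitespeedPSJunc}.

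First I would set $i_0:=-\lceil C_\theta t\rceil-1$ and $L:=m-1-i_0$. Since $J_1(m-1)\le m-2$, both $U_e$ and $U^m_e$ are nondecreasing genuine solutions of the original system on $\{i_0,\dots,J_1(m-1)\}$, and their initial data coincide on $\{i_0,\dots,m-1\}$; Lemma~\ref{lem.finitespeedPSJunc}, applied once with $U_e$ as subsolution and $U^m_e$ as supersolution and once with the roles reversed, then gives
$$
|U_{e,i}(t)-U^m_{e,i}(t)|\le C_1\, 2^{-n}\,e^{\beta t}\qquad \forall\, i\in\{i_0,\dots,J_n(m-1)\},\ \forall\, n\ge 1.
$$
I would then choose $n:=\lceil(\beta t+\log(2C_1/\delta))/\log 2\rceil$ so that the right-hand side is bounded by $\delta/2$, where $\delta$ is the uniform lower velocity bound. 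This forces $n\le c_1 t+c_2$ for constants depending only on $V$.

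Next I would introduce the event $E:=\{J_n(m-1)\ge 0\}$ and control $\P(E^c)$ via Lemma~\ref{lem.Jn} applied with $T=m-1$ and, say, $\epsilon=\alpha/2$: provided the universal constant $C$ is chosen large enough that $m=[Ct]$ satisfies $m-1\ge(3\alpha/2)n$ whenever $t\ge C$, we get $J_n(m-1)\ge 0$ on $E$ with $\P(E^c)\le 2\exp(-\alpha^2 n/(4C'))\le C_2\exp(-t/C_2)$. On $E$, the range $\{i_0,\dots,J_n(m-1)\}$ contains $\{-\lceil C_\theta t\rceil-1,\dots,0\}$, which by Lemma~\ref{lem.estitheta} and Lemma~\ref{lem.Um}(3) contains both $-\theta_e(t)$ and $-\theta^m_e(t)$.

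Finally, on $E$, setting $a:=\theta_e(t)$, the bound $U^m_{e,-a}(t)\le U_{e,-a}(t)+\delta/2\le\delta/2$ combined with $(d/dt)U^m_{e,-a}\ge\delta$ gives $U^m_{e,-a}(t-1/2)\le 0$ (recall $t\ge C\ge 1$), so the monotonicity of $\theta^m_e$ from Lemma~\ref{lem.Um}(3) yields $\theta^m_e(t)\ge\theta^m_e(t-1/2)\ge a=\theta_e(t)$. The symmetric argument with the roles of the two systems reversed gives $\theta_e(t)\ge\theta^m_e(t)$; hence $\theta_e(t)=\theta^m_e(t)$ on $E$, and the lemma follows. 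The main obstacle is calibrating the three scales so that everything fits: we must take $n$ large enough (of order $\beta t/\log 2$) to kill the exponential factor $e^{\beta t}$ from the speed-of-propagation estimate, yet $m$ must still be large enough relative to $n$ that $J_n(m-1)$ descends all the way down to $0$, which forces the threshold $C>\alpha\beta/\log 2$ in the choice $m=[Ct]$.
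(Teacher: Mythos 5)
Your proof follows the paper's approach: apply the approximate finite speed of propagation (Lemma~\ref{lem.finitespeedPSJunc}) on a window where the two systems have identical initial data and types, choose $n\sim \beta t/\log 2$ to kill the factor $e^{\beta t}$, then use Lemma~\ref{lem.Jn} to ensure $J_n(m-1)\ge 0$ with probability $1-Ce^{-t/C}$ after calibrating $m=[Ct]$. All of this is correct and matches the paper's argument.

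However, the final step contains a sign error that leads to a false conclusion. You set $a:=\theta_e(t)$, derive $U^m_{e,-a}(t-1/2)\le 0$, and then assert $\theta^m_e(t-1/2)\ge a$. This is backward: since $\theta^m_e(s)=\inf\{i\ge 0:\,U^m_{e,-i}(s)\le 0\}$, knowing that $U^m_{e,-a}(t-1/2)\le 0$ means $a$ \emph{belongs} to the set over which the infimum is taken, hence $\theta^m_e(t-1/2)\le a$, not $\ge a$. Consequently the claimed chain $\theta^m_e(t)\ge\theta^m_e(t-1/2)\ge a$ breaks down, and the conclusion $\theta_e(t)=\theta^m_e(t)$ on $E$ is simply not true — an additive constant shift between the two quantities is entirely compatible with the bound you established and with the exact-equality claim cannot be derived from it. With the correct direction you get $\theta^m_e(t-1/2)\le \theta_e(t)$, and then Lemma~\ref{lem.Um}(3) gives $\theta^m_e(t)\le \theta^m_e(t-1/2)+C\le\theta_e(t)+C$; the symmetric argument (start from $a:=\theta^m_e(t)$, use Lemma~\ref{lem.delta} and Lemma~\ref{lem.estitheta}) yields $\theta_e(t)\le\theta^m_e(t)+C$. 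Together these give $|\theta_e(t)-\theta^m_e(t)|\le C$ on $E$, which is the lemma's statement. So the fix is minor and the structure of your argument is sound, but as written the last paragraph derives a false intermediate inequality and overstates the conclusion.
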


\begin{proof} Note that $(U_{e,i})$ and $(U^m_{e,i})$ solve the same equation for $i\leq m-1$ with the same initial condition.  Lemma \ref{lem.finitespeedPSJunc} on the approximate speed of propagation then states that there exists constants $C>0$ and $\beta >0$ such that, for all $n\in \N$, $n\ge 1$ and $i\leq  J_n(m-1)$, 
 $$
 |U^{m}_{e,i}(s) -U_{e,i}(s)| \leq C\ 2^{-n}e^{\beta s}  \qquad \forall s  \geq 0.
$$
Fix $t>0$ and let us choose $n = [\beta t/\ln(2)]+1$ and $m = [(1+\alpha) n]$ where $\alpha>0$ is defined in Lemma \ref{lem.Jn}. In the event $\{J_n(m-1)\geq 0\}$, we have
 $$
 |U^{m}_{e,i}(s) -U_{e,i}(s)| \leq C \qquad \forall s  \in[ 0,t], \; \forall i\leq 0.
$$
Let us check that this inequality implies in the event $\{J_n(m-1)\geq 0\}$ that
\be\label{zalejfkdngc}
| \theta^{m}_e(t) -\theta_e(t)| \leq C.
\ee
Indeed, if $U^m_{e,i}(t)\leq 0$, then $U_{e,i}(t)\leq C$. Assume $t\geq C\delta^{-1}$ where $\delta$ is defined in Lemma \ref{lem.delta}. Then Lemma \ref{lem.delta} implies that $U_{e,i}(t-C\delta^{-1})\leq 0$. This shows that 
$$
\theta_e(t-C\delta^{-1})\leq \theta^m_e(t),
$$
and thus, by Lemma \ref{lem.estitheta}, that 
$$
\theta_e(t)-C' \leq \theta^m_e(t),
$$
for some new constant $C'$. If $t< C\delta^{-1}$, then, by Lemma  \ref{lem.estitheta}, 
$$
\theta_e(t)\leq C'  \leq \theta^m_e(t)+C'. 
$$
Therefore $\theta_e(t)-\theta^m_e(t)\leq C'$ in any case. The inequality 
$$
\theta^m_e(t)-C' \leq \theta_e(t)
$$
can be checked in the same way, by using points 2) and 3) of Lemma \ref{lem.Um}. This proves \eqref{zalejfkdngc}. 

As \eqref{zalejfkdngc} holds in the event  $\{J_n(m-1)\geq 0\}$, we get
$$
\P\left[|\theta_e(t)- \theta^{m}_e(t)| > C\right]\leq \P\left[ J_n(m-1)< 0\right]\leq \P\left[ J_n(m)< 0\right]. 
$$
Recalling the choice of $m$ and Lemma \ref{lem.Jn} (with $\ep=1$) we have
$$
 \P\left[ J_n(m)< 0\right]\leq  \P\left[ J_n(m) -m+\alpha n < -m+\alpha n+1\right]\leq \P\left[ J_n(m) -m+\alpha n< -n \right] \leq C\exp\{-t/C\}.
 $$
This gives the result. 
\end{proof}

The key step of the proof of Theorem \ref{thm:concentrationTOT} consists in establishing a concentration inequality for $\theta^{m}_e(t)$. 
To do so, let us set, for $n\in\N$,  $\mathcal F_{m,n}=\sigma\{ Z_{i}, \; i\in \{ m-n, \dots, m-1\}\ \}$ if $n\geq 1$ and $\mathcal F_{m,0}=\{\varnothing, \Omega\}$. We also set
$$
M_n(t)= \E\left[ \theta^m_e(t)\ |\ \mathcal F_{m,n}\right]- \E\left[ \theta^m_e(t)\right].
$$
Note that $(M_n(t))$ is a martingale with $M_0(t)=0$. \\

As, by Lemma \ref{lem.Um}, $\theta^m_e(t)\leq Ct$ and $\{\theta^m_e(t)\leq n\}= \{U^m_{-n}(t)\leq0\}$ is $\mathcal F_{m,m+n}-$measurable for any $n\in \N$, we have that $M_{n}(t)=\theta^m_e(t)-\E[\theta^m_e(t)]$  for $n\geq  \bar n:=[Ct]$ for $C$ large enough.\\

The next step is instrumental and consists in estimating $\left| M_{n+1}(t)-M_n(t)\right|$. 

\begin{Lemma} \label{lem.mol=jkzndcTOT} For any $n\in \N$, 
\be\label{mol=jkzndcTOT}
\left| M_{n+1}(t)-M_n(t)\right|\leq  C(1+\sigma^0_{\sigma_{m-n}}\wedge m-(m-n)), 
\ee
where $\sigma^0$ and $\sigma$ are defined in Lemma \ref{lem.Um}. 
\end{Lemma}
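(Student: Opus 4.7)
The plan is a standard martingale/coupling argument. I introduce an independent copy $Z'_{m-n-1}$ of $Z_{m-n-1}$, and denote by $(\tilde U^m_{e,i})_{i\in\Z}$ and $\tilde\theta^m_e(t)$ the quantities obtained from $(Z^m_i)$ and $(U^m_{e,i})$ by replacing $Z_{m-n-1}$ with $Z'_{m-n-1}$. By exchangeability, $\E[\tilde\theta^m_e(t)\mid\mathcal F_{m,n+1}]=\E[\theta^m_e(t)\mid\mathcal F_{m,n}]$, so
$$
|M_{n+1}(t)-M_n(t)|\ =\ \bigl|\E[\theta^m_e(t)-\tilde\theta^m_e(t)\mid\mathcal F_{m,n+1}]\bigr|\ \le\ \E\bigl[\,|\theta^m_e(t)-\tilde\theta^m_e(t)|\,\bigm|\,\mathcal F_{m,n+1}\bigr].
$$
Writing $\rho:=1+\sigma^0_{\sigma_{m-n}}\wedge m-(m-n)$, the reduction leaves me to establish the pathwise bound $|\theta^m_e(t)-\tilde\theta^m_e(t)|\le C\rho$.

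\textbf{Step 2: The corridor at index $m{-}n{-}1$.} First I observe that the dynamics of every vehicle $j\ge m-n$ involves only $Z_i$'s with $i\ge m-n$ (directly through $V_{Z_j}$, through $i+1$, and through $\ell_j\ge m-n+1$), so $U^m_{e,j}(\cdot)\equiv \tilde U^m_{e,j}(\cdot)$ for $j\ge m-n$. The trajectory of vehicle $m-n-1$ in either system is then an ODE whose upstream data $U^m_{e,m-n}(\cdot)$ and the same-type leader $U^m_{e,\ell^{(\bullet)}_{m-n-1}}(\cdot)$ (with $\ell^{(\bullet)}_{m-n-1}\ge m-n$) coincide with those of the unperturbed system. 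Using Lemma~\ref{lem.Um}(4) and the ordering of Lemma~\ref{lem.CompDeBase}, both $U^m_{e,m-n-1}(s)$ and $\tilde U^m_{e,m-n-1}(s)$ are trapped in a corridor of width $\le C\rho$ behind the common-type leader; moreover, the two possible leaders $\ell^{(1)}_{m-n-1}$ and $\ell^{(2)}_{m-n-1}$ both lie in the same $O(\rho)$-wide index window (each no further than $\sigma^0_{\sigma_{m-n}}\wedge m$), so their trajectories differ by $O(\rho)$ at each time. This yields $\sup_{s\in[0,t]}|U^m_{e,m-n-1}(s)-\tilde U^m_{e,m-n-1}(s)|\le C\rho$.

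\textbf{Step 3: Propagation to earlier indices and conversion.} For $j<m-n-1$ the two systems share the same velocity law $V_{Z_j}$ and differ only in the upstream trajectories. Using the monotonicity of $V$ in its first two arguments together with the basic comparison principle and the approximate finite-speed-of-propagation estimate of Lemma~\ref{lem.finitespeedPSJunc}, the spatial discrepancy propagates without amplification, giving $\sup_{s\in[0,t]}|U^m_{e,j}(s)-\tilde U^m_{e,j}(s)|\le C\rho$ for every $j\le m-n-1$. Finally, since $(d/dt)U^m_{e,j}\ge\delta>0$ by Lemma~\ref{lem.Um}(2), the time at which vehicle $-i$ crosses $0$ is $1/\delta$-Lipschitz in its trajectory, and together with the growth estimate on $\theta^m_e$ from Lemma~\ref{lem.Um}(3) this converts the spatial bound into the desired inequality $|\theta^m_e(t)-\tilde\theta^m_e(t)|\le C\rho$.

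\textbf{Main obstacle.} The delicate point will be Step 2: the trajectories $U^m_{e,m-n-1}$ and $\tilde U^m_{e,m-n-1}$ need not be ordered; the relevant leaders depend on $T(Z_{m-n-1})$ and $T(Z'_{m-n-1})$ and in general differ; and on the junction the velocity is $x$-dependent, so global shifts do not produce exact sub/supersolutions. The crux is that Lemma~\ref{lem.Um}(4) supplies a time-uniform bound on the distance to the same-type leader in terms of $\sigma^0_{\sigma_\bullet}\wedge m$, and (passing from the index $m-n-1$ to $m-n$ at an $O(1)$ cost) this is precisely the width $\rho$ of the common corridor in which both trajectories are trapped, and crucially $\rho$ is $\mathcal F_{m,n+1}$-measurable (the cap at $m$ is what makes this true).
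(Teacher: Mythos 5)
The coupling identity in Step 1 is sound, and it is a legitimate alternative to the paper's strategy, which instead conditions on the boundary trajectory $x=(U^m_{e,l^{m-n,\omega}(k)})_k$, introduces the auxiliary system $\hat U^{n,x}$ that depends only on $\{Z_i,\,i\le m-n-1\}$, and compares $\E[\hat\theta^{n,x}]$ with $\E[\hat\theta^{n+1,\tilde x}]$ through one monotonicity step and one index-plus-time shift. That conditioning device is what lets the paper sidestep all of the difficulties you flag in your final paragraph. Your coupling route faces them head-on and, as written, does not overcome them.

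The core gap is in Step 2, and the parenthetical ``passing from the index $m-n-1$ to $m-n$ at an $O(1)$ cost'' is where it breaks. Lemma~\ref{lem.Um}(4) applied at $i=m-n-1$ gives a corridor width $C(1+\sigma^0_{\sigma_{m-n-1}}\wedge m-(m-n-1))$, where $\sigma_{m-n-1}$ searches for the first slow vehicle \emph{of type $T_{m-n-1}$}. In your coupled pair the relevant types are $a:=T(Z_{m-n-1})$ and $b:=T(Z'_{m-n-1})$, which are in general distinct from each other and from $T_{m-n}$. So $U^m_{e,m-n-1}$ is trapped behind a type-$a$ leader in a corridor of width governed by $\sigma^0_{\sigma_{m-n-1}}$ (with type $a$), and $\tilde U^m_{e,m-n-1}$ behind a type-$b$ leader with a width governed by a $\sigma$ computed with type $b$. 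Neither of these is comparable to $\rho$, which uses $T_{m-n}$; there is no ``common corridor,'' and the index shift from $m-n-1$ to $m-n$ changes the type hence is \emph{not} an $O(1)$ cost. This is not a cosmetic issue: $\sigma^0_{\sigma_{m-n-1}}$ depends on $Z_{m-n-1}$ (hence is $\mathcal F_{m,n+1}$- but not $\mathcal F_{m,n}$-measurable), and the $b$-version depends on $Z'_{m-n-1}$, which lives outside the filtration entirely. Your bound $\rho$ is simply the wrong random variable.

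There are two further obstacles your Step~2--3 sketch does not address. First, when $m-n-1>0$ the initial position $U^m_{e,m-n-1}(0)=e^{T_{m-n-1}}(m-n-1)$ itself changes with the type, so the two coupled trajectories of vehicle $m-n-1$ do not even start from the same point, and the gap at time $0$ is of order $(m-n-1)$, not $O(\rho)$. Second, the wiring: for the last index $j<m-n-1$ with $T_j=a$ one has $\ell_j=m-n-1$ in the $U$-system but $\ell_j=\ell_{m-n-1}\ge m-n$ in the $\tilde U$-system (and symmetrically for the last $j'$ of type $b$). So for these indices ``the two systems share the same velocity law'' in Step~3 is false; the leader index changes, and the discrepancy does not propagate by a simple Gr\"onwall/monotonicity argument. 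The paper's $\hat U^{n,x}$ device makes these issues vanish, because the boundary $x$ is frozen and the comparison is between two deterministic functions of $x$ and $\tilde x$; in your coupling the boundary moves with $Z_{m-n-1}$, and that is precisely what the argument cannot currently absorb.
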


\begin{proof}
Let us first remark that, for any $n\geq 0$, $\theta^m_e(t){\bf 1}_{\{\theta^m_e(t)\leq (m-n-1)_-\}}$ is $\mathcal F_{m,n}-$measurable. Hence 
$$
 \left| M_{n+1}(t)-M_n(t)\right|= \left| \E\left[ \theta^m_e(t)\ |\ \mathcal F_{m,n+1}\right] -\E\left[ \theta^m_e(t)\ |\ \mathcal F_{m,n}\right]\right| {\bf 1}_{\{\theta^m_e(t)> (m-n-1)_-\}}.
$$
In the next steps, we  work in $\{U^m_{m-n}(t)> 0\}$. Let us introduce some notation. Given $n\in \N$ and a continuous componentwise nondecreasing map $x=(x^1, \dots, x^K):[0,+\infty)\to \R^{K}$,  we denote by $\hat U^{n,x}=(\hat U^{n,x,\omega}_i)_{i\leq m-n-1}$ the solution to 
\begin{align*}
&\frac{d}{dt} \hat U^{n,x,\omega}_i(\tau) \\
&=
\left\{\begin{array}{ll}
 V_{Z_i^\omega}(\hat U^{n,x,\omega}_{i+1}(\tau)-\hat U^{n,x,\omega}_{i}(\tau), \hat U^{n,x,\omega}_{l_i^\omega}(\tau)-\hat U^{n,x,\omega}_{i}(\tau),\hat U^{n,x,\omega}_{i}(\tau))  & {\rm if}\; i\leq m-n-2,\; \ell_i\leq m-n-1,\\
V_{Z_i^\omega}(\hat U^{n,x,\omega}_{i+1}(\tau)-\hat U^{n,x,\omega}_{i}(\tau), x^{T_i}(\tau)-\hat U^{n,x,\omega}_{i}(\tau),\hat U^{n,x,\omega}_{i}(\tau))
& {\rm if}\; i\leq m-n-2,\; \ell_i\geq m-n,\\
 V_{Z_i^\omega}(x^{k_0}(\tau)-\hat U^{n,x,\omega}_{i}(\tau), x^{T_i}(\tau)-\hat U^{n,x,\omega}_{i}(\tau),\hat U^{n,x,\omega}_{i}(\tau))
& {\rm if}\; i= m-n-1,
  \end{array}\right. 
\end{align*}
with for $ i\leq m-n-1$
$$
\hat U^{n,x,\omega}_{i}(0)=\left\{\begin{array}{ll}
e^0i &{\rm if}\;  i\leq 0\\
 e^ki &{\rm if}\; i\geq 0\;  {\rm and}\; k=T_i
\end{array}\right.$$ 
where $k_0\in \{1, \dots, K\}$ is such that $x^{k_0}(0)\leq x^k(0)$ for any $k\in \{1, \dots, K\}$ (if there are several minimizers of $x^k(0)$ we choose the smallest one). An important property of the $(\hat U^{n,x,\omega}_i)_{i\leq m-n-1}$ is that they depend on $\{Z_i, \; i\leq m-n-1\}$ only. We also define 
$$
\hat \theta^{n,x,\omega}(t)= \inf \left\{ i\ge  (m-n-1)_-, \; \hat U^{n,x,\omega}_{-i}(t)\leq 0\right\}. 
$$
We note that $U^{m,\omega}_{e,i}(\tau)= \hat U^{n,\left(U^{m,\omega}_{e,l^{m-n,\omega}(k)}\right)_{k=1,\dots,K},\omega}_i(\tau)$ for any $i\leq m -n-1$ and $\tau\geq 0$ where
\be\label{defellik}
l^{i,\omega}(k)=\inf\{j\ge i,\; T_j^\omega=k\}.
\ee
 Moreover, $\theta^{m,\omega}_e(t)= \hat \theta^{n, \left(U^{m,\omega}_{e,l^{m-n,\omega}(k)}\right)_{k=1,\dots,K},\omega}_e(t)$ in $\{U_{e,m-n}^{m,\omega}(t)>0\}$. As $\hat \theta^{n,x}(t)$ depends only on $\{Z_i, \; i\leq m- n-1\}$ while the $U^{m,\omega}_{e,l^{m-n,\omega}(k)}$ are $\mathcal F_{m,n}-$measurable, we have, in $\{U_{e,m-n}^{m,\omega}(t)>0\}$,  
$$
\E\left[ \theta^m_e(t)\ |\ \mathcal F_{m,n}\right]= 
\E\left[ \hat \theta^{n,x}(t)\right]_{x=\left(\hat U^{m,\omega}_{e,l^{m-n,\omega}(k)}\right)_{k=1,\dots,K}}.
$$
In the same way, we have
$$
\E\left[ \theta^m_e(t)\ |\ \mathcal F_{m,n+1}\right]\le 1+ \E\left[ \hat \theta^{n+1,\tilde x}(t)\right]_{\tilde x=\left(\hat U^{m,\omega}_{l^{{m-n-1},\omega}(k)}\right)_{k=1,\dots,K}}.
$$
So 
\begin{align}\label{deuxJUNCTOT}
&\left| M_{n+1}(t)-M_n(t)\right| \leq\\
& \qquad 1 +\Bigl|  \E\left[ \hat \theta^{n+1,\tilde x}(t)\right]_{\tilde x=\left(U^{m,\omega}_{l^{{m-n-1},\omega}(k)}\right)_{k=1,\dots,K}}- \E\left[ \hat \theta^{n,x}(t)\right]_{x=\left(U^{m,\omega}_{l^{m-n,\omega}(k)}\right)_{k=1,\dots,K}}\Bigr| {\bf 1}_{\{\theta^m_e(t)> (m-n-1)_-\}}.\notag
\end{align}

Next we estimate the difference between $\E\left[ \hat \theta^{n+1,\tilde x}(t)\right]$ and $\E\left[ \hat \theta^{n,x}(t)\right]$ when $\tilde x=\left(U^{m,\omega}_{l^{{m-n-1},\omega}(k)}\right)_{k=1,\dots,K}$ and $x=\left(U^{m,\omega}_{l^{m-n,\omega}(k)}\right)_{k=1,\dots,K}$ (recall that we work in $\{U^m_{m-n}(t)> 0\}$).  For this we fix two $C^1$  maps $x,\tilde x:[0,\infty)\to \R^K$ such that there exists $k_0\in \{1, \dots, K\}$ and $\gamma\geq 1$ with, for any $k\in \{1, \dots, K\}$,  
\begin{align}\label{hypsurxtildex}
& (d/d\tau)x^k(\tau)\geq \delta\; \text{ and} \;(d/d\tau)\tilde x^k(\tau)\geq  \delta, \qquad 
 x^k=\tilde x^k \; {\rm if }\; k\neq k_0\notag\\
&\text{and}\; -\gamma+x^{k_0}(\tau) \leq \tilde x^{k_0}(\tau)\leq x^{k_0}(\tau)\qquad \forall \tau\geq 0.
\end{align}
Note that the conditions above are satisfied by $\tilde x=(U^{m,\omega}_{l^{{m-n-1},\omega}(k)})_{k=1,\dots,K}$ and $x=(U^{m,\omega}_{l^{m-n,\omega}(k)})_{k=1,\dots,K}$ with $\gamma = C(1+\sigma^0_{\sigma_{m-n}}\wedge m-(m-n))$ and $k_0= T_{m-n}$ thanks to Lemma \ref{lem.Um}. Note also that the $\hat U^{n,x,\omega}_i$ and $\hat \theta^{n,x,\omega}$ satisfy the same conclusion as $U_e$ and $\theta_e$ in Lemmas \ref{lem.delta} (with the same constant $\delta$) and \ref{lem.estitheta}, the proof being  the same. 

The main part of the proof consists in showing that, under \eqref{hypsurxtildex}, 
\be\label{mol=jkzndc}
\left| \E\left[ \hat \theta^{n+1,\tilde x}(t)\right]-\E\left[ \hat \theta^{n,x}(t)\right]\right| \leq C(1+\gamma). 
\ee
For this we first check that 
\be\label{sixTOT}
\E\left[ \hat \theta^{n+1,\tilde x}(t)\right]\leq \E\left[ \hat \theta^{n+1,x}(t)\right]
\leq \E\left[ \hat \theta^{n,x}(t)\right]+1. 
\ee
The first inequality holds by comparison, which implies from assumption \eqref{hypsurxtildex} that $\hat U^{n+1,\tilde x}_i(s)\leq \hat U^{n+1, x}_i(s)$ for any $s\geq0$ and any $i\leq m-n-2$.  For the second inequality, let us set $W_i^\omega(s)= \hat U^{n+1, x, \tau_1\omega}_{i-1}(s)$ for $i\leq m-n$ and $s\geq0$. 
Then $W_i^\omega$ solves (since $Z_{i-1}^{\tau_1\omega}= Z_i^\omega$, $T^{\tau_1\omega}_{i-1}= T^\omega_i$ and $\ell^{\tau_1\omega}_{i-1}= \ell^\omega_i-1$)
\begin{align*}
&\frac{d}{dt} W^{\omega}_i(\tau) \\
&=
\left\{\begin{array}{ll}
 V_{Z_i^\omega}(W^\omega_{i+1}(\tau)-W^\omega_{i}(\tau), W^\omega_{l_i^\omega}(\tau)-W^\omega_{i}(\tau),W^\omega_{i}(\tau))  & {\rm if}\; i\leq m-n-2,\; \ell_i\leq m-n-1,\\
V_{Z_i^\omega}(W^\omega_{i+1}(\tau)-W^\omega_{i}(\tau), x^{T_i}(\tau)-W^\omega_{i}(\tau),W^\omega_{i}(\tau))
& {\rm if}\; i\leq m-n-2,\; \ell_i\geq m-n,\\
 V_{Z_i^\omega}(x^{k_0}(\tau)-W^\omega_{i}(\tau), x^{T_i}(\tau)-W^\omega_{i}(\tau),W^\omega_{i}(\tau))
& {\rm if}\; i= m-n-1,
  \end{array}\right. 
\end{align*}
$$
W^{\omega}_{i}(0)=\left\{\begin{array}{ll}
 e^0 (i-1) &{\rm if}\; i\le1\\
 e^k(i-1)&{\rm if} \; i\ge 1 \; {\rm and}\; T^{\tau_1\omega}_{i-1}=T_i^\omega=k
 \end{array}
 \right.
 \leq \hat U^{n,x,\omega}_i(0)\; {\rm for} \; i\leq m-n-1,
$$
Therefore by comparison 
$$
\hat U^{n+1, x, \tau_1\omega}_{i-1}(\tau)=W^\omega_i(\tau)\leq \hat U^{n,x,\omega}_i(\tau)\qquad \forall \tau\geq0, \; \forall i\leq m-n-1,
$$
which implies that 
$$
\hat \theta^{n+1,x,\tau_1\omega}(t)\leq \hat \theta^{n,x,\omega}(t)+1
$$
and gives the second inequality in \eqref{sixTOT} after taking expectation. \\

Using \eqref{hypsurxtildex}, the definitions of
 $\hat U^{n+1,\tilde x,\omega}_i(0)$ and $\hat U^{n, x,\omega}_{i}(0)$ and the fact that  $(d/dt) \hat U^{n+1,\tilde x,\omega}_i(t)\geq \delta>0$ (since the $\hat U^{n,x}$ satisfy the conclusion of Lemma \ref{lem.delta}), we get that there exists $C_1>0$ large (but which does not dependent on $m$, $n$, $\omega$ and $i$) such that
\be\label{kuqebsdlmk}
\hat U^{n+1,\tilde x,\tau_1\omega}_{i-1}(C_1\gamma) \geq \hat U^{n, x,\omega}_{i}(0)\qquad \forall i\leq m-n-1,
\ee
and 
$$
\tilde x^{k}(t+C_1\gamma)\geq x^{k}(t)\qquad \forall t\geq 0, \; \forall k\in \{1, \dots, K\}.
$$
We claim that, for some constant $C_2>0$,  
\be\label{totTOT}
\E\left[ \hat \theta^{n,x}(t)\right] \leq  \E\left[ \hat \theta^{n+1,\tilde x}(t)\right]+ C_2\gamma.
\ee
To prove \eqref{totTOT} let $W^\omega_i(t)= \hat U^{n+1,\tilde x,\tau_1\omega}_{i-1}(t+C_1\gamma)$ for $i\leq m-n-1$. For $i\leq m-n-1$, $W_i$ solves, exactly as above,  
\begin{align*}
&\frac{d}{dt} W^{\omega}_i(\tau) \\
&=
\left\{\begin{array}{ll}
 V_{Z_i^\omega}(W^\omega_{i+1}(\tau)-W^\omega_{i}(\tau), W^\omega_{l_i^\omega}(\tau)-W^\omega_{i}(\tau),W^\omega_{i}(\tau))  & {\rm if}\; i\leq m-n-2,\; \ell_i\leq m-n-1,\\
V_{Z_i^\omega}(W^\omega_{i+1}(\tau)-W^\omega_{i}(\tau), \tilde x^{T_i}(\tau+C_1\gamma)-W^\omega_{i}(\tau),W^\omega_{i}(\tau))
& {\rm if}\; i\leq m-n-2,\; \ell_i\geq m-n,\\
 V_{Z_i^\omega}(\tilde x^{k_0}(\tau+C_1\gamma)-W^\omega_{i}(\tau), \tilde x^{T_i}(\tau+C_1\gamma)-W^\omega_{i}(\tau),W^\omega_{i}(\tau))
& {\rm if}\; i= m-n-1,
  \end{array}\right. 
\end{align*}
$$
W^{\omega}_{i}(0)= \hat U^{n+1,\tilde x,\tau_1\omega}_{i-1}(C_1\gamma) \geq \hat U^{n,x,\omega}_i(0)\; {\rm for} \; i\leq m-n-1.
$$
As $\tilde x^{k}(\tau+C_1\gamma)\geq  x^{k}(\tau)$ and as $V_z$ is increasing in the first two variables, we obtain by comparison that 
$$
\hat U^{n+1,\tilde x,\tau_1\omega}_{i-1}(t+C_1\gamma) = W^\omega_i(t) \geq \hat U^{n,x,\omega}_i(t)\qquad \forall t\geq0, \; \forall i\leq m-n-1.
$$
Therefore 
$$
1+\hat \theta^{n+1,\tilde x,\tau_1\omega}(t+C_1\gamma)\geq \hat\theta^{n,x,\omega}(t).
$$
Using Lemma \ref{lem.estitheta} (which holds for $\hat \theta$ as explained above), we obtain 
$$
 C_2\gamma+ \hat \theta^{n+1,\tilde x,\tau_1\omega}(t)\geq \hat\theta^{n,\omega}(t),
$$
which implies  \eqref{totTOT} after taking expectation.\\

Combining \eqref{sixTOT} and \eqref{totTOT} gives \eqref{mol=jkzndc}. Then recalling \eqref{deuxJUNCTOT},  we obtain \eqref{mol=jkzndcTOT}.
\end{proof}

Let us set 
$$
\xi_{i}(t) = |M_{i+1}(t)-M_{i}(t)|.  
$$
and 
$$
[M]_n= \sum_{i=1}^{n} (\xi_{i}(t) )^2, \qquad \lg M\rg_n= \sum_{i=1}^{n} \E\left[ (\xi_{i}(t) )^2\ |\ \mathcal F_{m,i}\right].
$$
Following \cite[Theorem 2.1]{BeTo08}, the following concentration inequality holds: 
$$
\P\left[ |M_n|\geq x, \; [M]_n+  \lg M\rg_n\leq y\right] \leq 2 \exp\{ -x^2/(2y)\}. 
$$
This implies that 
\be\label{concentrationBercuTouatiTOT}
\P\left[ |M_n|\geq x\right] \leq 2 \exp\{ -x^2/(2y)\}+ \P\left[[M]_n+  \lg M\rg_n> y\right] . 
\ee
\begin{Lemma} \label{lem.estilgMrgTOT} There exists a constant $C>0$ such that 
$$
\P\left[[M]_n+  \lg M\rg_n> Cn\right] \leq  \exp\{ -\frac{n}{C}\}.
$$
\end{Lemma}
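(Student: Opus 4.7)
The bound from Lemma~\ref{lem.mol=jkzndcTOT} reads $\xi_i\leq C(1+\eta_i)$, where I set $\eta_i := \sigma^0_{\sigma_{m-i}}\wedge m-(m-i)\geq 0$. Since $\xi_i^2\leq 2C^2(1+\eta_i^2)$, it suffices to show that $\sum_{i=1}^n \eta_i^2\leq Cn$ and $\sum_{i=1}^n\E[\eta_i^2\mid \mathcal{F}_{m,i}]\leq Cn$ outside an event of probability $\leq \exp(-n/C)$. A first observation is that each $\eta_i$ has uniform exponential tails: one has $\eta_i\leq (\sigma_{m-i}-(m-i))+(\sigma^0_{\sigma_{m-i}}-\sigma_{m-i})$, and since $\mathcal{Z}$ is finite and $\P[Z_0=z^k_{\min}]>0$ for every $k\in\{0,\dots,K\}$, each summand is stochastically dominated by a geometric random variable with parameter $p:=\min_k\P[Z_0=z^k_{\min}]>0$.

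The key idea is to reindex the sum $\sum_i \eta_i^2$ by the positions at which slow vehicles occur, in order to make the i.i.d.\ structure explicit. For each $k\in\{0,\dots,K\}$, let $(p^k_l)_{l\in\Z}$ be the successive positions at which $z^k_{\min}$ appears in $(Z_j)$; by a standard renewal argument the gaps $G^k_l := p^k_{l+1}-p^k_l$ are i.i.d.\ geometric. Setting $\tau^k_j:=\inf\{\ell>j:Z_\ell=z^k_{\min}\}$, for any $j\in[p^k_l,p^k_{l+1})$ one has $\tau^k_j-j = p^k_{l+1}-j$, hence $\sum_{j=p^k_l}^{p^k_{l+1}-1}(\tau^k_j-j)^2\leq (G^k_l)^3$. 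Combining this with the bound $\eta_i^2 \leq 2\sum_{k=0}^K(\tau^k_{m-i}-(m-i))^2$, obtained by dominating the outer waiting $\sigma^0_{\sigma_{m-i}}-\sigma_{m-i}$ by the full waiting $\tau^0_{m-i}-(m-i)$ of a single species $z^0_{\min}$ (at the cost of a constant factor), one gets $\sum_{i=1}^n \eta_i^2 \leq C S_n$, where $S_n$ is a sum of $O(n)$ i.i.d.\ cubed geometric gaps $(G^k_l)^3$.

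Since geometric variables satisfy Bernstein's condition (as invoked in Lemma~\ref{lem.Jn}), so does each $(G^k_l)^3$ after centering; Bernstein's inequality (Corollary~2.11 in~\cite{BLM13}) then yields $\P[S_n>Cn]\leq \exp(-n/C)$ for $C$ large enough. For the predictable quadratic variation $\langle M\rangle_n$, I would use that given $\mathcal{F}_{m,i}=\sigma(Z_{m-i},\ldots,Z_{m-1})$, the future sequence $(Z_j)_{j\geq m}$ remains i.i.d.\ by independence, so any future waiting time still has a geometric conditional tail. Decomposing $\eta_i$ into its $\mathcal{F}_{m,i}$-measurable contribution $\eta_i^{\mathrm{past}}$ and a future one, and using $(a+b)^2\leq 2a^2+2b^2$, gives $\E[\eta_i^2\mid \mathcal{F}_{m,i}]\leq 2(\eta_i^{\mathrm{past}})^2+C$; the same renewal reindexing reduces $\sum_i\E[\eta_i^2\mid \mathcal{F}_{m,i}]$ to the concentration of the same type of sum~$S_n$.

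The main obstacle I anticipate lies in the second paragraph: expressing $\sum_i \eta_i^2$ as a sum of $O(n)$ cubed geometric gaps requires handling the nested waiting structure $\sigma^0_{\sigma_{m-i}}$, which couples two geometric waitings through a random stopping time. The correct way to decouple them is to replace the outer nested waiting by the unconditioned waiting $\tau^0_{m-i}-(m-i)$, losing only a multiplicative constant, after which the two resulting terms both fit into the renewal bookkeeping across the finitely many slow species $z^0_{\min},z^1_{\min},\dots,z^K_{\min}$.
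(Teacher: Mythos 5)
Your outer framing matches the paper's: one bounds $\xi_i$ via Lemma \ref{lem.mol=jkzndcTOT}, observes that it suffices to control $\sum_i \eta_i^2$ with high probability, and reduces everything to sums of powers of i.i.d.\ geometric gaps so that Bernstein's inequality applies. The renewal bookkeeping you describe for the ``one-level'' term $\sum_i(\sigma^k_{m-i}\wedge m-(m-i))^2\leq \sum_l (G^k_l)^3$ is exactly the paper's computation. But the step you single out as the crux---``the correct way to decouple \dots is to replace the outer nested waiting by the unconditioned waiting $\tau^0_{m-i}-(m-i)$, losing only a multiplicative constant''---is where the argument breaks. The claimed pointwise domination $\sigma^0_{\sigma_{m-i}}-\sigma_{m-i}\leq C\,(\tau^0_{m-i}-(m-i))$ is simply false. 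The inner wait $\sigma^0_{\sigma_{m-i}}-\sigma_{m-i}$ starts at the random stopping time $\sigma_{m-i}$, which can lie far beyond the first $z^0_{\min}$ after $m-i$; the gap to the next $z^0_{\min}$ seen from $\sigma_{m-i}$ has no pointwise relation to the gap seen from $m-i$. Concretely, take a configuration where $Z_{m-i+1}=z^0_{\min}$ (so $\tau^0_{m-i}-(m-i)=1$), the first $z^{T_{m-i}}_{\min}$ occurs at $m-i+L+1$, and the next $z^0_{\min}$ after that is at distance $L'$ with $L'\gg L$; then $\eta_i\approx L+L'$ while $\sum_k(\tau^k_{m-i}-(m-i))^2\approx L^2$, so $\eta_i^2 \leq 2\sum_k(\tau^k_{m-i}-(m-i))^2$ fails badly. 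Since these $\eta_i$ are strongly correlated across $i$ (nearby indices share the same renewal blocks), you also cannot rescue the argument by matching marginal distributions.

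The paper handles the nested term by a genuinely different device. After replacing $\sigma$ by the family $\sigma^k$ so that the cap $\sigma^0_{\sigma^k_{m-i}}\wedge m$ becomes $\mathcal F_{m,i}$-measurable (this also gives $\langle M\rangle_n$ for free, avoiding your past/future split), one reindexes $\sum_i(\sigma^0_{\sigma^k_{m-i}}\wedge m-\sigma^k_{m-i}\wedge m)^2$ by the renewal positions $s^k_j$ of species $k$: each inner wait $\sigma^0_{s^k_j}-s^k_j$ is repeated for all $i$ in a block of length equal to the gap, yielding a sum of the form $\sum_j(\sigma^0_{s^k_j}-s^k_j)^2\,(s^k_j-s^k_{j-1})$. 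The coupling between the inner wait and the block length is then broken by Young's inequality, $(\sigma^0_{s^k_j}-s^k_j)^2(s^k_j-s^k_{j-1})\leq(\sigma^0_{s^k_j}-s^k_j)^4+(s^k_j-s^k_{j-1})^2$; the first piece is again a renewal sum (now a fifth-power sum of gaps for $z^0_{\min}$) over a random range $[s^k_n,m)$, and the length of that range is controlled, on the event where the third-power sum is already $O(n)$, by H\"older's inequality. Only then does Bernstein apply. Some version of this decoupling and range-control is needed; the replacement by $\tau^0_{m-i}-(m-i)$ skips it and does not hold.
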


\begin{proof}
In view of Lemma \ref{lem.mol=jkzndcTOT} we have 
$$
\left| \xi_i(t)\right|\leq    C(1+\sigma^0_{\sigma_{m-i}}\wedge m-(m-i)).
$$
We first replace the right-hand side by a more suitable random variable. For $k\in \{1, \dots, K\}$, let $\sigma^k_i:= \inf\{j\geq i+1, \; T_j=k, \; Z_j=z^k_{\min}\}$. Note that $\sigma^k_i$ is independent of $\{Z_j, \; j\leq i\}$ and that $\sup_k \sigma^k_i\geq \sigma_i$. 
Then 
$$
\left| \xi_i(t)\right|\leq    C\sum_{k=1}^K (1+\sigma^0_{\sigma^k_{m-i}}\wedge m-(m-i)). 
$$
As $\sigma^0_{\sigma^k_{m-i}}\wedge m$ is $\mathcal F_{m, i}-$measurable, we have 
\begin{align*}
[M]_n+  \lg M\rg_n & \leq Cn+C\sum_{k=1}^K \sum_{i=1}^{n}(\sigma^0_{\sigma^k_{m-i}}\wedge m-(m-i))^2\\
& \leq Cn+C\sum_{k=1}^K \sum_{i=1}^{n}(\sigma^0_{\sigma^k_{m-i}}\wedge m-\sigma^k_{m-i}\wedge m)^2+ C\sum_{k=1}^K \sum_{i=1}^{n}(\sigma^k_{m-i}\wedge m-(m-i))^2.
\end{align*}
For $k\in \{0, \dots, K\}$, let us define by induction
$$
s^k_0=\inf\{j\geq  m, \; T_j=k, \;  Z_j=z^k_{\min}\}, \; s^k_{i+1}=\sup\{ j<s^k_i, \; T_j=k,\; Z_j=z^k_{\min}\}. 
$$
Note that the $(s^k_{i}-s^k_{i+1})_{i\geq 0}$ are i.i.d. and that, by definition, for any $j\in \{s^k_{i+1}, \dots, s^k_{i}-1\}$, one has $\sigma^k_j= s^k_i$. 
Therefore
$$
\sum_{r= s^k_{i+1}}^{s^k_i-1} (\sigma^k_r-r)^2 = \sum_{r= s^k_{i+1}}^{s^k_i-1} (s^k_i-r)^2 \leq C(s^k_i-s^k_{i+1})^3.
$$
As $s^k_0\geq m$ while $s^k_{n}\leq m-n$, this shows that 
$$
\sum_{i=1}^{n} (\sigma^k_{m-i}\wedge m-(m-i))^2\leq \sum_{i=m-n}^{m-1} (\sigma^k_{i}-i)^2\leq \sum_{j=0}^{n-1} \sum_{r=s^k_{j+1}}^{s^k_j-1} (\sigma^k_{r}-r)^2   \leq C \sum_{j=0}^{n-1} (s^k_j-s^k_{j+1})^3. 
$$
On the other hand,  
\begin{align*}
 & \sum_{i=1}^{n}(\sigma^0_{\sigma^k_{m-i}}\wedge m-\sigma^k_{m-i}\wedge m)^2 \le
 \sum_{i=1}^{n}(\sigma^0_{\sigma^k_{m-i}\wedge m}-\sigma^k_{m-i}\wedge m)^2 =
   \sum_{i=1}^{n}\sum_{j, \; s^k_j\le m} (\sigma^0_{s^k_j}-s^k_j)^2{\bf 1}_{s^k_j= \sigma^k_{m-i}} \\
 &  \leq  \sum_{j=1}^{n} (\sigma^0_{s^k_j}-s^k_j)^2(s^k_j-s^k_{j-1})
 \leq \sum_{j=1}^{n} (\sigma^0_{s^k_j}-s^k_j)^4+ \sum_{j=1}^{n} (s^k_j-s^k_{j-1})^2\\
&   \leq  \sum_{i=s^k_n}^{m-1} (\sigma^0_{i}-i)^4 + \sum_{j=1}^{n-1} (s^k_j-s^k_{j-1})^3.
 \end{align*}
 The first term in the right-hand side can be treated as above and we obtain: 
$$
[M]_n+  \lg M\rg_n\leq Cn+C \sum_{k=1}^K\left(\sum_{j=0}^{n-1} (s^k_j-s^k_{j+1})^3+\sum_{j=0}^{m-s^k_n-1}(s^0_j-s^0_{j+1})^5\right). 
$$
Therefore
\begin{align}\label{qlkjesdfngc}
& \P\left[[M]_n+  \lg M\rg_n> y\right] \leq \sum_{k=1}^K  \P\left[\sum_{j=0}^{n-1} (s^k_j-s^k_{j+1})^3+\sum_{j=0}^{m-s^k_n-1}(s^0_j-s^0_{j+1})^5> (KC)^{-1}(y-Cn)\right]\notag\\
 &\qquad \leq  \sum_{k=1}^K  \P\left[\sum_{j=0}^{n-1} (s^k_j-s^k_{j+1})^3> (2KC)^{-1}(y-Cn)\right]\notag \\
& \qquad  +
 \sum_{k=1}^K  \P\left[\sum_{j=0}^{n-1} (s^k_j-s^k_{j+1})^3\leq (2KC)^{-1}(y-Cn)\;, \; \sum_{j=0}^{m-s^k_n-1}(s^0_j-s^0_{j+1})^5> (2KC)^{-1}(y-Cn)\right]. 
\end{align}
Let $X_j^k= s^k_j-s^k_{j+1}$ (for $k\in \{0, \dots, K\}$). Then the $(X^k_j)_{j=0, \dots, n}$ are i.i.d. and $X_0^k$ follows a geometric law of parameter $p^k:= \P[Z_0= z^k_{\min}]$ which has exponential moments.  In particular $(X^k_0)^3$ satisfies Bernstein's condition: there exists $c^k>0$ such that, for any $k\in \{0, \dots, K\}$ and $p\geq 2$,  
$$
\E\left[ \left| |X^k_0|-\E\left[|X^k_0|\right]\right|^p\right] \leq \frac{ p! (c^k)^{p-2}}{2} v^k \qquad {\rm where}\; v^k:=Var(|X^k_0|^3). 
$$
From Bernstein's Theorem (Corollary 2.11 in \cite{BLM13}) we have 
$$
\P\left[  \sum_{j=0}^{n} (s^k_j-s^k_{j+1})^3 >  \E\left[ |X^k_0|^3\right]n +x \right] \leq \exp\left\{ -\frac{x^2}{2(nv^k +xc^k)}\right\}. 
$$
This allows to handle the first terms in the right-hand side of \eqref{qlkjesdfngc}. As for the second term, we note that, in the event $\{\sum_{j=0}^{n-1} (s^k_j-s^k_{j+1})^3\leq (2KC)^{-1}(y-Cn)\}$, we have by Hölder's inequality:
$$
s^k_n = s^k_0+ \sum_{i=0}^{n-1} (s^k_{i+1}-s^k_i) \geq m -n^{2/3} \left( \sum_{i=0}^{n-1} |X^k_i|^3\right)^{1/3} \geq m- C_1 n^{2/3}(y-Cn)^{1/3}.  
$$
Hence 
\begin{align*}
& \P\left[\sum_{j=0}^{n-1} (s^k_j-s^k_{j+1})^3\leq (2KC)^{-1}(y-Cn)\;, \; \sum_{j=0}^{m-s^k_n-1}(s^0_j-s^0_{j+1})^5> (2KC)^{-1}(y-Cn)\right] \\ 
& \qquad \leq \P\left[\sum_{j=0}^{[C_1 n^{2/3}(y-Cn)^{1/3}]+1}(s^0_j-s^0_{j+1})^5> (2KC)^{-1}(y-Cn)\right].
\end{align*}
We can again use Bernstein's inequality to handle this later term: we have, for some constant $c^0>0$ and $v^0:=  Var(|X^0_0|^5)$, 
$$
\P\left[  \sum_{j=0}^{n} (s^0_j-s^0_{j+1})^5 >  \E\left[ |X^k_0|^5\right]n +x \right] \leq \exp\left\{ -\frac{x^2}{2(nv^0 +xc^0)}\right\}. 
$$
So choosing $y= C_2n$ for a sufficiently large constant $C_2$ in \eqref{qlkjesdfngc}, we obtain that 
$$
\P\left[[M]_n+  \lg M\rg_n> C_2n\right]\leq 2K \exp\left\{ -\frac{n}{C_2}\right\}.
$$
\end{proof}

\begin{proof}[Proof of Theorem \ref{thm:concentrationTOT}]  Coming back to \eqref{concentrationBercuTouatiTOT} and using Lemma \ref{lem.estilgMrgTOT}, we  find 
$$
\P\left[ |M_n|\geq x\right] \leq 2 \exp\{ -x^2/(Cn)\}+  \exp\{ -\frac{n}{C}\}. 
$$
Recalling that $M_{\bar n}=\theta^m_e(t)-\E[\theta^m_e(t)]$ for $\bar n=Ct$, we obtain therefore (for $n=\bar n$, $x=\ep t$ and $\ep\in (0,1]$)
$$
\P\left[ \left|\theta^m_e(t)-\E\left[\theta^m_e(t)\right]\right|\geq \ep t\right] \leq C\exp\left\{ -\frac{\ep^2t}{C }\right\}.
$$
Then we use Lemma \ref{thetathetam} to get the concentration inequality for $\theta_e(t)$. 
\end{proof}

\subsection{A corrector outside  the junction}\label{subsec:correctoroutside}

In this part we build a random sequence $(W_i)_{i\in \Z}$ which plays the role of a corrector for large values of $|i|$. We first use $(W_i)_{i\in \Z}$ in this section to investigate the behavior of $U_{e,i}(t)$ for large $|i|$. The main role of the $(W_i)_{i\in \Z}$  will be however in the next section where the property of being a kind of corrector for large values of $|i|$ will be used in a crucial way. 

We recall that $e=(e^k)_{k=0,\dots,K}$ is such that $H^k(-1/e^k)=\min_pH^k(p)$. This implies in particular that $v^k_e=\bar V^k(e^k/\pi^k)<\bar v^k$ for all $k\in\{0,\dots,K\}$, where the $\bar v^k$ are defined in \eqref{def.barvk}. We define $(W_i)_{i\in \Z}$ as follows: we set $W^\omega_0=0$ and define $W_i^\omega$ for $i\leq -1$ by backward induction by setting:
$$
\tilde V^0_{Z_i^\omega}(W^\omega_{i+1}-W^\omega_i)= v_e^0\qquad {\rm for}\; i<0. 
$$
For $i\geq 1$, define $W^\omega_i$ by forward induction by setting, if  $\ell^{\omega,-1}_i :=\sup\{j <i, \; T^\omega_j=T^\omega_i\}$, 
$$
W_i^\omega= 0 \; {\rm if}\; i\geq1 \;{\rm and} \; \ell^{\omega,-1}_i <0
$$
and 
$$
\tilde V^k_{Z_{\ell^{\omega,-1}_i}^\omega} (W^\omega_{i}-W^\omega_{\ell^{\omega,-1}_i})=v_e^k, \qquad {\rm where} \; T^\omega_i=k
, \qquad {\rm if}\; i\geq 1\;{\rm and} \; \ell^{\omega,-1}_i \geq0.
$$
By the definition of $e_{\max}$, we have $0< W^\omega_{i+1}-W^\omega_i\leq e_{\max}$ if $i+1\leq 0$ while $0< W^\omega_{\ell_i}-W^\omega_i\leq e_{\max}$ if $\ell_i\geq 1$. We now collect several properties of the sequence $(W_i)$.

\begin{Lemma}\label{lem.boundWi-Wj} We have 
$$
|W_i-W_j|\leq e_{\max} |i-j|
$$
if $T_i=T_j$ or if $i\wedge j \leq 0$. 
\end{Lemma}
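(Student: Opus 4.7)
My plan is to unwind the inductive definition of $W$ and read off the bound by counting the steps in the natural chain joining $i$ and $j$. The two building blocks are already in hand: by construction $0 < W_{i+1}-W_i \le e_{\max}$ whenever $i+1\le 0$, and $0 < W_{\ell_i^{-1}}-W_i \le e_{\max}$... actually the other way, $0 < W_i - W_{\ell^{-1}_i}\le e_{\max}$ whenever $i\ge 1$ and $\ell^{-1}_i\ge 0$. Finally $W_i = 0$ when $i\ge 1$ and $\ell^{-1}_i<0$, which will serve as the ``base point'' of each forward chain.

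For the case $i\wedge j \le 0$, I split in two subcases. Assume $i\le j$. If $j\le 0$, then writing $W_j-W_i=\sum_{k=i}^{j-1}(W_{k+1}-W_k)$ and applying the backward-increment bound termwise gives $0\le W_j-W_i\le (j-i)e_{\max}$. If $i\le 0<j$, first note that $W_i\le 0$ and $|W_i|\le -i\, e_{\max}$ by the previous subcase (with $j=0$). For $W_j$, iterate the map $j\mapsto \ell^{-1}_j$; this sequence is strictly decreasing in $\{1,\dots,j\}$, stays inside $\{k\ge 1:T_k=T_j\}$ as long as it stays positive, and terminates at some $j_0\ge 1$ with $\ell^{-1}_{j_0}<0$, so $W_{j_0}=0$. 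Telescoping the forward-increment bound along this chain of length $n_j$ yields $0\le W_j\le n_j\, e_{\max}$; since the chain consists of distinct elements of $\{1,\dots,j\}$ we have $n_j\le j$, hence $0\le W_j\le j\, e_{\max}$. Combining, $|W_j-W_i|=W_j-W_i\le j\,e_{\max}+(-i)\,e_{\max}=(j-i)e_{\max}$.

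For the case $T_i=T_j$ (assume $i<j$), the case $i\le 0$ is already covered above, so suppose $1\le i<j$. Iterate $j\mapsto \ell^{-1}_j\mapsto \ell^{-1}_{\ell^{-1}_j}\mapsto\cdots$; because each step goes to the nearest strictly smaller index with the same $T$-value, and $T_i=T_j$, this chain passes through $i$ before going below $1$. Let $m$ be the number of steps from $j$ to $i$; by construction the $m$ intermediate indices are distinct elements of $\{i+1,\dots,j\}$ with $T$-value equal to $T_j$, so $m\le j-i$. Telescoping the forward-increment bound along this piece of chain gives $0\le W_j-W_i\le m\, e_{\max}\le (j-i)e_{\max}$.

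No step is really delicate; the only thing to be careful about is to verify that the $\ell^{-1}$-chain from any $j\ge 1$ eventually crosses $0$ (so the induction terminates at an index with $W=0$) and to count the chain length correctly so that it is bounded by $|i-j|$. Both of these are immediate from the strict monotonicity of $\ell^{-1}$ and from the fact that the chain consists of pairwise distinct indices sharing the same $T$-value.
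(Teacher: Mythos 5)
Your proof is correct and follows essentially the same route as the paper: both arguments telescope along the $\ell^{-1}$-chain (backward neighbor on the negative side, previous same-type vehicle on the positive side), using the step bounds $0<W_{i+1}-W_i\le e_{\max}$ and $0<W_j-W_{\ell^{-1}_j}\le e_{\max}$ and the base point $W_{j_0}=0$ where the chain first crosses zero. You merely unwind explicitly the "by induction" step that the paper states tersely, and in the mixed case $i\le 0<j$ you pass through $W_0=0$ to combine the two half-chains, which matches the paper's "follows easily."
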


\begin{proof}  If $i\leq -1$, then by the definition of $W_i$ one has $0\leq W_{i+1}-W_i\leq e_{\max}$. So the claim holds if $i\leq 0$ and $j\leq 0$. Let us now assume that $j\geq 0$ and let us set $k=T_j$. If $\ell^{-1}_j< 0$, then $W_j=0$ and
$$
|W_j-W_{\ell^{-1}_j}|= |W_{\ell^{-1}_j}|\leq - e_{\max}\ell^{-1}_j\leq e_{\max} (j-\ell^{-1}_j).
$$
If $\ell^{-1}_j\geq 0$, then by the definition of $W_j$ one has $|W_j-W_{\ell^{-1}_j}|\leq e_{\max}$. By induction, this  implies that $|W_j-W_i|\leq e_{\max}|j-i|$ if $T_i=T_j$ and $i\geq 0$, $j\geq 0$. The claim for $i\geq 0$ and $j\leq 0$ follows easily. 
\end{proof}

\begin{Lemma}\label{lem.estiWi} There exists $C>0$ such that, for any $\ep\in (0,1]$,   
for any $i_0, i\in \Z$, one has 
$$
\P\left[ |W^{\tau_{i_0}\cdot}_{i-i_0}-e^0(i-i_0)|> \ep |i-i_0|\right] \leq C\exp\{-\ep^2|i-i_0|/C\}\qquad {\rm if}\; i\leq i_0- C\ep^{-1}
$$
and
$$
\P\left[ |W^{\tau_{i_0}\cdot}_{i-i_0}-e^{T^{\tau_{i_0}\cdot}_i}(i-i_0)|> \ep |i-i_0|\right] \leq C\exp\{-\ep^2|i-i_0|/C\}\qquad {\rm if}\; i\geq i_0+C\ep^{-1}.
$$
\end{Lemma}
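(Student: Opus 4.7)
The plan is to recognize $W^{\tau_{i_0}\omega}_{i-i_0}$ as a (conditionally) i.i.d.\ sum whose mean exactly matches the target, and then apply Hoeffding and Chernoff concentration.

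In the first case $i \leq i_0 - C\ep^{-1}$, the shift identity $Z^{\tau_{i_0}\omega}_m = Z^\omega_{m+i_0}$ combined with the backward recursion $\tilde V^0_{Z_m}(W_{m+1} - W_m) = v_e^0$ gives
\begin{equation*}
W^{\tau_{i_0}\omega}_{i-i_0} = -\sum_{m=i}^{i_0-1} (\tilde V^0_{Z^\omega_m})^{-1}(v_e^0).
\end{equation*}
This is a sum of $i_0 - i$ i.i.d.\ random variables, bounded in $[0, e_{\max}]$, whose common mean is $(\bar V^0)^{-1}(v_e^0) = e^0$, by the very definition of $\bar V^0$ and the choice $v_e^0 = \bar V^0(e^0)$. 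Hoeffding's inequality then directly yields the first bound.

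For the second case $i \geq i_0 + C\ep^{-1}$, set $j := i - i_0$ and $k := T^{\tau_{i_0}\omega}_j$. I would trace back the chain of same-type predecessors $j = j_0 > j_1 > \dots > j_M$, where $j_{m+1} = \ell^{\tau_{i_0}\omega,-1}_{j_m}$, terminated at $j_M = 0$ if $T^{\tau_{i_0}\omega}_0 = k$, or at the first $j_M \geq 1$ with $\ell^{\tau_{i_0}\omega,-1}_{j_M} < 0$; in either case $W^{\tau_{i_0}\omega}_{j_M} = 0$. The forward recursion defining $W$ telescopes to
\begin{equation*}
W^{\tau_{i_0}\omega}_j = \sum_{m=0}^{M-1} (\tilde V^k_{Z^{\tau_{i_0}\omega}_{j_{m+1}}})^{-1}(v_e^k),
\qquad M = P_k - 1 + \1_{T^{\tau_{i_0}\omega}_0 = k},
\end{equation*}
where $P_k := \#\{j' \in \{1,\dots,j\} : T^{\tau_{i_0}\omega}_{j'} = k\}$.

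I would then condition on the whole type sequence $\mathcal{T} := (T^{\tau_{i_0}\omega}_{j'})_{j' \geq 0}$. Conditionally on $\mathcal{T}$, the values $(Z^{\tau_{i_0}\omega}_{j_{m+1}})_{m=0,\dots,M-1}$ are i.i.d.\ draws from the conditional law of $Z_0$ given $T_0 = k$, so the summands above are conditionally i.i.d., bounded in $[0, e_{\max}]$, with common mean $(\bar V^k)^{-1}(v_e^k) = e^k/\pi^k$, by the definition of $\bar V^k$ and the choice $v_e^k = \bar V^k(e^k/\pi^k)$. A conditional Hoeffding inequality (together with $M \leq j$) gives
\begin{equation*}
\P\left[ \left|W^{\tau_{i_0}\omega}_j - M e^k/\pi^k\right| > \ep j/2 \;\Big|\; \mathcal{T}\right] \leq 2\exp(-c\ep^2 j).
\end{equation*}
On the other hand, conditionally on $T^{\tau_{i_0}\omega}_j = k$, $M$ is a sum of a $\mathrm{Bin}(j-1, \pi^k)$ and an independent $\mathrm{Bernoulli}(\pi^k)$, with mean exactly $j\pi^k$, so a Chernoff bound for binomials yields $\P[|M e^k/\pi^k - j e^k| > \ep j/2 \mid T^{\tau_{i_0}\omega}_j = k] \leq 2\exp(-c' \ep^2 j)$. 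The triangle inequality and a union bound over the finitely many $k \in \{1,\dots,K\}$ then produce the second estimate, the restriction $j \geq C\ep^{-1}$ serving only to absorb multiplicative constants into the final form $C\exp(-\ep^2|i-i_0|/C)$.

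The delicate point is the second case: the number of summands $M$ is itself random, so no single concentration inequality applies directly. The key algebraic fact that allows the two layers of randomness to decouple on average is the identity $(\bar V^k)^{-1}(v_e^k) = e^k/\pi^k$, built into the very definitions of $\bar V^k$ and $v_e^k$; without it an $O(1)$ bias per summand would accumulate to an $O(j)$ bias and destroy the concentration.
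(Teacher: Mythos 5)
Your proof is correct. The first case is handled exactly as in the paper (telescoping the backward recursion to an i.i.d.\ sum, whose mean equals $e^0$ by the definitions of $\bar V^0$ and $v_e^0$, then Hoeffding). In the second case you take a slightly different but equally valid route. The paper passes to the subsequence $s^k_0<s^k_1<\dots$ of type-$k$ indices, applies Hoeffding to $W_{s^k_i}$ (a sum of a \emph{deterministic} number $i$ of i.i.d.\ summands), then separately controls $|s^k_{j_k}-i|$ via Bernstein for $j_k=[\pi^k i]$, and finally invokes Lemma~\ref{lem.boundWi-Wj} to transfer the bound from $W_{s^k_{j_k}}$ to $W_i$. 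You instead work directly with $W_j$: you telescope the forward recursion along the chain of same-type predecessors to express $W_j$ as a sum of a \emph{random} number $M=P_k-1+\mathbf 1_{T_0=k}$ of summands, condition on the type sequence $\mathcal T$ to make those summands i.i.d.\ and apply a conditional Hoeffding inequality, and separately control the random count $M$ (a binomial plus an independent Bernoulli, with mean exactly $j\pi^k$ given $T_j=k$) via a Chernoff bound. The two decompositions are mirror images of each other (random time vs.\ random count), but your version is a bit cleaner in that it dispenses with the need for the Lipschitz comparison Lemma~\ref{lem.boundWi-Wj}, at the modest cost of a conditioning step to restore independence of the summands. Your identification of the key algebraic fact $(\bar V^k)^{-1}(v_e^k)=e^k/\pi^k$ as what makes the two layers of randomness decouple on average is exactly right and is the same cancellation the paper relies on implicitly via the choice $j_k=[\pi^k i]$.
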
 

\begin{proof} Fix first $i_0=0$. For $i\leq 0$, the proof is a straightforward application of  Hoeffding's inequality (\cite[Theorem 2.8]{BLM13}) combined with the property that, by the definition of $\bar V^0$, $\E\left[ (\tilde V^0_{Z_i})^{-1}(v_e^0)\right]=e^0$. 

Let us now investigate the case $i\geq 0$. For $k\in \{1, \dots, K\}$, let $s^k_0= \inf\{i\geq 0, \; T_i=k\}$ and let us define by induction $s^k_{i+1}= \inf\{i\geq s^k_i+1, \; T_i=k\}$. Then 
$$
W_{s^k_i}= \sum_{j=0}^{i} (\tilde V^k_{Z_{s^k_j}})^{-1} (v_e^k),
$$
where the $(\tilde V^k_{Z_{s^k_j}})^{-1} (v_e^k)$ are i.i.d. with the same law as $(\tilde V^k_{Z_{0}})^{-1} (v_e^k)$ given $T_0=k$, which is bounded by $e_{\max}$. Recall that  $\E\left[ (\tilde V^k_{Z_{0}})^{-1} (v_e^k)\ |\ T_0=k\right]= e^k/\pi^k$. So, by Hoeffding's  inequality,  
$$
\P\left[ |W_{s^k_i}-ie^k/\pi^k | >x \right] \leq 2 \exp\{ -x^2/(2ie_{\max})\}.
$$
Since we also have by Bernstein's inequality (\cite[Corollary 2.11]{BLM13}): 
$$
\P\left[ |s^k_i - (\pi^k)^{-1}i |> \ep i\right]\leq 2 \exp\{-\ep^2i/C\}, 
$$
we can infer that, for any $i\geq C\ep^{-1}$ and setting $j_k= [\pi^k i]$,  
\begin{align*}
&\P\left[|W_i-e^{T_i}i|> \ep i\right]  \leq  \sum_{k=1}^K \P\left[|W_i-e^ki|> \ep i, \; T_i=k,\; | i-s^k_{j_k}| \leq \ep i / (2e_{\max})\right] + \P\left[ | i-s^k_{j_k}| > \ep i / (2e_{\max})\right] \\
&\qquad  \leq \sum_{k=1}^K \P\left[|W_{s^k_{j_k}}-e^kj_k/\pi^k|> \ep i/2-|j_k/\pi^k-i| \right]+ \P\left[ | j_k/\pi^k-s^k_{j_k}| > \ep i / (2e_{\max})-|j_k/\pi^k-i|  \right] \\
& \qquad \leq C\exp\{-\ep^2 i/C\}.
\end{align*}

 We now address the case $i_0\neq 0$. We note that $i\to W^{\tau_{i_0}\omega}_{i-i_0}$ can be built exactly as $W_i^\omega$ except that the origin is $i_0$ and $\omega$ is shifted by $\tau_{i_0}$. Thus we have in the same way 
$$
\P\left[|W^{\tau_{i_0}\cdot}_{i-i_0}-e^0(i-i_0)|> \ep |i-i_0|\right] \leq C\exp\{-\ep^2|i-i_0|/C\}\qquad \forall i\in \Z, \;i\leq i_0-C\ep^{-1} 
$$
and
$$
\P\left[|W^{\tau_{i_0}\cdot}_{i-i_0}-e^{T^{\tau_{i_0}\cdot}_i}(i-i_0)|> \ep |i-i_0|\right] \leq C\exp\{-\ep^2|i-i_0|/C\}\qquad \forall i\in \Z, \;i\geq i_0+C\ep^{-1} .
$$
\end{proof}

Fix $\bar C$ large and to be chosen below, $\ep>0$, $T\ge 1$ with $\ep T\geq C$, for $C$ large enough. We define the event 
\begin{align}\label{defEept}
E_{\ep,T}:=&  \Bigl\{ \sup_{|i|\leq \bar CT,\; |i_0|\leq C_\theta T} |W^{\tau_{i_0}\cdot}_{i-i_0} -e^0(i-i_0){\bf 1}_{i\leq i_0}- e^{T^{\tau_{i_0}\cdot}_i}(i-i_0){\bf 1}_{i>i_0} |\leq \ep T, \notag
\\ &
\qquad  \qquad \qquad   J_{[\bar CT/(2\alpha)] }([\bar C T])\geq \bar CT/ 2, \; J_{[\bar CT/(8\alpha)]}([-\bar CT/4])\geq -\bar CT/2  \Bigr\},
\end{align}
where $J_n(T)$ and $\alpha$ are defined in Lemma \ref{lem.Jn}.
By Lemma \ref{lem.Jn}, Lemma \ref{lem.boundWi-Wj} and Lemma \ref{lem.estiWi} we have 
\be\label{PEept}
\P\left[ E_{\ep,T}^c\right] \leq CT^2 \exp\{-\ep^2T /C\}, 
\ee
where $C= C(\bar C)$. We assume that $\bar C$ is so large that
\be\label{hypbarC}
C_\theta\leq \bar C /10\; {\rm and}\; \bar C\geq 16\beta\alpha/\ln(2),
\ee
where $\beta$ is given in the approximate finite speed of propagation (Lemma \ref{lem.finitespeedPSJunc}) and $C_\theta$ is defined in Lemma \ref{lem.estitheta}. 

\begin{Lemma}\label{lem.Wicorrector} If $\bar C$ is large enough we have,  for $T\geq \bar C\ep^{-1}$ and in $E_{\ep,T}$, 
\be\label{WivsUei+}
\left|e^{T_i}i+v_e^{T_i}s - U_{e,i}(s) \right|\leq 3\ep T \qquad \forall s\in [0,2T],   \; \forall i\in [2(\min_k e^k)^{-1}\ep T, \bar CT/2]\cap \Z
\ee
and 
\be\label{WivsUei-}
\left|e^0i+v_e^0s - U_{e,i}(s) \right|\leq 3\ep T  \qquad \forall s\in[ 0, 2T],\; \forall i\in [-\bar CT, -\bar CT/2)]\cap \Z.
\ee
\end{Lemma}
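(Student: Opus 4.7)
The idea is to use $(W_i)_{i\in\Z}$ as an approximate corrector and to sandwich $U_{e,i}(s)$ between the two affine-in-time profiles
\[
\tilde U_i^+(t):= W_i + v_i t + \ep T,\qquad \tilde U_i^-(t):= W_i + v_i t - \ep T,
\]
where $v_i:=v_e^{T_i}$ for $i\geq 1$ and $v_i:=v_e^0$ for $i\leq 0$. The first observation is that, taking $i_0=0$ in the definition \eqref{defEept} of $E_{\ep,T}$, we have $|W_i - e^0 i\1_{i\leq 0} - e^{T_i}i\1_{i>0}|\leq \ep T$ for $|i|\leq \bar CT$; hence both \eqref{WivsUei+} and \eqref{WivsUei-} reduce to proving
\[
|U_{e,i}(s) - (W_i + v_i s)| \leq 2\ep T
\]
uniformly in the relevant index range and for $s\in[0,2T]$.

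The key point is that $\tilde U^\pm$ are genuine sub/super-solutions of \eqref{eq.SystJuncTOT} on the regions where assumption $(H_3)$ applies. Indeed, the defining identities $\tilde V^0_{Z_i}(W_{i+1}-W_i)=v_e^0$ for $i\leq -1$ and $\tilde V^{T_i}_{Z_{\ell_i^{-1}}}(W_i-W_{\ell_i^{-1}})=v_e^{T_i}$ for $i\geq 1$, combined with $T_{\ell_i}=T_i$, imply
\[
\tfrac{d}{dt}\tilde U^\pm_i(t) = V_{Z_i}(\tilde U^\pm_{i+1}(t)-\tilde U^\pm_i(t),\tilde U^\pm_{\ell_i}(t)-\tilde U^\pm_i(t),\tilde U^\pm_i(t))
\]
at any index $i$ where $\tilde U^\pm_i(t)$ stays in the homogeneous zone: $\tilde U^\pm_i(t)\geq 0$ for $i\geq 1$ or $\tilde U^\pm_i(t)\leq -R_0$ for $i\leq -1$. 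I expect this geometric verification to be the main obstacle: on the outgoing side $\tilde U^+_i\geq W_i\geq 0$ is immediate, but $\tilde U^-_i\geq e^{T_i}i-2\ep T\geq 0$ forces the lower cutoff $i\geq 2(\min_k e^k)^{-1}\ep T$; conversely, on the incoming side, ensuring $\tilde U^+_i(t)\leq W_i + 2v_e^0 T + \ep T\leq -R_0$ for $i\leq -\bar CT/4$ and all $t\leq 2T$ requires $\bar C$ to be chosen large enough, which is permitted under \eqref{hypbarC}.

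Once $\tilde U^\pm$ are shown to satisfy the sub/super-solution property, the proof concludes with two applications of Lemma \ref{lem.finitespeedPSJunc}. For the outgoing branch, I apply it on $\{1,\ldots,\bar CT\}$ with $n=[\bar CT/(2\alpha)]$, using the inclusion $J_n(\bar CT)\geq \bar CT/2$ from $E_{\ep,T}$ and the initial comparison $\tilde U^-_i(0)\leq U_{e,i}(0)=e^{T_i}i\leq \tilde U^+_i(0)$, which follows directly from the $\ep T$-bound on $|W_i - e^{T_i}i|$. For the incoming branch I apply it on $\{-\bar CT,\ldots,-\bar CT/4\}$ with $n=[\bar CT/(8\alpha)]$ and use the bound $J_n(-\bar CT/4)\geq -\bar CT/2$. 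In both cases the error $C\,2^{-n}e^{\beta t}$ at $t\leq 2T$ is $O(e^{-cT})$ thanks to $\bar C\geq 16\beta\alpha/\ln 2$ (from \eqref{hypbarC}) and $T\geq \bar C\ep^{-1}$, and is therefore much smaller than $\ep T$. This yields the claimed $2\ep T$-estimate; combining it with the $W_i$-vs.-$e^{T_i}i$ bound produces the final $3\ep T$-estimate in \eqref{WivsUei+} and \eqref{WivsUei-}.
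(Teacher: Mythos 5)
Your proposal follows the same strategy as the paper's proof: use the affine-in-time profiles $W_i\pm\ep T+v_e^{T_i}s$ (resp.\ $v_e^0 s$) as barriers, observe that they are exact solutions of the discrete system on the region where $(H_3)$ gives position-independent velocity (using that the gaps $W_{\ell_i}-W_i$ and $W_{i+1}-W_i$ are, by construction of $(W_i)$, exactly those producing velocity $v_e^k$), compare with $U_{e,i}$ at time $0$ via the $\ep T$-bound on $|W_i-e^{T_i}i|$ provided by $E_{\ep,T}$, and then propagate via Lemma~\ref{lem.finitespeedPSJunc} with $n=[\bar CT/(2\alpha)]$ on the outgoing side and $n=[\bar CT/(8\alpha)]$ on the incoming side, where the $J_n$ inclusions in $E_{\ep,T}$ and the choice of $\bar C$ make $2^{-n}e^{\beta s}\le 1\le\ep T$. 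The index cutoffs and the geometric checks (nonnegativity on the outgoing side, staying below $-R_0$ on the incoming side for $\bar C$ large) are identified correctly and match the paper's argument, so the proof is correct and essentially identical.
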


\begin{proof} Let us first note that for $i\in[2(\min_k e^k)^{-1}\ep T, \bar CT]\cap \Z$, since we are in $E_{\ep,T}$, $W_i\ge e^{T_i}i-\ep T\ge\ep T>0$. Then, the maps $s\to U_{e,i}(s)$ and $s\to W^1_i(s):=W_i-\ep T+v_e^{T_i} s$ solve \eqref{eq.SystJuncTOT} with an initial condition which satisfies, since we are in $E_{\ep,T}$, $0\leq W^1_i(0)\leq U_{e,i}(0)$. So, by the approximate finite speed of propagation (Lemma \ref{lem.finitespeedPSJunc}) we have 
$$
W^1_i(s)\leq U_{e,i}(s)+ 2^{-n}e^{\beta s} \qquad \forall s\geq 0, \;\forall n\in \N, \; \forall i\in [2(\min_k e^k)^{-1}\ep T, J_n([\bar C T])]\cap \Z. 
$$
Choosing $n= [\bar CT/(2\alpha)] $, we obtain, if $\bar C$ is large enough (depending on $\beta$ only) and since we are in $E_{\ep,T}$, 
$$
e^{T_i}i - 2\ep T+v_e^{T_i} s\leq  W^1_i(s)\leq U_{e,i}(s)+1 \qquad \forall s\in [0,2T],   \; \forall i\in [2(\min_k e^k)^{-1}\ep T, \bar CT/2]\cap \Z.
$$
Replacing $W^1$ by $W^2_i(s):= W_i+\ep T+v_e^k s$ gives the opposite inequality. Thus  \eqref{WivsUei+} holds. 

To obtain \eqref{WivsUei-}, we note that, for $i\in [-\bar CT, -\bar CT/2]\cap \Z$, the maps $s\to U_{e,i}(s)$ and $s\to W^1_i(s):=W_i-\ep T+v_e^0 s$ solve \eqref{eq.SystJuncTOT} on the time interval $[0, \bar C T/(4\|V\|_\infty)]$ with an initial condition which satisfies, since we are in $E_{\ep,T}$, $W^1_i(0)\leq U_{e,i}(0)$. So we have as above 
$$
W^1_i(s)\leq U_{e,i}(s)+ 2^{-n}e^{\beta s} \qquad \forall s\in[ 0, \bar C T/(4\|V\|_\infty)],\; \forall i\in \{-\bar CT, \dots, J_n([-\bar CT/4])\}. 
$$
We choose $n= [\bar CT/(8\alpha)]$ and get, for $\bar C$ large enough (depending on $\beta$ and $\|V\|_\infty$), since we are in $E_{\ep,T}$, 
$$
W^1_i(s)\leq U_{e,i}(s)+ 1 \qquad \forall s\in[ 0, 2T],\; \forall i\in [-\bar CT, \dots, -\bar CT/2)]\cap \Z. 
$$
Arguing as above we get \eqref{WivsUei-}. 
\end{proof}

\subsection{A superadditive quantity}

The aim of this section is to investigate the existence of a limit for  $ \theta_e(t)/t$ as $t\to +\infty$. For doing so, we introduce new notation. Fix $\tilde h>0$ such that 
\be\label{cond.tildev}
\tilde h< \min_{k\in\{0,\dots, K\}} \frac{v_e^k}{e^k}= \min_{k\in\{0,\dots, K\}} \frac{\bar V^k(e^k/\pi^k)}{e^k} = - \max_{k\in \{0, \dots, K\}} H^k(-1/e^k)=-A_0,
\ee
where $A_0$ is defined in \eqref{defA0}. 
Let us define
$$
\bar \theta_e(s)=\E\left[\theta_e(s)\right], \qquad \bar M_{e,\tilde h}(t) = \inf_{s\in [0,t]} \bar \theta_e(s)- \tilde h s.
$$
We note that the quantity $\bar M_{e,\tilde h}$ is nonpositive, nonincreasing in $t$ and in $\tilde h$. The main result of this section is the following: 

\begin{Theorem}\label{thm.kebarvartheta} The limit $k_{e,\tilde h}$ of $\bar M_{e,\tilde h}(t)/t$, as $t\to +\infty$ exists, is nonpositive and nonincreasing with respect to $\tilde h$. Let us set 
$$
k_e:= \inf_{0<\tilde h< -A_0} k_{e, \tilde h}= \lim_{\tilde h\to - A_0} k_{e, \tilde h}.
$$
If $k_{e}<0$, then the limit $\bar \vartheta_e$ of $\theta_e(t)/t$, as $t\to +\infty$, exists almost surely, is deterministic and satisfies $\bar \vartheta_e<  -A_0$.

If $k_{e}=0$, then the limit $\liminf_{t\to+\infty} \theta_e(t)/t$ is deterministic and is not smaller than $-A_0$. 
\end{Theorem}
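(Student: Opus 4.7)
The strategy is to obtain the limit $k_{e,\tilde h}$ via Fekete's lemma by establishing an almost-superadditive inequality
\[
\bar M_{e,\tilde h}(t_1+t_2) \;\geq\; \bar M_{e,\tilde h}(t_1) + \bar M_{e,\tilde h}(t_2) - C
\]
for some deterministic constant $C$. Fix $t_1$ and let $s^{*}\in[0,t_1]$ attain the infimum defining $\bar M_{e,\tilde h}(t_1)$. The key step is to show that, on the high-probability event $E_{\ep,T}$ of \eqref{defEept}, the profile $(U_{e,i}(s^{*}))_{i\in\Z}$ is close, for $|i|$ large, to a shifted copy of the flat initial condition: after translating the index by $-\theta_e(s^{*})$ (equivalently, shifting $\omega$ by $\tau_{-\theta_e(s^{*})}$), the positions are within $O(\ep T)$ of $W^{\tau_{-\theta_e(s^{*})}\omega}_{i+\theta_e(s^{*})}+v_e^{T_i}s^{*}$, which is exactly the content of Lemma \ref{lem.Wicorrector} and the corrector $W_i$ of Subsection \ref{subsec:correctoroutside}. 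The comparison principle together with the approximate finite speed of propagation (Lemma \ref{lem.finitespeedPSJunc}) then bounds $U_{e,i}(s^{*}+s_2)$ from below by the solution of \eqref{eq.SystJuncTOT} restarted at $s^{*}$ from a fresh flat initial condition in the shifted environment $\tau_{-\theta_e(s^{*})}\omega$. Taking expectations, absorbing the fluctuations of $\theta_e(s^{*})$ around $\bar\theta_e(s^{*})$ via Theorem \ref{thm:concentrationTOT}, and using \eqref{PEept} to discard $E_{\ep,T}^{c}$, yields $\bar\theta_e(s^{*}+s_2)\geq\bar\theta_e(s^{*})+\bar\theta_e(s_2)-C$. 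The desired superadditive inequality for $\bar M_{e,\tilde h}$ then follows by considering separately $s\leq t_1$ in the infimum over $[0,t_1+t_2]$ (where $\bar M_{e,\tilde h}(t_2)\leq 0$ suffices) and $s=s^{*}+s_2$ with $s_2\in[0,t_2]$. Fekete's lemma delivers $k_{e,\tilde h}\leq 0$, and monotonicity in $\tilde h$ is immediate from the definition of $\bar M_{e,\tilde h}$.

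\paragraph*{Consequences on $\theta_e(t)/t$.} The bound $\bar M_{e,\tilde h}(t)\leq\bar\theta_e(t)-\tilde h t$ directly gives $\liminf_t\bar\theta_e(t)/t\geq k_{e,\tilde h}+\tilde h$, and letting $\tilde h\to -A_0$ yields $\liminf_t\bar\theta_e(t)/t\geq k_e-A_0$; Theorem \ref{thm:concentrationTOT} together with a Borel--Cantelli argument and the Lipschitz control of Lemma \ref{lem.estitheta} transfers this into an almost-sure lower bound on $\liminf_t\theta_e(t)/t$, settling the case $k_e=0$. When $k_e<0$, I pick $\tilde h$ close enough to $-A_0$ so that $k_{e,\tilde h}<0$ and let $b(t)$ be the largest minimizer in $[0,t]$ of $s\mapsto\bar\theta_e(s)-\tilde h s$; then $b$ is nondecreasing with $\bar M_{e,\tilde h}(t)=\bar M_{e,\tilde h}(b(t))=\bar\theta_e(b(t))-\tilde h b(t)$, and $b(t)\to+\infty$ since otherwise $\bar M_{e,\tilde h}(t)$ would remain bounded and $\bar M_{e,\tilde h}(t)/t\to 0$, contradicting $k_{e,\tilde h}<0$. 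Writing $\bar M_{e,\tilde h}(b(t))/b(t)=(t/b(t))\cdot\bar M_{e,\tilde h}(t)/t$ and using that both sides converge to the same nonzero limit $k_{e,\tilde h}$ forces $b(t)/t\to 1$; combined with Lemma \ref{lem.estitheta} this yields $\bar\theta_e(t)/t\to\tilde h+k_{e,\tilde h}$. This limit must be independent of $\tilde h$ in the range where $k_{e,\tilde h}<0$ and hence equals some deterministic $\bar\vartheta_e$; sending $\tilde h\to -A_0$ identifies $\bar\vartheta_e=-A_0+k_e<-A_0$, and concentration upgrades the convergence to be almost sure for $\theta_e(t)/t$.

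\paragraph*{Main obstacle.} The hard part is clearly the superadditive inequality. In the simple perturbation case sketched in the introduction, one can apply an envelope argument to $M_e(t)=\inf_i U_{e,i}(t)-ei-\tilde V(e)t$ to force the minimum to lie in the transition zone and then restart directly with a shift. The bifurcation geometry and the presence of several vehicle types break this, because at the minimizer time the profile is random and is not a deterministic shift of the initial flat condition. The corrector $W_i$ built in Subsection \ref{subsec:correctoroutside} is precisely the device that encodes the expected asymptotic spacing $\E[(\tilde V^k_{Z_0})^{-1}(v_e^k)\mid T_0=k]=e^k/\pi^k$ on each outgoing road; Lemma \ref{lem.estiWi} then controls the deviations of $W_i$ around this spacing sharply enough that the bad event $E_{\ep,T}^{c}$ is negligible after integration. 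The interplay between the random shift $\tau_{-\theta_e(s^{*})}\omega$, the fact that $s^{*}$ depends on the whole history, and the required concentration of $\theta_e(s^{*})$ around $\bar\theta_e(s^{*})$ is what forces the use of Theorem \ref{thm:concentrationTOT} in its full strength.
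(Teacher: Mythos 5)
Your approach follows the paper's route: an almost-superadditive inequality for $\bar M_{e,\tilde h}$ obtained by restarting from a shifted flat profile at a near-minimizing time, using the corrector $W$, the concentration of Theorem \ref{thm:concentrationTOT}, the approximate finite speed of propagation, and expectations; and your treatment of the consequences (the monotone $b(t)$ argument) matches Lemma \ref{lem.limEthetatJuncTOT}. However, the claimed superadditive inequality with a constant error $C$ is not provable by this method. Each step costs an additive error of order $\ep T$ from the concentration and corrector estimates, plus an error of order $T\,\P[\tilde E_{\ep,T}^c]\lesssim T^3\exp\{-\ep^2T/C\}$ from the bad event; no choice of $\ep$ makes both $O(1)$. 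The paper obtains $C(1+(\ln t)^{1/8}t^{7/8})$ (Lemma \ref{lem:superadditivityJuncTOT}), which is still sublinear and therefore sufficient for the Fekete-type conclusion, but your constant-error statement as written would have to be weakened.

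The more serious gap is in the restart step. You invoke Lemma \ref{lem.Wicorrector} to say that the profile at $s^*$ is close to a shifted corrector profile for $|i|$ large, and then compare with a fresh flat initial condition. But the comparison needs $U_{e,i}(\tilde s) \geq e^0(i-j_0){\bf 1}_{i\leq j_0}+e^{T_i}(i-j_0){\bf 1}_{i>j_0} - C\ep^{1/2}T$ for \emph{all} $i$ in the relevant range, not only far from the junction, and Lemma \ref{lem.Wicorrector} controls only the far field. The mechanism that produces the global lower bound is the envelope argument applied to
$$\tilde M^{i_0,\omega}_{e,T}(s) = \inf_{\tau\in[0,s],\ i\in\Z\cap[-\bar CT,\bar CT/2]}\Bigl[\xi^\omega_e(U^\omega_{e,i}(\tau),i) - \xi^\omega_e(W^{\tau_{i_0}\omega}_{i-i_0}, i-i_0)\Bigr] - i_0 - \tilde h\tau,$$
i.e.\ Lemma \ref{lem.central}: at a time where $\tilde M^{i_0,\omega}_{e,T}$ strictly decreases, the minimizing index must lie near the junction (because away from $0$ the very definition of $W$ forces the velocity to be at least $v^k_e$, contradicting $\tilde h< v^k_e/e^k$), and then optimality of the minimizer yields $U^\omega_{e,i}(\tilde s)\geq W^{\tau_{i_0}\omega}_{i-i_0}-C\ep^{1/2}T$ for all $i$. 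Moreover, to pass from the random $\theta_e(s^*)$ to the deterministic shift one actually needs to set $j_0=-\bar\theta_e(\bar t)+[C''\ep^{1/4}T]$ for a large $C''$, and to work with a carefully chosen time $\bar t$ where the deterministic $\bar M_{e,\tilde h}$ decreases noticeably rather than the minimizer $s^*$, so that a nearby time where the random $\tilde M^{i_0,\omega}_{e,T}$ decreases exists. Identifying $W$ as the right object and noting the need for concentration is correct, but without the envelope argument on $\tilde M^{i_0,\omega}_{e,T}$ there is no way to pass from the far-field estimate you cite to the global comparison required by the restart.
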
 

To prove the result, we are going to show that  $\bar M_{e,\tilde h}$ is almost superadditive (Lemma \ref{lem:superadditivityJuncTOT}), which implies that $\bar M_{e,\tilde h}(t)/t$ has a limit $k_{e,\tilde h}$ as $t\to +\infty$ (Lemma \ref{lem.limEthetatJuncTOT}). This, in turn, will show  the existence of a limit for $\bar \theta_e(t)/t$ if $k_{e,\tilde h}<0$ and thus, by the variance estimate, the a.s. limit of $\theta_e(t)/t$ (Lemma \ref{defbarthetaTOT}). 

The proof of the superadditivity of $\bar M_{e,\tilde h}$ is intricate and requires the introduction of several additional quantities. Let $\xi^\omega:\R\times \Z\to \R$ be a measurable map which is smooth, uniformly Lipschitz continuous and increasing in the $x$ variable, with inverse also uniformly Lipschitz continuous, and such that  
$$
\xi^\omega_e(x,i)= \left\{ \begin{array}{ll}
x/e^0 & {\rm if }\; x\leq -\min(R_0,e^0),\\ 
x/e^k & {\rm if}\; x\geq0, \; T_i=k, 
\end{array}\right.
\;{\rm and}\qquad \left|\xi^\omega_e(x,i)-\frac{x}{e^0}{\bf 1}_{x\leq 0}-\frac{x}{e^k}{\bf 1}_{\{x\geq 0, \ T^\omega_i=k\}}\right| \leq 1.
$$
Since the inverse of $\xi_e^\omega$ is uniformly Lipschitz continuous, we have in particular, if $x\ge y$
\be\label{eq:xi-inv}
\xi_e^{\omega}(x,i)-\xi_e^\omega(y,i)\ge C^{-1}(x-y).
\ee
For $0<s\leq T$, let
$$
M^\omega_{e,T} (s)=M^\omega_{e,\tilde h,T} (s)= \inf_{i\in \Z\cap [-\bar CT,\bar CT/2], \; \tau\in [0,s]} \xi_e^\omega(U^\omega_{e,i}(\tau),i)-i-\tilde h \tau. 
$$
We also set, for any  $i_0\in \Z\cap  [- C_\theta T,0]$, 
$$
\tilde M^{i_0,\omega}_{e,T} (s)=\tilde M^{i_0,\omega}_{e,\tilde h,T} (s)= \inf_{\tau\in [0,s], \; i\in \Z\cap  [-\bar CT,\bar CT/2]} \xi_e^\omega(U^\omega_{e,i}(\tau),i) -\xi_e^\omega(W^{\tau_{i_0}\omega}_{i-i_0}, i-i_0)-i_0-\tilde h \tau. 
$$
Note that $M^\omega_{e,T}(0)=0$ and that  $M^\omega_{e,T}$ is nonpositive. We will prove below that $M_{e,T}$ and $\tilde M_{e,T}$ are good approximations of $\bar M_{e,\tilde h}$.

Let us introduce the event 
\be\label{deftildeEept}
\tilde E_{\ep,T} = E_{\ep,T} \cap \left\{ \sup_{s\le 2T} \left| \theta_e(s)-\bar \theta_e(s)\right| \leq \ep T, \quad \sup_{i\leq 2e_{\max}^{-1}\ep T} \ell_i\leq \bar CT/2, \quad J_{[\bar C T/(16\alpha)]+1} ([\bar CT/2])\geq 0\right\},
\ee
where $E_{\ep, T}$ is defined in \eqref{defEept}. Recalling Lemma \ref{lem.Jn}, Theorem \ref{thm:concentrationTOT} and \eqref{PEept} we have 
\be\label{boundPtildeE}
\P\left[ \tilde E_{\ep, T}^c\right] \leq C T^2 \exp\{-\ep^2 T/C\}.
\ee

\begin{Lemma}\label{lem.easy1} In $\tilde E_{\ep,T}$ and for  $i_0\in \Z\cap  [-C_\theta T,0]$, we have, for $s\in [0,T]$,  
$$
M^\omega_{e,T}(s) \leq \bar M_{e,\tilde h}(s)+ \ep T, \qquad \left| M^\omega_{e,T}(s)-\tilde M^{i_0,\omega}_{e,T}(s)\right|\leq C\ep T,
$$
where $C$ depends only on the Lipschitz constant of $\xi=\xi^\omega_e(x,i)$ with respect to $x$. 
\end{Lemma}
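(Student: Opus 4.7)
The plan is to prove both bounds by reducing each to a direct estimate that combines the defining approximation property of $\xi_e^\omega$ with the concentration estimates bundled into the event $\tilde E_{\ep,T}$.

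For the first inequality, I would pick $\tau^* \in [0,s]$ minimizing $\tau \mapsto \bar\theta_e(\tau) - \tilde h \tau$ on $[0,s]$ and use $i^* := -\theta_e(\tau^*)$ as a test pair in the infimum defining $M^\omega_{e,T}(s)$. By Lemma~\ref{lem.estitheta} one has $\theta_e(\tau^*) \leq C_\theta T$, so $i^* \in [-C_\theta T,0] \subset [-\bar CT, \bar CT/2]$ by~\eqref{hypbarC}. Since $U^\omega_{e,i^*}(\tau^*) \leq 0$ by definition of $\theta_e$, the bound $|\xi_e^\omega(x,i) - x/e^0| \leq 1$ for $x \leq 0$ yields $\xi_e^\omega(U^\omega_{e,i^*}(\tau^*), i^*) \leq 1$, whence
$$
M^\omega_{e,T}(s) \leq 1 + \theta_e(\tau^*) - \tilde h \tau^*.
$$
On $\tilde E_{\ep,T}$ the concentration bound $|\theta_e(\tau^*) - \bar\theta_e(\tau^*)| \leq \ep T$ from definition~\eqref{deftildeEept} gives $M^\omega_{e,T}(s) \leq \bar M_{e,\tilde h}(s) + \ep T + 1$, and the residual constant is absorbed into $\ep T$ thanks to the assumption $\ep T \geq C$.

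For the second inequality, the integrands in the infima defining $M^\omega_{e,T}(s)$ and $\tilde M^{i_0,\omega}_{e,T}(s)$ differ by exactly $\Delta_{i,i_0} := \xi_e^\omega(W^{\tau_{i_0}\omega}_{i-i_0},\, i-i_0) + i_0 - i$, so it suffices to bound $|\Delta_{i,i_0}|$ by $C\ep T$ uniformly for $i \in \Z \cap [-\bar CT, \bar CT/2]$ and $i_0 \in \Z \cap [-C_\theta T, 0]$, which passes directly under the infima. I would split into the cases $i \leq i_0$ and $i > i_0$ according to the sign of $W^{\tau_{i_0}\omega}_{i-i_0}$, which is nonpositive (respectively nonnegative) by the monotone construction of $W$ starting from $W_0=0$. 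In each case the defining bound $|\xi_e^\omega(x,j) - x/e^{\cdot}| \leq 1$ reduces the task to controlling $|W^{\tau_{i_0}\omega}_{i-i_0} - e^{\cdot}(i-i_0)|$; this is precisely the first condition in the definition~\eqref{defEept} of $E_{\ep,T} \supset \tilde E_{\ep,T}$, giving an $\ep T$ bound which, divided by $e_{\min} := \min_{k} e^k > 0$, remains of order $\ep T$.

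The main subtlety will be to make sure, in the case $i > i_0$, that the type index implicit in the piecewise definition of $\xi_e^\omega(\cdot, i-i_0)$ lines up with the one appearing in the $E_{\ep,T}$-estimate on $W^{\tau_{i_0}\omega}_{i-i_0}$. Since the $(e^k)_k$ lie in a compact subinterval of $(0,\infty)$ and $\xi_e^\omega$ is uniformly Lipschitz with Lipschitz inverse (see~\eqref{eq:xi-inv}), any residual type mismatch produces only an absolute multiplicative constant in the error, compatible with the target bound $C\ep T$. Once the uniform estimate on $|\Delta_{i,i_0}|$ is in place, the second claim of the lemma is immediate, and the constant $C$ depends only on the Lipschitz constant of $\xi_e^\omega$ in its first variable, as announced.
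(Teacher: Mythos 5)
Your proof follows essentially the same route as the paper: for the first inequality, plug the witness $i = -\theta_e^\omega(\tau)$ into the infimum and invoke the concentration bound built into $\tilde E_{\ep,T}$; for the second, bound uniformly the difference $\Delta_{i,i_0} := \xi_e^\omega(W^{\tau_{i_0}\omega}_{i-i_0}, i-i_0) - (i - i_0)$ of the two integrands via the $W$-control in $E_{\ep,T}$ and pass under the infima.

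There is, however, a genuine gap in how you resolve the ``type subtlety'' you flag for $i > i_0$. You assert that a residual mismatch between the branch implicit in $\xi_e^\omega(\cdot, i-i_0)$ and the type governing the $E_{\ep,T}$-estimate would contribute only an absolute multiplicative constant, because the $e^k$ lie in a compact subinterval of $(0,\infty)$ and $\xi$ is bi-Lipschitz. This is false: were the two types $k\neq k'$ genuinely distinct, the quantity $\xi_e^\omega\bigl(W^{\tau_{i_0}\omega}_{i-i_0}, i-i_0\bigr)\approx W^{\tau_{i_0}\omega}_{i-i_0}/e^{k'}$, combined with the $E_{\ep,T}$-control $W^{\tau_{i_0}\omega}_{i-i_0}\approx e^{k}(i-i_0)$, would give $\approx (e^k/e^{k'})(i-i_0)$, which differs from $i-i_0$ by $\bigl|e^{k}/e^{k'}-1\bigr|\,|i-i_0|$, a quantity of order $T$ rather than $\ep T$ when $|i-i_0|\sim T$. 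Bi-Lipschitzness of $\xi$ controls distortion of the argument, not the consequence of evaluating the wrong branch. What makes the argument go through, and what the paper actually relies on, is that the two type indices \emph{coincide}: by the shift relation $Z^{\tau_n\omega}_{j-n}= Z^\omega_j$ one has $T^{\tau_{i_0}\omega}_{i-i_0}= T^\omega_i$, and applying Lemma~\ref{lem.estiWi} in the shifted environment $\tau_{i_0}\omega$ with origin $0$ shows that the slope in the $W$-bound is precisely $e^{T^{\tau_{i_0}\omega}_{i-i_0}}$. Once the branches are identified, $\xi$ sends the comparison point $e^{T^\omega_i}(i-i_0)$ to $i-i_0$ up to an additive $1$, and Lipschitz continuity yields $|\Delta_{i,i_0}|\leq C\ep T+1\leq C'\ep T$. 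Without this identification the bound genuinely fails.

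A minor point on the first inequality: the extra $+1$ you introduce via the estimate $\xi_e^\omega(x,i)\leq 1$ on $\{x\leq 0\}$ is avoidable. Since $\xi_e^\omega$ is increasing in $x$ and $\xi_e^\omega(0,i)=0$ (the formula $\xi_e^\omega(x,i)=x/e^k$ is exact at $x=0$), one has $\xi_e^\omega(x,i)\leq 0$ for $x\leq 0$, and the witness gives $M^\omega_{e,T}(s)\leq\theta_e^\omega(\tau)-\tilde h\tau$ with no residual constant. This matters because the first bound of the lemma carries no free constant in front of $\ep T$; absorbing $+1$ would only yield $2\ep T$.
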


\begin{proof} Let $0\leq \tau\leq s\leq T$ and choose $i= -\theta^\omega_e(\tau)$ in the definition of $M^\omega_{e,T}(s)$. By \eqref{hypbarC} we know that $i= -\theta^\omega_e(\tau)\in \Z\cap  [-\bar CT,0]$. Then, as we are in $\tilde E_{\ep,T}$, we have 
$$
M^\omega_{e,T}(s)\leq 0+ \theta^\omega_e(\tau) -\tilde h \tau \leq \bar \theta_e(\tau) -\tilde h \tau +\ep T. 
$$
Taking the infimum over $\tau\in [0,s]$ gives the first inequality. For the second one, let us recall that, since we are in $E_{\ep,T}$ defined in \eqref{defEept}, we have for any $i\in \Z\cap [-\bar CT, \bar CT]$ and $i_0\in \Z\cap   [-C_\theta T,0]$
$$
|W^{\tau_{i_0}\omega}_{i-i_0} - e^0(i-i_0){\bf 1}_{i\leq i_0}-e^{T_i^{\tau_{i_0}\omega}} (i-i_0){\bf 1}_{i>i_0}|\leq \ep T.
$$
As $x\to \xi^\omega_e(x,i)$ is uniformly Lipschitz continuous with Lipschitz constant $C$ and $T_i^{\tau_{i_0}\omega}=\tau^\omega_{i-i_0}$ we have
\begin{align*}
& \bigl|\xi_e^\omega(W^{\tau_{i_0}\omega}_{i-i_0},i-i_0) - (i-i_0)\bigr|\\ 
&\qquad \leq 
\bigl|\xi_e^\omega(W^{\tau_{i_0}\omega}_{i-i_0},i-i_0) - \xi_e^\omega(e^0(i-i_0){\bf 1}_{i\leq i_0}-e^{T_i^{\tau_{i_0}\omega}} (i-i_0){\bf 1}_{i>i_0}, i-i_0)\bigr|+1\\
&\qquad \leq  C\ep T+ 1.
\end{align*}
This implies the second inequality. 
\end{proof}

We now use in a crucial way the construction of $(W_i)$ to obtain the key property of $\tilde M_{e,T}$: 
\begin{Lemma}\label{lem.central}
Let $\ep, T$ be such that $\ep T\ge \bar C$ and fix  $C_1>0$, $\omega \in \tilde E_{\ep,T}$, $\bar s\in (0,T]$ and set $i_0=-\theta_e^\omega(\bar s)$. 
Assume that $\tilde M^{i_0,\omega}_{e,T}(\tilde s)<0$ and that $\frac{d}{ds} \tilde M^{i_0,\omega}_{e,T}(\tilde s)<0$ for some $\tilde s\in (0,T)$ with $|\tilde s-\bar s| \leq C_1\ep^{1/2} T$. Then there exists $i_1$ such that $(\tilde s,i_1)$ is a minimum point in the definition of $\tilde M^{i_0,\omega}_{e,T}(\tilde s)$ and there exists large  constants $c_1$ and $C$ (depending on $C_1$ but not on $\omega$, $\ep$, $T$, $\bar s$ or $\tilde s$) with the property that, if $\tilde s\geq c_1\ep T$, then we have $|U^\omega_{i_1}(\tilde s)|\leq C\ep^{1/2}T$, $|i_1-i_0|\leq C\ep^{1/2} T$ and  
\be\label{lznqejsdfxc}
U^\omega_{e,i}(\tilde s)\geq e^0(i-i_0){\bf 1}_{i\leq i_0}+e^{T^\omega_i}(i-i_0){\bf 1}_{i>i_0} -C\ep^{1/2} T\qquad \forall i\in  \Z\cap[-\bar CT,\bar CT/2] . 
\ee
\end{Lemma}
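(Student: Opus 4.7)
The plan follows the envelope-theorem heuristic of the introduction, adapted to the bifurcation setting via $\xi_e^\omega$ and the random corrector $(W_i)$ which replaces the affine profile $ei$. Since $\Z \cap [-\bar CT, \bar CT/2]$ is finite and $\tau \mapsto \xi_e^\omega(U_{e,i}^\omega(\tau),i) - \tilde h \tau$ is continuous, the infimum defining $\tilde M_{e,T}^{i_0,\omega}(\tilde s)$ is attained at some $(\tau_1,i_1)$. Because $\tilde M_{e,T}^{i_0,\omega}$ is non-increasing in $s$ while $\frac{d}{ds}\tilde M_{e,T}^{i_0,\omega}(\tilde s) < 0$, the minimum cannot be achieved at a strictly earlier $\tau$, so $\tau_1 = \tilde s$ and the existence of $i_1$ is settled. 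An envelope computation along $\tau$ at this minimizer yields
\begin{equation*}
0 \,>\, \partial_x\xi_e^\omega(U_{e,i_1}(\tilde s),i_1)\cdot V_{Z_{i_1}}\bigl(U_{e,i_1+1}-U_{e,i_1},\,U_{e,\ell_{i_1}}-U_{e,i_1},\,U_{e,i_1}\bigr)(\tilde s) \,-\, \tilde h,
\end{equation*}
which says the velocity of vehicle $i_1$ at time $\tilde s$ lies below the threshold determined by $\tilde h$ and $\xi_e^\omega$.

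The decisive step is the spatial bound $|U_{e,i_1}(\tilde s)| \leq C\ep^{1/2}T$. Assume for contradiction that $U_{e,i_1}(\tilde s) \leq -R_0 - C'\ep^{1/2}T$ with $C'$ large. Using the corrector estimate of Lemma \ref{lem.estiWi}, the Lipschitz bound of Lemma \ref{lem.boundWi-Wj} and the profile control of Lemma \ref{lem.Wicorrector}---all valid in $E_{\ep,T}$---one checks that $i_1$ sits in the interior of the admissible window and that the four quantities $U_{e,i_1}(\tilde s)$, $U_{e,i_1+1}(\tilde s)$, $W^{\tau_{i_0}\omega}_{i_1-i_0}$, $W^{\tau_{i_0}\omega}_{i_1+1-i_0}$ all lie in the region where $\xi_e^\omega(x,\cdot) = x/e^0$. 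Minimality of $(i_1,\tilde s)$ tested at $i = i_1+1$ then reduces to
\begin{equation*}
U_{e,i_1+1}(\tilde s) - U_{e,i_1}(\tilde s) \,\geq\, W^{\tau_{i_0}\omega}_{i_1+1-i_0} - W^{\tau_{i_0}\omega}_{i_1-i_0} \,=\, (\tilde V^0_{Z_{i_1}})^{-1}(v_e^0),
\end{equation*}
so $V_{Z_{i_1}}(\cdots) = \tilde V^0_{Z_{i_1}}(U_{e,i_1+1}-U_{e,i_1}) \geq v_e^0$; the envelope inequality then forces $\frac{d}{ds}\tilde M_{e,T}^{i_0,\omega}(\tilde s) \geq v_e^0/e^0 - \tilde h > 0$ by \eqref{cond.tildev}, a contradiction. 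The symmetric assumption $U_{e,i_1}(\tilde s) \geq C'\ep^{1/2}T$ is excluded by testing minimality at $i = \ell_{i_1}$ instead: setting $k := T_{i_1}$, the corresponding $W$-increment equals $(\tilde V^k_{Z_{i_1}})^{-1}(v_e^k)$ and the same calculation produces $\frac{d}{ds}\tilde M_{e,T}^{i_0,\omega}(\tilde s) \geq v_e^k/e^k - \tilde h > 0$. The intermediate range $U_{e,i_1}(\tilde s) \in [-R_0, 0]$ satisfies the bound trivially.

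Granted the spatial bound, $|i_1 - i_0| \leq C\ep^{1/2}T$ follows by comparing $i_1$ with $-\theta_e^\omega(\tilde s)$ using the velocity lower bound $\delta$ of Lemma \ref{lem.delta} and the ordering of Lemma \ref{lem.CompDeBase}, then invoking Lemma \ref{lem.estitheta} with $|\tilde s - \bar s| \leq C_1\ep^{1/2}T$ to obtain $|\theta_e^\omega(\tilde s) + i_0| \leq C\ep^{1/2}T$; the threshold $\tilde s \geq c_1 \ep T$ ensures this index/time conversion is non-degenerate. Finally \eqref{lznqejsdfxc} follows from minimality at a generic $i$: the previous bounds and the uniform Lipschitz continuity of $\xi_e^\omega$ imply $\xi_e^\omega(U_{e,i_1}(\tilde s),i_1) - \xi_e^\omega(W^{\tau_{i_0}\omega}_{i_1-i_0},i_1-i_0) \geq -C\ep^{1/2}T$, hence $\xi_e^\omega(U_{e,i}(\tilde s),i) \geq \xi_e^\omega(W^{\tau_{i_0}\omega}_{i-i_0},i-i_0) - C\ep^{1/2}T$; inverting via \eqref{eq:xi-inv} and substituting the approximation $W^{\tau_{i_0}\omega}_{i-i_0} \approx e^0(i-i_0){\bf 1}_{i \leq i_0}+e^{T_i^\omega}(i-i_0){\bf 1}_{i > i_0}$ valid in $E_{\ep,T}$ closes the argument. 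The main obstacle is the central spatial bound: translating minimality at a single neighbor into a velocity inequality requires all four relevant positions to sit in a common branch of $\xi_e^\omega$ (this is exactly what $E_{\ep,T}$ is designed to secure), and boundary minimizers at $i_1 \in \{-\bar CT, \bar CT/2\}$ must be excluded through the steady-state-like profile provided by Lemma \ref{lem.Wicorrector}.
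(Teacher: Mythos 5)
Your proposal follows essentially the same route as the paper's proof: existence of the minimizer $i_1$ from $\frac{d}{ds}\tilde M<0$, the envelope inequality, the exclusion of boundary minimizers $i_1\in\{-\bar CT,\bar CT/2\}$ via the steady-state profile of Lemma \ref{lem.Wicorrector}, the contradiction argument for the spatial bound by testing \eqref{eeopti} at $i=i_1+1$ (when $U_{e,i_1}(\tilde s)$ is very negative, giving a velocity $\geq v_e^0$) and at $i=\ell_{i_1}$ (when it is positive, giving $\geq v_e^{T_{i_1}}$), then the conversion to $|i_1-i_0|\leq C\ep^{1/2}T$ via Lemmas \ref{lem.delta} and \ref{lem.estitheta}, and finally \eqref{lznqejsdfxc} by inverting $\xi$ in the optimality inequality. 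The only slight imprecision is the role you assign to the condition $\tilde s\geq c_1\ep T$: in the paper it is used to make the profile bounds $\xi_e^\omega(U_{e,i}(\tilde s),i)-\xi_e^\omega(W^{\tau_{i_0}\omega}_{i-i_0},i-i_0)-i_0-\tilde h\tilde s>0$ strictly positive for $|i|$ near $\bar CT$, excluding boundary minimizers, rather than to control the ``index/time conversion''; but since you already invoke Lemma \ref{lem.Wicorrector} for that exclusion, this is a cosmetic rather than a substantive difference.
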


\begin{proof} As  $\frac{d}{ds} \tilde M^{i_0,\omega}_{e,T}(\tilde s)<0$ there exists $i_1\in \Z\cap  [-\bar CT,\bar CT/2] $ such that $(\tilde s,i_1)$ is a minimum point in the definition of $\tilde M^{i_0,\omega}_{e,T}(\tilde s)$. 

By the envelope theorem, we have
\be\label{eeenvelope}
0>\frac{d}{ds}\tilde M^{i_0,\omega}_{e,T}(\tilde s) = \partial_x \xi_e^\omega(U^\omega_{e,i_1}(\tilde s),i_1) \  V_{Z_{i_1}^\omega}(U^\omega_{e,i_1+1}(\tilde s)-U^\omega_{e,i_1}(\tilde s), U^\omega_{e,\ell^\omega_{i_1}}(\tilde s)-U^\omega_{e,i_1}(\tilde s), U^\omega_{e,i_1}(\tilde s))-\tilde h. 
\ee
On the other hand, by the optimality of $i_1$, we have 
\be\label{eeopti}
\xi_e^\omega(U^\omega_{e,i}(\tilde s),i) -\xi_e^\omega(U^\omega_{e,i_1}(\tilde s),i_1) 
\geq 
\xi_e^\omega(W^{\tau_{i_0}\omega}_{i-i_0}, i-i_0)
- \xi_e^\omega(W^{\tau_{i_0}\omega}_{i_1-i_0}, i_1-i_0)
\qquad \forall i\in  \Z\cap  [-\bar CT,\bar CT/2] . 
\ee
We first claim that $i_1\ge -\bar C T/2$. Indeed, since we are in $E_{\ep,T}$, inequality \eqref{WivsUei-} in Lemma \ref{lem.Wicorrector} implies that, for any $i\in [-\bar CT, -\bar CT/2]$ (which implies that $i\leq C_\theta t\leq i_0$, $U^\omega_{e,i}(\tilde s)\leq -R_0$ and $W^{\tau_{i_0}\omega}_{i-i_0}\leq -R_0$ if $\bar C$ is big enough),  
\begin{align*}
 \xi_e^\omega(U^\omega_{e,i}(\tilde s),i) -\xi_e^\omega(W^{\tau_{i_0}\omega}_{i-i_0}, i-i_0)-i_0-\tilde h \tau & \geq 
(e^0)^{-1} (e^0i+v^0_e\tilde s) -(e^0)^{-1}(e^0(i-i_0))-i_0-\tilde h \tilde s- 4\ep (e^0)^{-1}T\\ 
& \geq ((e^0)^{-1} v^0_e-\tilde h) \tilde s- 4\ep (e^0)^{-1} T >0
 \end{align*}
since $(e^0)^{-1}v_e^0>\tilde h$ and $\tilde s> c_1\ep T$ where $c_1$ is large enough. This shows that $i_1\geq -\bar CT/2$ because $\tilde M^{i_0,\omega}_{e,T}(\tilde s)<0$. 

In the same way, we have $i_1\leq 2(\min_ke^k)^{-1}\ep T$. Indeed, for $i\in [2(\min_k e^k)^{-1}\ep T, \bar CT/2]$, by \eqref{WivsUei+} in Lemma \ref{lem.Wicorrector} and for $k=T^\omega_i$, we have
\begin{align*}
 \xi_e^\omega(U^\omega_{e,i}(\tilde s),i) -\xi_e^\omega(W^{\tau_{i_0}\omega}_{i-i_0}, i-i_0)-i_0-\tilde h \tilde s & \geq 
 (e^k)^{-1}(e^k i+v^k_e \tilde s)- (e^k)^{-1} (e^k (i-i_0))-i_0- \tilde h \tilde s - 4\ep(e^k)^{-1} T\\ 
& > ( (e^k)^{-1}v^k_e- \tilde h) \tilde s - 4\ep (e^k)^{-1} T >0.
\end{align*}
Since $\tilde M^{i_0,\omega}_{e,T}(\tilde s)<0$, this shows that $i_1\leq 2(\min_ke^k)^{-1}\ep T$. Recalling the definition  of $\tilde E_{\ep,T}$, we also have therefore that $\ell_{i_1} \leq \bar CT/2$. 

We now prove that $|U^\omega_{i_1}(\tilde s)|\leq C\ep^{1/2}T$. By contradiction, assume first that $U^\omega_{e,i_1}(\tilde s)\geq C\ep^{1/2} T$, where $C$ is to be chosen below. Then, as $|\tilde s-\bar s|\leq C_1\ep^{1/2}T$ we obtain $U^\omega_{e,i_1}(\bar s)>0$ for $C$ large enough. Since $i_0=-\theta_e^\omega(\bar s)$, we get that $i_1\geq i_0+1$. Using successively that $U^\omega_{e,i_1}(\tilde s)> 0$ (for the first equality), \eqref{eeopti} and the fact that $\ell_{i_1} \leq \bar CT/2$ and that $\ell^\omega_{i_1}-i_0= \ell^{\tau_{i_0}\omega}_{i_1-i_0}$ (for the inequality), and  the definition of the $(W_i)$ (for the last equality), we have, for $k=T^\omega_{i_1}$,  
\begin{align*}
&\partial_x \xi_e^\omega(U^\omega_{e,i_1}(\tilde s),i_1) \   V_{Z_{i_1}^\omega}(U^\omega_{e,i_1+1}(\tilde s)-U^\omega_{e,i_1}(\tilde s), U^\omega_{e,\ell^\omega_{i_1}}(\tilde s)-U^\omega_{e,i_1}(\tilde s), U^\omega_{e,i_1}(\tilde s))\\
& \qquad =(e^k)^{-1} V^k_{Z^{\tau_{i_0}\omega}_{i_1-i_0}}( U^\omega_{e,\ell^\omega_{i_1}}(\tilde s)-U^\omega_{e,i_1}(\tilde s))\\
& \qquad  \geq
 (e^k)^{-1}  V^k_{Z^{\tau_{i_0}\omega}_{i_1-i_0}}( W^{\tau_{i_0}\omega}_{\ell^{\tau_{i_0}\omega}_{i_1-i_0}}-W^{\tau_{i_0}\omega}_{i_1-i_0})= (e^k)^{-1} v_e^k> \tilde h, 
\end{align*}
which contradicts \eqref{eeenvelope}. Assume now that $U^\omega_{e,i_1}(\tilde s)\leq (- C'\ep^{1/2} T)\wedge (-R_0)$. Then as before, we have $i_1+1\leq i_0$ if $C'$ is large enough and we get
\begin{align*}
&\partial_x \xi_e^\omega(U^\omega_{e,i_1}(\tilde s),i_1) \   V_{Z_{i_1}^\omega}(U^\omega_{e,i_1+1}(\tilde s)-U^\omega_{e,i_1}(\tilde s), U^\omega_{e,\ell^\omega_{i_1}}(\tilde s)-U^\omega_{e,i_1}(\tilde s), U^\omega_{e,i_1}(\tilde s))\\
&\qquad= (e^0)^{-1}\   V^0_{Z^\omega_{i_1}}( U^\omega_{e,i_1+1}(\tilde s)-U^\omega_{e,i_1}(\tilde s))
\\
& \qquad  
\geq 
(e^0)^{-1}V^0_{Z^{\tau_{i_0}\omega}_{i_1-i_0}}( W^{\tau_{i_0}\omega}_{i_1+1-i_0}-W^{\tau_{i_0}\omega}_{i_1-i_0}) = (e^0)^{-1}v_e^0> \tilde h.
\end{align*}
This gives again a contradiction with  \eqref{eeenvelope} and show that $|U^\omega_{i_1}(\tilde s)|\leq C\ep^{1/2}T$ for $C$ large enough.

We now claim that this inequality and the fact that $i_0=-\theta_e^\omega(\bar s)$ imply that $|i_0-i_1|\leq C\ep^{1/2}T$: indeed, let $s$ be such that $U^\omega_{i_1}(s)=0$ (if it exists, otherwise, we set $s=0$). Then,  by Lemma \ref{lem.delta}, we have 
$$
|U^\omega_{e,i_1}(\tilde s)|\ge |U^\omega_{e,i_1}(\tilde s)- U^\omega_{e,i_1}(s)|\geq \delta|\tilde s-s|, 
$$
so that $|\tilde s-s|\leq \delta^{-1} C'\ep^{1/2}T$. If $i_1\le 0$, by the definition of $s$, $\theta_e^\omega(s)=-i_1$, we get, recalling Lemma \ref{lem.estitheta}:
$$
|i_0-i_1|= |\theta_e^\omega(s) -\theta_e^\omega(\bar s)| \leq C_\theta (|s-\bar s| +1)\leq C_\theta (|s-\tilde s|+|\tilde s-\bar s|+1)\leq  C\ep^{1/2}T. 
$$
If $i_1\ge 0$, we get in the same way $|i_0|\le C\ep^{1/2}T$ and so $|i_0-i_1|\le C\ep^{1/2}T$.
By the choice of $\bar C$ in \eqref{hypbarC}, we have that $i_0\in [-\bar CT/10, 0]$. Thus, for $\ep$ small enough, we obtain also $i_1\in [-\bar CT, \bar CT/2]$.  Coming back to \eqref{eeopti} we obtain therefore, using the facts that $\xi$ is uniformly Lipschitz continuous,   $|U^\omega_{i_1}(\tilde s)|\leq C\ep^{1/2}T$ $|i_0-i_1|\leq C\ep^{1/2}T$ and  the fact that we are in $ E_{\ep,T}$,
$$
\xi_e^\omega(U^\omega_{e,i}(\tilde s),i) \geq 
\xi_e^\omega(W^{\tau_{i_0}\omega}_{i-i_0}, i-i_0) -C\ep^{1/2}T
\qquad \forall i\in  \Z\cap  [-\bar CT,\bar CT/2] . 
$$
Since the inverse of $\xi$ is increasing and uniformly Lipschitz continuous, we get 
$$
U^\omega_{e,i}(\tilde s) \geq 
W^{\tau_{i_0}\omega}_{i-i_0}  -C\ep^{1/2}T
\qquad \forall i\in  \Z\cap  [-\bar CT,\bar CT/2] . 
$$
Recalling that $\omega\in E_{\ep,T}$, we find \eqref{lznqejsdfxc}.  
\end{proof}

Next we show that we can bound from below $M^\omega_{e,T}(t)$ by $\bar M_{e,\tilde h}(t)$: 

\begin{Lemma}\label{lem.kjsnedf}Let $\ep, T$ be such that $\ep T\ge \bar C$. There exists a constant $C$ such that, in $\tilde E_{\ep,T}$ and for $t\in[0,T]$, we have 
$$
M^\omega_{e,T}(t) \geq \bar M_{e,\tilde h}(t)-C\ep^{1/2}T.
$$
\end{Lemma}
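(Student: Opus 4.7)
By Lemma \ref{lem.easy1}, for any $i_0 \in \Z \cap [-C_\theta T, 0]$ one has $M^\omega_{e,T}(t) \geq \tilde M^{i_0,\omega}_{e,T}(t) - C\ep T$, so it is enough to bound $\tilde M^{i_0,\omega}_{e,T}(t)$ from below for a well-chosen $i_0$. The plan is to take $\bar s := t$ and $i_0 := -\theta^\omega_e(t) \in [-C_\theta T, 0]$, introduce the integrand $G_{i_0}(\tau, i) := \xi^\omega_e(U^\omega_{e,i}(\tau), i) - \xi^\omega_e(W^{\tau_{i_0}\omega}_{i-i_0}, i-i_0) - i_0 - \tilde h \tau$, and pick a minimum point $(\tau^*, i_1^*)$ of $G_{i_0}$ on $[0,t] \times ([-\bar CT, \bar CT/2]\cap\Z)$ with $\tau^*$ minimal (so $\tau^* \leq t$). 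When $\tau^* \leq c_1 \ep T$ for a large enough $c_1$, Lipschitz continuity of $G_{i_0}$ in $\tau$ combined with $|G_{i_0}(0, \cdot)| \leq C\ep T$---a routine consequence of the definition of $\xi^\omega_e$ and of the estimate $|\xi^\omega_e(W^{\tau_{i_0}\omega}_{i-i_0}, i-i_0) - (i-i_0)| \leq C\ep T$ valid in $E_{\ep, T}$ (as used in Lemma \ref{lem.easy1})---yields $G_{i_0}(\tau^*, i_1^*) \geq -C\ep T$; since $\bar M_{e, \tilde h}(t) \leq 0$ and $\ep \leq 1$, the required bound follows in this case.

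The substantive case is $\tau^* \geq c_1 \ep T$. Minimality of $\tau^*$ together with the envelope theorem give $\partial_\tau G_{i_0}(\tau^*, i_1^*) \leq 0$, that is $\partial_x\xi^\omega_e(U^\omega_{e, i_1^*}(\tau^*), i_1^*) \cdot V_{Z^\omega_{i_1^*}} \leq \tilde h$. I mimic the proof of Lemma \ref{lem.central} to deduce $|U^\omega_{e, i_1^*}(\tau^*)| \leq C\ep^{1/2}T$: assuming for contradiction $U^\omega_{e, i_1^*}(\tau^*) \geq C\ep^{1/2}T$, one tests the optimality of $i_1^*$ in $G_{i_0}$ against $\ell_{i_1^*}$, uses the defining identity of $W$, and invokes the strict inequality $v^k_e/e^k > \tilde h$ from \eqref{cond.tildev} to produce $\partial_x\xi V \geq v^k_e/e^k > \tilde h$, a contradiction; the very-negative case $U^\omega_{e, i_1^*}(\tau^*) \leq -(C\ep^{1/2}T + R_0)$ is treated symmetrically with the test index $i_1^* + 1$. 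The \emph{main obstacle} is that Lemma \ref{lem.central} deploys the hypothesis $|\tilde s - \bar s| \leq C_1 \ep^{1/2}T$ precisely to guarantee $i_1^* \geq i_0 + 1$ (respectively $i_1^* \leq i_0 - 1$ in the negative case), and that closeness is not available here since $\tau^*$ may be far from $t = \bar s$. I get around this by exploiting the monotonicity of $\theta^\omega_e$ together with $\tau^* \leq t$: $U^\omega_{e, i_1^*}(\tau^*) > 0$ means that vehicle $i_1^*$ has already crossed $0$ by time $\tau^*$, so $i_1^* \geq -\theta^\omega_e(\tau^*) + 1 \geq -\theta^\omega_e(t) + 1 = i_0 + 1$ directly; the negative case is handled analogously using Lemmas \ref{lem.CompDeBase} and \ref{lem.delta}.

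With $|U^\omega_{e, i_1^*}(\tau^*)| \leq C\ep^{1/2}T$ in hand, I split on the sign of $i_1^*$. If $i_1^* \geq 1$, the monotonicity of $U^\omega_{e, i_1^*}$ in time and the initial condition give $U^\omega_{e, i_1^*}(\tau^*) \geq e^{T_{i_1^*}} i_1^*$, so $i_1^* \leq C\ep^{1/2}T$; combining $\xi^\omega_e(U^\omega_{e, i_1^*}(\tau^*), i_1^*) \geq i_1^*$ with the $E_{\ep,T}$-estimate $\xi^\omega_e(W^{\tau_{i_0}\omega}_{i_1^*-i_0}, i_1^*-i_0) \leq i_1^* - i_0 + C\ep T$ produces $G_{i_0}(\tau^*, i_1^*) \geq -\tilde h\tau^* - C\ep T \geq -\tilde h t - C\ep T \geq \bar M_{e, \tilde h}(t) - C\ep^{1/2}T$, where the last step uses the elementary fact $\bar M_{e,\tilde h}(t) \geq -\tilde h t$ (coming from $\bar \theta_e \geq 0$). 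If $i_1^* \leq 0$, Lemma \ref{lem.delta} provides a crossing time $s^*$ with $U^\omega_{e, i_1^*}(s^*) = 0$ and $|s^* - \tau^*| \leq C\ep^{1/2}T$; then $-i_1^* = \theta^\omega_e(s^*)$, Lemma \ref{lem.estitheta} delivers $|i_1^* + \theta^\omega_e(\tau^*)| \leq C\ep^{1/2}T$, and the concentration bound $|\theta^\omega_e(\tau^*) - \bar\theta_e(\tau^*)| \leq \ep T$ valid in $\tilde E_{\ep,T}$ yields $-i_1^* \geq \bar\theta_e(\tau^*) - C\ep^{1/2}T$. Substituting these estimates and the Lipschitz bound $|\xi^\omega_e(U^\omega_{e, i_1^*}(\tau^*), i_1^*)| \leq C\ep^{1/2}T$ into $G_{i_0}(\tau^*, i_1^*)$ gives $G_{i_0}(\tau^*, i_1^*) \geq \bar\theta_e(\tau^*) - \tilde h\tau^* - C\ep^{1/2}T \geq \bar M_{e,\tilde h}(\tau^*) - C\ep^{1/2}T \geq \bar M_{e,\tilde h}(t) - C\ep^{1/2}T$, closing the argument.
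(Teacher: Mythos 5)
Your plan departs from the paper's proof in an interesting and more direct-looking way: instead of constructing the auxiliary time $\bar s$ via the rate condition and then applying Lemma~\ref{lem.central} at a nearby $\tilde s$ where $\frac{d}{ds}\tilde M^{i_0,\omega}_{e,T}<0$, you take $i_0=-\theta_e^\omega(t)$ directly and examine the minimizer $(\tau^*,i_1^*)$ with $\tau^*$ minimal. You correctly identify the obstacle --- the absence of $|\tau^*-t|\le C\ep^{1/2}T$ --- and your monotonicity argument does handle the \emph{positive} contradiction: $U_{e,i_1^*}^\omega(\tau^*)>0$ propagates forward in time, so $i_1^*\ge -\theta_e^\omega(\tau^*)+1\ge i_0+1$.

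However, the \emph{negative} case is not symmetric, and the sentence ``the negative case is handled analogously using Lemmas~\ref{lem.CompDeBase} and~\ref{lem.delta}'' hides a genuine gap. What is needed there (to apply the backward defining identity $\tilde V^0_{Z^{\tau_{i_0}\omega}_{i_1^*-i_0}}(W^{\tau_{i_0}\omega}_{i_1^*+1-i_0}-W^{\tau_{i_0}\omega}_{i_1^*-i_0})=v^0_e$) is $i_1^*+1\le i_0$, i.e.\ $\theta_e^\omega(t)\le -i_1^*-1$, which requires knowing that vehicle $i_1^*+1$ has \emph{not} crossed $0$ by time $t$. But $U^\omega_{e,i_1^*}(\tau^*)\le -(R_0+C\ep^{1/2}T)$ together with $\tau^*\le t$ gives no upper bound on $U^\omega_{e,i_1^*+1}(t)$ when $t-\tau^*$ is large. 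Even if one argues (via Lemma~\ref{lem.CompDeBase} at time $\tau^*$ and Lemma~\ref{lem.delta} for the vehicle $i_0$) that supposing $i_1^*\ge i_0$ forces $t-\tau^*\le \delta^{-1}(R_0+C\ep^{1/2}T)$, the resulting bound $U^\omega_{e,i_1^*}(t)\le -(R_0+C\ep^{1/2}T)+\|V\|_\infty\delta^{-1}(R_0+C\ep^{1/2}T)$ is not negative because $\|V\|_\infty\ge\delta$. This asymmetry --- crossing is preserved forward in time, not-yet-crossing is not --- is precisely the reason the paper introduces $\bar s$: it forces the contradiction to occur within $C\ep^{1/2}T$ of the time at which the corrector is anchored. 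There is also a smaller, fixable slip in your last paragraph: the chain $-\tilde h t - C\ep T \ge \bar M_{e,\tilde h}(t)-C\ep^{1/2}T$ does not follow from $\bar M_{e,\tilde h}(t)\ge -\tilde h t$ (the inequality goes the wrong way); in the case $i_1^*\ge 1$ you must instead note that $U^\omega_{e,i_1^*}(\tau^*)\ge e^{T_{i_1^*}}i_1^*+\delta\tau^*$ forces $\tau^*\le C\ep^{1/2}T$, so that $\bar M_{e,\tilde h}(t)\le \bar\theta_e(\tau^*)-\tilde h\tau^*\le C\ep^{1/2}T-\tilde h\tau^*$, closing the estimate.
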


\begin{proof} 
Let 
$$
\bar s=\sup\Bigl\{s\in [0,t], \; M^\omega_{e,T}(s)\geq M^\omega_{e,T}(t)+4\ep^{1/2}(2T-s)\Bigr\}.
$$
If there is no such a $s\in [0,t]$, then $M^\omega_{e,T}(t)\geq -8\ep^{1/2}T$ since $M^\omega_{e,T}(0)= 0$. So we have  $M^\omega_{e,T}(t)\geq -C\ep^{1/2}T$ while $ \bar M_{e,\tilde h}(t)\leq 0$, which proves the result in this case. In the same way, if $\bar s\leq c_1\ep T$ (for $c_1$ to be chosen below), then, since $s\to M^\omega_{e,T}(s)$ is Lipschitz continuous and $M^\omega_{e,T}(0)=0$, 
$M^\omega_{e,T}(\bar s)\geq -C\bar s \geq -C c_1\ep T$ and by the definition of $\bar s$ we have $M^\omega_{e,T}(\bar s)= M^\omega_{e,T}(t)+4\ep^{1/2}(2T-\bar s)$. So $M^\omega_{e,T}(t)\geq -C\ep^{1/2}T$ while $ \bar M_{e,\tilde h}(t)\leq 0$, which proves the result also in this case.   \\

Assume now that $\bar s$ exists and satisfies $\bar s> c_1\ep T$. We also suppose in a first step that $\bar s+ C'\ep^{1/2} T\leq t$ with $C'=C/2$ where $C$ is given by Lemma \ref{lem.easy1}. Note that $M^\omega_{e,T}(\bar s)= M^\omega_{e,T}(t)+4\ep^{1/2}(2T-\bar s)$. Let $i_0=- \theta^\omega_e(\bar s)$. Then, recalling Lemma \ref{lem.easy1} and the definition of $\bar s$, we have 
\begin{align*}
\tilde M^{i_0,\omega}_{e,T}(\bar s) & \geq M^\omega_{e,T}(\bar s) - C\ep T = M^\omega_{e,T}(t) +4\ep^{1/2}(2T-\bar s)-C\ep T  \\
& > M^\omega_{e,T}(\bar s+C'\ep^{1/2}T) -4\ep^{1/2}(2T-\bar s-C'\ep^{1/2}T)+4\ep^{1/2}(2T-\bar  s)-C\ep T \\ 
&=M^\omega_{e,T}(\bar s+\ep^{1/2}T)+C\ep T \\ 
& > \tilde M^{i_0,\omega}_{e,T}(\bar s+\ep^{1/2}T). 
\end{align*}
So there exists $\tilde s\in [\bar s, \bar s+\ep^{1/2}T]$ such that $(d/ds)\tilde M^{i_0,\omega}_{e,T}(\tilde s) <0$. Note that $|\tilde s-\bar s|\leq C\ep^{1/2}T$. Let $(\tilde s,i_1)$ (where $i_1\in \Z\cap  [-\bar CT,\bar CT/2] $) be a minimum point in the definition of $\tilde M^{i_0,\omega}_{e,T}(\tilde s)$. From Lemma \ref{lem.central} we know that $|U^\omega_{e,i_1}(\tilde s)|\leq C\ep^{1/2}T$ and $|i_1-i_0|\leq C\ep^{1/2}T$ for a large constant $C$. Then we have (since we are in $\tilde E_{\ep,T}$ and by Lemma \ref{lem.easy1}), 
\begin{align*}
 M^\omega_{e,T}(t) & \geq M^\omega_{e,T}(\bar s) -C\ep^{1/2} T  \geq \tilde M^{i_0,\omega}_{e,T}(\tilde s) -C\ep^{1/2} T \\
 & =
\xi^\omega_e(U^\omega_{e,i_1}(\tilde s),i_1) - \xi^\omega_e(W^{\tau_{i_0}\omega}_{i_1-i_0},i_1-i_0) -i_0 -\tilde h \tilde s -C\ep^{1/2} T   \\
&   \geq -C\ep^{1/2} T -i_0 -\tilde h\bar s  
\geq -C\ep^{1/2} T + \theta^\omega_e(\bar s)  -\tilde h\bar s \\
&  \geq  -C\ep^{1/2} T + \bar\theta_e(\bar s) -\tilde h\bar  s \geq -C\ep^{1/2} T +\bar M_{e,\tilde h}(t).
\end{align*}
This proves the result. \\

The case where $\bar s$ satisfies $\bar s> c_1\ep T$ and $\bar s+ C'\ep^{1/2} T> t$ can be treated in a similar way, by noticing in a first step that 
$$
\tilde M^{i_0,\omega}_{e,T}(\bar s)> \tilde M^{i_0,\omega}_{e,T}(t) 
$$
and concluding as in the previous case that there exists a minimizer $i_1$ for $\tilde M^{i_0,\omega}_{e,T}(\tilde s)$ (for some $\tilde s\in [\bar s,t]$ and thus such that $|\tilde s-\bar s|\leq C't\ep^{1/2} T$ since $0\leq t- \bar s\leq  C'\ep^{1/2}T$) such that $|U^\omega_{i_1}(\tilde s)|\leq C\ep^{1/2}T$ and $|i_0-i_1|\leq C\ep^{1/2}T$. We can then complete the proof as above.
\end{proof}

In the next step we show that $\bar M_{e,\tilde h}(t)$ is almost superadditive. 

\begin{Lemma} \label{lem:superadditivityJuncTOT}
There is a constant $C$ such that, for any $t\geq C$ and any $h\in [0,t]$,  
\be\label{superadditivityJuncTOT}
 \bar M_{e,\tilde h}(t+h)\geq \bar M_{e,\tilde h}(h)+ \bar M_{e,\tilde h}(t)- C(1+ (\ln(t))^{1/8}t^{7/8}) .
\ee
\end{Lemma}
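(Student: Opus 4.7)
The plan is to derive the almost-superadditivity of $\bar M_{e,\tilde h}$ from a pathwise restart argument based on Lemma~\ref{lem.central}, converted back to expectations via the concentration of Theorem~\ref{thm:concentrationTOT}. Fix $T:=t+h$ and $\ep:=C(\ln t/t)^{1/4}$ (so that $\ep^{1/2}T\lesssim (\ln t)^{1/8}t^{7/8}$ and $\ep T\ge\bar C$) and work on $\tilde E_{\ep,T}$, whose complement has probability at most $CT^{2}\exp(-\ep^{2}T/C)$ by \eqref{boundPtildeE}; this super-polynomially small contribution is absorbed into the final constant using the a priori bound $\theta_e(s)\le C_\theta s$ of Lemma~\ref{lem.estitheta}. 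Write $g(s):=\bar\theta_e(s)-\tilde h s$ and let $\bar s$ be a minimizer of $g$ on $[0,t+h]$. If $\bar s\le h$ the result is immediate since $\bar M_{e,\tilde h}(t+h)=\bar M_{e,\tilde h}(h)\ge\bar M_{e,\tilde h}(h)+\bar M_{e,\tilde h}(t)$ using $\bar M_{e,\tilde h}(t)\le 0$; hereafter assume $\bar s>h$.

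The core is a pathwise restart. Mimicking the proof of Lemma~\ref{lem.kjsnedf}, I define
\[
\bar s_0:=\sup\Bigl\{s\in[h,t+h]:\ M^\omega_{e,T}(s)\ge M^\omega_{e,T}(t+h)+4\ep^{1/2}(2T-s)\Bigr\}
\]
and find $\tilde s\in[\bar s_0,\bar s_0+\ep^{1/2}T]$ at which $\frac{d}{ds}\tilde M^{i_0,\omega}_{e,T}(\tilde s)<0$, with $i_0:=-\theta_e^\omega(\tilde s)$. Lemma~\ref{lem.central} yields the profile lower bound $U^\omega_{e,i}(\tilde s)\ge W^{\tau_{i_0}\omega}_{i-i_0}-C\ep^{1/2}T$ for $i\in\Z\cap[-\bar CT,\bar CT/2]$. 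Set $\omega':=\tau_{i_0}\omega$ and $\hat U_j(\tau):=U^\omega_{e,j+i_0}(\tilde s+\tau)$: using $Z^\omega_{j+i_0}=Z^{\omega'}_j$ and $\ell^\omega_{j+i_0}=\ell^{\omega'}_j+i_0$, the family $(\hat U_j)$ solves \eqref{eq.SystJuncTOT} on $\omega'$ with initial datum $\hat U_j(0)\ge W^{\omega'}_j-C\ep^{1/2}T$. Since $W^{\omega'}_j$ agrees with the flat initial condition $U^{\omega'}_{e,j}(0)$ up to $O(\ep T)$ on $\tilde E_{\ep,T}$ (Lemmas~\ref{lem.boundWi-Wj}, \ref{lem.estiWi}), the approximate finite speed of propagation of Lemma~\ref{lem.finitespeedPSJunc} gives $\hat U_j(\tau)\ge U^{\omega'}_{e,j}(\tau)-C\ep^{1/2}T$ on the relevant range, and Lemma~\ref{lem.delta} converts this $L^\infty$ offset into a time offset of order $\ep^{1/2}T$, producing the crossing inequality
\[
\theta^\omega_e(\tilde s+\tau)-\theta^\omega_e(\tilde s)\ \ge\ \theta^{\omega'}_e(\tau)-C\ep^{1/2}T,\qquad 0\le\tau\le t+h-\tilde s.
\]

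Taking expectations and absorbing $\tilde E^c_{\ep,T}$ via Theorem~\ref{thm:concentrationTOT} together with Lemma~\ref{lem.estitheta} gives $\bar\theta_e(\tilde s+\tau)\ge\bar\theta_e(\tilde s)+\bar\theta_e(\tau)-C\ep^{1/2}T$, i.e.\ $g(\tilde s+\tau)\ge g(\tilde s)+g(\tau)-C\ep^{1/2}T$. Applying this with $\tau=\bar s-\tilde s$ and using the Lipschitz bound of Lemma~\ref{lem.estitheta} to relate $g(\tilde s)$ to $\bar M_{e,\tilde h}(h)$ and $g(\bar s-\tilde s)$ to $\bar M_{e,\tilde h}(t)$ (the offsets $|\tilde s-h|$ and $|(\bar s-\tilde s)-t|$ being controlled by the definition of $\bar s_0$ combined with the fact that, in the reduced case, the drop $\bar M_{e,\tilde h}(h)-\bar M_{e,\tilde h}(t+h)$ is at least of order $\ep^{1/2}T$), one concludes
\[
\bar M_{e,\tilde h}(t+h)\ =\ g(\bar s)\ \ge\ \bar M_{e,\tilde h}(h)+\bar M_{e,\tilde h}(t)-C'\ep^{1/2}T,
\]
which is the claim. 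The principal obstacle is twofold: the absence of $x$-translation invariance of \eqref{eq.SystJuncTOT} forbids a direct vertical shift of the solution by $C\ep^{1/2}T$, and the restart time $\tilde s$ is a priori located near $\bar s_0$ rather than near $h$. The first point is resolved by invoking the \emph{approximate} (not exact) finite speed of propagation of Lemma~\ref{lem.finitespeedPSJunc}, which tolerates an $L^\infty$ error on a bounded range of indices; the second by dichotomising on the size of the drop of the running minimum between $h$ and $t+h$, the small-drop case being trivial and the large-drop case forcing a favourable placement of $\bar s_0$ relative to $h$.
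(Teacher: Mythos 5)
Your high-level strategy (restart at a time where $\tilde M^{i_0,\omega}_{e,T}$ decreases, obtain the profile bound from Lemma~\ref{lem.central}, compare by Lemma~\ref{lem.finitespeedPSJunc} to the translated flat solution, convert the $L^\infty$ offset to a time offset via Lemma~\ref{lem.delta}, and take expectations using Theorem~\ref{thm:concentrationTOT}) matches the paper's proof. But there is a genuine gap in the choice of the splitting time.

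You put the restart at $\tilde s$, located near
$\bar s_0 := \sup\{s\in[h,t+h]: M^\omega_{e,T}(s)\ge M^\omega_{e,T}(t+h)+4\ep^{1/2}(2T-s)\}$,
and then want to apply the superadditive inequality $g(\tilde s+\tau)\ge g(\tilde s)+g(\tau)-C\ep^{1/2}T$ at $\tau=\bar s-\tilde s$. To conclude you need $g(\tilde s)\ge\bar M_{e,\tilde h}(h)-C\ep^{1/2}T$ and $g(\bar s-\tilde s)\ge\bar M_{e,\tilde h}(t)-C\ep^{1/2}T$, which forces $\tilde s$ to lie within $O(\ep^{1/2}T)$ of $h$. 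But nothing in the definition of $\bar s_0$ pins it down there: a large drop $\bar M_{e,\tilde h}(h)-\bar M_{e,\tilde h}(t+h)$ only guarantees that the condition in the sup holds \emph{at} $s=h$, so $\bar s_0\ge h$; the sup can still land anywhere in $[h,t+h)$, and in particular far from $h$. In that case $\bar M_{e,\tilde h}(\tilde s)\le\bar M_{e,\tilde h}(h)$ gives the inequality in the wrong direction, and the dichotomy you invoke does not repair this.

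The paper sidesteps this by a different definition of the splitting time: $\bar t := \inf\{s\in[t,t+h]: \bar M_{e,\tilde h}(s)<\bar M_{e,\tilde h}(t)-\ep^{1/4}s\}$. Because it is an infimum taken from $t$ (not a sup over $[h,t+h]$), two things come for free: (i) by continuity, $\bar M_{e,\tilde h}(\bar t)=\bar M_{e,\tilde h}(t)-\ep^{1/4}\bar t$, directly relating the value at the splitting time to $\bar M_{e,\tilde h}(t)$; and (ii) the ``remainder'' window $[\bar t, t+h]$ over which the restart must cover the infimum has length $h+t-\bar t\le h$. The paper then decomposes $\bar M_{e,\tilde h}(t+h)=\min\{\bar M_{e,\tilde h}(\bar t),\inf_{s\in[0,h+t-\bar t]}\bar\theta_e(\bar t+s)-\tilde h(\bar t+s)\}$ and bounds both branches; crucially it also derives, from the minimizer $(i_1,\tilde t)$ produced by Lemma~\ref{lem.central}, a \emph{lower} bound $\bar M_{e,\tilde h}(\bar t)\ge\bar\theta_e(\bar t)-\tilde h\bar t-C\ep^{1/4}T$, which is what allows the first branch to be estimated. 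Both the choice of splitting time as an inf from $t$ and this lower bound on $\bar M_{e,\tilde h}(\bar t)$ are missing from your argument, and without them the proof does not close.
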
 

\begin{proof} 
Fix $0\leq h\leq t\leq T:=2t+1$ and $\ep>0$ small enough so that $t\geq c_1\ep T$, where $c_1$ is as in Lemma \ref{lem.central}. We also assume that $t$ is large enough so that $\ep T\ge \bar C$. Let consider 
$$
\bar t = \inf\{ s\in [t, t+h], \; \bar M_{e,\tilde h}(s)< \bar M_{e,\tilde h}(t)-\ep^{1/4}s\}
$$
if there is some $s\in [t,t+h]$ such that $ \bar M_{e,\tilde h}(s)< \bar M_{e,\tilde h}(t)-\ep^{1/4}s$ and set $\bar t= t+h$ otherwise. 
If $\bar t= t+h$, since $\bar M_{e,\tilde h}(h)\le 0$, we have 
$$
\bar M_{e,\tilde h}(t+h)\geq \bar M_{e,\tilde h}(t)-\ep^{1/4}(t+h)\geq \bar M_{e,\tilde h}(h)+ \bar M_{e,\tilde h}(t)-\ep^{1/4}(t+h).
$$
Our aim is to show that a similar inequality holds  if $\bar t< t+h$. \\

Let us first consider the case where $\bar t< t+h$ and $\bar t- \gamma T >  t$, where $\gamma=C'\ep^{1/4}$ for some large constant $C'$ to be chosen below.  
Then $\bar M_{e,\tilde h}(\bar t)= \bar M_{e,\tilde h}(t)-\ep^{1/4}\bar t$. In $\tilde E_{\ep,T}$, we have by Lemma \ref{lem.easy1} (for the first inequality) and Lemma \ref{lem.kjsnedf} (for the second one)
\be\label{kqudfcl}
M^\omega_{e,T}(\bar t) \leq \bar M_{e,\tilde h}(\bar t) + \ep T= \bar M_{e,\tilde h}(t)-\ep^{1/4}\bar t+ \ep T \leq M^\omega_{e,T}(t)+ C\ep^{1/2} T -\ep^{1/4}\bar t.
\ee
As $\bar t- \gamma T \in(t,t+h)$,  we have, by the definition of $\bar t$,  
$$
\bar M_{e,\tilde h}(\bar t-\gamma T) \geq  \bar M_{e,\tilde h}(t)-\ep^{1/4}(\bar t-\gamma T),
$$
so that 
\begin{align*}
M^\omega_e(\bar t-\gamma T) &  \geq \bar M_{e,\tilde h}(\bar t-\gamma T) -C\ep^{1/2} T\geq  \bar M_{e,\tilde h}(t)-\ep^{1/4}(\bar t-\gamma T)-C\ep^{1/2} T\\
&  \geq 
M^\omega_{e,T}(t)-\ep^{1/4}\bar t+\ep^{1/4}\gamma T-C\ep^{1/2} T  \\
&\geq  M_{e,T}^\omega(\bar t)- C\ep^{1/2} T+\ep^{1/4}\gamma T \qquad \mbox{\rm (by \eqref{kqudfcl})}\\
&\geq   M^\omega_{e,T}(\bar t) + C^{-1}\ep^{1/4} T, 
\end{align*}  
if we choose  $\gamma = C'\ep^{1/4}$ for $C'$ large enough and independent of $t$ and $T$.  Let us set $i_0=- \theta^\omega_e(\bar t)$. For $\ep$ small enough, the inequality above implies (by Lemma \ref{lem.easy1}) that 
\begin{align*}
\tilde M^{i_0,\omega}_{e,T}(\bar t-\gamma t)   >  \tilde M^{i_0,\omega}_{e,T}(\bar t) . 
\end{align*}  
So there exists $\tilde t\in [\bar t-\gamma t,\bar t]$ such that $\tilde M^{i_0,\omega}_{e,T}(\tilde t)<0$ and $(d/dt) \tilde M^{i_0,\omega}_{e,T}(\tilde t)<0$.  Note that $|\tilde t-\bar t|\leq \gamma t=C'\ep^{1/4}t$ and that $\tilde t\geq t\geq c_1\ep T$. According to Lemma \ref{lem.central}, there exists $i_1$ such that $(\tilde t,i_1)$ is a minimizer of $\tilde M^{i_0,\omega}_{e,T}(\tilde t)$, and $i_1$ satisfies $|U^\omega_{e,i_1}(\tilde t)|\leq C\ep^{1/4}T$,  $|i_1-i_0|\leq C\ep^{1/4}T$ and
$$
U^\omega_{e,i}(\tilde t)\geq e^0(i-i_0){\bf 1}_{i\leq i_0}+e^{T^\omega_i}(i-i_0){\bf 1}_{i>i_0} -C\ep^{1/4} T\qquad \forall i\in  \Z\cap  [-\bar CT,\bar CT/2] . 
$$
Let us set $j_0:= -\bar \theta_e(\bar t)+[C''\ep^{1/4}T]$, where $C''$ is a large constant.  Using the definition of $\xi_e$ (for the first line and the last line), \eqref{eq:xi-inv} (for the second line) and the fact that we are in $\tilde E_{\ep,T}$ (for the third line), we have, if $C''$ is large enough and for any   $i\in  \Z\cap  [-\bar CT,\bar CT/2]$,
\begin{align*}
U^\omega_{e,i}(\tilde t)\geq & \xi_e^\omega(i-i_0,i)-C\ep^{1/4}T-1\\
\ge &  \xi_e^\omega(i-j_0,i) +C^{-1}(j_0-i_0)-C\ep^{1/4}T\\
\ge &  \xi_e^\omega(i-j_0,i) +C^{-1}(\theta_e^\omega(\bar t)-\bar \theta_e(\bar t)+C''\ep^{1/4}T-1)-C\ep^{1/4}T\\
\ge & e^0(i-j_0){\bf 1}_{i\leq j_0}+e^{T^\omega_i}(i-j_0){\bf 1}_{i>j_0}. 
\end{align*}
As the solution starting from $e^0(i-j_0){\bf 1}_{i\leq j_0}+e^{T^\omega_i}(i-j_0){\bf 1}_{i>j_0}$ is $U^{\tau_{j_0}\omega}_{e,i-j_0}$, the approximate finite speed of propagation then implies for all $n\ge 1$
$$
U^\omega_{e,i}(\tilde t+s)\geq U^{\tau_{j_0}\omega}_{e,i-j_0}(s) -2^{-n} e^{\beta s} \qquad \forall s\geq 0, \; \forall i\in  \Z\cap  [-\bar CT,J_n([\bar CT/2])]. 
$$
Choosing $n= [\bar CT/(16\alpha)]+1$ (with $\alpha$  as in Lemma \ref{lem.Jn}), we obtain by the choice of $\bar C$ in \eqref{hypbarC} (which ensures that $-n\ln(2) +\beta T\leq 0$) since we are in $\tilde E_{\ep, T}$ (where $J_n([\bar CT/2])\geq0$): 
$$
U^\omega_{e,i}(\tilde t+s)\geq U^{\tau_{j_0}\omega}_{e,i-j_0}(s) -1\geq U^{\tau_{j_0}\omega}_{e,i-j_0}(s-\delta^{-1}) \qquad \forall s\in [\delta^{-1},T], \; \forall i\in  \Z\cap  [-\bar CT,0],
$$
where $\delta$ is given by Lemma \ref{lem.delta}. Hence, in $\tilde E_{\ep,T}$,  and for $s\in[\delta^{-1},h]$: 
$$
\theta^\omega_e(\tilde t+s) \geq  \theta^{\tau_{j_0}\omega}_e(s-\delta^{-1}) - j_0.
$$
As $|\tilde t-\bar t|\leq \gamma t=C'\ep^{1/4}t$ and by the definition of $j_0$ we get, in view of Lemma \ref{lem.estitheta}:
$$
\theta^\omega_e(\bar t+s) \geq  \theta^{\tau_{j_0}\omega}_e(s) +\bar \theta_e(\bar t) -C\ep^{1/4}T\qquad \forall s\in [0,h].
$$
Recalling the bounds on $\theta_e$ and on $\P[E_{\ep,T}^c]$ in \eqref{boundPtildeE} we obtain by taking expectation and for $ s\in [0,h]$: 
$$
\bar \theta_e(\bar t+s) \geq \bar \theta_e(s) +\bar\theta_e(\bar t) -CT\ep^{1/4}-CT \P[E_{\ep,T}^c]\geq  \bar \theta_e(s) +\bar\theta_e(\bar t) -C(\ep^{1/4}T +T^3\exp\{-\ep^2T/C\}).
$$
Thus, using that $t-\bar t\le 0$, 
\begin{align*}
\inf_{s\in [0,h+t-\bar t]} \bar \theta_e(\bar t+s)-\tilde h(\bar t+s) & \geq \bar M_{e,\tilde h}(h+t-\bar t)+\bar\theta_e(\bar  t) -\tilde h \bar  t-C(\ep^{1/4}T +T^3\exp\{-\ep^2T/C\})\\
& \geq \bar M_{e,\tilde h}(h)+\bar M_{e,\tilde h}(\bar t)-C(\ep^{1/4}T +T^3\exp\{-\ep^2T/C\})\\
& \geq \bar M_{e,\tilde h}(h)+\bar M_{e,\tilde h}( t)-C(\ep^{1/4}T +T^3\exp\{-\ep^2T/C\}),
\end{align*}
where the last inequality comes from the fact that $\bar M_{e,\tilde h}(\bar t)= \bar M_{e,\tilde h}(t)-\ep^{1/4}\bar t$. On the other hand, picking $\omega\in \tilde E_{\ep,T}$ and using successively Lemma \ref{lem.easy1}, inequality  $|\tilde t-\bar t|\leq C'\ep^{1/4}t$, the definition of $\tilde t$ and the fact that $| U^\omega_{e,i_1}(\tilde t)|\leq C\ep^{1/4}T$ and that $|i_1-i_0|\leq C\ep^{1/4}T$ with $i_0=- \theta^\omega_e(\bar t)$, we obtain
\begin{align*}
\bar M_{e,\tilde h}(\bar t) &\geq M^\omega_{e, T}(\bar t) -\ep T \geq \tilde M^{i_0,\omega}_{e,T}(\bar t)-C\ep T   \geq \tilde M^{i_0,\omega}_{e,T}(\tilde t)-C\ep^{1/4} T\\  
& = \xi_e^\omega(U^\omega_{e,i_1}(\tilde t),i_1) -\xi_e^\omega(W^{\tau_{i_0}\omega}_{i_1-i_0},i_1-i_0)-i_0-\tilde h \tilde t -C\ep^{1/4} T\\ 
&\geq  -i_1-C\ep T  -\tilde h \bar t- C\ep^{1/4}T \geq \theta^\omega_e(\bar t)-\tilde h \bar t-C\ep^{1/4}T \geq \bar \theta_e(\bar t)-\tilde h \bar t- C\ep^{1/4}T.  
\end{align*}
Therefore 
$$
\bar M_{e,\tilde h}(t+h)= \inf\{\bar M_{e,\tilde h}(\bar t) \ ,\  \inf_{s\in [0,h+t-\bar t]} \bar \theta_e(\bar t+s)-\tilde h(\bar t+s)\}\geq  \inf_{s\in [0,h+t-\bar t]} \bar \theta_e(\bar t+s)-\tilde h(\bar t+s)- C\ep^{1/4}T. 
$$
So we have obtained the following inequality: 
\be\label{ineqsuraddiJuncTOT}
\bar M_{e,\tilde h}(t+h) \geq \bar M_{e,\tilde h}(h)+\bar M_{e,\tilde h}(t) -C(\ep^{1/4}T +T^3\exp\{-\ep^2T/C\}).
\ee

In order to handle the case where $\bar t< t+h$ and $\bar t- \gamma T \leq  t$, we note that, by the choice of $T$ we have $\bar t\ge t\geq T/3$. Then, for $\ep>0$  small enough, we have in $\tilde E_{\ep,T}$:
\begin{align*}
M^\omega_{e,T}(\bar t) &  \leq \bar M_{e,\tilde h}(\bar t) +C\ep T=  \bar M_{e,\tilde h}(t)-\ep^{1/4}\bar t+C\ep T\\
&  \leq M^\omega_{e,T}(t)-\ep^{1/4}T/3+C\ep^{1/2} T < M^\omega_{e,T}(t). 
\end{align*}  
Thus we can find $\tilde t\in [t, \bar t]$ such that $\tilde M^{i_0,\omega}_{e,T}(\tilde t)<0$ and $(d/dt) \tilde M^{i_0,\omega}_{e,T}(\tilde t)<0$. Note also that $|\bar t-\tilde t|\leq C'\ep^{1/4}T$ because $\bar t- \gamma T \leq  t$. Then we can complete the proof as in the previous case. \\

We now know that  \eqref{ineqsuraddiJuncTOT}  always holds. If we choose $\ep = (3C\ln(T)/T)^{1/2}$ (where $C$ is the constant in \eqref{ineqsuraddiJuncTOT} and where this choice is possible since then $t\geq \bar C\ep^{-1}$ for $\ep$ small), then, as $T=2t+1$, \eqref{ineqsuraddiJuncTOT} becomes 
$$
\bar M_{e,\tilde h}(t+h) \geq \bar M_{e,\tilde h}(h)+ \bar M_{e,\tilde h}(t)- C(1+ (\ln(t))^{1/8}t^{7/8}) .
 $$
 This holds for any $0\leq h\leq t$ with $t\geq C$ for some large constant $C$. 
\end{proof}

The main consequence of the previous lemma is the following: 

\begin{Lemma}\label{lem.limEthetatJuncTOT} The limit $k_{e,\tilde h}:= \lim_{t\to +\infty} \bar M_{e,\tilde h}(t)/t$ exists and is nonpositive as $t\to+\infty$. If $k_{e,\tilde h}<0$, then $\E[\theta_e(t)]/t$ has a limit as $t\to +\infty$ given by $(k_{e,\tilde h}+\tilde h)$. If $k_{e,\tilde h}=0$, then 
$\liminf_{t\to +\infty} \frac{\E[\theta_e(t)] }{t} \geq \tilde h. $
\end{Lemma}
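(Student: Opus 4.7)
The plan is to establish existence of $k_{e,\tilde h}$ via an almost-superadditive Fekete lemma, and then transfer the asymptotics to $\bar\theta_e$ by exploiting the running-infimum structure, together with a direct estimate in the degenerate case.

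\medskip

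\textbf{Existence of $k_{e,\tilde h}$.} Setting $f(t) := -\bar M_{e,\tilde h}(t)$, the bounds $\bar M_{e,\tilde h}(0)=0$ and $\bar\theta_e\geq 0$ give $f(t)\in[0,\tilde h t]$, and Lemma~\ref{lem:superadditivityJuncTOT} translates into
$$
f(t+h)\leq f(t)+f(h)+\epsilon(t+h),\qquad 0\leq h\leq t,\ t\geq C,
$$
with $\epsilon(t)=C(1+(\ln t)^{1/8}t^{7/8})$ nondecreasing. Since $\int^{\infty}\epsilon(t)/t^{2}\,dt<\infty$ (the leading term behaves like $t^{-9/8}(\ln t)^{1/8}$), the de Bruijn--Erd\H{o}s almost-subadditive Fekete theorem applies and yields the existence of $\lim_t f(t)/t$. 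Hence $k_{e,\tilde h}:=\lim_t \bar M_{e,\tilde h}(t)/t\in[-\tilde h,0]$ exists and is nonpositive.

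\medskip

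\textbf{Case $k_{e,\tilde h}<0$.} Since $\bar\theta_e$ is Lipschitz (Lemma~\ref{lem.estitheta}), the infimum defining $\bar M_{e,\tilde h}(t)$ is attained at some $s^*(t)\in[0,t]$; by monotonicity of $\bar M_{e,\tilde h}$,
$$
\bar M_{e,\tilde h}(s^*(t))=\bar M_{e,\tilde h}(t)=\bar\theta_e(s^*(t))-\tilde h\,s^*(t).
$$
The key assertion is $s^*(t)/t\to 1$. If not, there are $\eta>0$ and $t_n\to+\infty$ with $s^*(t_n)/t_n\leq 1-\eta$; since $\bar M_{e,\tilde h}(t_n)\to -\infty$ we have $s^*(t_n)\to+\infty$, and
$$
\frac{\bar M_{e,\tilde h}(s^*(t_n))}{s^*(t_n)}=\frac{\bar M_{e,\tilde h}(t_n)}{s^*(t_n)}\leq \frac{\bar M_{e,\tilde h}(t_n)}{(1-\eta)\,t_n}\;\longrightarrow\;\frac{k_{e,\tilde h}}{1-\eta}<k_{e,\tilde h},
$$
contradicting the convergence of the left-hand side to $k_{e,\tilde h}$. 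Combining $s^*(t)/t\to 1$ with the Lipschitz sandwich $\bar\theta_e(s^*(t))\leq \bar\theta_e(t)\leq \bar\theta_e(s^*(t))+C_\theta(t-s^*(t)+1)$ from Lemma~\ref{lem.estitheta} yields
$$
\frac{\bar\theta_e(t)}{t}=\frac{s^*(t)}{t}\left(\tilde h+\frac{\bar M_{e,\tilde h}(t)}{s^*(t)}\right)+O\!\left(\frac{t-s^*(t)+1}{t}\right)\;\longrightarrow\;\tilde h+k_{e,\tilde h}.
$$

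\medskip

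\textbf{Case $k_{e,\tilde h}=0$.} Directly $\bar\theta_e(t)-\tilde h t\geq \bar M_{e,\tilde h}(t)=o(t)$, hence $\liminf_t \bar\theta_e(t)/t\geq \tilde h$.

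\medskip

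The delicate step is the proof that $s^*(t)/t\to 1$: strict negativity of $k_{e,\tilde h}$ is used essentially to prevent the running infimum being attained well before $t$. The first step looks routine but actually requires the de Bruijn--Erd\H{o}s form of the almost-subadditive Fekete lemma, since $\epsilon(t)\sim t^{7/8}(\ln t)^{1/8}$ is too large for naive iteration: a linear iteration with $n$ steps accumulates a correction of order $n\cdot\epsilon(nT)$, which does not vanish after division by $nT$.
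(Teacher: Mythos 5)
Your proof is correct and follows the same overall structure as the paper's, but with a notable difference in the case $k_{e,\tilde h}<0$. The paper fixes a large $t$ and looks \emph{forward}: it sets $\bar t=\sup\{s\geq t:\bar M_{e,\tilde h}(s)=\bar M_{e,\tilde h}(t)\}$, shows $\bar t\leq(1+C\ep)t$, and exploits $\bar\theta_e(\bar t)-\tilde h\bar t=\bar M_{e,\tilde h}(\bar t)$. You instead look \emph{backward} at the argmin $s^*(t)\in[0,t]$ and prove $s^*(t)/t\to 1$; your compactness-based argument (if $s^*(t_n)\leq(1-\eta)t_n$ then $\bar M(s^*(t_n))/s^*(t_n)$ cannot converge to $k_{e,\tilde h}$) is clean and symmetric to the paper's computation. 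Both routes are valid and comparable in length; yours may be marginally simpler to state since $s^*(t)$ is automatically finite while the paper must first rule out $\bar t=\infty$.

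Your remark about the Fekete step is a genuine improvement in exposition: the paper simply asserts that almost-superadditivity gives convergence, while you correctly point out that with an error of order $t^{7/8}(\ln t)^{1/8}$ the naive linear iteration diverges and one needs the de Bruijn--Erd\H{o}s criterion $\int^\infty\epsilon(t)t^{-2}\,dt<\infty$, which you verify.

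One small inaccuracy: Lemma~\ref{lem.estitheta} gives $0\leq\bar\theta_e(t)-\bar\theta_e(s)\leq C_\theta(t-s+1)$, which is not Lipschitz continuity (the ``$+1$'' is genuine, since $\theta_e$ is integer-valued with jumps). What actually guarantees that the infimum defining $\bar M_{e,\tilde h}(t)$ is attained is that $\theta_e$ is nondecreasing and lower semicontinuous in $t$ (its jumps are upward and it takes the smaller value at the jump time), hence so is $\bar\theta_e$, hence so is $s\mapsto\bar\theta_e(s)-\tilde h s$, and a lower semicontinuous function attains its infimum on a compact interval. The conclusion you need is right, but the citation to ``Lipschitz'' should be replaced by this lower-semicontinuity argument.
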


\begin{proof} As $\bar M_{e,\tilde h}(t)$ satisfies the almost superadditivity property \eqref{superadditivityJuncTOT}, the limit $k_{e,\tilde h}:= \lim_{t\to +\infty} \bar M_{e,\tilde h}(t)/t$ exists.  If $k_{e,\tilde h}=0$,  then, as, by the definition of $\bar M_{e,\tilde h}(t)$, we have $\bar M_{e,\tilde h}(t) \leq \bar \theta_e(t) -\tilde ht$ and we obtain 
$$
\liminf_{t\to +\infty} \bar \theta_e(t)/t\geq \tilde h.
$$
Let us now assume that $k_{e,\tilde h}<0$. By the definition of $\bar M_{e,\tilde h}(t)$  we know that
$$
\liminf_{t\to+\infty} \frac{\bar \theta_e(t)}{t} \geq \liminf_{t\to+\infty}\frac{\bar M_{e,\tilde h}(t)+\tilde ht}{t} = k_{e,\tilde h}+\tilde h.
$$
To prove the opposite inequality, let $\ep\in (0,|k_{e,\tilde h}|/2)$ and $T>0$ be such that $|\bar M_{e,\tilde h}(t)/t-k_{e,\tilde h}|<\ep$ for any $t\geq T$. 
Fix $t\geq T$ and let us define $\bar t = \sup\{s\geq t,\; \bar M_{e,\tilde h}(t)=\bar M_{e,\tilde h}(s)\}\in [t, +\infty]$. Then, for $ s\in [t, \bar t)$, we have, since $s\geq T$,  
$$
-\ep \leq  \frac{\bar M_{e,\tilde h}(t)}{t}-k_{e,\tilde h} = \frac{\bar M_{e,\tilde h}(s)}{t}-k_{e,\tilde h} = \frac{s}{t} \frac{\bar M_{e,\tilde h}(s)}{s} -k_{e,\tilde h} \leq \frac{s}{t} (k_{e,\tilde h}+\ep) -k_{e,\tilde h}.
$$
So $s\leq t(k_{e,\tilde h}-\ep)/(k_{e,\tilde h}+\ep)\leq (1+C\ep)t$. In particular, $\bar t$ is finite and satisfies $\bar t\leq t(1+C\ep)$. Note that, by the definitions of $\bar t$ and of $\bar M_{e,\tilde h}(t)$, we have $\bar \theta_e(\bar t)-\tilde h\bar t= \bar M_{e,\tilde h}(\bar t)$. Therefore, as $\bar \theta_e$ is nonnegative and nondecreasing,  
$$
\frac{\bar \theta_e(t)}{t} \leq \frac{\bar \theta_e(\bar t)}{t} \leq (1+C\ep) \frac{\bar \theta_e(\bar t)}{\bar t} =  (1+C\ep) \frac{\bar M_{e,\tilde h}(\bar t)+\tilde h\bar t}{\bar t}\leq (1+C\ep)(k_{e,\tilde h}+\ep+\tilde h).
$$
So 
$$
\limsup_{t\to+\infty} \frac{\bar \theta_e(t)}{t} \leq (1+C\ep)(k_{e,\tilde h}+\ep+\tilde h), 
$$
which proves that $\bar \theta_e(t)/t$ converges to $k_{e,\tilde h}+\tilde h$ since $\ep$ is arbitrary. 
\end{proof}

As a consequence, we have

\begin{Lemma} \label{defbarthetaTOT} Let $\tilde h<- A_0$. Assume that $k_{e,\tilde h}<0$, where $k_{e,\tilde h}$ is defined in Lemma \ref{lem.limEthetatJuncTOT}. Then the limit $\bar \vartheta_e$ of $\theta_e(t)/t$ exists a.s. as $t\to +\infty$ and satisfies $\bar \vartheta_e <  -A_0$.

If $k_{e,\tilde h}=0$ for any  $\tilde h<-A_0$, then  $\liminf_{t\to+\infty} \theta_e(t)/t$ is deterministic and is not smaller than $-A_0$.  
\end{Lemma}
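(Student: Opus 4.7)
The strategy is to transfer the deterministic limit information on $\bar\theta_e(t)/t = \E[\theta_e(t)]/t$ provided by Lemma \ref{lem.limEthetatJuncTOT} to an almost sure statement on $\theta_e(t)/t$ via the concentration inequality of Theorem \ref{thm:concentrationTOT} combined with a Borel--Cantelli argument; the Lipschitz estimate of Lemma \ref{lem.estitheta} is then used to interpolate between integer times.

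\textbf{Step 1 (concentration along integers).} I would choose the sequence $t_n=n$ and a slowly decaying rate $\epsilon_n = n^{-1/3}$. For $n$ large enough one has $\epsilon_n\in(0,C^{-1}]$ and $n\geq C\epsilon_n^{-1}$, so Theorem \ref{thm:concentrationTOT} gives
\[
\P\left[\,\bigl|\theta_e(n)-\bar\theta_e(n)\bigr|\geq n^{2/3}\,\right]\leq C\exp\bigl\{-n^{1/3}/C\bigr\}.
\]
The right-hand side is summable, so Borel--Cantelli yields $|\theta_e(n)-\bar\theta_e(n)|/n\to 0$ almost surely.

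\textbf{Step 2 (continuous-time interpolation).} For $t\in[n,n+1]$, Lemma \ref{lem.estitheta} gives $|\theta_e(t)-\theta_e(n)|\leq 2C_\theta$, so $\theta_e(t)/t - \theta_e(n)/n \to 0$ as $n\to\infty$; the same estimate applied to $\bar\theta_e$ shows $\bar\theta_e(t)/t - \bar\theta_e(n)/n\to 0$. Combining with Step 1,
\[
\lim_{t\to+\infty}\bigl(\theta_e(t)/t-\bar\theta_e(t)/t\bigr)=0\qquad\text{a.s.}
\]

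\textbf{Step 3 (conclusion in each case).} If $k_{e,\tilde h}<0$, Lemma \ref{lem.limEthetatJuncTOT} asserts $\bar\theta_e(t)/t\to k_{e,\tilde h}+\tilde h$, and this quantity is $<\tilde h<-A_0$ since $k_{e,\tilde h}<0$ and $\tilde h<-A_0$. Step 2 then shows $\theta_e(t)/t$ converges almost surely to the same deterministic number $\bar\vartheta_e:=k_{e,\tilde h}+\tilde h<-A_0$. If instead $k_{e,\tilde h}=0$ for every $\tilde h<-A_0$, Lemma \ref{lem.limEthetatJuncTOT} gives $\liminf_{t\to+\infty}\bar\theta_e(t)/t\geq \tilde h$ for each such $\tilde h$; letting $\tilde h\uparrow -A_0$ yields $\liminf_{t\to+\infty}\bar\theta_e(t)/t\geq -A_0$. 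By Step 2, $\liminf_{t\to+\infty}\theta_e(t)/t=\liminf_{t\to+\infty}\bar\theta_e(t)/t$ almost surely, which is a deterministic number no smaller than $-A_0$.

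\textbf{Expected difficulty.} There is no serious analytical obstacle here: the heavy lifting has been done in Theorem \ref{thm:concentrationTOT} and Lemma \ref{lem.limEthetatJuncTOT}. The only routine care needed is to check the applicability range of the concentration inequality (ensuring $n\geq C\epsilon_n^{-1}$) and the interpolation between integer and continuous times via the Lipschitz bound of Lemma \ref{lem.estitheta}; the choice $\epsilon_n=n^{-1/3}$ is convenient since $\exp\{-n^{1/3}/C\}$ is summable but not sharper than needed.
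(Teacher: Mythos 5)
Your proposal is correct and follows essentially the same strategy as the paper's own proof: concentration (Theorem \ref{thm:concentrationTOT}) plus Borel--Cantelli along integer times, then interpolation between integers via the Lipschitz estimate of Lemma \ref{lem.estitheta}, and finally invoking Lemma \ref{lem.limEthetatJuncTOT} to identify the deterministic limit. The only cosmetic difference is that you run Borel--Cantelli once with a slowly decaying rate $\epsilon_n=n^{-1/3}$, while the paper fixes $\epsilon>0$, applies Borel--Cantelli for each $\epsilon$, and then lets $\epsilon\to 0$ over a countable sequence; both are standard and equivalent.
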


\begin{proof} Assume that $k_{e,\tilde h}<0$.  Let us first check that, a.s., $\theta_e(t)/t$  converges to the limit $\bar \vartheta_e$ of $\bar \theta_e(t)/t=\E[\theta_e(t)]/t$ given by Lemma \ref{lem.limEthetatJuncTOT} and which satisfies $
\bar \vartheta_e <  -A_0$.  This is a classical consequence of the variance estimate in Theorem \ref{thm:concentrationTOT}. Fix $\ep>0$ and let $N\in \N$ be such that $|\bar \theta_e(n)/n-\bar \vartheta_e|\leq \ep$ for any $n\in \N$, $n\geq N$. By the variance estimate, we have 
$$
\P\left[ \left| \frac{\theta_e(n)}{n}-\bar \vartheta_e\right|> 2\ep\right] \leq 
\P\left[ \left| \frac{\theta_e(n)}{n}-\frac{\bar \theta_e(n)}{n}\right|> \ep\right] \leq 2\exp\{ -\ep^2n/C\},
$$
where the right-hand side is summable. So by the Borel-Cantelli Lemma we have, a.s. 
$$
\bar \vartheta_e-2\ep \leq \liminf_n \frac{\theta_e(n)}{n}\leq \limsup_n \frac{\theta_e(n)}{n}\leq \bar \vartheta_e+2\ep.
$$
As $\ep$ is arbitrary, this implies the a.s. convergence of $(\theta_e(n)/n)$ to $\bar \vartheta_e$. The convergence of $(\theta_e(t)/t)$ to $\bar \vartheta_e$ as the continuous variable $t$ tends to infinity comes directly from the  regularity  in time of $\theta_e$ (Lemma \ref{lem.estitheta}). The proof in the case $k_{e,\tilde h}=0$ goes exactly along the same lines. 
\end{proof}

%

\begin{proof}[Proof of Theorem \ref{thm.kebarvartheta}] It is a straightforward application of the previous lemmas. 
\end{proof}

\section{Definition  of the flux limiter and homogenization}\label{section.flux}
We recall that $e=(e^k)_{k=0, \dots, K}$ is such that $H^k(-1/e^k)=\min_pH^k(p)$.
The aim of this part is to define the flux limiter $\bar A$, building on the construction of $\bar \vartheta_e$ in the previous section. For this we introduce new notation. Recall that $A_0=  \max_{k\in\{0, \dots, K\}}  \min_{p\in \R} H^k(p)$.  Given $A\in [ A_0, 0)$ and $k\in \{0, \dots,K\}$, we denote by $p^{k,+}_A$ (respectively $p^{k,-}_A$) the largest (resp. the smallest) solution to $H^k(p)=A$ and set 
 \be\label{defphiJuncTOT}
 \phi_A(x,k)= \left\{\begin{array}{ll}
 p^{0,-}_Ax & {\rm if}\; x\leq 0, \; k\in\{0,\dots,K\}\\
 p^{k,+}_Ax & {\rm if }\; x\geq 0, \; k\in \{1, \dots, K\}
 \end{array}\right.
\ee
and 
 \be\label{defpsiJuncTOT}
\psi_A(y,k)= \phi^{-1}_A(-y,k)= \left\{\begin{array}{ll}
 (-p^{0,-}_A)^{-1}y & {\rm if}\; y\leq 0, \; k\in\{0,\dots,K\}\\
 (-p^{k,+}_A)^{-1}y & {\rm if }\; y\geq 0, \; k\in \{1, \dots, K\}. 
 \end{array}\right.
\ee
We note for later use that if $A_1<A_2$, then $\phi_{A_1}<\phi_{A_2}$ and $\psi_{A_1}<\psi_{A_2}$ in $\R\backslash\{0\}$. 

We define the flux limiter $\bar A$ as  
$$
\bar A := \left\{\begin{array}{ll}
- \bar \vartheta_e &  \mbox{\rm if}\; k_e<0\\
A_0&{\rm otherwise}
\end{array}
\right.
$$
with $k_e$ and $\bar \vartheta_e$ defined by Theorem \ref{thm.kebarvartheta}.  Note that if $k_e<0$, then, by Theorem \ref{thm.kebarvartheta}, 
$$\bar A=- \bar \vartheta_e> \max_{k\in\{0,\dots,K\}}H^k(-1/e^k)=A_0.$$

\subsection{The limit of $N_e$ and $U_e$}

We recall that $U_e$ is the solution of \eqref{eq.SystJuncTOT} with initial condition $U^{\omega}_{e,i}(0)=e^0i{\bf 1}_{i\leq 0}+ e^{T^\omega_i}i{\bf 1}_{i\geq 0}$. Let  us define, for $k\in \{1, \dots, K\}$ and $(x,t)\in \R\times [0,+\infty)$,  
\be\label{def.NJuncTOT}
N_e^{\omega}(x,k,t)=  \sum_{i\in \Z, \ i\leq 0, \  T_i=k}  \delta_{U^\omega_{e,i}(t)}( (x,+\infty))- \sum_{i\in \Z,\ i>0, \ T_i=k} \delta_{U^\omega_{e,i}(t)}((-\infty,x]) 
\ee
and, for  $(x,t)\in (-\infty, 0]\times [0,+\infty)$,  
$$
N_e^\omega(x,0,t)= \sum_{k=1}^K N_e^{k}(x,t)= \sum_{i\in \Z, \ i\leq 0}  \delta_{U^\omega_{e,i}(t)}( (x,+\infty)).
$$
We set 
$$
n^{\ep,\omega}_e(x,k,t)= \ep N^{\omega}_e(x/\ep,k,t/\ep)
$$
and 
$$
\nu^{\omega,\ep}_e(x,k,t)=\left\{\begin{array}{ll}
n^{\omega,\ep}_e(x,0,t) & {\rm if}\; k=0\; {\rm and} \;x\leq 0,\\
(\pi^k)^{-1}n^{\omega,\ep}_e(x,k,t) & {\rm if} \; k\in \{1, \dots, K\}\; {\rm and}\;  x\in \R. 
\end{array}\right.
$$

\begin{Lemma}\label{lem.limNt=0TOT}
There is a set $\Omega_0$ of full probability such that, for all $\omega\in \Omega_0$, 
$$
\lim_{\ep\to0} \nu^{\omega,\ep}_e(x,k,0)= \left\{\begin{array}{ll}
-x/e^0 & {\rm if}\;  (x,k)\in (-\infty,0]\times \{0, \dots, K\},\\ 
-x/e^k  & {\rm if}\;  (x,k)\in [0,+\infty)\times \{1, \dots, K\},
\end{array}\right.  
$$
and the limit is locally uniform in $x$. 
\end{Lemma}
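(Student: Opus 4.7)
The key observation is that at $t=0$ the positions are deterministic affine functions of $i$: $U^\omega_{e,i}(0)=e^0 i$ for $i\leq 0$ and $U^\omega_{e,i}(0)=e^{T^\omega_i} i$ for $i\geq 0$. Thus each Dirac term $\delta_{U^\omega_{e,i}(0)}$ reduces to an explicit indicator, and the sums defining $N^\omega_e(x,k,0)$ become counts of integers with a prescribed sign and a constraint of the form $T_i=k$. For instance, for $x\leq 0$ and $k=0$ one gets the deterministic expression $N^\omega_e(x,0,0)=\#\{i\in\Z: x/e^0<i\leq 0\}$, which after the scaling $x\mapsto x/\ep$ and multiplication by $\ep$ converges to $-x/e^0$ uniformly in $x$. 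For $k\in\{1,\dots,K\}$ and $x\in\R$ one similarly obtains
\[
N^\omega_e(x,k,0)=\#\{i\leq 0:\ T^\omega_i=k,\ i>x/e^0\}-\#\{i>0:\ T^\omega_i=k,\ i\leq x/e^k\},
\]
and, since $x<0$ kills the second sum while $x>0$ kills the first, only one sum is nontrivial in each half-line.

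The main step is then to handle these random counts via the strong law of large numbers. Applied to the i.i.d.\ Bernoulli sequence $(\mathbf{1}_{T_i=k})_{i\in\Z}$, it yields a set $\Omega_0$ of full probability (a finite intersection over $k\in\{1,\dots,K\}$, which is harmless since $K$ is finite) on which
\[
\lim_{n\to +\infty}\frac{1}{n}\#\{0<i\leq n,\ T^\omega_i=k\}=\pi^k=\lim_{n\to +\infty}\frac{1}{n}\#\{-n\leq i\leq 0,\ T^\omega_i=k\}.
\]
Plugging this into the explicit expressions above gives, for any fixed $(x,k)$ and $\omega\in\Omega_0$, the pointwise limits $\nu^{\omega,\ep}_e(x,0,0)\to -x/e^0$ (for $x\leq 0$), $\nu^{\omega,\ep}_e(x,k,0)\to-x/e^0$ (for $x\leq 0$, $k\geq 1$) and $\nu^{\omega,\ep}_e(x,k,0)\to-x/e^k$ (for $x\geq 0$, $k\geq 1$); the factor $(\pi^k)^{-1}$ in the definition of $\nu^{\omega,\ep}_e$ is precisely what cancels the SLLN factor $\pi^k$ coming from the $\{T_i=k\}$ constraint.

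Finally, to upgrade from pointwise to locally uniform convergence, I use that for each $k$ the map $x\mapsto \nu^{\omega,\ep}_e(x,k,0)$ is non-increasing (the first sum in $N^\omega_e$ is non-increasing in $x$, the second is non-decreasing), so the $\ep$-family is a family of monotone functions converging pointwise to a continuous limit on each half-line; a standard Dini-type argument then gives locally uniform convergence. I do not anticipate any serious obstacle here: the ``microscopic'' dynamics plays no role at $t=0$, and all the work is in a direct count combined with the SLLN.
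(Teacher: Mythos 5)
Your proof is correct and follows essentially the same route as the paper: reduce $N_e^\omega(\cdot,k,0)$ to an explicit count of indices $i$ with $T_i=k$ in a prescribed interval, then apply the strong law of large numbers to the i.i.d.\ Bernoulli sequence $(\mathbf{1}_{T_i=k})_i$ so that the factor $\pi^k$ exactly cancels the $(\pi^k)^{-1}$ in the definition of $\nu^\ep_e$. The only difference is that you make explicit the upgrade from pointwise to locally uniform convergence via monotonicity of $x\mapsto\nu^{\omega,\ep}_e(x,k,0)$ and a Dini/P\'olya-type argument, whereas the paper simply asserts local uniformity of the SLLN limit; your extra remark is a valid and clean way to close that small gap.
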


\begin{proof} The result is obvious if $k=0$ since $U_{e,i}(0)= e^0i$. We have, for $k\in\{1, \dots, K\}$ and $x\leq 0$,  
\begin{align*}
N_e^\omega(x,k,0) & =  \sum_{i\in \Z, \ i\leq 0, \  T_i=k}  \delta_{e^0i}( (x,+\infty))- \sum_{i\in \Z,\ i>0, \ T_i=k} \delta_{e^ki}((-\infty,x]) \\
&=  \sharp\{i\in \Z, \ x/e^0< i\leq 0, \; T_i=k\}
\end{align*} 
(where $\sharp E$ is the cardinal of a set $E$).
So, by the law of large number, we have, a.s. and locally uniformly in $(-\infty, 0]$,  
$$
n^{\ep,\omega}_e(x,k,0)= \ep \sharp\{i\in \Z, \ x/(\ep e^0) < i\leq 0, \; T_i=k\} \to - \pi^kx/e^0 . 
$$
This proves the locally uniform convergence for $\nu^{\ep,\omega}_e(x,k,0)$ if $k\in\{1, \dots, K\}$ on $(-\infty, 0]$. 

The proof for $k\in \{1, \dots, K\}$ and $x> 0$ is similar: since 
\begin{align*}
N_e^\omega(x,k,0) & =  \sum_{i\in \Z, \ i\leq 0, \  T_i=k}  \delta_{e^0i}( (x,+\infty))- \sum_{i\in \Z,\ i>0, \ T_i=k} \delta_{e^ki}((-\infty,x]) \\
&=  - \sharp\{i\in \Z, \ 0< i<x/e^k, \; T_i=k\},
\end{align*}
we have as above that 
$$
n^{\ep,\omega}_e(x,k,0)= - \ep \sharp\{i\in \Z, \ 0< i<x/(\ep e^k),  \; T_i=k\} \to - \pi^kx/e^k,
$$
so that $\nu^{\ep,\omega}_e(x,k,0)$  converges a.s. and locally uniformly to $-x/e^k$.  
\end{proof}

We now want to study the convergence of $\nu^\ep_e$ as $\ep\to0$. Let us first give a regularity result for $\nu_e^\ep$. This result will be proved later in a more general setting (see Lemma \ref{lem:regularity-nu-ep}).
\begin{Lemma}\label{lem:regularity-nu-ep-e}
The function $\nu^\ep_e$ satisfies, for any $k\in \{0, \dots, K\}$,  
$$
\left| \nu^\ep_e(x,k,t)-\nu^\ep_e(y,k,s)\right| \leq C(|x-y|+|t-s|+\ep),
$$
for a constant $C$ depending on $\Delta_{\min}$, $(\pi^k)$ and $\|V\|_\infty$ only. 
\end{Lemma}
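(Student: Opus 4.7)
The plan is to bound $|\nu^\ep_e(x,k,t)-\nu^\ep_e(y,k,s)|$ by a spatial part and a temporal part, and for each part to reduce to counting vehicles of a given type in a spatial interval, using two basic facts: (a) by Lemma \ref{lem.CompDeBase}, one has $U_{\ell_i}-U_i\geq \Delta_{\min}$ at all times, so any two successive vehicles \emph{of the same type} are separated by at least $\Delta_{\min}$; (b) each $U_i$ is nondecreasing with speed bounded by $\|V\|_\infty$. Observe also that $\nu^\ep_e(\cdot,k,\cdot)$ is nonincreasing in $x$ and nondecreasing in $t$ (the first sum in \eqref{def.NJuncTOT} is nonincreasing in $x$ and nondecreasing in $t$, while the second sum is nondecreasing in $x$ and nonincreasing in $t$), so it suffices to bound the increments in one direction.

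For the spatial estimate, take $x\leq y$ in the domain of $\nu^\ep_e(\cdot,k,\cdot)$. A direct computation with \eqref{def.NJuncTOT} shows that, for $k\in\{1,\dots,K\}$,
\[
N_e^\omega(x/\ep,k,t/\ep)-N_e^\omega(y/\ep,k,t/\ep)=\sharp\{i\in\Z:\ T_i=k,\ U_{e,i}^\omega(t/\ep)\in(x/\ep,y/\ep]\},
\]
and by fact (a) above this count is at most $(y-x)/(\ep\Delta_{\min})+1$. Multiplying by $\ep(\pi^k)^{-1}$ yields
$|\nu^\ep_e(y,k,t)-\nu^\ep_e(x,k,t)|\leq (\pi^k)^{-1}(|y-x|/\Delta_{\min}+\ep)$. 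For $k=0$ (so $x,y\leq 0$), the same formula holds without the restriction $T_i=k$; grouping by type and applying (a) separately to each of the $K$ types gives a count bounded by $K((y-x)/(\ep\Delta_{\min})+1)$, hence the desired estimate with an extra factor $K$.

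For the temporal estimate, take $t\leq s$. Using that each $U_{e,i}^\omega(\cdot)$ is nondecreasing, one checks that
\[
N_e^\omega(x/\ep,k,s/\ep)-N_e^\omega(x/\ep,k,t/\ep)
=\sharp\{i\in\Z:\ T_i=k,\ U_{e,i}^\omega(t/\ep)\leq x/\ep<U_{e,i}^\omega(s/\ep)\}
\]
(with the same formula, without the condition $T_i=k$ and restricted to $i\leq 0$, when $k=0$ and $x\leq 0$). By fact (b), each such vehicle satisfies $U_{e,i}^\omega(t/\ep)\in(x/\ep-\|V\|_\infty(s-t)/\ep,x/\ep]$; applying (a) to this spatial interval (and summing over types for $k=0$) bounds this count by $\|V\|_\infty(s-t)/(\ep\Delta_{\min})+1$ (times $K$ for $k=0$). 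Multiplying by $\ep(\pi^k)^{-1}$ (or by $\ep$ for $k=0$) yields the temporal part of the bound. Combining the two estimates by the triangle inequality through the intermediate point $(y,k,t)$ gives the full Lipschitz-up-to-$\ep$ control with a constant $C$ depending only on $K$, $\Delta_{\min}$, $(\pi^k)$ and $\|V\|_\infty$.

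The argument is essentially routine; the only point worth emphasizing is that it is crucial to group by type before counting, since vehicles of \emph{different} types may come arbitrarily close to each other (in particular near the junction), whereas the $\Delta_{\min}$ separation from Lemma \ref{lem.CompDeBase} is guaranteed only within each type $k$ via the $\ell_i$-chain.
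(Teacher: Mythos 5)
Your proof is correct and follows essentially the same route as the paper's (given for the more general $\nu^\ep$ in Lemma \ref{lem:regularity-nu-ep}): spatial increments are controlled by counting same-type vehicles in an interval via the $\Delta_{\min}$ separation from Lemma \ref{lem.CompDeBase}, temporal increments are reduced to spatial ones using the bound $\|V\|_\infty$ on the speed, and the $k=0$ case is obtained by summing over the $K$ types. The only cosmetic difference is that you write $N_e(x)-N_e(y)$ directly as a single cardinality before counting, whereas the paper bounds the two sums in \eqref{def.NJuncTOT} separately; both give the same conclusion.
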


\begin{Lemma} \label{lem.limNTOT} Let $e$ be such that $H^k(-1/e^k)=\min_pH^k(p)$ and assume that $k_{e}<0$, where $k_e$ is defined in Theorem \ref{thm.kebarvartheta}. There exists a set $\Omega_0$ of full probability such that, for all $\omega\in \Omega_0$, $\nu^{\omega,\ep}_e$ converges locally uniformly to $\nu_e$ 
which satisfies 
$$
\nu_e(x,0,t)=\nu_e(x,1,t)= \dots = \nu_e(x,K,t) \qquad {\rm in}\; (-\infty, 0]\times [0,+\infty)
$$
and is given by 
\be\label{sol.tildenueTOT}
 \nu_e(x,k,t)=
 \min\left\{ \phi_{{\bar A }}(x,k)-  \bar A t\; ,\; -x/e^k- tH^{k}(-1/e^k)  \right\}\qquad \forall (x,k,t)\in \mathcal R\times [0,+\infty).
\ee
\end{Lemma}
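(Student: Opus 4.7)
The plan is to identify the limit $\nu_e$ by combining the a.s.\ convergence $\theta_e(t)/t\to\bar\vartheta_e=-\bar A$ of Lemma~\ref{defbarthetaTOT}, which pins down the trace of the limit on the junction, with barrier arguments based on the corrector $(W_i)$ of Subsection~\ref{subsec:correctoroutside}, which control the scaled counting function far from $x=0$. The uniform Lipschitz bound of Lemma~\ref{lem:regularity-nu-ep-e} together with Lemma~\ref{lem.limNt=0TOT} yields, on a set $\Omega_0$ of full probability, local uniform precompactness of $(\nu_e^\ep)$, so that any subsequential limit is Lipschitz continuous and has the expected initial trace.

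I first identify the value on the junction. For $k\in\{1,\dots,K\}$ and $t\geq 0$,
\[
\nu_e^{\omega,\ep}(0,k,t)=(\pi^k)^{-1}\ep\,\sharp\bigl\{i\in(-\theta_e^\omega(t/\ep),0]:\,T_i^\omega=k\bigr\},
\]
so the strong law of large numbers together with Lemma~\ref{defbarthetaTOT} gives $\nu_e^{\omega,\ep}(0,k,t)\to-\bar At$ a.s., independently of $k$. For $x\leq 0$ and $k\geq 1$, the same LLN argument, together with the fact that the dynamics of $U_{e,i}$ when $U_{e,i}\leq -R_0$ does not depend on $T_i$ by assumption~(H3), implies that the scaled counting function along the subset $\{T_i=k\}$ is, after division by $\pi^k$, asymptotic to the full count. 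This yields $\nu_e^\ep(x,k,t)-\nu_e^\ep(x,0,t)\to 0$ a.s.\ for every fixed $(x,t)$ with $x\leq 0$, hence the $k$-independence of $\nu_e(\cdot,\cdot,t)$ on $(-\infty,0]$.

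To identify $\nu_e$ away from the junction, I construct two families of microscopic barriers matching the two affine profiles in~\eqref{sol.tildenueTOT}. The \emph{free-flow barrier} uses the corrector $(W_i)$: on the incoming road, Lemma~\ref{lem.Wicorrector} combined with Lemma~\ref{lem.estiWi} and the approximate finite speed of propagation (Lemma~\ref{lem.finitespeedPSJunc}) gives $U_{e,i}^\omega(t/\ep)=e^0 i+v_e^0\,t/\ep+o(1/\ep)$ for $|i|$ large with respect to $t/\ep$, which after counting yields $\nu_e^\ep(x,0,t)\to-x/e^0-tH^0(-1/e^0)$ in the free-flow regime, and analogously on each outgoing branch with $(e^k,v_e^k,H^k)$. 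The \emph{jammed barrier} is the solution itself translated by $-\theta_e(t/\ep)$ in the index variable: by the basic comparison of Lemma~\ref{lem.CompDeBase} and the convergence $\theta_e(t/\ep)/(t/\ep)\to-\bar A$, one obtains $\nu_e^\ep(x,k,t)\leq\phi_{\bar A}(x,k)-\bar At+o(1)$. The pointwise minimum of these two barriers is the claimed formula, and the matching lower bound is produced by running the same comparison arguments in the opposite direction.

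The main obstacle is this last step: stitching the free and jammed regimes at the kink where the two affine profiles coincide. The corrector-based estimate is sharp only for $|i|$ large compared to $t/\ep$, while the flux-limit estimate is sharp only near $x=0$; the transition between the two has to be handled with care, using the monotonicity of $\nu_e^\ep$ in $x$, the basic comparison principle (Lemma~\ref{lem.CompDeBase}), and the approximate finite speed of propagation (Lemma~\ref{lem.finitespeedPSJunc}) to propagate the one-sided bounds across the kink.
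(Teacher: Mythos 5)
Your barrier approach has a genuine gap in the step you yourself flag. The ``free-flow barrier'' via Lemma~\ref{lem.Wicorrector} controls $U_{e,i}(s)$ only for $|i|$ of order $\bar C T$ with $\bar C$ a large fixed constant, i.e.\ at ratios $|x|/t$ far beyond the kink of \eqref{sol.tildenueTOT}; propagating the free-flow profile inward down to the kink requires knowing where the kink is, which requires knowing $\bar A$, so the argument is circular. Worse, the ``jammed barrier'' comparison gives only the one-sided estimate $\nu_e^\ep\le\phi_{\bar A}-\bar A t+o(1)$; the matching lower bound does not follow by ``running the comparison in the opposite direction.'' Comparison via Lemma~\ref{lem.CompDeBase} exploits that vehicles in front slow those behind, and a lower bound on the counting function would need a lower control on the velocities in the intermediate regime between the corrector zone and the junction; neither Lemma~\ref{lem.CompDeBase} nor the approximate finite speed of propagation (Lemma~\ref{lem.finitespeedPSJunc}) supplies that. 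This is precisely the hard part, and your proposal acknowledges but does not close it.

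The paper's route sidesteps the matching problem entirely. It invokes the interior homogenization result (Lemma~\ref{lem.homobasic}) to show that any subsequential limit $w$ of $n^{\omega,\ep}_e(\cdot,0,\cdot)$ is a viscosity solution of $\partial_t w+H^0(\partial_x w)=0$ on $(-\infty,0)\times(0,+\infty)$ with initial trace $-x/e^0$, identifies the Dirichlet trace $w(0,t)=\bar\vartheta_e t$ from the exact identity $n^{\omega,\ep}_e(0,0,t)=\ep\theta_e^\omega(t/\ep)$ together with Lemma~\ref{defbarthetaTOT}, and then appeals to uniqueness of that mixed initial/boundary value problem (Lemma~\ref{lem:app2}) to obtain the explicit formula; the kink is an output of the PDE, never matched by hand. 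Your identification of the trace at $x=0$ is correct and agrees with the paper's, but the paper's proof of $k$-independence is not the informal ``the pre-junction dynamics do not see $T_i$'': it inverts the counting function through the map $u(y,t)$ and applies the law of large numbers along ordered vehicles to get $n_e(x,k,t)=\pi^k w(x,t)$ for $x<0$. The outgoing branches are then handled by branch-wise homogenization with the boundary data $\nu_e(0,k,t)=-\bar A t$ and $\nu_e(x,k,0)=-x/e^k$. You should route the identification of $\nu_e$ away from the junction through this PDE-plus-uniqueness argument rather than through explicit barriers.
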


\begin{proof}  Let us denote by $\Omega_1$  the intersection of the set of full probability measure given in Lemma \ref{lem.limNt=0TOT} and in Lemma \ref{lem.homobasic}. Let $\Omega_0$ be the set of $\omega\in \Omega_1$ such that $\theta^\omega_e(t)/t$ converges to $\bar \vartheta_e$ as $t\to +\infty$. By Lemma \ref{defbarthetaTOT}  we know that $\P[\Omega_0]=1$. Fix now $\omega\in \Omega_0$. By Lemma \ref{lem:regularity-nu-ep-e}, we can consider a locally uniform limit $w$, up to a subsequence, of $n^{\omega,\ep}_e(\cdot,0, \cdot)$ in $(-\infty, 0]\times [0,+\infty)$. Then, by standard homogenization \cite{CaFo} (see Subsection \ref{subsec:homog} in Appendix), $w$ solves 
\be\label{ununJuncTOT}
\left\{\begin{array}{l}
\ds \partial_t w +H^0(\partial_x w)= 0 \; {\rm in }\; (-\infty, 0)\times (0,+\infty)\\ 
\ds w(x,0)= -x/e^0 \; {\rm in }\; (-\infty, 0].
\end{array}\right. 
\ee
Moreover, by the definition of $\theta_e$, we have $n^{\omega,\ep}(0,0,t)= \ep\theta^\omega_e(t/\ep)$. Therefore 
\be \label{deudeuxJuncTOT} 
w(0,t)= \bar \vartheta_e t\qquad \forall t\geq 0.
\ee 
The solution to \eqref{ununJuncTOT}-\eqref{deudeuxJuncTOT} is unique and given by 
\be\label{eqwwwwTOT}
w(x,t):= \min\{ -x/e^0-H^0(-1/e^0)t\ , \ p^{0,-}_{{\bar A}} x-\bar At\} \; {\rm in }\; (-\infty, 0)\times (0,+\infty),
\ee
since $\bar A>H^0(-1/e^0)$ and $H^0$ is convex (see Lemma \ref{lem:app2}). Therefore the whole sequence $n^{\omega,\ep}_e(\cdot,0, \cdot)$ converges to  $w$ locally uniformly on $(-\infty,0]\times [0,+\infty)$ as $\ep\to 0$ for any $\omega\in \Omega_0$. We set $n_e(x,0,t)= \nu_e(x,0,t):=w(x,t)$. \\

Let us now fix $\omega\in \Omega_0$, $y<0$ and set $i_\ep= [y/(e^0\ep)]$. Our next step is to show that, if $t< y/(e^0{\bar A})$, then 
\be\label{def.uytTOT}
u(y,t):=  \lim_{\ep\to 0} \ep U^\omega_{e, i_\ep}(t/\ep)= \min\{ y-H^0(-1/e^0)e^0t\ ,\ (-p^{0,-}_{{\bar A}})^{-1}(y/e^0-{\bar A} t)\}.
\ee
Indeed, as the map $t\to \ep U^\omega_{e, i_\ep}(t/\ep)$ is uniformly Lipschitz continuous with $\ep U^\omega_{e, i_\ep}(0)\to y$ as $\ep\to 0$, we can find a subsequence which converges to some  map $t\to u(y,t)$. Assume that $U^\omega_{e, i_\ep}(t/\ep)\leq -R_0$ for any $\ep$ small enough. Then  for any $i\in \Z$ with $i\leq 0$, one has thanks to Lemma \ref{lem.CompDeBase}:
$$
U^\omega_{e, i_\ep}(t/\ep)\leq  U^\omega_{e,i}(t/\ep) \qquad \text{ if and only if } \qquad i\geq i_\ep.
$$
 Therefore
$$
N^{\omega}_e( U^\omega_{e, i_\ep}(t/\ep),0, t/\ep)= -i_\ep.
$$
Multiplying by $\ep$ and letting $\ep \to 0$, we find
$$
w(u(y,t),t)= -y/e^0 
$$
and thus, by \eqref{eqwwwwTOT},
$$
u(y,t)= w^{-1}(-y/e^0,t)=  \min\{ y-H^0(-1/e^0)e^0t\ ,\ (-p^{0,-}_{{\bar A}})^{-1}(y/e^0-{\bar A} t)\}.
$$
This holds if $U^\omega_{e, i_\ep}(t/\ep)\leq -R_0$ for any $\ep$ small enough, which is ensured by the condition $t< y/(e^0{\bar A})$ and $\ep$ small enough thanks to the equality above. Our proof of \eqref{def.uytTOT} is then complete since the limit is independent of the subsequence. \\

We now turn to the proof of the convergence of $n^{\omega,\ep}_e(\cdot, k,\cdot)$ in $(-\infty, 0]\times [0,+\infty)$. 
Fix $\omega\in \Omega_0$, $x<0$  and $t\ge0$. The map $y\mapsto \min\{ y-H^0(-1/e^0)e^0t\ ,\ (-p^{0,-}_{{\bar A}})^{-1}(y/e^0-{\bar A} t)$ being increasing in $(-\infty,t/(e^0{\bar A}))$ and being equal to $0$ for $y=t/(e^0{\bar A})$, there exists $y$ with $t< y/(e^0{\bar A})$ and such that
$$u(y,t)=x.$$
We set $i_\ep= [y/(e^0\ep)]$. Assume also that $n^{\omega,\ep}_e(\cdot, k,\cdot)$ converges up to a subsequence to $n_e(\cdot, k, \cdot)$. By the same argument as above, we have 
$$
N^{\omega}_e(U^\omega_{e, i_\ep}(t/\ep), k,t/\ep)= \sharp\{ i\in \{i_\ep, \dots, 0\}, \; T_i= k\}.  
$$
We multiply by $\ep$ and let $\ep\to 0$. Recalling Lemma \ref{lem.limNt=0TOT} and the previous step, we get
$$
n_e(u(y,t),k,t)= -\pi^ky/e^0=\pi^kw(u(y,t),t).
$$
By definition of $ y$, this shows that $n_e(x,k,t)= \pi^k w(x,t)= \pi^k n_e(x, 0,t)$ as well as the equality $\nu_e(x,k,t)=\nu_e(x,0,t)$ for any $x< 0$ and   $t\geq 0$. By continuity of $\nu_e(\cdot,k,\cdot)$ (this is a direct consequence of Lemma \ref{lem:regularity-nu-ep-e}), we  also get the result for $x=0$.\medskip

Next we turn to the limits for $x\geq 0$. Let $\omega\in\Omega_0$ and $\nu_e(\cdot, k,\cdot)$ be a locally uniform limit, up to a subsequence, of $\nu^{\omega, \ep}(\cdot, k, \cdot)$ for $k\in \{1, \dots, K\}$, which exists by Lemma \ref{lem:regularity-nu-ep-e}. By the previous arguments, we know that for $x\geq 0$
\be\label{condbordnu}
\nu_e(x, k, 0)= -x/e^k, \qquad \nu_e(0, k, t)= -{\bar A} t. 
\ee
On the other hand, on each branch $\mathcal R_k$ the dynamical system corresponds (up to relabelling) to the standard ODE 
$$
\frac{d}{dt}U_j= V^k_{Z_j}(U_{j+1}(t)-U_j(t))
$$
(where the sequence $(Z_j)_{j\in \Z, \ T_j=k}$ is defined for indices $j$ such that $T_j=k$). By homogenization (See Subsection \ref{subsec:homog} in Appendix) $n_e^k$ solves 
$$
\partial_t n_e^k +\bar H^k(\partial_x n_e^k)= 0 \qquad {\rm in}\; (0,+\infty)\times (0,+\infty),
$$
where $\bar H^k(p)= p\bar V^k(-1/p)$ for any $p< 0$ and $\bar H^k(p)=0$ if $p\geq 0$. Hence $\nu_e^k$ solves 
$$
\partial_t \nu_e^k + H^k(\partial_x \nu_e^k)= 0 \qquad {\rm in}\; (0,+\infty)\times (0,+\infty), 
$$
where $H^k$ is given by $H^k(p)= (\pi^k)^{-1}\bar H^k(\pi^k p)= p \bar V^k(-1/(\pi^kp))$. Complemented with \eqref{condbordnu} this system has a unique solution given by 
$$
\nu_e(x,k,t)= \min\left\{ -x/e^k-H^k(-1/e^k)t\ , \ p^{k,+}_{{\bar A}} x-{\bar A}t\right\}.
$$
As before this shows that the whole sequence $\nu^{\omega, \ep}(\cdot, k, \cdot)$ converges to $\nu_e^k$ given above. 
\end{proof}

In the case $k_e=0$, {we have the following result.
\begin{Lemma}\label{lem.caske=0} Assume that $e=(e^k)$ is such that $H^k(-1/e^k)= \min_p H^k(p)$ for any $k\in \{0, \dots, K\}$ and assume that $k_e=0$. Then there exists a set $\Omega_0$ of full probability such that, for all $\omega\in \Omega_0$, $\nu^{\omega,\ep}_e$ converges to $\nu_e$ 
which satisfies 
$$
\nu_e(x,0,t)=\nu_e(x,1,t)= \dots = \nu_e(x,K,t) \qquad {\rm in}\; (-\infty, 0]\times [0,+\infty)
$$
and is given by 
\be\label{sol.tildenueTOTcaske=0}
 \nu_e(x,k,t)=
 \min\left\{ \phi_{A_0}(x,k)- A_0 t\; ,\; -x/e^k- tH^{k}(-1/e^k)  \right\}\qquad \forall (x,k,t)\in \mathcal R\times [0,+\infty),
\ee
where $A_0$ is given by \eqref{defA0}. 

\end{Lemma}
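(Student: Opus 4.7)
The plan is to adapt the argument of Lemma \ref{lem.limNTOT} to the degenerate regime $k_e=0$, in which Theorem \ref{thm.kebarvartheta} supplies only the one-sided inequality $\liminf_{t\to+\infty}\theta_e(t)/t\ge -A_0$ (a.s. and deterministic) rather than the full convergence of $\theta_e(t)/t$. By Lemma \ref{lem:regularity-nu-ep-e} the family $(\nu_e^{\omega,\ep})_{\ep>0}$ is equi-Lipschitz on $\mathcal R\times[0,+\infty)$, so any sequence $\ep_n\to 0$ admits a locally uniformly convergent subsequence with limit $\nu_e$. Exactly as in the proof of Lemma \ref{lem.limNTOT}, standard homogenization (Lemma \ref{lem.homobasic}) shows that on the interior of each branch the limit $\nu_e$ is a viscosity solution of $\partial_t\nu_e + H^k(\partial_x\nu_e)=0$ with initial condition $\nu_e(x,k,0)=-x/e^k$ supplied by Lemma \ref{lem.limNt=0TOT}, and the almost-continuity of $\nu_e^\ep$ at $x=0$ passes to the limit to give $\nu_e(0,0,t)=\nu_e(0,k,t)$ for every $k\in\{1,\dots,K\}$ and every $t\ge 0$.

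The key step is to identify the common boundary value $c(t):=\nu_e(0,0,t)$. Since $\nu_e^{\omega,\ep}(0,0,t)=\ep\,\theta_e^\omega(t/\ep)$, the inequality $\liminf_{s\to+\infty}\theta_e(s)/s\ge -A_0$ yields $c(t)\ge -A_0 t$ by passing to the limit along the subsequence. For the reverse inequality, pick $k^*\in\{0,\dots,K\}$ achieving the maximum in \eqref{defA0}, so that $\min_p H^{k^*}(p)=H^{k^*}(-1/e^{k^*})=A_0$. When $k^*\ge 1$, the function $\psi(x,t):=-x/e^{k^*}-A_0 t$ is a classical solution of $\partial_t\psi+H^{k^*}(\partial_x\psi)=0$ sharing the initial data of $\nu_e(\cdot,k^*,\cdot)$, and the slope $-1/e^{k^*}$ of this initial data realises the minimum of $H^{k^*}$, so the characteristics emanating from the initial line are vertical and do not carry information from $x=0$ into the interior; the viscosity solution $\nu_e(\cdot,k^*,\cdot)$ is therefore forced to satisfy $\nu_e(x,k^*,t)\le\psi(x,t)$ throughout $[0,+\infty)\times[0,+\infty)$. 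Microscopically, this is the statement that type-$k^*$ vehicles on road $k^*$ cannot move faster than the free-flow speed $v_e^{k^*}$, so their count past any $x\ge 0$ is bounded by the free-flow count. Specialising at $x=0$ and using the almost-continuity of $\nu_e^\ep$ gives $c(t)\le -A_0 t$; when $k^*=0$, the same argument is applied directly on the incoming road. Hence $c(t)=-A_0 t$.

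With $c(t)=-A_0 t$ prescribed, the HJ equation on each branch with the given initial and boundary data admits a unique viscosity solution, which a direct computation (parallel to the end of the proof of Lemma \ref{lem.limNTOT}) identifies with the right-hand side of \eqref{sol.tildenueTOTcaske=0}. Uniqueness of the subsequential limit $\nu_e$ then upgrades convergence along the extracted subsequence to locally uniform convergence of the entire family $\nu_e^{\omega,\ep}$ as $\ep\to 0$; the set $\Omega_0$ is the intersection of the almost sure events supplied by Lemmas \ref{lem.limNt=0TOT}, \ref{lem.homobasic} and \ref{defbarthetaTOT}. The main obstacle is the upper bound $c(t)\le -A_0 t$: absent any $\limsup$ control on $\theta_e(t)/t$, it has to be recovered macroscopically via the bottleneck branch $k^*$, exploiting that the slope of the initial density matches the minimiser of $H^{k^*}$ and therefore caps the sustainable crossing rate through the junction at $-A_0$.
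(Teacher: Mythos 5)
Your overall strategy matches the paper's: pass to a subsequential limit $\nu_e$, identify the common trace $c(t):=\nu_e(0,\cdot,t)$ via two-sided bounds, solve branch-by-branch, then remove the subsequence by uniqueness. The lower bound $c(t)\geq -A_0 t$ from Lemma \ref{defbarthetaTOT} is handled the same way. Where you differ is the upper bound $c(t)\leq -A_0 t$, and this is the interesting part.

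The paper gets it by invoking \cite[Theorem 2.7]{IM} (any continuous function solving the HJ equation in the interior of each branch is automatically a flux-limited subsolution at the junction with flux limiter $A_0$), then compares with the explicit $A_0$-limited solution $\tilde\nu_e$ of Lemma \ref{lem:app1} to get $\nu_e\leq\tilde\nu_e$, and evaluates at $x=0$. You instead argue directly on the bottleneck branch $k^*$. This route does work, and is arguably more elementary: since $\nu_e(\cdot,k^*,\cdot)$ is a Lipschitz viscosity subsolution of $\partial_t\nu+H^{k^*}(\partial_x\nu)=0$ on $(0,+\infty)\times(0,+\infty)$ (Lemmas \ref{lem.homobasic}, \ref{lem:regularity-nu-ep-e}) and $H^{k^*}\geq\min_p H^{k^*}=A_0$ pointwise, one has $\partial_t\nu_e\leq -A_0$ a.e., so $\nu_e(x,k^*,t)\leq\nu_e(x,k^*,0)-A_0t=-x/e^{k^*}-A_0t$, and letting $x\to 0^+$ gives the trace bound without ever invoking \cite[Theorem 2.7]{IM}. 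The trade-off is that this argument leans on the very special flat initial data (slope equal to the minimizer of $H^{k^*}$), so it does not generalize the way the paper's machinery does.

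Two cautions, however. First, as written your justification is a heuristic rather than a proof: ``the characteristics emanating from the initial line are vertical and do not carry information from $x=0$ into the interior'' addresses the wrong direction of propagation -- the concern is information leaving $x=0$ into $x>0$, not the reverse -- and does not by itself rule out the boundary pushing $\nu_e$ above $-x/e^{k^*}-A_0t$. What actually closes the gap is the a.e.\ subsolution inequality $\partial_t\nu_e\leq -A_0$ (equivalently sub-optimality of $\nu_e$ along constant-in-space paths), which you should state explicitly. Second, the microscopic remark is incorrect as stated: individual type-$k^*$ vehicles can and do travel faster than $v_e^{k^*}$ (up to $\bar v^{k^*}>v_e^{k^*}$); what is capped at $-A_0$ is the macroscopic crossing rate $\theta_e(t)/t$, not the speed of any single vehicle.
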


\begin{proof}
Note that with our choice of $e$ we have $A_0= \max_{k\in \{0, \dots, K\}} H^k(-1/e^k)$. Let $\bar \theta_e(t)= \E[\theta_e(t)]$ and $\bar \theta^\ep_e(t)= \ep \bar \theta_e(t/\ep)$. Then, using Lemma \ref{lem.estitheta}, $\bar \theta^\ep_e$  converges, up to a subsequence, to a Lipschitz continuous map $t\to \bar\vartheta_e(t)$. From now on we argue along this subsequence and note that it does not depend on $\omega$. According to Lemma \ref{defbarthetaTOT}, we have $\bar \vartheta_e(t)\geq  -A_0t$. We also note that, by Theorem \ref{thm:concentrationTOT}, $\ep \theta_e(t/\ep)$ converges a.s. locally uniformly to $\bar\vartheta_e(t)$. Let $\Omega_0'$ be the set of $\omega\in \Omega_0$ such that this limit holds  (note that this set depends on the subsequence, we will come back to this point at the very end of the proof).  
Without loss of generality, we also assume that, for any $\omega\in \Omega_0'$, 
$$
\lim_{s\to -\infty}  (-s)^{-1} \sharp  \{ i \in \Z\cap (s,0], \; T_i^\omega=k\}= \pi^k.
$$

Recall that 
$$
N^{\omega}_e(0,k,t)=\sharp \{ i \in \Z, \; i\leq 0, \; T_i^\omega=k, \; U^\omega_{e,i}(t)>0\}=\sharp  \{ i \in \Z\cap (-\theta^\omega_e(t),0], \; T_i^\omega=k \}.
$$
Therefore 
$$
n^{\omega,\ep}(0,k,t) = \ep \sharp  \{ i \in \Z\cap (-\theta^\omega_e(t/\ep ),0], \; T_i^\omega=k\}. 
$$
If $\omega\in \Omega_0'$, then we get 
$$
\lim_{\ep\to 0} n^{\omega,\ep}(0,k,t)= \bar \vartheta_e(t)\pi^k.
$$
because  $\ep \theta^\omega_e(t/\ep )$ converges to the deterministic limit $\bar \vartheta_e(t)$. So arguing as above, for any $\omega\in \Omega'_0$, we can find a subsequence (subsequence of the previous subsequence and depending this time on $\omega$) such that $\nu^{\omega, \ep}_e$ converges locally uniformly to a continuous solution $\nu_e$ of
\be\label{eq.nubarvarthetatTOT}
\left\{\begin{array}{l}
\ds \partial_t \nu_e + H(\partial_x \nu_e)=0 \qquad {\rm in }\;  \stackrel{o}{{\mathcal R}}\times (0,+\infty) \\
\ds \nu_e(x,k, 0)= -x/e^k  \qquad {\rm in }\;  {{\mathcal R}} \\
\ds \nu_e(0,k,t)=\bar \vartheta_e(t) \qquad \forall k\in \{0, \dots, K\}, \; t\geq 0. 
\end{array}\right.
\ee
Let $\tilde \nu_e$ be the solution of the junction problem without flux limiter: 
\be\label{eq.tildenuTOT}
\left\{\begin{array}{l}
\ds \partial_t \tilde \nu_e + H(\partial_x\tilde \nu_e)=0 \qquad {\rm in }\;  \stackrel{o}{{\mathcal R}}\times (0,+\infty) \\
\ds \tilde \nu_e(x,k,0)= -x/e^k \qquad {\rm in }\; \mathcal R\\
\ds \partial_t \tilde \nu_e + \max \{ A_0, H^{0,+}(\partial_0 \tilde \nu_e), H^{1,-}(\partial_1\tilde \nu_e),\dots, H^{K,-}(\partial_K \tilde \nu_e))=0
\qquad {\rm at}\; x=0.
\end{array}\right.
\ee
The solution is given by (see Lemma \ref{lem:app1})
\be\label{sol.tildenueBISTOT}
\tilde \nu_e(x,k,t)= \min\left\{ \phi_{A_0}(x,k)-A_0t\; ,\; -x/e^k- tH^{k}(-1/e^k)  \right\} .
\ee
We  know from \cite[Theorem 2.7]{IM} that $\nu_e$  is a subsolution to \eqref{eq.tildenuTOT}, as it is continuous and satisfies the Hamilton-Jacobi equation in $ (\R\backslash\{0\})\times \{0,\dots,K\}\times(0,+\infty)$. Therefore $\nu_e\leq \tilde \nu_e$ by comparison \cite{IM}. In addition, we get by the definition of $A_0$ and since  $\bar \vartheta_e(t)\geq -A_0t$: 
$$
-A_0t \leq \bar \vartheta_e(t)= \nu_e(0,k,t) \leq \tilde \nu_e(0,k,t)=  -A_0t .
$$
This shows that $ \nu_e(0,k,t) =\bar \vartheta_e(t)=-A_0t$ for any $k$. \\

So we have proved that the whole sequence $\bar \theta_e(t)/t$ converges as $t\to +\infty$ to $-A_0t$. This shows, exactly as in the proof of Lemma \ref{defbarthetaTOT} that $\theta^\omega_e(t)/t$ converges a.s. to $-A_0t$ as $t\to +\infty$. We can then proceed as in the case $k_{e}<0$ and find a set $\Omega_0$ of full probability such that $\nu^{\omega, \ep}_e$ converges for any $\omega\in \Omega_0'$ to the unique solution of \eqref{eq.nubarvarthetatTOT} with 
$\bar \vartheta_e(t)=-A_0t$, which is nothing but $\tilde \nu_e$. This completes the proof of the lemma. 
\end{proof}

For later use it will be convenient to rewrite the previous lemma in term of the behavior of the $U_e$. Let us define, for $k\in \{1, \dots, K\}$ and $x\in \R$
$$
[x]_k=[x]_k^\omega= \sup\{i\in \Z, \; i\leq x, \; T_i=k\}.
$$
We note that, a.s.,  
$$
\lim_{\ep\to0} \ep[x/\ep]_k= x
$$
and that this convergence holds locally uniformly in $x$. 

By the definition of $N_e$, we have, for any $y<0$ and $t\geq 0$,  
$$
N_e(U_{e, [y]_k}(t),k,t)= \sharp \left\{ j\leq 0, \; T_j=k, \; U_{e,j}(t)>U_{e, [y]_k}(t)\right\} 
=
\sharp \left\{ j\leq 0, \; T_j=k, \; j> [y]_k\right\}.
$$
since the order is preserved in time among vehicle with a same type (Lemma \ref{lem.CompDeBase}). Therefore 
$$
\nu^{\ep}_e(\ep U_{e, [y/\ep]_k}(t/\ep),k,t) = \ep(\pi^k)^{-1} \sharp \left\{ j\leq 0, \; T_j=k, \; j> [y/\ep]_k\right\}
$$
from which we derive that any uniform limit $y(\cdot)$ (up to subsequences) of $t\to \ep U_{e, [y/\ep]_k}(t/\ep)$ satisfies
$$
\nu_e(y(t), k,t)=  -y. 
$$
We know by Lemma \ref{lem.limNTOT} (when $k_e<0$) or Lemma \ref{lem.caske=0} (when $k_e=0$),  that 
$$
\nu_e(x,k,t)= \min\left\{  \phi_{\bar A}(x,k)-\bar A t\; ,\; -x/e^k- tH^{k}(-1/(e^k))  \right\} ,\qquad \forall (x,k)\in \mathcal R
$$
where $\bar A=-\bar \vartheta_e$ if $k_e<0$ and $\bar A=A_0$ otherwise. 
This shows that 
$$
y(t)= \left\{\begin{array}{ll}
\min\left\{ \psi_{\bar A}(y-\bar At ,0)\; ,\; ye^0- e^0 t H^{0}(-1/e^0)  \right\} &{\rm if}\; y\le \bar A t\\
\min\left\{ \psi_{\bar A}(y-\bar At ,k)\; ,\; ye^k- e^k t H^{k}(-1/e^k)  \right\} &{\rm if}\; y\ge \bar A t.
\end{array}
\right.
$$
Since this limit is independent of the choice of the subsequence, we have proved the following (the case $y\geq 0$ being treated in the same way): 

\begin{Corollary} \label{cor.CvUepTOT} Let $e$ be such that $H^k(-1/e^k)=\min_pH^k(p)$.  For $k\in\{0,\dots, K\}$, let 
$$
u^\ep_e(y,k,t):= \ep U_{e,[y/\ep]_k}(t/\ep).
$$
Then $u^\ep_e$ converges a.s. and locally uniformly to 
\be\label{identifueJuncTOT}
u_e(y,k,t):= 
\left\{\begin{array}{ll}
\min\left\{ \psi_{{\bar A}}(y-{\bar A}t ,0)\; ,\; ye^0- e^0 t H^{0}(-1/e^0)  \right\} &{\rm if}\; y\le {\bar A} t\\
\min\left\{ \psi_{{\bar A}}(y-{\bar A}t ,k)\; ,\; ye^k- e^k t H^{k}(-1/e^k)  \right\} &{\rm if}\; y\ge {\bar A}t.
\end{array}
\right.
\ee
\end{Corollary}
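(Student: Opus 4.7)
The plan is to deduce the corollary from the convergence of $\nu^\ep_e$ established in Lemma \ref{lem.limNTOT} (case $k_e<0$) or Lemma \ref{lem.caske=0} (case $k_e=0$), by inverting the implicit relation between $U_{e,i}$ and $N_e$. The discussion preceding the statement already sketches the main idea; below I organise it as a formal scheme.

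First I would set up compactness. The maps $t\mapsto U_{e,i}(t)$ are nondecreasing and uniformly Lipschitz (with constant $\|V\|_\infty$), so the rescaled functions $(y,t)\mapsto u^\ep_e(y,k,t)$ are equicontinuous in $t$ (after scaling, with constant $\|V\|_\infty$), and continuous in $y$ up to an error $O(\ep)$ coming from the floor function $[\cdot]_k$. Since a.s.\ $\ep[y/\ep]_k\to y$ locally uniformly (by the law of large numbers for $\sharp\{i\le y/\ep,\ T_i=k\}$), the family $u^\ep_e$ is relatively compact, and it suffices to identify every locally uniform subsequential limit.

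Second, I would exploit the identity given just before the statement. For $k\in\{1,\dots,K\}$ and $y<0$, the ordering of vehicles with the same type (Lemma \ref{lem.CompDeBase}) yields
\[
N_e(U_{e,[y]_k}(t),k,t)=\sharp\{j\le0,\ T_j=k,\ j>[y]_k\}.
\]
Rescaling and recalling the definition of $\nu^\ep_e$, this becomes
\[
\nu^\ep_e\bigl(\ep U_{e,[y/\ep]_k}(t/\ep),k,t\bigr)=\ep(\pi^k)^{-1}\sharp\{j\le0,\ T_j=k,\ j>[y/\ep]_k\},
\]
and by the law of large numbers the right-hand side converges a.s.\ to $-y$. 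An analogous identity holds for $k=0$ with $y\le 0$ and, symmetrically, for $y\ge 0$ on each outgoing branch.

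Third, I would pass to the limit. Along a subsequence on which $u^\ep_e(y,k,\cdot)$ converges locally uniformly to some continuous $y(\cdot)$, the locally uniform convergence of $\nu^\ep_e$ to $\nu_e$ (from Lemma \ref{lem.limNTOT} or \ref{lem.caske=0}, on a set $\Omega_0$ of full probability) gives
\[
\nu_e(y(t),k,t)=-y.
\]
Since
\[
\nu_e(x,k,t)=\min\bigl\{\phi_{\bar A}(x,k)-\bar A t,\ -x/e^k-tH^k(-1/e^k)\bigr\},
\]
the map $x\mapsto\nu_e(x,k,t)$ is strictly decreasing on the region where it is positive, with explicit piecewise-linear inverse given by $\psi_{\bar A}$ in \eqref{defpsiJuncTOT}. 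Inverting on each piece leads to the formula
\[
y(t)=\min\bigl\{\psi_{\bar A}(y-\bar A t,k),\ ye^k-e^k t H^k(-1/e^k)\bigr\},
\]
provided we are on the correct branch. The branching condition $y\lessgtr \bar A t$ simply records which label ($k=0$ or $k=T_i$) governs the relevant scaled sum in the definition of $\nu^\ep_e$, i.e.\ whether the vehicle $[y/\ep]_k$ is, in the limit, on the incoming or outgoing side of the junction.

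The only nontrivial point is to check that the implicit relation $\nu_e(y(t),k,t)=-y$ has a unique solution $y(t)$ and that this solution matches the stated piecewise formula on both sides of the boundary $y=\bar A t$. Monotonicity and continuity of $\nu_e$ in $x$ on each branch, together with the explicit inverses $\psi_{\bar A}(\cdot,k)$, make this a direct computation. Since the limit thus identified does not depend on the subsequence, the full family $u^\ep_e$ converges a.s.\ and locally uniformly to $u_e$, completing the proof.
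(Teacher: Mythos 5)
Your proposal is correct and follows essentially the same route as the paper: the text immediately preceding Corollary~\ref{cor.CvUepTOT} is precisely the paper's proof, and it proceeds exactly as you do — rescaling the identity $N_e(U_{e,[y]_k}(t),k,t)=\sharp\{j\le 0,\ T_j=k,\ j>[y]_k\}$ (valid by order-preservation, Lemma~\ref{lem.CompDeBase}), passing to the limit using the locally uniform convergence of $\nu^\ep_e$ from Lemma~\ref{lem.limNTOT} or~\ref{lem.caske=0}, and inverting the explicit piecewise-linear formula for $\nu_e$ to identify $y(t)$, with independence of the subsequence giving full convergence.
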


\subsection{Comparison principle}

 An important point in the proof of the homogenization is to explain how the comparison for the solutions $U$ pass to the limit. This is the aim of the following lemma:

\begin{Lemma}\label{lem.CompaLimJunc} We fix a solution $U$ of 
$$
\frac{d}{dt} U_i(t)=  V_{Z_i}(U_{i+1}(t)-U_i(t), U_{\ell_i}(t)-U_i(t), U_i(t))\qquad i\in \Z
$$
and set $u^\ep(x,k,t)= \ep U_{[x/\ep]_k}(t/\ep)$. {Let $e$ be such that $H^k(-1/e^k)=\min_pH^k(p)$}. There exists a constant $C>1$ and a set $\Omega_0$ of full probability independent of $U$ such that, if $\omega\in \Omega_0$,  if $u^\omega_*$ is any half relaxed lower limit of  $u^{\omega,\ep}$ as $\ep\to 0^+$ (possibly up to a subsequence) and if there exists $\gamma>0$, a time $t_0\geq 0$ and  $a,b\in \R$ with $b> -t_0$, such that  
\be\label{cond.lemCompaLimJunc}
u^\omega_*(x,k,t_0)\geq u_e(x+a,k,t_0+b)\qquad \forall (x,k)\in [-\gamma, \gamma]\times \{1, \dots, K\}
\ee
and such that the minimum of $(x,k) \to u^\omega_*(x,k,t_0)- u_e(x+a,k,t_0+b)$ is not reached on $\{-\gamma, \gamma\}\times \{1,\dots, K\}$, then
$$
u^\omega_*(x,k,t_0+s)\geq u_e(x+a,k,t_0+b+s)\qquad \forall (x,k,s)\in [-\gamma/2, \gamma/2]\times\{1, \dots, K\} \times [0,C^{-1}\gamma]. 
$$
In the same way, if $u^{\omega,*}$ is any half relaxed upper limit of some $U^\omega_i$ (possibly up to a subsequence) and if there exists $\gamma>0$,  $t_0\geq 0$ and  $a,b\in \R$ with $b> -t_0$, such that  
$$
u^{\omega,*}(x,k,t_0)\leq u_e(x+a,k,t_0+b)\qquad \forall x\in [-\gamma, \gamma]\times \{1, \dots, K\}
$$
and such that the maximum of $(x,k) \to u^\omega_*(x,k,t_0)- u_e(x+a,k,t_0+b)$ is not reached on $\{-\gamma, \gamma\}\times \{1,\dots, K\}$,
then
$$
u^{\omega,*}(x,k,t_0+s)\leq u_e(x+a,k,t_0+b+s)\qquad \forall (x,k,t)\in [-\gamma/2, \gamma/2]\times \{1, \dots, K\} \times [0,C^{-1}\gamma]. 
$$
\end{Lemma}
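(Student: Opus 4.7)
The plan has three steps: (i) pull the limit inequality back to a microscopic comparison on the \emph{same} sample environment by conjugating the reference profile with an environment shift; (ii) propagate the ordering locally in time using the approximate finite speed of propagation (Lemma~\ref{lem.finitespeedPSJunc}); and (iii) pass to the lower relaxed limit as $\ep\to 0^+$ along the subsequence defining $u^\omega_*$. To absorb the shifts $(a,b)$ at the microscopic scale, set $j_a:=-[a/\ep]$, $t_b:=b/\ep$, and define
\[
\tilde U^\omega_i(\tau):=U^{\tau_{j_a}\omega}_{e,\,i-j_a}(\tau+t_b).
\]
Because $Z^{\tau_{j_a}\omega}_{i-j_a}=Z^\omega_i$ and $\ell^{\tau_{j_a}\omega}_{i-j_a}=\ell^\omega_i-j_a$, the family $\tilde U^\omega$ satisfies the very same system \eqref{eq.SystJuncTOT} in environment $\omega$ as $U^\omega$ does, so the microscopic comparison and finite-speed machinery of Section~2 apply to the pair. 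Applying Corollary~\ref{cor.CvUepTOT} in the environment $\tau_{j_a}\omega$ along the subsequence, valid on a $\tau$-invariant full-measure set (the uniformity in the unbounded shift $j_a(\ep)$ coming from the concentration estimate of Theorem~\ref{thm:concentrationTOT} via a diagonal Borel--Cantelli argument), yields $\ep\tilde U^\omega_{[x/\ep]_k}(t/\ep)\to u_e(x+a,k,t+b)$ locally uniformly in $(x,k,t)$.

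Fix such $\omega$ and work along the subsequence $(\ep_n)$. The hypothesis that the minimum of $(x,k)\mapsto u^\omega_*(x,k,t_0)-u_e(x+a,k,t_0+b)$ is not attained on $\{-\gamma,\gamma\}\times\{1,\dots,K\}$ provides $\eta>0$ with $u^\omega_*(\pm\gamma,k,t_0)\ge u_e(\pm\gamma+a,k,t_0+b)+3\eta$ for every $k$. Combining this strict inequality at the boundary with the non-strict bound in the interior, the convergence of $\ep\tilde U^\omega$, and the fact that every index $i\in\Z$ has some type in $\{1,\dots,K\}$, we obtain, for all small enough $\ep$ in the subsequence,
\[
U^\omega_i(t_0/\ep)\ge \tilde U^\omega_i(t_0/\ep)-o(1/\ep)\quad\text{on}\ \mathcal I_\ep:=\{i\in\Z:\ep i\in[-\gamma,\gamma]\},
\]
with the stronger bound $U^\omega_i(t_0/\ep)\ge \tilde U^\omega_i(t_0/\ep)+2\eta/\ep$ at the two endpoints of $\mathcal I_\ep$. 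To upgrade the $o(1/\ep)$ interior deficit into the clean ordering required by Lemma~\ref{lem.finitespeedPSJunc}, time-shift $\tilde U^\omega$ backward by $h(\ep):=\sqrt{\ep}$ and set $\tilde U^{\omega,-h}_i(\tau):=\tilde U^\omega_i(\tau-h(\ep)/\ep)$. Lemma~\ref{lem.delta} gives $\tilde U^\omega_i(t_0/\ep)-\tilde U^{\omega,-h}_i(t_0/\ep)\ge \delta/\sqrt{\ep}$, which dominates the $o(1/\ep)$ deficit, so $U^\omega_i(t_0/\ep)\ge \tilde U^{\omega,-h}_i(t_0/\ep)$ on all of $\mathcal I_\ep$.

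Lemma~\ref{lem.finitespeedPSJunc} applied to the pair of sub-solution $\tilde U^{\omega,-h}$ and super-solution $U^\omega$ on $\mathcal I_\ep$, starting from $t_0/\ep$, then yields, for every $n\in\N$ and every $i\in\{\min\mathcal I_\ep,\dots,J_n(\max\mathcal I_\ep)\}$, the bound
\[
\tilde U^\omega_i\bigl(\tau-h(\ep)/\ep\bigr)\le U^\omega_i(\tau)+C\,2^{-n}e^{\beta(\tau-t_0/\ep)},\qquad \tau\ge t_0/\ep.
\]
Choosing $n:=[\gamma/(2\alpha\ep)]$, Lemma~\ref{lem.Jn} ensures on a high-probability event that $J_n(\max\mathcal I_\ep)\ge[\gamma/(2\ep)]$; then for $s\in[0,C_0^{-1}\gamma]$ with $C_0:=2\alpha\beta/\ln 2$, the factor $\ep\cdot 2^{-n}e^{\beta s/\ep}$ tends to $0$ with $\ep$. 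Multiplying by $\ep$, using $h(\ep)\to 0$, and taking the lower relaxed limit along the subsequence produces $u^\omega_*(x,k,t_0+s)\ge u_e(x+a,k,t_0+b+s)$ on $[-\gamma/2,\gamma/2]\times\{1,\dots,K\}\times[0,C_0^{-1}\gamma]$, which is the claim. The upper statement follows by swapping sub- and super-solutions in Lemma~\ref{lem.finitespeedPSJunc} and using a forward time shift instead.

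The main obstacle is the tension between the \emph{local} nature of the hypothesis and the fact that the microscopic system has only an \emph{approximate} finite speed of propagation: any information leaking in from outside the window $[-\gamma,\gamma]$ must be controlled quantitatively. Lemma~\ref{lem.finitespeedPSJunc} trades the time-exponential factor $e^{\beta s/\ep}$ against the index-geometric factor $2^{-n}$, with $n$ bounded by the random cascade $J_n$ controlled by Lemma~\ref{lem.Jn}; the strict boundary inequality, converted into the microscopic margin $\eta/\ep$ and further protected by the time shift $h(\ep)$ that absorbs the interior $o(1/\ep)$ slack, is precisely the buffer that makes this trade-off workable on the uniform time horizon $C^{-1}\gamma$.
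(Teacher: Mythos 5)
The overall architecture of your argument — shift the reference solution by $j_a = -[a/\ep]$ in the environment, compare microscopically at time $t_0/\ep$, propagate with the approximate finite speed of propagation and Lemma \ref{lem.Jn}, then take a half-relaxed limit — is the same as the paper's. But there are two genuine gaps in the way you set up the microscopic comparison.

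First, the diagonal convergence $\ep\,U^{\tau_{j_a(\ep)}\omega}_{e,\,[x/\ep]_k-j_a(\ep)}(t/\ep + t_b) \to u_e(x+a,k,t+b)$ along a moving shift $j_a(\ep)=-[a/\ep]\to\pm\infty$ is not a consequence of a $\tau$-invariant full-measure set. A shift-invariant $\Omega_0$ gives $u^{\tau_m\omega,\ep}_e\to u_e$ as $\ep\to0$ for each \emph{fixed} $m$, but the diagonal $\ep\to0$ with $m=j_a(\ep)$ can fail even then (take any error of the form $f(m,\ep)$ with $f(m,\ep)\to0$ for each $m$ but $f(j_a(\ep),\ep)\not\to 0$). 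Your appeal to a Borel--Cantelli argument via Theorem \ref{thm:concentrationTOT} would need a quantitative rate for the \emph{entire} profile $u^{\omega,\ep}_e-u_e$ on the window, not only for $\theta_e$; but the convergence in Lemma \ref{lem.limNTOT} away from $x=0$ goes through the qualitative homogenization of Lemma \ref{lem.homobasic}, for which no such rate is given. The paper sidesteps this completely: it fixes a high-probability set $E_\eta$ on which $\sup_{\ep<\ep_0}\|u^{\cdot,\ep}_e-u_e\|_\infty\le\eta$ and then invokes the ergodic theorem (Lemma \ref{lemma.bigprobaJunc}) to find a \emph{nearby} integer $m_{\ep,\eta}$ with $\tau_{m_{\ep,\eta}}\omega\in E_\eta$ and $|m_{\ep,\eta}+n_\ep|\le C_1(\omega,\eta)+C_2\eta|n_\ep|$, letting $\ep\to0$ and then $\eta\to0$. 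This replaces an unproved diagonal limit by a soft density argument.

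Second, your absorption of the interior deficit by the time shift $h(\ep)=\sqrt{\ep}$ is wrong as stated. A deficit $o(1/\ep)$ can be as large as $\ep^{-3/4}$, while the gain from the shift is $\delta\,h(\ep)/\ep=\delta\,\ep^{-1/2}$, which is \emph{smaller}; so $U^\omega_i(t_0/\ep)\ge\tilde U^{\omega,-h}_i(t_0/\ep)$ does not follow. The time shift must be chosen proportionally to the actual deficit. The paper again handles this differently, and more sharply: it works at the \emph{minimum point} $(x_{\ep,\eta},k_{\ep,\eta})$ of $u^{\omega,\ep}(\cdot,\cdot,t_0)-u^{\tau_{m_{\ep,\eta}}\omega,\ep}_e(\cdot+a,\cdot,t_0+b)$, which produces the identity $U^{\tau_{m_{\ep,\eta}}\omega}_{e,i-m_{\ep,\eta}}(\ep^{-1}(t_0+b))-r_{\ep,\eta}\le U^\omega_i(\ep^{-1}t_0)$ on the whole window with an explicit translation $r_{\ep,\eta}$ satisfying $\ep r_{\ep,\eta}\to-\min\le0$. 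Only the positive part $(r_{\ep,\eta})_+$ needs absorbing, via a microscopic time shift $C(r_{\ep,\eta})_+$ that is $o(1/\ep)$ by construction, hence a vanishing macroscopic shift $C\ep(r_{\ep,\eta})_+\to0$. Your scheme is salvageable if you replace $h(\ep)=\sqrt\ep$ by the deficit-calibrated choice $h(\ep)=\delta^{-1}\ep\cdot\mathrm{deficit}(\ep)$, which goes to zero precisely because $\mathrm{deficit}(\ep)=o(1/\ep)$; but the cleaner route, and the one the paper takes, is the minimum-point normalization, because it also resolves the normalization constant ($-\min$) exactly, rather than relying on the strict boundary margin alone.
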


\begin{proof} We only prove the first statement, the proof for the second one being symmetric. Let $\Omega_0$ be the set of $\omega$ such that $u^{\omega,\ep}_e$ converges to $u_e$ locally uniformly, such that  $(\ep J_{ [\gamma/(3\alpha\ep)]}  (\ep^{-1} \gamma))$ converges to $\gamma-\alpha \gamma/(3\alpha)=2\gamma/3$ as $\ep\to 0$ (see Lemma \ref{lem.Jn})  and for which the conclusions of Lemma \ref{lemma.bigprobaJunc} (below) hold. 

Since $u^{\omega,\ep}_e$ converges locally uniformly to $u_e$, for any $\eta\in (0,1)$ small and $M\geq 1$ large (to be chosen below), there exists $\ep_0>0$ such that the set 
$$
E_\eta:= \left\{\omega\in \Omega, \; \sup_{\ep\in (0,\ep_0)} \|u^{\omega,\ep}_e-u_e\|_{L^\infty([-M, M]\times \{1, \dots, k\}\times [0,M])} \leq \eta\right\}
$$
has a probability larger than $1-\eta$. Let us set $E_\eta(\omega):=\{n\in \Z, \; \tau_n\omega\in E_\eta\}$. Let $n_\ep:= [a/\ep]$. By Lemma \ref{lemma.bigprobaJunc} below, there exists $m_{\ep,\eta}\geq  - n_\ep $ with $m_{\ep,\eta}\in E_\eta(\omega)$ and 
\be\label{estimep}
 |m_{\ep,\eta}+n_\ep|\leq C_1(\omega,\eta)+C_2\eta |n_\ep|.
\ee
We will use below that $\ep n_\ep\to a$ as $\ep\to 0$ and therefore that $\ep m_{\ep,\eta}$ converges to $-a$ as $\ep \to 0$ and then $\eta\to 0$.

By \eqref{cond.lemCompaLimJunc}, the fact that $u^\omega_*$ is the half relaxed lower limit of  $u^{\omega,\ep}$ and by the definition of $m_{\ep,\eta}$, there exists $(x_{\ep,\eta},k_{\ep,\eta}) \in (-\gamma, \gamma)\times \{1, \dots, K\}$, minimum point of $u^{\ep,\omega}(\cdot,\cdot,t_0)- u^{\ep,\tau_{m_{\ep,\eta}}\omega}_e(\cdot+a,\cdot,t_0+b)$ such that
$$
u^{\omega,\ep}(x_{\ep,\eta},k_{\ep,\eta},t_0)- u^{\tau_{m_{\ep,\eta}}\omega}_e(x_{\ep,\eta}+a,k_{\ep,\eta}, t_0+b)  \to \min
$$ 
as $\ep$ and $\eta$ tend to $0$, where 
$$
\min:= \min_{(x,k)\in[-\delta,\delta]\times \{0,\dots,K\}}(u^\omega_*(\cdot,\cdot,t_0)- u_e(\cdot+a,\cdot,t_0+b))\geq 0.
$$
By minimality of  $(x_{\ep,\eta},k_{\ep,\eta})$, we have 
\begin{align*}
& u^{\tau_{m_{\ep,\eta}}\omega,\ep}_e(x+a,k,t_0+b)-u^{\tau_{m_{\ep,\eta}}\omega,\ep}_e(x_{\ep,\eta}+a,k_{\ep,\eta},b)\\
& \qquad \leq  u^{\omega,\ep}(x,k,t_0)
-u^{\omega,\ep}(x_{\ep,\eta},k_{\ep,\eta},t_0) \; {\rm for}\;(x,k)\in [-\gamma ,  \gamma]\times \{1,\dots, K\}.
\end{align*}
As $-\ep m_{\ep,\eta}\leq \ep n_\ep\leq a$ and $u_e^\ep$ is nondecreasing in the first variable, we obtain also
\begin{align*}
& u^{\tau_{m_{\ep,\eta}}\omega,\ep}_e(x-\ep m_{\ep,\eta},k,t_0+b)-u^{\tau_{m_{\ep,\eta}}\omega,\ep}_e(x_{\ep,\eta}+a,k_{\ep,\eta},b)\\
& \qquad \leq  u^{\omega,\ep}(x,k,t_0)
-u^{\omega,\ep}(x_{\ep,\eta},k_{\ep,\eta},t_0) \; {\rm for}\;(x,k)\in [-\gamma ,  \gamma]\times \{1,\dots, K\}.
\end{align*}
For $i\in [-\ep^{-1}\gamma , \ep^{-1} \gamma]\cap \Z$, we have, if we set $k:=T^\omega_i=T^{\tau_{m_{\ep,\eta}}\omega}_{i-m_{\ep,\eta}}$ and $x= \ep i$, that $i=[x/\ep]^\omega_k$ and $i-m_{\ep,\eta}= [x/\ep-m_{\ep,\eta}]^{\tau_{m_k}\omega}_k$ with $x\in [-\gamma, \gamma]$. Therefore we can rewrite the inequality above in terms of $U_e$ and $U$ to get 
$$
U^{\tau_{m_{\ep,\eta}}\omega}_{e,i-m_{\ep,\eta}}(\ep^{-1}(t_0+b))-r_{\ep,\eta} \leq  U^{\omega}_{i}(\ep^{-1}t_0)
\;  {\rm for}\;i\in [-\ep^{-1}\gamma , \ep^{-1} \gamma]\cap \Z.
$$
where $r_{\ep,\eta}:=\ep^{-1}(u^{\tau_{m_{\ep,\eta}}\omega,\ep}_e(x_{\ep,\eta}+a,k_{\ep,\eta},b)-u^{\omega,\ep}(x_{\ep,\eta},k_{\ep,\eta},t_0))$. Let us note for later use that, as $\ep\to 0$,  $\ep r_{\ep,\eta}$ converges to $-\min\leq 0$. 
By Lemma \ref{lem.delta} and using the fact that $t_0+b>0$ we obtain, from $\ep$ small enough, 
$$
U^{\tau_{m_{\ep,\eta}}\omega}_{e,i-m_{\ep,\eta}}(\ep^{-1}(t_0+b)-C(r_{\ep,\eta})_+) \leq  U^{\omega}_{i}(\ep^{-1}t_0)
\;  {\rm for}\;i\in [-\ep^{-1}\gamma , \ep^{-1} \gamma]\cap \Z.
$$
As $(U^{\tau_{m_{\ep,\eta}}\omega}_{e,i-m_{\ep,\eta}}(\ep^{-1}(t_0+b)-C(r_{\ep,\eta})_++\cdot))$ and $U^{\omega}_{i}(\ep^{-1}t_0+\cdot))$ solve equation \eqref{eq.SystJuncTOT} and can be compared at time $0$ for $i\in [-\ep^{-1}\gamma , \ep^{-1} \gamma-1]\cap \Z$, we obtain by  approximate finite speed of propagation (Lemma \ref{lem.finitespeedPSJunc}) that for any $n\in \N$, 
$$
U^{\tau_{m_{\ep,\eta}}\omega}_{e,i-m_{\ep,\eta}}(\ep^{-1}(t_0+b)-C(r_{\ep,\eta})_++s) \leq  U^{\omega}_{i}(\ep^{-1}t_0+s)+ C2^{-n}e^{\beta s}
\;  {\rm for}\;i\in [-\ep^{-1}\gamma , J_n(\ep^{-1} \gamma)]\cap \Z, \; s\geq 0.
$$
Coming back to the scaled problem and choosing $n= [\gamma /(3\alpha\ep) ]$ (where $\alpha$ is defined in Lemma \ref{lem.Jn}) and for $s\leq \gamma\ln(2) /(3\alpha\beta\ep)$, so that $-\ln(2)n+ \beta s\leq 0$, this implies that 
\begin{align*}
&u^{\tau_{m_{\ep,\eta}}\omega,\ep}_{e}(x-\ep m_{\ep,\eta},k,b+t_0-C\ep(r_{\ep,\eta})_++t) \leq  u^{\omega,\ep}(x ,k, t_0+ t)+ C\ep \\
 & \qquad {\rm for}\;(x,k,t)\in [-\gamma , \ep J_{ [\gamma/(3\alpha\ep)]} (\ep^{-1} \gamma)]\times \{1,\dots, K\}\times  [0, \gamma /(3\alpha\beta)]. 
\end{align*}
By  the choice of $\omega$, $(\ep J_{ [\gamma/(3\alpha\ep)]}  (\ep^{-1} \gamma))$ converges to $2\gamma/3$ as $\ep\to 0$. So, for $\ep$ small enough, we find 
\begin{align*}
&u^{\tau_{m_{\ep,\eta}}\omega,\ep}_{e}(x-\ep m_{\ep,\eta},k,b+t_0-C\ep(r_{\ep,\eta})_++t) \leq  u^{\omega,\ep}(x ,k, t_0+ t)+ C\ep \\
& \qquad \qquad \qquad \;  {\rm for}\;(x,k,t)\in [-\gamma , \gamma/2]\times \{1,\dots, K\}\times  [0,C^{-1}\gamma].
\end{align*}
So we obtain, from the definition of $E_\eta$ and for $M$ large (depending on  $a$ and $\gamma$ only) and for $\ep$ and $\eta$ small: 
$$
u_{e}(x-\ep m_{\ep,\eta},k,b+t_0-C\ep(r_{\ep,\eta})_++t) \leq  u^{\omega,\ep}(x ,k,t_0+ t)+\eta +C\ep 
\;  {\rm for}\;(x,k,t)\in [-\gamma , \gamma/2]\times \{1,\dots, K\}\times  [0,C^{-1}\gamma].
$$
Recall that $\ep m_{\ep,\eta}$ converges to $-a$ while $C\ep(r_{\ep,\eta})_+$ tends to $0$ as $\ep$ and $\eta$ tend to $0$: we can let $\ep\to 0$ (taking the half relaxed limit in the right-hand side) and then $\eta\to 0$ to find: 
$$
u_{e}(x+a,k,t_0+b+t) \leq  u^\omega_*(x , k,t_0+t)
\;  {\rm for}\;(x,k,t)\in [-\gamma , \gamma/2]\times \{1,\dots, K\}\times  [0,C^{-1}\gamma].
$$
This proves our claim. 
\end{proof}

\begin{Lemma}\label{lemma.bigprobaJunc} Let $E\in \mathcal F$ be such that $\P[E]>1-\ep$ where $\ep\in (0,1/16)$. Let $E(\omega)=\{n\in \Z, \; \tau_n\omega\in E\}$. There is a set $\Omega_0$  of full probability such that, for any $\omega\in \Omega_0$, there exists $C_1(\omega,\ep)$ and $C_2$ universal such that, for any $n\in \Z$, one can find $m^\pm\in E(\omega)$ with $|n-m^\pm|\leq C_1(\omega,\ep)+ C_2\ep|n|$, $m^+\geq n$ and $m^-\leq n$.  
\end{Lemma}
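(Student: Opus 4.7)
The plan is to exploit the fact that $E(\omega)$ has asymptotic density $\P[E] > 1-\ep$ in $\Z$, so its gaps must be small relative to $|n|$. The one-sided shift $\tau_1$ is measure-preserving and ergodic (it is even mixing) on the product space $(\Omega,\mathcal F,\P)$, so Birkhoff's ergodic theorem applied to $\mathbf 1_E$ gives, on a set $\Omega_0$ of full probability,
$$
\lim_{N\to +\infty} \frac{1}{N}\,\sharp\bigl(E(\omega)\cap\{0,\dots,N-1\}\bigr)=\P[E], \quad
\lim_{N\to +\infty} \frac{1}{N}\,\sharp\bigl(E(\omega)\cap\{-N,\dots,-1\}\bigr)=\P[E].
$$
On $\Omega_0$ the set $E(\omega)$ is in particular unbounded above and below. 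I would fix $\omega\in\Omega_0$ and choose $N_0=N_0(\omega,\ep)\geq 1$ so that for every $N\geq N_0$
$$
\sharp\bigl(E(\omega)^c\cap\{0,\dots,N-1\}\bigr)\leq 2\ep N, \qquad
\sharp\bigl(E(\omega)^c\cap\{-N,\dots,-1\}\bigr)\leq 2\ep N.
$$

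To construct $m^+$, I split according to $n$. If $0\leq n<N_0$, then $m^+:=\min\{m\in E(\omega),\; m\geq n\}$ is finite, and the quantity
$$
C_1^+(\omega,\ep):=1+\max_{-N_0\leq n<N_0}\min\{m\in E(\omega),\; m\geq n\}-n
$$
bounds $m^+-n$. If $n\geq N_0$, set $L:=\lfloor \tfrac{2\ep}{1-2\ep}n\rfloor +1$ and argue by contradiction: if $E(\omega)\cap[n,n+L]=\varnothing$, then $\{n,\dots,n+L\}\subseteq E(\omega)^c\cap\{0,\dots,n+L\}$, so
$$
L+1\leq \sharp\bigl(E(\omega)^c\cap\{0,\dots,n+L\}\bigr)\leq 2\ep(n+L+1),
$$
which rearranges to $(1-2\ep)(L+1)\leq 2\ep n$, contradicting the choice of $L$. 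Hence $E(\omega)\cap[n,n+L]\neq\varnothing$ and its minimum gives $m^+-n\leq L\leq \tfrac{2\ep}{1-2\ep}n+1$. Since $\ep<1/16$, $\tfrac{2\ep}{1-2\ep}<3\ep$, so taking $C_2=3$ (universal) and $C_2\ep n+1\leq C_2\ep n+C_1^+$, the bound $|m^+-n|\leq C_1^+(\omega,\ep)+C_2\ep|n|$ holds in every case. The construction of $m^-$ is symmetric using the negative-side density estimate, yielding an analogous constant $C_1^-(\omega,\ep)$; taking $C_1(\omega,\ep):=C_1^+(\omega,\ep)+C_1^-(\omega,\ep)$ concludes.

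This proof is essentially a quantitative consequence of the ergodic theorem, so there is no serious obstacle; the only mildly delicate point is handling the small-$|n|$ regime (where the asymptotic density estimate is not yet effective) by absorbing its contribution into the $\omega$-dependent constant $C_1(\omega,\ep)$, while reserving a universal $C_2$ for the density-driven linear part.
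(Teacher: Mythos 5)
Your proof is correct and takes essentially the same route as the paper's: both apply Birkhoff's ergodic theorem to $\mathbf 1_E$ to get that $E(\omega)$ has density at least $1-2\ep$ on large discrete windows, then bound the gap $|m^\pm-n|$ by a pigeonhole/counting contradiction, absorbing the small-$|n|$ range into the $\omega$-dependent constant $C_1(\omega,\ep)$. The only cosmetic difference is that you use one-sided windows $\{0,\dots,N-1\}$ and $\{-N,\dots,-1\}$ where the paper uses symmetric windows $[-r,r]\cap\Z$, giving you a slightly smaller universal $C_2=3$ against the paper's $C_2=8$; this is immaterial. One small omission to flag: your explicit construction of $m^+$ covers $n\geq N_0$ and $-N_0\leq n<N_0$ (the latter via $C_1^+$), but not $n<-N_0$; that case needs one more line using the negative-side density (if $E(\omega)\cap\{n,\dots,n+\lfloor 2\ep|n|\rfloor+1\}=\varnothing$ then either the interval lies in $\{-|n|,\dots,-1\}$, contradicting the estimate, or it reaches $0$, which is even easier), and symmetrically for $m^-$ with $n>N_0$ — the paper's proof is equally terse on this symmetric-case point.
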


\begin{proof} By the ergodic theorem, we have 
$$
\lim_{r\to+\infty} (2r+1)^{-1}\sharp\left(E(\omega)\cap ([-r,r]\cap \Z)\right)=\P[E]\geq 1-\ep\qquad  {\rm a.s.}.
$$
(where $\sharp (A)$ is the cardinal of $A$). 
 Let $\Omega_0$ be the set of full probability for which this holds. Fix $\omega\in \Omega_0$ and let $R=R(\omega,\ep)$ be such that 
\be\label{lzkaehrfdJunc}
(2r+1)^{-1}\sharp \left(E(\omega)\cap ([-r,r]\cap\Z)\right)\geq 1-2\ep\qquad \forall r\geq R(\omega,\ep).
\ee
Fix $n\in \Z$. For simplicity we assume that $n\geq 0$ and we look for $m^+$. The other case can be treated in a symmetric way. Let us choose 
$r= 11+R+[n(1+8\ep)]$ and assume that $[n,r]\cap E(\omega)=\varnothing$. Then by \eqref{lzkaehrfdJunc} we have 
$r-n \leq 2\ep (2r+1)$, which implies that (as $\ep\leq 1/16$)
$$
10+R+8\ep n\leq r -n \leq 2\ep ( 22+2R+ 2n(1+8\ep))\leq 44/16 +  R/4+ 6\ep n. 
$$
This is impossible and therefore there exists $m^+\in E(\omega)\cap [n, r]$. Then $m^+\in E(\omega)$, $m^+\geq n$ and $m^+-n\leq r-n\leq n+C_1(\omega, \ep)+C_2\ep n$ where $C_1(\omega, \ep)= 12+R(\omega,\ep)$ while $C_2=8$. 
\end{proof}

\subsection{Proof of the homogenization}

From now on we fix $\Omega_0$ such that $\P[\Omega_0]=1$ and such that, for any $\omega\in \Omega_0$, for  $e=(e^k)$  such that $H^k(-1/e^k)=\min H^k$, $\nu_e^\ep$ converges locally uniformly to the map $\nu_e$ given in Lemma \ref{lem.limNTOT} or Lemma \ref{lem.caske=0}. Moreover, we assume that, if $\omega\in \Omega_0$ the conclusions of Lemma \ref{lem.CompaLimJunc} and of Lemma \ref{lem.homobasic} holds. 

Let $(U^\ep_{i,0})$ be a deterministic family of initial conditions satisfying the compatibility condition \eqref{compatibilitycond} and assume, up to relabel the indices, that $U^\ep_{i,0}\le0$  if and only if $i\le0$.  Let $(U^\ep_i)$ be the solution to 
$$
\frac{d}{dt} U^\ep_i(t)=  V_{Z_i}(U^\ep_{i+1}(t)-U^\ep_i(t), U^\ep_{\ell_i}(t)-U^\ep_i(t), U^\ep_i(t))\qquad i\in \Z
$$
with initial condition $U^\ep_i(0)=U^\ep_{i,0}$, $i\in \Z$. 

We set,  for $k\in \{1, \dots, K\}$ and $(x,t)\in\R \times [0,+\infty)$,
\be\label{def.NJuncGen}
N^{\omega}(x,k,t)=  \sum_{i\in \Z, \ i\leq 0, \  T^\omega_i=k}  \delta_{U^\omega_i(t)}( (x,+\infty))- \sum_{i\in \Z,\ i>0, \ T^\omega_i=k} \delta_{U^\omega_i(t)}((-\infty,x]) 
\ee
and, for $(x,t)\in (-\infty, 0]\times [0,+\infty)$,  
$$
N^{\omega}(x,0,t)= \sum_{k=1}^K N^{\omega}(x,k,t).
$$
We set $n^{\omega,\ep}(x,k,t)= \ep N^{\omega}(x/\ep,k,t/\ep)$ and 
$$
\nu^{\omega,\ep}(x,k,t)=\left\{\begin{array}{ll}
n^{\omega,\ep}(x,0,t) & {\rm if}\;  k=0\;{\rm and}\; x\leq 0,\\
\pi_k^{-1}n^{\omega,\ep}(x,k,t)& {\rm if}\; k\in \{1, \dots, K\}\; {\rm and}\; x\in \R.
\end{array}\right.
$$

 Let us first check that $\nu^\ep$ is well defined. 
\begin{Lemma}\label{lem:regularity-nu-ep} Let $(U^\ep_{i,0})$ and $U^\ep$ be as above. Then, for any $t\geq 0$, 
\be\label{jehrsdjhn}
\lim_{i\to \pm \infty} U^\ep_i(t)= \pm \infty.
\ee
Hence $\nu^\ep$ is well-defined and satisfies, for any $k\in \{0, \dots, K\}$,  
$$
\left| \nu^\ep(x,k,t)-\nu^\ep(y,k,s)\right| \leq C(|x-y|+|t-s|+\ep),
$$
for a constant $C$ depending on $\Delta_{\min}$, $\pi^k$ and $\|V\|_\infty$ only. 
\end{Lemma}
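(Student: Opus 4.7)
The plan is to handle the two assertions separately, both relying in an essential way on the $\Delta_{\min}$--separation among vehicles sharing the same type provided by the second part of Lemma~\ref{lem.CompDeBase}, combined with elementary monotonicity properties of the dynamics.

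For Part~1 (the claim $U^\ep_i(t)\to\pm\infty$ as $i\to\pm\infty$), I would first establish the corresponding statement at $t=0$ directly from the compatibility condition~\eqref{compatibilitycond}. Fix $k\in\{1,\dots,K\}$ and enumerate the indices $\{i\in\Z\,:\,T_i=k\}$ in increasing order as $(i^k_j)_{j\in\Z}$, so that $\ell_{i^k_j}=i^k_{j+1}$. The second part of \eqref{compatibilitycond} then gives $U^\ep_{i^k_{j+1},0}\geq U^\ep_{i^k_j,0}+\Delta_{\min}$, whence $U^\ep_{i^k_j,0}\to\pm\infty$ as $j\to\pm\infty$ along each road. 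Since for any $M\in\R$ the number of indices $i$ with $T_i=k$ and $U^\ep_{i,0}\in[-|M|,|M|]$ is bounded by $2|M|/\Delta_{\min}+1$, summing over $k\in\{1,\dots,K\}$ yields $U^\ep_{i,0}\to\pm\infty$ as $i\to\pm\infty$. Propagation to $t>0$ is then automatic: $t\mapsto U^\ep_i(t)$ is nondecreasing with Lipschitz constant $\|V\|_\infty$, so $U^\ep_{i,0}\leq U^\ep_i(t)\leq U^\ep_{i,0}+\|V\|_\infty t$, and both sides tend to $\pm\infty$ as $i\to\pm\infty$. In particular $N^\omega(x,k,t)$ is a finite integer for every $(x,k,t)$, so $\nu^\ep$ is well defined.

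For Part~2 (regularity in $x$), fix $k\in\{1,\dots,K\}$ and $x<y$. A direct computation, distinguishing $i\leq 0$ and $i>0$ in \eqref{def.NJuncGen}, gives
\begin{equation*}
N^\omega(x,k,t)-N^\omega(y,k,t)=\sharp\{i\in\Z\,:\,T_i=k,\ x<U^\ep_i(t)\leq y\}.
\end{equation*}
Iterating the second part of Lemma~\ref{lem.CompDeBase} along the chain $i<\ell_i<\ell_{\ell_i}<\dots$, any two indices $i<j$ with $T_i=T_j=k$ satisfy $U^\ep_j(t)-U^\ep_i(t)\geq\Delta_{\min}$, so the cardinality above is at most $(y-x)/\Delta_{\min}+1$. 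Scaling by $\ep(\pi^k)^{-1}$ then gives $|\nu^\ep(x,k,t)-\nu^\ep(y,k,t)|\leq(\pi^k\Delta_{\min})^{-1}|x-y|+\ep(\pi^k)^{-1}$. For $k=0$ and $x,y\leq 0$ the same argument applies road by road: the count $\sharp\{i\leq 0\,:\,x<U^\ep_i(t)\leq y\}$ splits as $\sum_{k=1}^K\sharp\{i\leq 0\,:\,T_i=k,\ x<U^\ep_i(t)\leq y\}\leq K\bigl((y-x)/\Delta_{\min}+1\bigr)$, and scaling yields the desired bound with $C$ depending only on $K$, $\Delta_{\min}$ and $(\pi^k)$.

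For regularity in $t$, fix $k$ and $s<t$. An analogous splitting gives
\begin{equation*}
N^\omega(x,k,t)-N^\omega(x,k,s)=\sharp\{i\in\Z\,:\,T_i=k,\ U^\ep_i(s)\leq x<U^\ep_i(t)\}.
\end{equation*}
Each such index satisfies $U^\ep_i(s)\in[x-\|V\|_\infty(t-s),x]$ because the $U^\ep_i$ are $\|V\|_\infty$-Lipschitz, and the $\Delta_{\min}$-separation on road $k$ bounds the number of such $i$ by $\|V\|_\infty(t-s)/\Delta_{\min}+1$. Scaling and combining with the spatial estimate yields the claimed $|\nu^\ep(x,k,t)-\nu^\ep(y,k,s)|\leq C(|x-y|+|t-s|+\ep)$, and the $k=0$ case is handled exactly as above.

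The argument is essentially bookkeeping rather than a deep estimate; the only conceptual subtlety is that the \emph{first} part of Lemma~\ref{lem.CompDeBase} only provides index-ordering when one of the positions is $\leq-R_2$, so the transition zone $[-R_0,0]$ cannot be controlled that way. The cure is to rely throughout on the \emph{second} part of the lemma, which gives $\Delta_{\min}$-separation along each outgoing road uniformly in space and time, and then to sum over the $K$ roads, which is the only place where the constant $C$ depends on $K$ and on $(\pi^k)$.
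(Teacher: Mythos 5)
Your proposal is correct and follows essentially the same route as the paper: divergence of $U^\ep_i(0)$ from the compatibility condition propagated by monotonicity and boundedness of $V$, and then the $\Delta_{\min}$-separation among same-type vehicles from Lemma~\ref{lem.CompDeBase} to count the indices contributing to the increments of $N^\omega$ in $x$ and in $t$. The only cosmetic difference is at $k=0$: you count road by road and pick up a factor $K$, whereas the paper observes $\nu^\ep(\cdot,0,\cdot)=\sum_{k=1}^K\pi^k\nu^\ep(\cdot,k,\cdot)$ and deduces the bound directly from the $k\ge1$ cases; both yield the same conclusion, and your remark that one must use the second (unconditional) part of Lemma~\ref{lem.CompDeBase}, not the first, is exactly the right caution.
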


\begin{proof} The compatibility condition \eqref{compatibilitycond} implies that \eqref{jehrsdjhn} holds for $t=0$. Then it holds for any $t$ since $V$ is bounded. Fix $k\in \{1, \dots, K\}$ and let $x,y\in \R$ with $x<y$. We have  
$$
\left| \delta_{U^\omega_i(t/\ep)}( (y/\ep,+\infty))- \delta_{U^\omega_i(t/\ep)}( (x/\ep,+\infty))\right| = \left\{ \begin{array}{ll}
1 & {\rm if}\; x/\ep < U^\omega_i(t/\ep) \leq y/\ep, \\
0 & {\rm otherwise}. 
\end{array}\right. 
$$
By Lemma \ref{lem.CompDeBase} there are at most $[(y-x)/(\ep \Delta_{\min})]+1$ vehicles of the same type in $(x/\ep, y/\ep]$.  Arguing in the same way for the difference 
$$
\left|\delta_{U^\omega_i(t)}((-\infty,y]) -\delta_{U^\omega_i(t)}((-\infty,x]) \right|, 
$$
we infer that 
\begin{align*}
\left| \nu^\ep(x,k,t)-\nu^\ep(y,k,t)\right| \leq 2(\pi^{k})^{-1}( |x-y|/\Delta_{\min}+\ep).
\end{align*}
Fix now $0\leq s<t$. We have 
$$
\left| \delta_{U^\omega_i(t/\ep)}( (x/\ep,+\infty))- \delta_{U^\omega_i(s/\ep)}( (x/\ep,+\infty))\right| = \left\{ \begin{array}{ll}
1 & {\rm if}\; U^\omega_i(s/\ep) \leq x/\ep < U^\omega_i(t/\ep), \\
0 & {\rm otherwise}. 
\end{array}\right. 
$$
Let $i_0\in \Z$ be the smallest index such that $U^\omega_{i_0}(s/\ep) \leq x/\ep < U^\omega_{i_0}(t/\ep)$ and $T_{i_0}=k$ and $i_1$ be the largest one. Then 
$$
U^\omega_{i_1}(s/\ep) \leq x/\ep < U^\omega_{i_0}(t/\ep) \leq U^\omega_{i_0}(s/\ep) + \|V\|_\infty (t-s)/\ep.
$$
Still by Lemma \ref{lem.CompDeBase} we must have $i_1-i_0\leq  [\|V\|_\infty (t-s)/(\ep \Delta_{\min})]+1$, so that 
$$
\left| \nu^\ep(x,k,t)-\nu^\ep(x,k,s)\right| \leq 2(\pi^k)^{-1}(\|V\|_\infty (t-s)/( \Delta_{\min})+ \ep). 
$$
The Lipschitz continuity of $\nu^\ep(\cdot,0,\cdot)= \sum_{k=1}^K \pi^k \nu^\ep(\cdot,k,\cdot) $ is then immediate. 
\end{proof}

We assume that $\nu^{\omega,\ep}(\cdot, \cdot, 0)\to \nu_0$ locally uniformly, where $\nu_0$ is deterministic. Note that $\nu_0$ is  Lipschitz continuous  and satisfies $\nu_0(x,0)=\nu_0(x,1)=\dots=\nu_0(x,k)$ for $x\le 0$.  We fix $\omega\in \Omega_0$ and let $\nu^\omega$ be any uniform limit of $\nu^{\omega,\ep}$. We already know  (cf. Subsection \ref{subsec:homog} in Appendix) that $\nu^\omega$ satisfies
$$
\left\{\begin{array}{l}
\ds \partial_t \nu + H(\partial_x \nu)=0\qquad {\rm in}\; \stackrel{o}{{\mathcal R}}\times (0,T)\\
\ds \nu(x,k,0)= \nu_0(x,k) \qquad {\rm in}\;\mathcal R.
\end{array}\right.
$$
Our aim is to show that $\nu^\omega$ also satisfies 
$$
\partial_t \nu +\max\{ \bar A,  H^{0,+}(\partial_0 \nu), H^{1,-}(\partial_1 \nu), \dots, H^{K,-}(\partial_K \nu))\}=0\; {\rm at}\; x=0.
$$

We first show that $\nu^\omega$  is continuous in $0$ (and does not depend on $k$ for $x\le0$).
\begin{Lemma}
 Let $\nu^\omega$ be any uniform limit (up to subsequences) of $\nu^{\omega,\ep}$. Then, for all $t\ge 0$ and $x\le 0$
 $$\nu^\omega(x,0,t)=\nu^\omega(x,1,t)=\dots=\nu^\omega(x,K,t).$$
\end{Lemma}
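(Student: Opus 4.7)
The plan is to reduce the claim to a law of large numbers on the i.i.d.\ types $(T_i)$, exploiting the no-overtaking property of vehicles before the junction. By the very definitions, $N^\omega(x,0,t) = \sum_{k=1}^K N^\omega(x,k,t)$ for $x\leq 0$, so $\nu^{\omega,\ep}(x,0,t) = \sum_{k=1}^K \pi^k \nu^{\omega,\ep}(x,k,t)$; since $\sum_k \pi^k = 1$, it suffices to prove that $\nu^\omega(x,k,t)$ is independent of $k \in \{1,\dots,K\}$ for $x<0$, the case $x=0$ then following from the Lipschitz continuity in $x$ of $\nu^\omega(\cdot,k,t)$ on each branch (Lemma~\ref{lem:regularity-nu-ep}).

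Fix $x<0$ and $t\geq 0$. Since $V\geq 0$ and $U_i^\ep(0)>0$ for $i>0$ (by the relabelling convention and \eqref{compatibilitycond}), we have $U_i^\ep(t/\ep)>0\geq x/\ep$ for every $i>0$, so the second sum in \eqref{def.NJuncGen} vanishes and
\[
N^\omega(x/\ep,k,t/\ep) = \sharp\bigl\{i\leq 0 \,:\, T_i=k,\ U_i^\ep(t/\ep) > x/\ep\bigr\}.
\]
For $\ep$ small enough that $x/\ep \leq -R_2$, any $i<j\leq 0$ with $U_i^\ep(t/\ep)>x/\ep$ satisfies $U_j^\ep(t/\ep)>x/\ep$ as well: indeed, either $U_j^\ep(t/\ep)>-R_2\geq x/\ep$, or $U_j^\ep(t/\ep)\leq -R_2$ and Lemma~\ref{lem.CompDeBase} yields $U_j^\ep(t/\ep)\geq U_i^\ep(t/\ep)+\Delta_{\min}>x/\ep$. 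Hence the set $S^\ep:=\{i\leq 0 : U_i^\ep(t/\ep)>x/\ep\}$ is either empty or of the form $\{i_0^\ep,i_0^\ep+1,\dots,0\}$ for some $i_0^\ep\leq 0$.

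Borel--Cantelli together with Hoeffding's inequality applied to the i.i.d.\ indicators $\1_{\{T_i=k\}}$ provides a full-probability set $\Omega_0$ on which, for every $k\in\{1,\dots,K\}$,
\[
\frac{\sharp\{i \,:\, -n\leq i\leq 0,\ T_i=k\}}{n+1} \longrightarrow \pi^k \qquad\text{as } n\to+\infty.
\]
Fix $\omega\in \Omega_0$. Since $\nu^{\omega,\ep}(x,0,t)=\ep\,\sharp S^\ep$ converges to $\nu^\omega(x,0,t)$, two cases arise. If $\nu^\omega(x,0,t)=0$, then $\ep\,\sharp S^\ep\to 0$, so $\nu^{\omega,\ep}(x,k,t)\leq (\pi^k)^{-1}\ep\,\sharp S^\ep\to 0$ for every $k$. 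If $\nu^\omega(x,0,t)>0$, then $\sharp S^\ep = 1+|i_0^\ep|\to+\infty$, and the LLN above, applied along the random sequence $|i_0^\ep(\omega)|\to+\infty$, gives $\sharp\{i\in S^\ep : T_i=k\}/\sharp S^\ep \to \pi^k$, whence
\[
\nu^{\omega,\ep}(x,k,t)=(\pi^k)^{-1}\ep\,\sharp\{i\in S^\ep : T_i=k\} \longrightarrow (\pi^k)^{-1}\pi^k\,\nu^\omega(x,0,t)=\nu^\omega(x,0,t),
\]
independent of $k$. In either case the limits $\nu^\omega(x,k,t)$ coincide for $x<0$; continuity at $x=0$ completes the argument. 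The only mildly delicate point is the invocation of the LLN along the random index sequence $i_0^\ep(\omega)$, which is routine once the uniform bound $\ep\,\sharp S^\ep = O(1)$ is in place via Lemma~\ref{lem:regularity-nu-ep}.
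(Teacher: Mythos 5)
Your proof is correct and follows the same basic strategy as the paper's (order preservation from Lemma~\ref{lem.CompDeBase} plus the strong law of large numbers on the i.i.d.\ types), but with a mild and arguably cleaner variation. The paper fixes $x\le 0$, considers the two indices $i_0^\ep$ and $i_1^\ep$ marking the position $x/\ep$ at times $0$ and $t/\ep$, applies the SLLN to the \emph{increment} interval $\{-i_1^\ep,\dots,-i_0^\ep\}$, and then uses the hypothesis that $\nu_0(x,0)=\nu_0(x,k)$ to pass from the increment to the value at time $t$. You instead observe that for $x<0$ the second sum in the definition of $N^\omega$ vanishes, so $\nu^{\omega,\ep}(x,k,t)$ is the (normalized) count of type-$k$ vehicles in the single index interval $S^\ep=\{i_0^\ep,\dots,0\}$ of vehicles past $x/\ep$ at time $t/\ep$, and the SLLN applies to this interval directly; this makes the hypothesis on $\nu_0$ unnecessary and merges the two counters into one, at the small price of treating $x=0$ by continuity. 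The justification for applying the SLLN along the random sequence $|i_0^\ep|\to+\infty$ is exactly the point where the paper also appeals to it implicitly, and you handle it correctly.

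One remark on a phrasing: you cite Hoeffding plus Borel--Cantelli, but this is overkill -- the plain SLLN already gives the a.s. limit $\sharp\{-n\le i\le 0:\,T_i=k\}/(n+1)\to\pi^k$ on a full-probability set, which is all that is used. This is a cosmetic point and does not affect correctness.
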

\begin{proof}
Let $x\le 0$ and $i_0^\ep, \; i_1^\ep\in \N $ be the indices such that $U_{-i_0^\ep}(0)\le x/ \ep<U_{-i_0^\ep+1}(0)$ and $U_{-i_1^\ep}\left(t/\ep\right)\le  x/\ep<U_{-i_1^\ep+1}\left(t/\ep\right)$. 
We assume in a first step that $\nu^\omega(x,0,t)-\nu^\omega (x,0,0)\ge 2c>0$. Then, for $\ep$ small enough, $\nu^{\omega,\ep}(x,0,t)-\nu^{\omega,\ep}(x,0,0)\ge c$. As, by assumption, $U^\ep_{i,0}\leq 0$ if and only if $i\leq 0$, this implies that
$$\sum_{i\le0}\delta_{U_i^\omega(t/\ep)}((x/\ep,+\infty))-\sum_{i\le0}\delta_{U_i^\omega(0)}((x/\ep,+\infty))\ge c/\ep.$$ 
Since by Lemma \ref{lem.CompDeBase}, the cars remains ordered before $0$, we deduce that $i_1^\ep-i_0^\ep\ge c/\ep$. Moreover, since $U_{-i_1^\ep+1}(0)-x/\ep\le U_{-i_1^\ep+1}(0)-U_{-i_0^\ep}(0)\le \Delta_{\min}(i_0^\ep-i_1^\ep +1)$, we get that  $i_1^\ep- i_0^\ep\le \frac {||V||_\infty}{\Delta_{\min}}\frac t \ep +1$. We then have that $\ep(i_1^\ep- i_0^\ep)$ is bounded and converges, up to a subsequence, to a constant $z$.
Remarking that 
$$\nu^{\omega,\ep}(x,0,t)-\nu^{\omega,\ep}(x,0,0)=\ep (i_1^\ep-i_0^\ep)\to  \nu^{\omega}(x,0,t)-\nu^\omega(x,0,0),$$  
 we deduce that $z=\nu^{\omega}(x,0,t)-\nu^{\omega}(x,0,0)$.  Hence, for every $k\in\{1,\dots,K\}$,
by the law of large number,
$$\nu^{\omega,\ep}(x,k,t)-\nu^{\omega,\ep}(x,k,0)=(\pi_k)^{-1} \ep \sharp\{ i\in\{-\ep i_1^\ep/\ep,\dots, -\ep i_0^\ep/\ep\}\; {\rm s.t.}\; T_i^\omega=k \}\to (\pi_k)^{-1} \pi_k z=z.$$  
Since $\nu^\omega(x,k,0)=\nu^\omega(x,0,0)$, this implies that $\nu^\omega(x,k,t)=\nu^\omega(x,0,t).$

Assume now that $\nu^\omega(x,0,t)-\nu^\omega(x,0,0)=0$. Then 
$$0\le n^{\omega,\ep}(x,k,t)-n^{\omega,\ep}(x,k,0)\le \nu^{\omega,\ep}(x,0,t)-\nu^{\omega,\ep}(x,0,0).$$
Sending $\ep\to 0$, we deduce that $\nu^\omega(x,k,t)-\nu^\omega(x,k,0)=(\pi_k)^{-1}(n^\omega(x,k,t)-n^\omega(x,k,0))=0$ and so $\nu^\omega(x,k,t)=\nu^\omega(x,k,0)=\nu^\omega(x,0,0)=\nu^\omega(x,0,t).$
\end{proof}

It will be convenient to consider also the limit of $u^{\omega, \ep}(y,k,t):= \ep U^{\omega}_{[y/\ep]_k}(t/\ep)$ along the same subsequence as for $\nu^{\omega,\ep}$ for $k\in \{1, \dots, K\}$. Let $u^{\omega,*}$ and $u^\omega_*$ be the half-relaxed limits of  $u^{\omega, \ep}$ (along that same subsequence). 
 As $x\to \nu^\omega(x,k,t)$ is nonincreasing, it has an inverse
\begin{align*}
\tilde u^\omega(y,k,t)& :=\inf\{ x\in \R, \; \nu^\omega(x,k,t)< -y\} \qquad \text{(with $\tilde u^\omega(y,k,t)=+\infty$ if there is no such a $x$)}\\
& =  \sup\{ x\in \R, \; \nu^\omega(x,k,t)\geq -y\} 
\end{align*}
Note that $\tilde u^\omega$ is usc, while its lower semicontinuous envelope is given by 
$$
\tilde u^\omega_*(y,k,t):=\inf\{ x\in \R, \; \nu^\omega(x,k,t)\leq -y\}=\sup\{ x\in \R, \; \nu^\omega(x,k,t)> -y\} .
$$

\begin{Lemma}\label{lem.u*tildeuJunc} We have 
$$
u^{\omega,*}\leq \tilde u^\omega\; \text{and}\; u^{\omega}_*\geq \tilde u^\omega_*.
$$
\end{Lemma}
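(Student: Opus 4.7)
The plan hinges on a discrete duality between $\nu^{\omega,\ep}$ and $u^{\omega,\ep}$: the composition $x\mapsto \nu^{\omega,\ep}(u^{\omega,\ep}(\cdot,k,t),k,t)$ converges to the identity $y\mapsto -y$. To establish this, write $i_\ep := [y/\ep]^\omega_k$, so $T^\omega_{i_\ep}=k$ and $u^{\omega,\ep}(y,k,t) = \ep U^\omega_{i_\ep}(t/\ep)$. Since vehicles sharing the same type remain strictly ordered by Lemma \ref{lem.CompDeBase}, the double sum in \eqref{def.NJuncGen} collapses at $x=U^\omega_{i_\ep}(t/\ep)$ to a pure count of type-$k$ indices lying between $i_\ep$ and $0$: it equals $\sharp\{i:i_\ep<i\leq 0,\ T^\omega_i=k\}$ when $i_\ep\leq 0$ and $-\sharp\{i:0<i\leq i_\ep,\ T^\omega_i=k\}$ when $i_\ep>0$. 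By the strong law of large numbers applied to the i.i.d.\ Bernoulli variables $(\mathbf 1_{T^\omega_i=k})$, on a full-probability set absorbed into $\Omega_0$,
$$
\nu^{\omega,\ep}\bigl(u^{\omega,\ep}(y,k,t),k,t\bigr)\ \longrightarrow\ -y \qquad \text{as } \ep\to 0,
$$
and the convergence is locally uniform in $y$ because the left-hand side is monotone in $y$ and the right-hand side is continuous.

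I then combine this with the observation that $x\mapsto N^\omega(x,k,\tau)$ is nonincreasing (the first sum shrinks and the second grows as $x$ increases), so $x\mapsto\nu^{\omega,\ep}(x,k,t)$ is nonincreasing as well. For the first inequality $u^{\omega,*}\leq \tilde u^\omega$, I pick a sequence $(y_n,t_n,\ep_n)\to(y,t,0)$ realising $L:=u^{\omega,*}(y,k,t)$ and fix $x<L$. For $n$ large, $u^{\omega,\ep_n}(y_n,k,t_n)>x$, so monotonicity yields
$$
\nu^{\omega,\ep_n}(x,k,t_n)\ \geq\ \nu^{\omega,\ep_n}\bigl(u^{\omega,\ep_n}(y_n,k,t_n),k,t_n\bigr);
$$
passing to the limit via the locally uniform convergence $\nu^{\omega,\ep_n}\to\nu^\omega$ and the identity above (applied with $y_n\to y$) gives $\nu^\omega(x,k,t)\geq -y$, i.e.\ $x\leq \tilde u^\omega(y,k,t)$; letting $x\uparrow L$ closes the inequality. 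The second inequality $u^\omega_*\geq \tilde u^\omega_*$ is obtained symmetrically: for $x>L$ and $n$ large, $u^{\omega,\ep_n}(y_n,k,t_n)<x$, whence $\nu^{\omega,\ep_n}(x,k,t_n)\leq \nu^{\omega,\ep_n}(u^{\omega,\ep_n}(y_n,k,t_n),k,t_n)\to -y$; hence $\nu^\omega(x,k,t)\leq -y$, which forces $\tilde u^\omega_*(y,k,t)\leq x$, and taking $x\downarrow L$ gives $\tilde u^\omega_*(y,k,t)\leq L$.

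There is no genuine obstacle: the argument only combines the basic ordering of same-type vehicles (Lemma \ref{lem.CompDeBase}), the monotonicity of $N^\omega$ in $x$, and the SLLN. The one point requiring care is handling $y_n\to y$ rather than $y_n=y$ in the limit identity of the first paragraph, which is precisely why its locally uniform version — rather than just pointwise a.s.\ convergence — is needed; this in turn reduces to the observation that $\ep_n [y_n/\ep_n]^\omega_k = y_n+O(\ep_n)\to y$ together with the monotonicity in $y$ of the counting function and the continuity of its limit.
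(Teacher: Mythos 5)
Your proof is correct and follows essentially the same route as the paper's: both exploit that the counting function $N^\omega$ evaluated at $U^\omega_{i_\ep}(t_\ep/\ep)$ reduces, via the ordering from Lemma \ref{lem.CompDeBase}, to a deterministic count of same-type indices whose scaled limit is $-y$ by the law of large numbers, combined with the locally uniform convergence $\nu^{\omega,\ep}\to\nu^\omega$. The only cosmetic difference is that you insert the monotonicity of $x\mapsto\nu^{\omega,\ep}(x,k,t)$ to pass to the limit through an inequality in $x$, whereas the paper directly concludes $\nu^\omega(u^{\omega,*}(y,k,t),k,t)=-y$ and then reads off the inequality from the definition of $\tilde u^\omega$.
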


Note that, at each point where $\tilde u^\omega$ is continuous, we have $u^{\omega,*}= \tilde u^\omega= u^{\omega}_*$. 

\begin{proof} We only do the proof of the first inequality, the proof of the other one being similar. Fix $(y,k,t)$ with $k\in \{1, \dots, K\}$ and let $(y_\ep, t_\ep)\to (y,t)$ be such that $u^{\omega, \ep}(y_\ep,k,t_\ep)\to u^{\omega,*}(y,k,t)$. 
Let $i_\ep:= [y_\ep/\ep]_k$. By the definition of $N$  and the fact that the $U_i$ with $T_i=K$ remain ordered (see  Lemma \ref{lem.CompDeBase}), we have 
$$
N^{\omega}(U^\omega_{i_\ep}(t_\ep/\ep),k,t_\ep/\ep)= N^{\omega}(U^\omega_{i_\ep}(0),k,0)= \left\{\begin{array}{ll}
\sharp \{j\in \{i_\ep, \dots, -1\}, \;  T_j= T_{i_\ep}\} & {\rm if }\; i_\ep\leq 0\\
-\sharp \{j\in \{0,\dots, i_\ep\}, \;  T_j= T_{i_\ep}\} & {\rm if }\; i_\ep>0. \end{array}\right. 
$$
Hence 
$$
\nu^{\omega,\ep}(\ep U^\omega_{i_\ep}(t_\ep/\ep),k,t_\ep)= \ep (\pi^k)^{-1}  \left\{\begin{array}{ll}
\sharp \{j\in \{[y_\ep/\ep]_k, \dots, -1\}, \;  T_j= k\} & {\rm if }\; [y_\ep/\ep]_k^\omega\leq 0\\
-\sharp \{j\in \{0,\dots, [y_\ep/\ep]_k\}, \;  T_j= k\} & {\rm if }\; [y_\ep/\ep]_k^\omega>0. \end{array}\right. , 
$$
where $\ep U^\omega_{i_\ep}(t_\ep/\ep)= \ep U^\omega_{[y_\ep/\ep]_k}(t_\ep/\ep)= u^{\omega,\ep}(y_\ep, k,t_\ep)\to u^{\omega,*}(y,k,t)$ while $\ep i_\ep\to y$. So, by uniform convergence of $\nu^{\omega, \ep}$, we obtain $\nu^\omega(u^{\omega,*}(y,t),k,t)=-y$. This shows that $\tilde u^\omega(y,t) \geq u^{\omega,*}(y,t)$. 
\end{proof}

\begin{Lemma} [Supersolution at the junction] \label{lem:super}
Let $\xi:[0,T]\to \R $ be a smooth test function and $A>\bar A$ be such that $(x,k,t)\to \nu^\omega(x,k,t)-\xi(t)-\phi_{A}(x,k)$ has a local minimum on $\mathcal R\times (0,+\infty)$ at $(0,t_0)$. Then 
$$
\xi'(t_0) + A\geq 0.
$$
\end{Lemma}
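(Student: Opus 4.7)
We argue by contradiction: assume $\xi'(t_0)<-A$, and aim to derive a contradiction. After perturbing $\xi$ by adding $\eta(t-t_0)^2$ with $\eta>0$ small, we may assume the local minimum of $\nu^\omega-\xi-\phi_A$ is strict in a ball $B_\gamma\subset\mathcal R\times(0,+\infty)$ and that $\xi'(t_0)\leq -A-2\delta$ for some $\delta>0$. Set $M:=\nu^\omega(0,t_0)$ and $y_0:=-M$, and fix any $k\in\{1,\dots,K\}$. The first step is to translate the local minimum condition, via the monotonicity of $\nu^\omega(\cdot,k',t)$ in $x$ together with Lemma \ref{lem.u*tildeuJunc}, into a local lower bound on the inverse $u^\omega_*$:
\[ u^\omega_*(y,k',t)\geq \psi_A(y+\xi(t)-\xi(t_0)+M,k') \]
on a neighborhood of $(y_0,k,t_0)$, with equality at the contact point.

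Next, we build a barrier using $u_e$ and propagate forward in time. Fix $b\in(0,\gamma)$ and set $a:=-M-\bar A(t_0+b)$. By the formula \eqref{identifueJuncTOT}, $u_e(\cdot,k',t)\leq \psi_{\bar A}(\cdot-\bar A t,k')$, hence $u_e(y-a,k',t_0+b)\leq \psi_{\bar A}(y+M,k')$. The crucial hypothesis $A>\bar A$ yields $\psi_A>\psi_{\bar A}$ strictly on $\R\setminus\{0\}$ (both vanishing at $0$), so $u_e(y-a,k',t_0+b)<\psi_A(y+M,k')$ for $y\neq y_0$ with equality at $y_0$. Combined with Step 1, $u^\omega_*(\cdot,k',t_0)\geq u_e(\cdot-a,k',t_0+b)$ locally near $y_0$, with strict inequality on $\{y_0\pm\gamma\}$. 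Applying Lemma \ref{lem.CompaLimJunc} propagates the comparison:
\[ u^\omega_*(y,k',t_0+s)\geq u_e(y-a,k',t_0+b+s) \]
for $y$ in a smaller neighborhood of $y_0$, $k'\in\{1,\dots,K\}$, and $s\in[0,C^{-1}\gamma]$.

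The contradiction now comes from the Hamilton--Jacobi supersolution property of $u^\omega_*$ in the $y$-variable. On the interior of each branch where $u^\omega_*>0$, standard homogenization on a single road (cf.\ Subsection \ref{subsec:homog} and \cite{CaFo}) ensures that $u^\omega_*$ is a viscosity supersolution of the HJ equation dual to $\partial_t\nu+H^{k'}(\partial_x\nu)=0$, namely $\partial_t u=\bar V^{k'}(\partial_y u/\pi^{k'})$. The propagation estimate combined with the strict barrier separation extends this supersolution property up to the contact point $(y_0,t_0)$, where it can be applied with the test $\Psi(y,t):=\psi_A(y+\xi(t)-\xi(t_0)+M,k)$ (smooth and positive on the branch $y>y_0$, touching $u^\omega_*$ from below at $(y_0,t_0)$). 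Using the right-sided derivatives at $(y_0,t_0)$, the supersolution inequality $\partial_t\Psi-\bar V^k(\partial_y\Psi/\pi^k)\geq 0$ reads
\[ \frac{\xi'(t_0)}{-p^{k,+}_A}\geq \bar V^k\!\left(\frac{1}{\pi^k(-p^{k,+}_A)}\right). \]
By the identity $H^k(p^{k,+}_A)=p^{k,+}_A\bar V^k(-1/(\pi^k p^{k,+}_A))=A$, the right-hand side equals $-A/(-p^{k,+}_A)$. Multiplying through by $-p^{k,+}_A>0$ yields $\xi'(t_0)\geq -A$, contradicting $\xi'(t_0)\leq -A-2\delta$.

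The main obstacle is the justification, in the third paragraph, that the supersolution property of $u^\omega_*$, well-established in the branch interior by standard homogenization, extends to the boundary contact point $(y_0,t_0)$ where the $u$-branch meets the axis $u^\omega_*=0$. The propagation from Lemma \ref{lem.CompaLimJunc} together with the strict separation $\psi_A>\psi_{\bar A}$ on $\R\setminus\{0\}$ (which relies essentially on $A>\bar A$) is the mechanism for this extension: the reference solution $u_e$ encodes the flux limiter $\bar A$, and the propagation-comparison with $u_e$ transfers this information to $u^\omega_*$, allowing the supersolution test with $\psi_A$ to be carried out and to yield the sharp inequality $\xi'(t_0)\geq -A$.
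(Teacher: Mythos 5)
The first two paragraphs of your proposal follow the paper's strategy quite closely: pass to the inverse function $u^\omega_*$ via Lemma \ref{lem.u*tildeuJunc}, express the local-minimum condition as $u^\omega_*(y,k',t)\geq\psi_{\bullet}(\cdots)$ on a neighborhood of $(y_0,k,t_0)$, compare that lower barrier with a suitable translate of $u_e$ using $\psi_A>\psi_{\bar A}$ off zero, and propagate with Lemma \ref{lem.CompaLimJunc}. (The paper replaces $\phi_A$ by $\phi_{\bar A}$ first, which makes the comparison with $u_e$ match exactly and avoids carrying a $\psi_A$ bound; this is a cosmetic difference.)

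The gap is in the third paragraph, and it is a real one, not just a technical loose end. You invoke a viscosity supersolution property for $u^\omega_*$ at the contact point $(y_0,t_0)$, i.e.\ precisely where $u^\omega_*=0$, which corresponds to the junction $x=0$. But the supersolution property of the limit problem at the junction is exactly the statement being proved; on the branch interior the homogenization of Subsection \ref{subsec:homog} does give you the HJ equation for $\nu^\omega$ (equivalently for $u^\omega_*$), but nothing a priori passes this to the contact with the axis $x=0$, which is where your test function $\Psi$ touches. Lemma \ref{lem.CompaLimJunc} gives a comparison with $u_e$, not a supersolution inequality at the junction, so the ``extension mechanism'' you describe does not close the argument. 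Concretely, the forward propagation from $t_0$ yields $u^\omega_*(y_0,k,t_0+s)\geq u_e(y_0-a,k,t_0+b+s)>0$ for $s>0$, which carries information about $\nu^\omega(0,t_0+s)$ but produces no sign contradiction by itself.

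The paper sidesteps this entirely: it applies Lemma \ref{lem.CompaLimJunc} with \emph{initial time} $t_0-\tau$ for small $\tau>0$, choosing $\tau$ small enough that the minimum of $u^\omega_*(\cdot,\cdot,t_0-\tau)-u_e(\cdot+a,\cdot,t_0+b-\tau)$ is interior. Evaluating the propagated inequality at $s=\tau$, $y=y_0$ lands exactly at time $t_0$, where the left-hand side is $u^\omega_*(y_0,k,t_0)=0$, and the right-hand side is an explicit $u_e$-value:
$$
0\ \geq\ u_e\bigl(\xi(t_0-\tau)-\bar A\tau+y_T,\,k,\,T\bigr).
$$
Since $\xi(t_0)=0$, if $\xi'(t_0)<-\bar A$ then $\xi(t_0-\tau)-\bar A\tau>0$ for small $\tau$; plugging into \eqref{identifueJuncTOT} (and using $\bar A\geq\max_kH^k(-1/e^k)$) shows the right-hand side is strictly positive, a contradiction. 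This gives $\xi'(t_0)\geq-\bar A\geq -A$ directly, with no supersolution test on $u^\omega_*$ required. Replacing your forward-in-time propagation with this backward-then-forward evaluation is the fix: it turns the barrier comparison into a pointwise sign inequality at the contact time, rather than a PDE inequality you cannot yet assert.
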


\begin{proof} As $(x,k,t)\to\nu^\omega(x,k,t)-\xi(t)-\phi_{A}(x,k)$ has a local minimum in $\mathcal R\times (0,+\infty)$ at $(0,t_0)$ and $\phi_{\bar A}<\phi_{A}$ on $\stackrel{o}{{\mathcal R}}$ with an equality at $0$, modifying $\xi$ if necessary, the map $(x,k,t)\to\nu^\omega(x,k,t)-\xi(t)-\phi_{\bar A}(x,k)$ has a strict local minimum $\mathcal R\times (0,+\infty)$  at $(0,t_0)$: assuming that $\xi(t_0)=0$, there exists $\gamma>0$ such that, for any $(x,k,t)\in \mathcal R\times (t_0-\gamma, t_0+\gamma)$ with $(x,t)\neq (0,t_0)$ and $|x|\leq \gamma$, 
\be\label{azhebjnsrdm}
\nu^\omega(x,k,t)-\xi(t)-\phi_{\bar A}(x,k) > \nu^\omega(0,t_0),
\ee
with an equality at $(0,t_0)$. As $\nu^\omega(x,k,t_0)=\nu^\omega(x,0,t_0)$ for $x\leq 0$, inequality \eqref{azhebjnsrdm} actually holds for any  $(x,k,t)\in (-\gamma, \gamma)\times \{1, \dots, K\}\times (t_0-\gamma, t_0+\gamma)$ with $(x,t)\neq (0,t_0)$.  

Let $y_0=- \nu^\omega(0,t_0)$. By \eqref{azhebjnsrdm}, we have that $\nu^\omega(x,k,t_0)>-y_0$ for $x\in(-\gamma,0)$, so that
$$
\tilde u^\omega_*(y_0,k,t_0)=  \inf\{ x\in \R, \; \nu^\omega(x,k,t_0)\leq -y_0\} =0.
$$
By continuity of $\nu^\omega$, there exists $\gamma'\in (0,\gamma)$ such that, if $(y,k,t)\in (y_0-\gamma',y_0+\gamma')\times \{1, \dots, K\}\times (t_0-\gamma',t_0+\gamma')$, then 
$$
\tilde u^\omega_*(y,k,t)=  \inf\{ x\in (-\gamma, +\infty), \; \nu^\omega(x,k,t)\leq -y\}. 
$$
Therefore 
\begin{align*}
\tilde u^\omega_*(y,k,t)& \geq \min\{\gamma\ ,\  \inf\{ x\in (-\gamma, \gamma), \; \nu^\omega(x,k,t)\leq -y\} \ \}  \\ 
& \geq  \min\{ \gamma \ , \  \inf\{ x\in (-\gamma,\gamma), \; \xi(t)+\phi_{\bar A}(x,k) - y_0\leq -y\} \ \}\\
& \geq  \min\{ \gamma \ , \  \inf\{ x\in \R, \; \xi(t)+\phi_{\bar A}(x,k) - y_0\leq -y\} \ \}\; =\; \min \{\gamma\ ,\  \psi_{\bar A}\left( y-y_0+\xi(t),k\right)\}. 
\end{align*}
If $(y,k,t)=(y_0,k,t_0)$, then $\psi_{\bar A}\left( y_0-y_0+\xi(t_0),k\right)= \psi_{\bar A}\left( 0,k\right)=0<\gamma$, so that, reducing $\gamma'$ if necessary, we get 
\be\label{iuemsrd:kk}
\tilde u^\omega_*(y,k,t) \geq \psi_{\bar A}\left( y-y_0+\xi(t),k\right)\qquad \forall (y,k,t)\in (y_0-\gamma',y_0+\gamma')\times \{1,\dots, K\}\times (t_0-\gamma',t_0+\gamma'), 
\ee
In addition, as the inequality in \eqref{azhebjnsrdm} is strict, we have a strict inequality in the above inequality unless $(y,k,t)=(y_0,k,t_0)$. 
By \eqref{identifueJuncTOT} we have 
$$
u_e(y,k,t):= \left\{\begin{array}{ll}
\min\left\{ \psi_{\bar A}(y-\bar At ,k)\; ,\; ye^0- e^0 t H^{0}(-1/e^0)  \right\} &{\rm if}\; y\le \bar A t\\
\min\left\{ \psi_{\bar A}(y-\bar At ,k)\; ,\; ye^k- e^k t H^{k}(-1/e^k)  \right\} &{\rm if}\; y\ge \bar At.
\end{array}
\right.
$$
Let us fix $T>0$ and set $y_T:= \bar A T$. The equality above can be rewritten as 
\begin{align*}
& u_e(y+\bar A (t-t_0)+y_T, k, t-t_0+T) \\
& \qquad =  \left\{\begin{array}{ll}
\min\left\{ \psi_{\bar A}(y ,k)\; ,\; (y+\bar A (t-t_0)+y_T)e^0- e^0 ( t-t_0+T) H^{0}(-1/e^0)  \right\} &{\rm if}\; y\le 0\\
\min\left\{ \psi_{\bar A}(y ,k)\; ,\; (y+\bar A (t-t_0)+y_T)e^k- e^k ( t-t_0+T) H^{k}(-1/e^k)  \right\} &{\rm if}\; y\ge 0.
\end{array}
\right.
\end{align*}
By \eqref{iuemsrd:kk}, this implies that, for any $ (y, k,t)\in (y_0-\gamma',y_0+\gamma')\times \{1,\dots, K\}\times (t_0-\gamma',t_0+\gamma')$,
$$
\tilde u^\omega_*(y,k,t) \geq \psi_{\bar A}\left( y-y_0+\xi(t),k\right) \geq  u_e(y-y_0+\xi(t) +\bar A (t-t_0)+y_T, k, t-t_0+T). 
$$
with a strict inequality if $(y,k,t)\neq (y_0,k,t_0)$. 

We apply Lemma \ref{lem.CompaLimJunc} with initial time $t_0-\tau$, where $\tau>0$ is so small that the minimum of  the map
$$
(y,k)\to u^{\omega}_*(y,k,t_0-\tau) - u_e(y-y_0+\xi(t_0-\tau) -\bar A \tau +y_T, k, -\tau+T).
$$
is not reached at $y\in \{-\gamma,\gamma\}$: this is possible since this minimum point converges to $(y_0,0)$ as $\tau\to 0+$. Then by Lemma \ref{lem.CompaLimJunc} we get, if $s\geq 0$ and $|y-y_0|$ are small enough (depending on $\gamma'$ only)
$$
u^{\omega}_*(y,k,t_0-\tau+s)\geq u_e(y-y_0+\xi(t_0-\tau) -\bar A \tau +y_T, k, -\tau+T+s).
$$ 
For $y=y_0$ and $s=\tau$, we get 
\begin{align*}
0 & = u^{\omega}_*(y_0,k,t_0) \geq u_e(\xi(t_0-\tau) -\bar A \tau +y_T, k, T)\\
&= \left\{\begin{array}{ll}
\min\left\{ \psi_{\bar A}(\xi(t_0-\tau) -\bar A \tau +y_T -\bar A T ,k)\; ,\; e^0(\xi(t_0-\tau) -\bar A \tau +y_T-T H^{0}(-1/e^0) ) \right\} &{\rm if}\; \xi(t_0-\tau)\le \bar A\tau\\
\min\left\{ \psi_{\bar A}(\xi(t_0-\tau) -\bar A \tau +y_T -\bar A T ,k)\; ,\; e^k(\xi(t_0-\tau) -\bar A \tau +y_T-  T H^{k}(-1/e^k) ) \right\} &{\rm if}\; \xi(t_0-\tau)\ge \bar A\tau
\end{array}
\right.
\\
& \geq 
\left\{\begin{array}{ll}
\min\left\{ \psi_{\bar A}(\xi(t_0-\tau) -\bar A \tau  ,k)\; ,\; e^0(\xi(t_0-\tau) -\bar A \tau  ) \right\} &{\rm if}\; \xi(t_0-\tau)\le \bar A\tau\\
\min\left\{ \psi_{\bar A}(\xi(t_0-\tau) -\bar A \tau  ,k)\; ,\; e^k(\xi(t_0-\tau) -\bar A \tau  ) \right\} &{\rm if}\; \xi(t_0-\tau)\ge \bar A\tau
\end{array}
\right.
\end{align*}
because 
$y_T = \bar AT\geq T\max_k H^k(-1/e^k)$. Assume now that $\xi'(t_0) < -\bar A$. Then, since $\xi(t_0)=0$ and $\xi'(t_0)<-\bar A$, one has $\xi(t_0-\tau)- \bar A \tau>0$ and thus the right-hand side in the inequality above is positive. This leads to a contradiction and shows that 
$\xi'(t_0)\geq -\bar A\geq -A$.
\end{proof}

\begin{Lemma}[Subsolution at the junction]\label{lem:sub}  
Assume that $\bar A>A_0$ and let $A_0\le A<\bar A$ and  $\xi:[0,T]\to \R $ be a smooth test function  be such that $(x,k,t)\to \nu^\omega(x,k,t)-\xi(t)-\phi_{A}(x,k)$ has a local maximum on $\mathcal R\times (0,+\infty)$ at $(0,t_0)$. Then 
$$
\xi'(t_0) + A\leq 0.
$$
\end{Lemma}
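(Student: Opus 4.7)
The proof mirrors that of Lemma \ref{lem:super}, but with reversed inequalities and using the second (upper) part of Lemma \ref{lem.CompaLimJunc}. I would argue by contradiction: assume $\xi'(t_0) + A > 0$, i.e., $\xi'(t_0) > -A$. Without loss of generality $\xi(t_0)=0$, and by subtracting $\eta(t-t_0)^2$ from $\xi$ with $\eta>0$ small (which does not alter $\xi'(t_0)$), the maximum of $(x,k,t)\mapsto\nu^\omega(x,k,t)-\xi(t)-\phi_A(x,k)$ at $(0,t_0)$ can be made strict on a neighborhood: there exists $\gamma>0$ such that
$$\nu^\omega(x,k,t)-\xi(t)-\phi_A(x,k)<-y_0,\qquad y_0:=-\nu^\omega(0,t_0),$$
for $(x,k,t)\in\mathcal R\times(t_0-\gamma,t_0+\gamma)$ with $|x|\leq\gamma$ and $(x,t)\neq(0,t_0)$. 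Since $\phi_A(\cdot,k)$ is strictly decreasing with inverse $\psi_A(-\cdot,k)$, inverting yields
$$\tilde u^\omega(y,k,t)\leq \psi_A(y-y_0+\xi(t),k)\quad\text{near }(y_0,k,t_0),$$
with strict inequality away from $(y_0,k,t_0)$. By Lemma \ref{lem.u*tildeuJunc}, $u^{\omega,*}\leq\tilde u^\omega$, so the same bound holds for $u^{\omega,*}$.

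Next, since $\bar A>A_0\geq H^k(-1/e^k)$ for every $k$, for any $T$ large enough and $y_T:=\bar AT$, a direct inspection of \eqref{identifueJuncTOT} gives, for $(y,k,t)$ in a neighborhood of $(y_0,k,t_0)$,
$$u_e(y-y_0+\xi(t)+\bar A(t-t_0)+y_T,\,k,\,t-t_0+T)=\psi_{\bar A}(y-y_0+\xi(t),k),$$
the minimum in the definition of $u_e$ being attained by the first argument (because the ``flat'' term grows like $e^kT(\bar A-H^k(-1/e^k))>0$). Combining with $\psi_A<\psi_{\bar A}$ on $\R\setminus\{0\}$ (since $A<\bar A$),
$$u^{\omega,*}(y,k,t)\leq u_e(y-y_0+\xi(t)+\bar A(t-t_0)+y_T,\,k,\,t-t_0+T)$$
in a neighborhood of $(y_0,k,t_0)$, with equality only at $(y_0,k,t_0)$.

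I would then apply the second part of Lemma \ref{lem.CompaLimJunc} at initial time $t_0-\tau$ (for small $\tau>0$), with constants $a:=-y_0+\xi(t_0-\tau)-\bar A\tau+y_T$ and $b:=T-t_0$. The maximum of $(y,k)\mapsto u^{\omega,*}(y,k,t_0-\tau)-u_e(y+a,k,t_0-\tau+b)$ converges to its (unique, strict) value at $(y_0,k,t_0)$ as $\tau\to 0^+$, hence is attained in the interior for $\tau$ small enough. The lemma propagates the inequality to $s=\tau$, yielding at $y=y_0$
$$u^{\omega,*}(y_0,k,t_0)\leq u_e(\xi(t_0-\tau)-\bar A\tau+y_T,\,k,\,T).$$

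To close the argument I identify $u^{\omega,*}(y_0,k,t_0)=0$: the strict inequality $\nu^\omega(x,k,t_0)>-y_0$ for $x\in(-\gamma,0)$ gives $\tilde u^\omega_*(y_0,k,t_0)\geq 0$, while $\tilde u^\omega(y_0,k,t_0)=0$, so Lemma \ref{lem.u*tildeuJunc} forces $u^{\omega,*}(y_0,k,t_0)=u^\omega_*(y_0,k,t_0)=0$. On the other hand, by the same computation,
$$u_e(\xi(t_0-\tau)-\bar A\tau+y_T,k,T)\leq \psi_{\bar A}(\xi(t_0-\tau)-\bar A\tau,k),$$
and under the hypothesis $\xi'(t_0)>-A>-\bar A$, for $\tau>0$ small one has $\xi(t_0-\tau)-\bar A\tau\approx-\tau\xi'(t_0)-\bar A\tau<(A-\bar A)\tau<0$, so the right-hand side is strictly negative (as $\psi_{\bar A}$ sends negatives to negatives). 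This gives $0\leq u_e(\cdots)<0$, the desired contradiction. The main technical obstacle is, as in Lemma \ref{lem:super}, verifying the interior-maximum hypothesis of Lemma \ref{lem.CompaLimJunc} uniformly as $\tau\to 0^+$ (handled via the strictness of the perturbed maximum), together with the identification $u^{\omega,*}(y_0,k,t_0)=0$, which crucially uses that the strict max condition furnishes a genuine strict monotonicity of $\nu^\omega(\cdot,k,t_0)$ across $x=0$.
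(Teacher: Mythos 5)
Your proof follows the same overall strategy as the paper's: invert $\nu^\omega$ to $\tilde u^\omega$, establish $\tilde u^\omega(y,k,t) \le \psi_{\bar A}(y-y_0+\xi(t),k)$ near $(y_0,k,t_0)$, recognize the right-hand side as a shifted copy of $u_e$, propagate the comparison via the second half of Lemma~\ref{lem.CompaLimJunc}, and extract the sign contradiction from $\psi_{\bar A}(\xi(t_0-\tau)-\bar A\tau,k)<0$. The identification of $u_e$ with $\psi_{\bar A}$ for large $T$, the choice $y_T=\bar A T$, the application of the approximate-comparison lemma at time $t_0-\tau$, and the final contradiction are all the paper's steps, phrased slightly differently (you argue by explicit contradiction; the paper simply concludes $\xi'(t_0)\le-\bar A$).

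Two imprecisions are worth flagging, though neither is a genuine gap since the paper glosses over the same points. First, the perturbation $-\eta(t-t_0)^2$ of $\xi$ only makes the maximum strict in $t$, not in $x$; in the paper, strictness in $x$ comes from replacing $\phi_A$ by $\phi_{\bar A}$ (which is strictly larger off $x=0$). Your chain $\tilde u^\omega \le \psi_A \le \psi_{\bar A}$ with $\psi_A<\psi_{\bar A}$ on $\R\setminus\{0\}$ recovers this strictness at the right place, so the argument closes, but the intermediate claim ``the max can be made strict for $(x,t)\neq(0,t_0)$'' is not justified as stated. Second, your claim ``$\nu^\omega(x,k,t_0)>-y_0$ for $x\in(-\gamma,0)$'' does not follow from the subsolution test: the strict-max condition only forces $\nu^\omega(x,k,t_0)<-y_0+\phi_{\bar A}(x,k)$, which for $x<0$ is an upper bound \emph{above} $-y_0$. (The paper makes the analogous claim in the \emph{super}solution proof, where it does follow.) Consequently, $\tilde u^\omega_*(y_0,k,t_0)\ge 0$, and hence $u^{\omega,*}(y_0,k,t_0)=0$, is not fully established; the paper asserts $u^\omega(y_0,k,t_0)=0$ without argument. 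This is a shared technicality; it can be repaired, for instance, by evaluating the propagated inequality at $y$ slightly larger than $y_0$, where $\tilde u^\omega_*(y,k,t_0)>0$ follows from $\nu^\omega(0,k,t_0)=-y_0$ and monotonicity, while the right-hand side remains negative for $y$ in the window $(y_0,\,y_0-(\xi(t_0-\tau)-\bar A\tau))$. With that caveat, your argument is correct and matches the paper.
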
 

\begin{proof}    We argue  as in the supersolution case. As $(x,k,t)\to\nu^\omega(x,k,t)-\xi(t)-\phi_{A}(x,k)$ has a local maximum on $\mathcal R\times (0,+\infty)$ at $(0,t_0)$ and $\phi_{\bar A}>\phi_{A}$ on $\stackrel{o}{{\mathcal R}}$ with an equality at $0$, modifying $\xi$ if necessary, the map $(x,k,t)\to\nu^\omega(x,k,t)-\xi(t)-\phi_{\bar A}(x,k)$ has a strict local maximum at $(0,t_0)$: assuming that $\xi(t_0)=0$, there exists $\gamma>0$ such that, for any $(x,k,t)\in \mathcal R \times (t_0-\gamma, t_0+\gamma)$ with $(x,t)\neq (0,t_0)$ and $|x|\leq \gamma$,
\be\label{azhebjnsrdmSUB}
\nu^\omega(x,k,t)-\xi(t)-\phi_{\bar A}(x) < \nu^\omega(0,t_0),
\ee
with an equality at $(0,t_0)$. As $\nu^\omega(x,k,t)= \nu^\omega(x,0,t)$ for $x\leq 0$, inequality \eqref{azhebjnsrdmSUB} also holds for any $(x,k,t)\in (-\gamma, \gamma)\times \{1, \dots, K\}\times (t_0-\gamma, t_0+\gamma)$ with $(x,t)\neq (0,t_0)$. 

Let $y_0=- \nu^\omega(0,t_0)$. By \eqref{azhebjnsrdmSUB}, we have that $\nu^\omega(x,k,t_0)<-y_0$ for $x\in(0,\gamma)$, so that 
$$
\tilde u^\omega(y_0,k,t_0)=  \sup\{ x\in \R, \; \nu^\omega(x,k,t_0)\geq -y_0\} =0.
$$
By continuity of $\nu^\omega$, there exists $\gamma'\in (0,\gamma)$ such that, if $(y,k,t)\in (y_0-\gamma',y_0+\gamma')\times \{1, \dots, K\}\times (t_0-\gamma',t_0+\gamma')$, then 
$$
\tilde u^\omega(y,k,t)=  \sup\{ x\in (-\infty, \gamma), \; \nu^\omega(x,k,t)\geq -y\}. 
$$
Therefore 
\begin{align*}
\tilde u^\omega(y,k,t)& \leq \max\{-\gamma\ ,\  \sup\{ (-\gamma, \gamma), \;  \nu^\omega(x,k,t)\geq -y\} \ \}  \\ 
& \leq  \max\{ -\gamma \ , \  \sup\{ x\in (-\gamma,\gamma), \; \xi(t)+\phi_{\bar A}(x,k) - y_0\geq -y\} \ \}\\
& \leq  \max\{ -\gamma \ , \  \sup\{ x\in \R, \; \xi(t)+\phi_{\bar A}(x,k) - y_0\geq -y\} \ \}\; =\; \max \{- \gamma\ ,\  \psi_{\bar A}\left( y-y_0+\xi(t),k\right)\}. 
\end{align*}
If $(y,k,t)=(y_0,k,t_0)$, then $\psi_{\bar A}\left( y_0-y_0+\xi(t_0),k\right)= \psi_{\bar A}\left( 0,k\right)=0>-\gamma$, so that, reducing $\delta'$ if necessary, we get 
\be\label{iuemsrd:kkSUB}
\tilde u^\omega(y,k,t) \leq \psi_{\bar A}\left( y-y_0+\xi(t),k\right)\qquad \forall (y,k,t)\in (y_0-\gamma',y_0+\gamma')\times \{1,\dots, K\}\times (t_0-\gamma',t_0+\gamma'). 
\ee
In addition, as  inequality in \eqref{azhebjnsrdm} is strict, we have a strict inequality in the above inequality unless $(y,k,t)=(y_0,k,t_0)$. 
By \eqref{identifueJuncTOT} we have
$$
u_e(y,k,t):= \left\{\begin{array}{ll}
\min\left\{ \psi_{\bar A}(y-\bar At ,k)\; ,\; ye^0- e^0 t H^{0}(-1/e^0)  \right\} &{\rm if}\; y\le \bar A t\\
\min\left\{ \psi_{\bar A}(y-\bar At ,k)\; ,\; ye^k- e^k t H^{k}(-1/e^k)  \right\} &{\rm if}\; y\ge \bar At.
\end{array}
\right.
$$
Let us fix $T>0$ and set $y_T:= \bar A T$. Note that $u_e(y_T,k,T)= \psi_{\bar A}(y_T-\bar A T ,k) < y_Te^k- e^kT H^{k}(-1/e^k)$ because $\bar A> \max_{k\in \{0, \dots K\}}H^k(-1/e^k)$. So, reducing $\gamma'$ if necessary, the equality above can be rewritten as 
\be\label{lhkaebzsndfl}
u_e(y+\bar A t+y_T, t+T) = \psi_{\bar A}(y ,k)\qquad \forall (y, k, t)\in (-\gamma',\gamma')\times \{1,\dots, K\}\times (-\gamma',\gamma').
\ee
By \eqref{iuemsrd:kkSUB}, this implies that, for any $ (y, k,t)\in (y_0-\gamma',y_0+\gamma')\times \{1,\dots, K\}\times (t_0-\gamma',t_0+\gamma')$,
$$
\tilde u^\omega(y,k,t) \leq \psi_{\bar A}\left( y-y_0+\xi(t),k\right) =  u_e(y-y_0+\xi(t) +\bar A (t-t_0)+y_T, k, t-t_0+T). 
$$
By Lemma \ref{lem.CompaLimJunc}, applied at time $t_0-\tau$, we get, if $s\geq 0$ and $|y-y_0|$ are small enough (depending on $\gamma'$ only)
$$
u^{\omega}(y,k,t_0-\tau+s)\leq u_e(y-y_0+\xi(t_0-\tau) -\bar A \tau +y_T, k, -\tau+T+s).
$$
For $y=y_0$ and $s=\tau$, we get 
\begin{align*}
0 & = u^{\omega}(y_0,k_\tau,t_0) \leq u_e(\xi(t_0-\tau) -\bar A \tau +y_T, k_\tau, T)\\
& =\psi_{\bar A}(\xi(t_0-\tau) -\bar A \tau +y_T -\bar A T ,k_\tau)
= \psi_{\bar A}(\xi(t_0-\tau) - \bar A \tau,k_\tau),  
\end{align*}
where the second equality holds because of \eqref{lhkaebzsndfl}.
 Then, if $\xi'(t_0)>-\bar A$ and  since $\xi(t_0)=0$ , one has $\xi(t_0-\tau)- \bar A \tau<0$ and thus the right-hand side in the inequality above is negative. This leads to a contradiction and shows that $\xi'(t_0) \leq -\bar A\leq -A$. 
\end{proof}
\begin{proof}[Proof of Theorem \ref{thm.main}]
We just have to show that  $\nu^\omega$  satisfies in the viscosity sense
$$
\partial_t \nu +\max\{ \bar A,  H^{0,+}(\partial_0 \nu), H^{1,-}(\partial_1 \nu), \dots, H^{K,-}(\partial_K \nu))\}=0\; {\rm at}\; x=0.
$$
Let $A_n> \bar A$ be such that $A_n\to A$. By Lemma \ref{lem:super} and \cite[Theorem 2.11]{IM}, $\nu^\omega$ is a super-solution of 
$$
\left\{\begin{array}{l}
\ds \partial_t \nu + H(\partial_x \nu)=0\qquad {\rm in}\; \stackrel{o}{{\mathcal R}}\times (0,T)\\
\ds \nu(x,k,0)= \nu_0(x,k) \qquad {\rm in}\;\mathcal R\\
\partial_t \nu +\max\{  A_n,  H^{0,+}(\partial_0 \nu), H^{1,-}(\partial_1 \nu), \dots, H^{K,-}(\partial_K \nu))\}=0\; {\rm at}\; x=0.
\end{array}\right.
$$
By stability \cite[Proposition 2.6]{IM}, we then get that 
$\nu^\omega$  satisfies 
$$
\partial_t \nu +\max\{ \bar A,  H^{0,+}(\partial_0 \nu), H^{1,-}(\partial_1 \nu), \dots, H^{K,-}(\partial_K \nu))\}\ge0\; {\rm at}\; x=0.
$$
We now turn to the sub-solution property.
Following \cite[Theorem 2.7]{IM}, $\nu^\omega$ being continuous and a subsolution of the Hamilton-Jacobi equation in $\stackrel{o}{{\mathcal R}}$, is a subsolution at $x=0$ with $A=A_0$. So we can assume from now on that $\bar A> A_0$. Arguing as above (taking $A_n< \bar A$ with $A_n\to \bar A$), we then get that 
$$
\partial_t \nu +\max\{ \bar A,  H^{0,+}(\partial_0 \nu), H^{1,-}(\partial_1 \nu), \dots, H^{K,-}(\partial_K \nu))\}\le0\; {\rm at}\; x=0.
$$
\end{proof}

\appendix 
\section{Appendix}

\subsection{Computation for Lemma \ref{lem.Jn}} \label{subsec:lemmaJn}

Let $X:= X_0-X_1$. We have $\P\left[X>m\right] \leq Kr^m$ for $m\in \N$ and where $r:=(\pi+1)/2 \in (0,1)$. So, for $q\geq 1$, 
\begin{align*}
\E\left[ |X|^q\right] & = \sum_{m=0}^\infty m^q \P\left[X= m\right] \leq \sum_{m=1}^\infty m^q \P\left[X\geq m\right]\leq \frac{K}{r} \sum_{m=1}^\infty m^q r^m .
\end{align*}
Note that $x\to x^qr^x$ is increasing on $[0, -q/\ln(r)]$ and decreasing on $[ -q/\ln(r),+\infty]$. So we can approximate the  sum in the right-hand side by 
\begin{align*}
\frac{K}{r} \sum_{m=1}^\infty m^q r^m \leq \frac{2K}{r}\int_0^\infty x^q r^x dx = \frac{2K}{r |\ln(r)|^{q+1}} \int_0^\infty y^qe^{-y}dy = \frac{2Kq!}{r |\ln(r)|^{q+1}}. 
\end{align*}

\subsection{Flux-limited solutions}

\begin{Lemma}\label{lem:app1}
Assume that $e=(e^k)$ is such that $H^k(-1/e^k)= \min_p H^k(p)$ for any $k\in \{0, \dots, K\}$. The solution of the junction problem without flux limiter: 
\be\label{eq.tildenuTOT-app}
\left\{\begin{array}{l}
\ds \partial_t \tilde \nu_e + H(\partial_x\tilde \nu_e)=0 \qquad {\rm in }\;  \stackrel{o}{{\mathcal R}}\times(0,+\infty)\\
\ds \tilde \nu_e(x,k,0)= -x/e^k   \qquad {\rm in }\;{{\mathcal R}}\\
\ds \partial_t \tilde \nu_e + \max \{ A_0, H^{0,+}(\partial_0 \tilde \nu_e), H^{1,-}(\partial_1 \tilde \nu_e),\dots, H^{K,-}(\partial_K \tilde \nu_e)\}=0
\qquad {\rm at}\; x=0.
\end{array}\right.
\ee
is given by 
\be\label{sol.tildenueBISTOT-app}
\tilde \nu_e(x,k,t)= \min\left\{ \phi_{A_0}(x,k)-A_0t\; ,\; -x/e^k- tH^{k}(-1/e^k)  \right\} .
\ee
\end{Lemma}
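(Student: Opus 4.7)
The plan is to verify directly that the right-hand side of \eqref{sol.tildenueBISTOT-app}, which I will denote by $\tilde\nu_e$, is a viscosity solution of the flux-limited problem \eqref{eq.tildenuTOT-app}, and then to invoke the comparison theorem of \cite{IM} for uniqueness. Since $A_0 = \max_k H^k(-1/e^k)$ is the smallest admissible flux limiter and each $H^k$ is convex on the relevant interval (Lemma \ref{lem:convexity}), the verification reduces essentially to bookkeeping once the explicit form of the solution near the junction is understood.

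First I would check the initial condition and the HJ equation on each branch. For $k=0$ and $x \leq 0$, one has $p^{0,-}_{A_0} \leq -1/e^0$ because $-1/e^0$ is the argmin of $H^0$ and $A_0 \geq \min H^0 = H^0(-1/e^0)$, so $p^{0,-}_{A_0} x \geq -x/e^0$ and the min equals $-x/e^0$ at $t=0$; the case $k \in \{1,\dots,K\}$ on $\mathcal R^k$ is symmetric using $p^{k,+}_{A_0} \geq -1/e^k$. On each branch $\mathcal R^k \setminus \{0\}$, $\tilde\nu_e$ is the minimum of two affine classical solutions of $\partial_t u + H^k(\partial_x u) = 0$, namely $\phi_{A_0}(x,k) - A_0 t$ and $-x/e^k - t H^k(-1/e^k)$. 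Their kink lies along a ray through the origin, and the spatial slope \emph{decreases} across it (from $-1/e^k$ to $p^{k,\pm}_{A_0}$); by convexity of $H^k$ on the interval joining these two slopes, one has $H^k \leq A_0 \leq 0$ throughout that interval, which gives the subsolution property at the kink, while the supersolution property is vacuous since no $C^1$ test function can touch a downward kink from below. Hence $\tilde\nu_e$ solves the HJ equation on each open branch.

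The main step is the junction condition at $x=0$. By the definition of $A_0$, one has $H^k(-1/e^k) \leq A_0$ for every $k$, so $\tilde\nu_e(0,k,t) = \min\{-A_0 t, -t H^k(-1/e^k)\} = -A_0 t$ for all $k$ and all $t \geq 0$; in particular $\tilde\nu_e$ is continuous at the junction and $t \mapsto \tilde\nu_e(0,k,t)$ is smooth. A direct comparison of the two affine pieces shows that, on a small one-sided neighbourhood of $0$ on branch $k$, the minimum is attained by $\phi_{A_0}(x,k) - A_0 t$, so that $\partial_0 \tilde\nu_e(0,t) = p^{0,-}_{A_0}$ on branch $0$ and $\partial_k \tilde\nu_e(0,t) = p^{k,+}_{A_0}$ on branch $k \geq 1$. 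For any test function $\phi \in C^1(\mathcal R \times (0,+\infty))$ such that $\tilde\nu_e - \phi$ has a local max (resp.\ min) at $(0, t_0)$, the smoothness of $t \mapsto \tilde\nu_e(0,k,t)$ forces $\partial_t \phi(0,t_0) = -A_0$, and a one-sided Taylor expansion on each branch yields $\partial_0 \phi(0,t_0) \leq p^{0,-}_{A_0}$ and $\partial_k \phi(0,t_0) \geq p^{k,+}_{A_0}$ in the max case (and the reverse inequalities in the min case).

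For the subsolution condition, since $H^{0,+}$ is nondecreasing and $p^{0,-}_{A_0}$ lies in the nonincreasing region of $H^0$, I get $H^{0,+}(\partial_0 \phi(0,t_0)) \leq H^{0,+}(p^{0,-}_{A_0}) = \min H^0 = H^0(-1/e^0) \leq A_0$, and symmetrically $H^{k,-}(\partial_k \phi(0,t_0)) \leq A_0$ for $k \geq 1$; the max in the junction condition therefore equals $A_0$, and $\partial_t \phi(0,t_0) + A_0 = 0$ closes this case. The supersolution condition is immediate because $A_0$ itself appears in the max, so $\max\{\dots\} \geq A_0$ and $\partial_t \phi(0,t_0) + \max\{\dots\} \geq -A_0 + A_0 = 0$. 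By the comparison principle of \cite{IM}, $\tilde\nu_e$ is the unique viscosity solution of \eqref{eq.tildenuTOT-app}, yielding \eqref{sol.tildenueBISTOT-app}. The only real obstacle is keeping the signs, the sides $\pm$ in $p^{k,\pm}_{A_0}$, and the monotone parts $H^{k,\pm}$ consistently oriented at the junction; no analytical ingredient beyond convexity of each $H^k$ and the definition of $A_0$ is needed.
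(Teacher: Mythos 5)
Your proof is correct and arrives at the same formula, but by a more elementary, fully hands-on route than the paper's. For the HJ equation on the open branches, the paper invokes two black-box facts: the stability of supersolutions under infima, and \cite[Theorem 9.2 ii)]{Ba2013}, which shows that the minimum of two smooth solutions of a convex HJ equation is again a subsolution; at the junction it invokes \cite[Theorem 2.7]{IM} to reduce the class of test functions to those of the form $\psi(t)+\phi_{A_0}(x,k)$, from which $\psi'(t_0)=-A_0$ drops out immediately. You instead verify each viscosity inequality directly, via one-sided Taylor expansion at the kink and at the junction, using only the convexity of each $H^k$ and the elementary observation that a concave (downward) kink of a minimum of two affine solutions admits no $C^1$ test function from below. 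Your version is more self-contained and makes completely explicit where convexity, the definition of $A_0$, and the monotonicity of $H^{0,+}$ and $H^{k,-}$ enter; the paper's is shorter because it outsources the routine verifications. Two minor points to tidy: the parenthetical ``from $-1/e^k$ to $p^{k,\pm}_{A_0}$'' has the slopes in the wrong order for $k\geq 1$ (near $0^+$ the active piece has slope $p^{k,+}_{A_0}$, and $-1/e^k$ takes over further out); and the subsolution check at the interior kink should be phrased on the joint superdifferential in $(\partial_x\phi,\partial_t\phi)$---one applies the convexity inequality to the pair of full space-time gradients of the two affine pieces, since $\partial_t\phi$ is constrained along with $\partial_x\phi$---rather than merely noting $H^k\leq A_0$ along the slope interval. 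Both are cosmetic; the underlying reasoning is sound.
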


\begin{proof}
By stability of super-solution, we classically have that $\tilde \nu_e$ is a super-solution. Let us prove that it is a sub-solution. First remark that for $x\ne 0$, $(x,k,t)\mapsto \phi_{A_0}(x,k)-A_0t$ and $(x,k,t)\mapsto -x/e^k- tH^{k}(-1/e^k)$ are (smooth) solutions of the equation. Then, using \cite[Theorem 9.2 ii)]{Ba2013}, $\tilde \nu_e$ is a sub-solution. We then have to consider the case $x=0$.
Let $\phi$ be a test function such that $\tilde \nu_e-\phi$ reaches a minimum at $(0,t_0)$. By \cite[Theorem 2.7]{IM}, it is sufficient to take $\phi$ such that $\phi(x,k,t)=\psi(t)+\phi_{A_0}(x,k)$. Since for $x$ close to $0$, we have $\tilde \nu_e(x,t,k)=\phi_{A_0}(x,k)-A_0t$ (because $A_0\ge H^k(-1/e^k)$), we deduce that $t\mapsto -A_0t-\psi(t)$ reaches a minimum at $t_0$ and so $\psi'(t)=-A_0$.
This implies that
$$
\ds \partial_t  \phi + \max \{ A_0, H^{0,+}(\partial_0 \phi), H^{1,-}(\partial_1 \phi),\dots, H^{K,-}(\partial_K \phi))=-A_0+\max\{A_0, \min_pH^{0}(p), \dots, \min_pH^{K}(p)\}=0
$$
and so $\tilde \nu_e$ is a sub-solution.
Finally, for $t=0$, since  $A_0\ge H^k(-1/e^k)$, we have $\tilde \nu_e(x,k,0)=-x/e^k<\phi_{A_0}(x,k)$.
\end{proof}

\begin{Lemma}\label{lem:app2}
Assume that $\bar \vartheta_e<-H^0(-1/e^0)$ and $H^0$ is convex. Then, the solution of 
\be\label{ununJuncTOT-app}
\left\{\begin{array}{l}
\ds \partial_t w +H^0(\partial_x w)= 0 \; {\rm in }\; (-\infty, 0)\times (0,+\infty)\\ 
\ds w(x,0)= -x/e^0 \; {\rm in }\; (-\infty, 0]\\
w(0,t)= \bar \vartheta_e t\; {\rm for}\;   t\geq 0.
\end{array}\right. 
\ee
 is unique and given by 
\be\label{eqwwwwTOT-app}
w(x,t):= \min\{ -x/e^0-H^0(-1/e^0)t\ , \ p^{0,-}_{-\bar \vartheta_e} x+\bar \vartheta_et\} \; {\rm in }\; (-\infty, 0)\times (0,+\infty).
\ee

\end{Lemma}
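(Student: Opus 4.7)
My plan is to break the proof into three standard pieces: (i) verify the explicit candidate matches the data, (ii) verify it is a viscosity solution of the PDE in the interior, and (iii) invoke comparison to get uniqueness.

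First, I would check the data. Set $w_1(x,t):= -x/e^0-H^0(-1/e^0)t$ and $w_2(x,t):= p^{0,-}_{-\bar\vartheta_e}x+\bar\vartheta_e t$, so that the candidate is $w=\min\{w_1,w_2\}$. By the assumption $\bar \vartheta_e<-H^0(-1/e^0)=-\min_p H^0(p)$, the equation $H^0(p)=-\bar\vartheta_e$ has two roots (by convexity of $H^0$) that bracket the minimizer $-1/e^0$; in particular $p^{0,-}_{-\bar\vartheta_e}<-1/e^0$. For $x\le 0$ this gives $p^{0,-}_{-\bar\vartheta_e}x\ge -x/e^0$, so $w(x,0)=w_1(x,0)=-x/e^0$. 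At $x=0$, $w(0,t)=\min\{-H^0(-1/e^0)t,\bar\vartheta_e t\}=\bar\vartheta_e t$ again by the inequality on $\bar\vartheta_e$.

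Second, I would check that $w$ is a viscosity solution of $\partial_t w+H^0(\partial_x w)=0$ on $(-\infty,0)\times(0,+\infty)$. Each $w_i$ is a classical solution ($w_2$ because $H^0(p^{0,-}_{-\bar\vartheta_e})=-\bar\vartheta_e$ by definition of $p^{0,-}$). Off the crease $\{w_1=w_2\}$ (which is a single half-line in the domain, since the slopes of $w_1$ and $w_2$ in $x$ differ), $w$ locally coincides with one of the $w_i$ and hence is a classical solution. On the crease, $w$ is locally the minimum of two affine functions, hence concave with a true corner: no smooth $\phi$ can touch $w$ from below there, so the supersolution condition holds trivially. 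For the subsolution condition, any $C^1$ test function $\phi$ touching $w$ from above at a crease point $(x_0,t_0)$ must satisfy $\nabla\phi(x_0,t_0)=\lambda\nabla w_1+(1-\lambda)\nabla w_2$ for some $\lambda\in[0,1]$; convexity of $H^0$ then yields
\[
\partial_t\phi+H^0(\partial_x\phi)\le \lambda\bigl(\partial_t w_1+H^0(\partial_x w_1)\bigr)+(1-\lambda)\bigl(\partial_t w_2+H^0(\partial_x w_2)\bigr)=0.
\]

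Third, for uniqueness I would apply the standard comparison principle for Hamilton--Jacobi equations on $(-\infty,0)\times(0,+\infty)$ with classical Dirichlet datum at $x=0$ and Lipschitz initial datum. The candidate and any viscosity solution are at most linearly growing at $-\infty$, and $H^0$ is Lipschitz on bounded sets, so a doubling-of-variables argument with the usual penalization terms $e^{-\lambda t}$ and a cutoff in $x$ to handle the unbounded domain yields $w\le u$ and $u\le w$ for any viscosity solution $u$. The only mildly delicate point is the uniform control at $x\to-\infty$, which comes from the linear bound $|w(x,t)|\le C(1+|x|+t)$ shared by both functions. No step here is genuinely hard: the lemma is essentially a bookkeeping check that the two affine solutions of the PDE glue into the correct envelope solution, together with a routine invocation of comparison.
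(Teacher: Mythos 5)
Your proof is correct, but it takes a genuinely different route from the paper. The paper's argument is very short: it invokes \cite[Proposition 2.12]{IM} to rewrite the classical Dirichlet condition $w(0,t)=\bar\vartheta_e t$ as an equivalent flux-limited junction condition $\partial_t w + \max\{A_e, H^{0,+}(\partial w)\}=0$ at $x=0$ with $A_e=-\bar\vartheta_e$, and then refers to the proof of Lemma \ref{lem:app1} (verify sub-/super-solution in the flux-limited sense; uniqueness is then immediate from the Imbert--Monneau comparison theorem). You instead verify directly that the explicit min of the two affine exact solutions is a viscosity solution in the interior (using convexity for the subsolution property across the crease), check the initial and boundary data pointwise, and invoke comparison for the classical Dirichlet problem. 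Your interior verification and your sign checks on $p^{0,-}_{-\bar\vartheta_e}<-1/e^0$ are correct. The one place to be careful is the final appeal to comparison: for first-order Hamilton--Jacobi equations on a half-line, Dirichlet boundary conditions are usually posed in the relaxed (viscosity) sense precisely because the strong (pointwise) formulation can fail to be well-posed; here it does work because both competitors attain the same boundary trace continuously, so the maximum of their difference cannot occur at $x=0$, but this is an adaptation rather than a black-box citation, and would need to be written out (with the penalization at $x\to-\infty$). The paper's route via \cite[Proposition 2.12]{IM} sidesteps this entirely by transferring uniqueness to the flux-limited framework, where the comparison principle is already established; your route is more elementary and self-contained but requires establishing the Dirichlet comparison on the half-line.
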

\begin{proof}
The proof is similar to the one of Lemma \ref{lem:app1}. Indeed it is sufficient to remark that, by \cite[Proposition 2.12]{IM}, $w$ is solution of \eqref{ununJuncTOT-app} iff $w$ is solution of 
\be
\left\{\begin{array}{l}
\ds \partial_t w +H^0(\partial_x w)= 0 \; {\rm in }\; (-\infty, 0)\times (0,+\infty)\\ 
\ds w(x,0)= -x/e^0 \; {\rm in }\; (-\infty, 0]\\
\ds \partial_t w + \max \{ A_e, H^{0,+}(\partial w)\}=0\qquad {\rm at}\; x=0,
\end{array}\right. 
\ee
where $A_e=-\vartheta_e$.

\end{proof}

\subsection{Homogenization outside the junction} \label{subsec:homog}

We consider a family of solutions $(U^\ep_i)$ of \eqref{eq.SystJuncTOT} and define $\nu^\ep$  from  $(U^\ep_i)$ as in Section \ref{section.flux}. Let us also fix a set  $(a,b)\times \{k\}\times [t_0,t_1]$ with $t_0<t_1$, $a<b<0$ if $k=0$ and  $b>a>0$ if $k\in \{1, \dots, K\}$. The following result is an easy adaptation of \cite{CaFo}. 

\begin{Lemma}\label{lem.homobasic} There is a set $\Omega_0$ of full probability (independent of $U^\ep$) such that, for any $\omega\in \Omega$, if $\nu^\ep$ is bounded above (respectively below) on $(a,b)\times \{k\}\times [t_0,t_1]$, then any half-relaxed upper limit (resp.  half-relaxed lower limit) of $\nu^\ep$ as $\ep\to 0$ (possibly up to a subsequence)  is a viscosity subsolution (resp. supersolution) of the Hamilton-Jacobi equation 
$$
\partial_t \nu(\cdot, k,\cdot)  +H^k(\partial_x\nu(\cdot, k,\cdot) )=0 \; {\rm in}\; (a,b)\times \{k\}\times [t_0,t_1].
$$
\end{Lemma}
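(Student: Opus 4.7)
The plan is to reduce the problem, on the interior region $(a,b)\times\{k\}\times[t_0,t_1]$, to the hydrodynamic limit of a standard i.i.d. follow-the-leader model on a homogeneous road, for which the result is established in \cite{CaFo}. The starting observation is that Assumption ($H_3$) makes the microscopic dynamics autonomous on the relevant region. Indeed, if $k=0$ and $b<0$, then for $\ep$ small enough (depending on $b$, $R_0$ and $\|V\|_\infty$) any vehicle whose scaled position lies near $(a,b)$ stays in $\{U^\ep_i\le -R_0/\ep\}$ on $[t_0/\ep,t_1/\ep]$ and satisfies $(d/dt)U^\ep_i = \tilde V^0_{Z_i}(U^\ep_{i+1}-U^\ep_i)$. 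Symmetrically, if $k\in\{1,\dots,K\}$ and $a>0$, any type-$k$ vehicle near $(a,b)$ stays in $\{U^\ep_i\ge 0\}$ and satisfies $(d/dt)U^\ep_i = \tilde V^k_{Z_i}(U^\ep_{\ell_i}-U^\ep_i)$, interacting only with its type-$k$ successor.

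Next I would relabel, for $k\ge 1$, the type-$k$ vehicles by a single index $j\mapsto i^k_j$ (e.g.\ $i^k_0$ is the last type-$k$ vehicle with nonpositive index, and the ordering is preserved by Lemma \ref{lem.CompDeBase}). The subsystem $(U^\ep_{i^k_j})_{j\in\Z}$ is then a classical follow-the-leader model with i.i.d.\ velocity laws $(\tilde V^k_{Z_{i^k_j}})_j$, i.e.\ i.i.d.\ conditionally on $T_0=k$. The case $k=0$ needs no relabelling. On such homogeneous one-road systems the homogenization theorem of \cite{CaFo} applies: the unscaled integrated density $\tilde\nu^\ep$ of the subsystem converges a.s.\ and locally uniformly to the unique viscosity solution of
$$
\partial_t\tilde\nu+\bar H^k(\partial_x\tilde\nu)=0,\qquad \bar H^k(p)=p\bar V^k(-1/p),
$$
where $\bar V^k$ is the inverse of $v\mapsto \E[(\tilde V^k_{Z_0})^{-1}(v)\mid T_0=k]$ (unconditional for $k=0$). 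This is precisely the perturbed test function argument built around the flat correctors $i\mapsto -i/p$, whose asymptotic speed is deterministic by the strong law of large numbers applied to $(\tilde V^k_{Z_{i^k_j}})^{-1}(\cdot)$.

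To pass from $\tilde\nu^\ep$ to $\nu^\ep$, I would use that by the law of large numbers
$$
\ep\,\sharp\{j\in\Z:i^k_j\in(-M/\ep,M/\ep)\}\longrightarrow 2M\pi^k\quad\text{a.s.,}
$$
uniformly in $M$ on compact sets, so that for $k\ge 1$ one has $\nu^\ep(x,k,t)=(\pi^k)^{-1}\tilde\nu^\ep(x,k,t)+o(1)$ on $(a,b)\times\{k\}\times[t_0,t_1]$. The change of variables $p\mapsto \pi^k p$ in the Hamilton--Jacobi equation then yields the effective Hamiltonian $H^k(p)=(\pi^k)^{-1}\bar H^k(\pi^k p)=p\bar V^k(-1/(\pi^k p))$, which is exactly the one appearing in the statement. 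The set $\Omega_0$ of full probability can be chosen, as in \cite{CaFo}, as the intersection of countably many almost sure events (law of large numbers along a dense family of slopes $p$, asymptotic density of type-$k$ indices, and concentration of the relevant corrector times). It depends only on $(Z_i)$ and not on the particular family $(U^\ep_i)$, because the comparison between $U^\ep_i$ and the flat correctors is carried out via the basic ordering (Lemma \ref{lem.CompDeBase}) and the approximate finite speed of propagation (Lemma \ref{lem.finitespeedPSJunc}), both of which are uniform in the initial data.

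The main, and essentially only, point to check carefully is the stability of the perturbed test function argument under the a priori bound: the assumption that $\nu^\ep$ is bounded above (resp.\ below) on $(a,b)\times\{k\}\times[t_0,t_1]$ prevents concentration of vehicles in the relevant region, and in particular gives a uniform lower bound on the distance between consecutive (type-$k$) vehicles via Lemma \ref{lem.CompDeBase}, so that the slopes of the test functions to be tested remain in the range where $\bar V^k$ is strictly increasing. This is what allows the classical half-relaxed limit machinery to produce the subsolution (resp.\ supersolution) property away from the junction, yielding the lemma.
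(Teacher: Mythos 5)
Your proposal is correct and follows exactly the route the paper itself takes: the paper gives no proof of this lemma beyond declaring it ``an easy adaptation of \cite{CaFo}'', and your reduction --- localize away from the junction via $(H_3)$, relabel the type-$k$ vehicles into an autonomous follow-the-leader system with i.i.d.\ velocity laws, apply the one-road homogenization of \cite{CaFo}, and rescale by $\pi^k$ to turn $\bar H^k$ into $H^k$ --- is precisely that adaptation. The only point to tighten is the localization in time: a vehicle starting near scaled position $b<0$ travels a distance of order $\|V\|_\infty(t_1-t_0)$ in scaled units over $[t_0/\ep,t_1/\ep]$, which need not keep it in $\{x\le -R_0\}$, so the sub/supersolution test should be carried out at a fixed interior point $(\bar x,\bar t)$ over a short time window $[\bar t-\delta,\bar t+\delta]$ with $\delta$ small depending on ${\rm dist}(\bar x,0)$ (the viscosity property being local), rather than over the whole of $[t_0,t_1]$ at once; likewise, the uniform lower bound $\Delta_{\min}$ on inter-vehicle distances comes from $(H_2)$ and Lemma \ref{lem.CompDeBase} alone, not from the one-sided bound on $\nu^\ep$, whose only role is to make the relevant half-relaxed limit finite.
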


%
%

\subsection{Convexity of the effective Hamiltonians}

\begin{Lemma}\label{lem:convexity} Assume that the $\tilde V^0_z$ are concave on $[\Delta_{\min}, +\infty)$ for any $z\in \mathcal Z$. Then $\bar V^0$ is also concave in $[\Delta_{\min}, +\infty)$ and $H^0$ is convex in $[-1/\Delta_{\min}, 0]$. In the same way, the $H^k$ are convex on $[-\pi^k/\Delta_{\min}, 0]$ for any $k\in \{1, \dots, K\}$. 
\end{Lemma}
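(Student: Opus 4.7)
The strategy is to transfer concavity through two elementary operations: inversion of monotone functions, and the "perspective" transformation $V(e)\mapsto \rho V(1/\rho)$.

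First I would establish the concavity of $\bar V^0$ on $[\Delta_{\min},+\infty)$. By assumption $(H_4)$, each $\tilde V^0_z$ is concave and strictly increasing from $[\Delta_{\min},h^0_{\max,z}]$ onto $[0,V^0_{\max,z}]$, so its inverse $(\tilde V^0_z)^{-1}$ is convex and nondecreasing on $[0,V^0_{\max,z}]$ (this is the standard fact that the inverse of a concave increasing bijection is convex, applied to $f(\lambda x_1+(1-\lambda)x_2)\geq \lambda f(x_1)+(1-\lambda)f(x_2)$ and composing with $f^{-1}$). On the common interval $(0,\bar v^0)$ all these inverses are defined, so taking expectations preserves convexity and the map
\[
G(v):=\E\bigl[(\tilde V^0_{Z_0})^{-1}(v)\bigr]
\]
is convex and strictly increasing on $(0,\bar v^0)$. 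Hence its inverse $\bar V^0=G^{-1}$ is concave and nondecreasing on $\bigl(\Delta_{\min},\E[(\tilde V^0_{Z_0})^{-1}(\bar v^0)]\bigr)$. The extension prescribed in the paper sets $\bar V^0\equiv 0$ to the left of $\Delta_{\min}$ and $\bar V^0\equiv \bar v^0$ to the right; on $[\Delta_{\min},+\infty)$ this gives a function that is concave on the middle interval, then constant. Since the one-sided derivative of the concave part at the right endpoint is nonnegative and the derivative of the constant extension is $0$, the resulting function on $[\Delta_{\min},+\infty)$ is concave (slopes are nonincreasing across the join). The same argument with $\tilde V^k_z$ conditioned on $T(Z_0)=k$ gives concavity of each $\bar V^k$ on $[\Delta_{\min},+\infty)$.

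Next I would deduce convexity of $H^k$ from concavity of $\bar V^k$ via the perspective-function trick. For a concave function $V$ on $(0,+\infty)$, the map
\[
f(\rho):=\rho V(1/\rho), \qquad \rho>0,
\]
is concave: indeed, $-V$ is convex and the joint perspective $(\rho,x)\mapsto \rho(-V)(x/\rho)$ is convex on $(0,+\infty)\times \R$, so setting $x=1$ yields convexity of $\rho\mapsto -\rho V(1/\rho)$, i.e. concavity of $f$. Apply this with $V=\bar V^k$: for $p<0$, the substitution $\rho=-\pi^k p>0$ gives
\[
H^k(p)=p\,\bar V^k\!\bigl(-1/(\pi^k p)\bigr)=-\tfrac{1}{\pi^k}\bigl(-\pi^k p\bigr)\bar V^k\!\bigl(1/(-\pi^k p)\bigr)=-\tfrac{1}{\pi^k}f(-\pi^k p),
\]
so $H^k$ is convex in $p$ wherever $f$ is concave in $\rho=-\pi^k p$, i.e. wherever $1/\rho=-1/(\pi^k p)\geq \Delta_{\min}$, namely on $(-1/(\pi^k\Delta_{\min}),0)$. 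For $k=0$ one has $\pi^0=1$ and recovers convexity of $H^0$ on $(-1/\Delta_{\min},0)$. Extending by $H^k\equiv 0$ for $p\geq 0$ preserves convexity up to the boundary point $0$ (the left derivative of $H^k$ at $0^-$ is $\leq 0$ since $H^k\leq 0$ and $H^k(0)=0$, matching the zero right derivative).

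The argument is almost entirely bookkeeping once the two general principles (inverse of a concave increasing function is convex; perspective of a concave function is concave) are in hand; the only point requiring a moment of care is the behaviour at the endpoints $\Delta_{\min}$ and the plateau value $\bar v^0$ when one extends $\bar V^k$ beyond the interval on which it is defined as $G^{-1}$, but this is handled by checking monotonicity of one-sided derivatives at the joins.
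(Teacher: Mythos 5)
Your first step (concavity of $\bar V^0$ via ``inverse of a concave increasing function is convex, then take expectations, then invert again'') is identical to the paper's. Where you diverge is in the second step: the paper passes from concavity of $\bar V^k$ to convexity of $H^k$ by the second-derivative identity $(H^0)''(p)=p^{-3}(\bar V^0)''(-1/p)$ in the smooth case, followed by an approximation argument for the general case, whereas you invoke the joint convexity of the perspective $(\rho,x)\mapsto \rho\,g(x/\rho)$ of a convex $g$ and restrict to $x=1$. Your route is a genuine alternative and buys a little: it handles non-smooth $\bar V^k$ directly, with no need to regularize, and it makes transparent exactly which interval of $p$ the convexity survives on — your computation gives the interval $(-1/(\pi^k\Delta_{\min}),0)$, which matches the paper's remark preceding the definition of $A_0$ (and flags that the statement of the lemma as written, $[-\pi^k/\Delta_{\min},0]$, has the $\pi^k$ in the wrong place). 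One slip at the end: your claim that the left derivative of $H^k$ at $0^-$ is $\leq 0$ is backwards. Since $H^k\leq 0$ on $(-\infty,0)$ with $H^k(0)=0$, the difference quotient $H^k(p)/p$ is $\geq 0$ for $p<0$, so the left derivative at $0^-$ is $\geq 0$; in fact near $0^-$ one has $H^k(p)=p\,\bar v^k$, whose slope is $\bar v^k>0$. Consequently $H^k$ is \emph{not} convex across the point $0$ when one includes the extension $H^k\equiv 0$ for $p\geq 0$. This does not damage the lemma, because the lemma only asserts convexity up to and including $p=0$ as the right endpoint, which follows from convexity on the open interval together with continuity at $0$; no derivative matching is needed and the extension to $p>0$ plays no role. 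You should simply delete that final sentence.
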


\begin{proof} Recall that a one-to-one map $\phi:I\to J$ (where $I$ and $J$ are open intervals) is increasing and concave if and only if $\phi^{-1}$ is increasing and convex. Thus the maps $(\tilde V^0_z)^{-1}$ (for $z\in \mathcal Z$) are increasing and  convex from $(0,\min_{z'} h^0_{\max,z'})$ to $(\Delta_{\min}, \bar e^0)$. So $v\to \E\left[(\tilde V^0_z)^{-1}(v)\right]$ is also  increasing and  convex from $(0,\min_{z'} h^0_{\max,z'})$ to $(\Delta_{\min}, \bar e^0)$.  This shows that its inverse $\bar V^0$ is increasing and concave from $(\Delta_{\min}, \bar e^0)$  to $(0,\min_{z'} h^0_{\max,z'})$. As $\bar  V^0$ is continuous and is constant after $\bar e^0$, we infer that $\bar V^0$ is concave on $[\Delta_{\min},+\infty)$. Finally, as $H^0(p)=p\bar V^0(-1/p)$ on $(-1/\Delta_{\min},0)$, $H^0$ is convex on this interval: indeed,  if $H^0$ and $\bar V^0$ are smooth, then 
$(H^0)''(p)= p^{-3}(\bar V^0)''(-1/p)\geq 0$; the general case can be treated by approximation. 
\end{proof}

\bibliographystyle{siam}

 \end{document}